\UseRawInputEncoding 

\documentclass[10pt]{article}
\usepackage[letterpaper, left=1in, top=1in, right=1in, bottom=1in, verbose, ignoremp]{geometry}

\usepackage{url}\RequirePackage[colorlinks,citecolor=blue, linkcolor=blue,urlcolor = blue]{hyperref}
\usepackage{latexsym,amssymb,amsmath,amsfonts,graphicx,color,fancyvrb,amsthm,enumitem,subcaption,mathrsfs,thmtools,stmaryrd}
\usepackage[doi=true,isbn=false,style=alphabetic,sorting=nyt,backend=biber,maxnames=5,maxalphanames=5,giveninits=true,bibencoding=utf8]{biblatex}
\bibliography{nonarchimedean_refs}
\usepackage[dvipsnames]{xcolor}
\usepackage{xy}\xyoption{all} \xyoption{poly} \xyoption{knot}
\usepackage{float}
\usepackage{bm}
\usepackage{bbm}
\usepackage{multicol,multirow}
\usepackage{array}
\usepackage{relsize}
\usepackage{chngcntr}
\usepackage{etoolbox}
\usepackage{caption}
\usepackage{tikz}
\usetikzlibrary{decorations.pathreplacing,calc,positioning}
\usetikzlibrary{shapes,backgrounds}
\usetikzlibrary{patterns}
\usetikzlibrary{arrows}
\usepackage{amsmath}
\usetikzlibrary{shapes.geometric, arrows.meta}
\usepackage{amsopn}
\usepackage{calc}
\usepackage{algorithm}
\usepackage[noend]{algpseudocode}
\usepackage{hyperref}
\usepackage{mathtools}
\usepackage{booktabs}
\usepackage{float}
\usepackage{adjustbox}
\usepackage{pdflscape}
\usepackage{forest}

\usepackage{tikz-cd} 
\usepackage{amscd} 
\usepackage{comment}
\usepackage[capitalize,nameinlink,compress]{cleveref}
\crefname{figure}{Figure}{Figures}
\crefname{equation}{}{}
\crefname{assumption}{Assumption}{Assumptions}
\crefname{subsection}{Subsection}{Subsections}

\newcounter{cdrow}

\newtheorem{theorem}{Theorem}[]
\newtheorem*{theorem*}{Theorem}
\newtheorem{corollary}[theorem]{Corollary}
\newtheorem{lemma}[theorem]{Lemma}
\newtheorem{proposition}[theorem]{Proposition}

\newtheorem*{claim*}{Claim}

\theoremstyle{definition}
\newtheorem{definition}[theorem]{Definition}
\newtheorem*{definition*}{Definition}

\newtheorem{remark}[theorem]{Remark}

\newtheorem{example}[theorem]{Example}
\newtheorem*{example*}{Example}
\newtheorem{convention}[theorem]{Convention}

\def\log{{\rm log}}

\makeatletter
\newcommand*{\op}{%
  \DOTSB
  \mathop{\vphantom{\bigoplus}\mathpalette\matt@op\relax}%
  \slimits@
}
\newcommand\matt@op[2]{%
  \vcenter{\m@th\hbox{\resizebox{\widthof{$#1\bigoplus$}}{!}{$\boxplus$}}}%
}
\makeatother

\newcommand{\BA}{\mathbb{A}}
\newcommand{\CC}{\mathbb{C}}
\newcommand{\RR}{\mathbb{R}}
\newcommand{\NN}{\mathbb{N}}
\newcommand{\FF}{\mathbb{F}}
\newcommand{\TT}{\mathbb{T}}
\newcommand{\ZZ}{\mathbb{Z}}
\newcommand{\PP}{\mathbb{P}}
\newcommand{\QQ}{\mathbb{Q}}

\renewcommand{\Im}{\mathrm{Im} \,}

\DeclareMathOperator*{\argmin}{\text{argmin}}

\newcommand{\suchthat}{\;\ifnum\currentgrouptype=16 \middle\fi|\;}
\newcommand{\bigmid}{\left.\vphantom{\Big\{} \suchthat \vphantom{\Big\}}\right.}
\newcommand{\bigslant}[2]{{\raisebox{.2em}{$#1$}\left/\raisebox{-.2em}{$#2$}\right.}}

\newcommand{\calB}{\mathcal{B}}
\newcommand{\calH}{\mathcal{H}}

\newcommand{\sgn}{\text{\sgn}}

\newcommand{\diam}{\text{diam}}

\renewcommand{\vec}[1]{\mathbf{#1}}

\newcommand{\slope}{\text{slope}}

\newcommand{\val}{{\mathrm{val}}}

\DeclareMathOperator{\argmax}{argmax}

\DeclareMathOperator{\conv}{conv}

\makeatletter
\def\@biblabel#1{}
\makeatother

\makeatletter
\patchcmd{\NAT@citex}
  {\@citea\NAT@hyper@{%
     \NAT@nmfmt{\NAT@nm}%
     \hyper@natlinkbreak{\NAT@aysep\NAT@spacechar}{\@citeb\@extra@b@citeb}%
     \NAT@date}}
  {\@citea\NAT@nmfmt{\NAT@nm}%
   \NAT@aysep\NAT@spacechar\NAT@hyper@{\NAT@date}}{}{}

\patchcmd{\NAT@citex}
  {\@citea\NAT@hyper@{%
     \NAT@nmfmt{\NAT@nm}%
     \hyper@natlinkbreak{\NAT@spacechar\NAT@@open\if*#1*\else#1\NAT@spacechar\fi}%
       {\@citeb\@extra@b@citeb}%
     \NAT@date}}
  {\@citea\NAT@nmfmt{\NAT@nm}%
   \NAT@spacechar\NAT@@open\if*#1*\else#1\NAT@spacechar\fi\NAT@hyper@{\NAT@date}}
  {}{}

\makeatother

\setlist[description]{font=\normalfont\space}
\begin{document}

\def\spacingset#1{\renewcommand{\baselinestretch}%
{#1}\small\normalsize} \spacingset{1}

\begin{flushleft}
{\Large{\textbf{Non-Archimedean Polydisc Spaces and\\
Applications to Optimisation}}}
\newline
\\
Paul Lezeau$^{1,2}$, Yiannis Fam$^{1,2}$, Anthea Monod$^{2}$ and Yue Ren$^{3,\dagger}$
\\
\bigskip
\bf{1} London School of Geometry and Number Theory, UK
\\
\bf{2} Department of Mathematics, Imperial College London, UK
\\
\bf{3} Department of Mathematics, Durham University, UK
\\
\bigskip
$\dagger$ Corresponding e-mail: yue.ren2@durham.ac.uk
\end{flushleft}


\section*{Abstract}

We propose a new framework for optimisation over non-Archimedean spaces inspired by Berkovich geometry.  Specifically, we introduce polydisc spaces, which consist of products of closed balls over a non-Archimedean field. These spaces retain the rigid hierarchical structure of the non-Archimedean field whilst acquiring many desirable geometric features absent from it.  
We show that metric trees embed naturally into these spaces, demonstrating their capacity to represent hierarchical data.
We study their metric geometry, establishing properties such as geodesic uniqueness, confirming their compatibility with classical optimisation techniques. We further propose a class of real-valued functions given by linear combinations of absolute values of polynomials. These functions admit a piecewise polynomial description along geodesics and satisfy a universal approximation property. We formulate a theory of optimisation on polydisc spaces: we prove existence of minimisers and explore algorithms for finding them.  We provide an accompanying open-source Julia library implementing the core objects and optimisation procedures introduced.

\paragraph{Keywords:} Non-Archimedean fields, Hierarchical data, Monte-Carlo tree search


\section{Introduction}
\label{sec:intro}

Hierarchical data analysis is ubiquitous in applications.  In many applications the data is naturally hierarchical, such as in phylogenetics \cite{SKSW2003,ChakerianHolmes2012}, genomics \cite{LZH2008}, linguistics \cite{BrennanHale2019}, or sociology \cite{BryanJenkins2016}.  In even more applications, hierarchies are introduced artificially to simplify the data analysis, such as in education \cite{Raudenbush1988}, economics \cite{EDKHDM2013}, computer vision \cite{MPSKPM2022}, and data mining \cite{SillaFreitas2011}. Text provides another important example: linguistic structures are inherently hierarchical, with words combining into phrases, phrases into sentences, and sentences into larger semantic units \cite[e.g.,][]{miller1995wordnet}. As a consequence, hierarchical representations have become a fundamental component of modern language modelling.

A common feature of hierarchical representations is that they naturally induce non-Archimedean geometric structures, which are frequently visualised as metric trees; see \cref{fig:introduction}. This non-Archimedean nature however is difficult to capture over the real numbers.  For example, to realize the distance between the four points in \cref{fig:introduction} over $\RR$ requires embedding them in $\RR^3$.  More generally, embedding $n$ such points is only possible in $\RR^{n-1}$ \cite{FAVER_KOCHALSKI_MURUGAN_VERHEGGEN_WESSON_WESTON_2014}.  Prominent workarounds include working in hyperbolic space \cite{NickelKiela2017} or working in very high dimensions, as is common in LLMs.

Capturing the non-Archimedean nature faithfully, however, requires working over so-called \emph{non-Archimedean fields}, such as the $p$-adic numbers $\QQ_p$, whose distances satisfy the strong triangle inequality:
\begin{equation*}
  |a-b|_p\leq\max(|a-c|_p, |c-b|_p) \qquad\text{for all } a,b,c\in\QQ_p.
\end{equation*}
Because of the strong triangle inequality, their topology naturally organises into strictly nested, non-overlapping sets that form perfect, infinite branching trees. These fields are indispensable in number theory, where their hierarchical structure allows the study of polynomial equations prime by prime.  However, optimisation directly over non-Archimedean spaces remains challenging: their totally disconnected topology prohibits the use of many standard optimisation techniques. Thus, optimisation problems frequently reduce to difficult and computationally expensive combinatorial searches.

\begin{figure}[t]
  \centering
  \begin{tikzpicture}
    \node (tree)
    {
      \begin{tikzpicture}[scale=1.2]
        \coordinate (o) at (0,0);
        \coordinate (v1) at (-1,-1);
        \coordinate (v2) at (1,-1);
        \coordinate (w1) at (-1.5,-2);
        \coordinate (w2) at (-0.5,-2);
        \coordinate (w3) at (0.5,-2);
        \coordinate (w4) at (1.5,-2);
        \draw
        (o) -- node[left,font=\footnotesize] {$1$} (v1)
        (o) -- node[right,font=\footnotesize] {$1$} (v2)
        (v1) -- node[left,font=\footnotesize] {$1$} (w1)
        (v1) -- node[right,font=\footnotesize] {$1$} (w2)
        (v2) -- node[left,font=\footnotesize] {$1$} (w3)
        (v2) -- node[right,font=\footnotesize] {$1$} (w4);
        \node[below] at (w1) {dog};
        \node[below] at (w2) {wolf};
        \node[below] at (w3) {cat};
        \node[below] at (w4) {lion};
      \end{tikzpicture}
    };
    \node[anchor=west,xshift=10mm,yshift=3mm] (matrix) at (tree.east)
    {
      \begin{tikzpicture}
        \matrix (m) [
        matrix of math nodes,
        nodes in empty cells,
        row sep=0.25em,
        column sep=0.25em
        ] {
          & & \text{dog} & \text{wolf} & \text{cat} & \text{lion} \\
          \text{dog}  & & 0 & 2 & 4 & 4 \\
          \text{wolf} & & 2 & 0 & 4 & 4 \\
          \text{cat}  & & 4 & 4 & 0 & 2 \\
          \text{lion} & & 4 & 4 & 2 & 0 \\
        };

        \node[fit=(m-2-3) (m-5-6),inner sep=0pt] (sub) {};
        \draw[thick] (sub.north west) [bend right=20] to (sub.south west);
        \draw[thick] (sub.north east) [bend left=20] to (sub.south east);
      \end{tikzpicture}
    };
    \node[anchor=west,xshift=10mm,yshift=-3mm] (embedding) at (matrix.east)
    {
      $
      \begin{array}{lcl}
        \text{dog} &\mapsto& 0\cdot 2^{-2}+0\cdot 2^{-1}\\
        \text{wolf} &\mapsto& 0\cdot 2^{-2}+1\cdot 2^{-1} \\
        \text{cat} &\mapsto& 1\cdot 2^{-2}+0\cdot 2^{-1}\\
        \text{lion} &\mapsto& 1\cdot 2^{-2}+1\cdot 2^{-1}\\
      \end{array}
      $
    };
  \end{tikzpicture}\vspace{-3mm}
  \caption{Four mammals separated by their evolutionary distance arranged in form of a tree (left), their distance matrix (centre), and four points in $\QQ_2$ realizing the distances (right).}
  \label{fig:introduction}
\end{figure}

In this paper, we propose an approach to optimisation over non-Archimedean fields inspired by the theory of Berkovich spaces, which were introduced to provide a topologically well-behaved framework for geometry and analysis over non-Archimedean fields \cite{Berkovich90}.  Instead of working in a non-Archimedean space $K^n$, we introduce and work in a family of geometric optimisation domains called \emph{polydisc spaces}, denoted $\mathcal B^n$, which contain $K^n$ as a distinguished subset.  Our polydisc spaces retain the hierarchical structure inherited from $K^n$ while simultaneously acquiring geometric features absent from it, such as path connectedness, geodesics, convexity, and tangent directions.  

We also introduce a class of continuous real valued functions on $\mathcal B^n$ called \emph{absolute polynomials}.  These functions form the foundation of our optimisation framework: we show that large classes of absolute-polynomial optimisation problems admit global minima, and prove an analogue of the universal approximation theorem.

A key feature of our polydisc framework is that it supports two geometric complementary perspectives: On the one hand, the hierarchical structure of non-Archimedean geometry allows metric trees to be embedded easily and often isometrically.  On the other hand, the additional geometric structure of polydisc spaces permits the development of optimisation methods that have no analogue on the underlying non-Archimedean field.  Thus, the same space can simultaneously serve both as a natural representation space for hierarchical data and as a natural domain for optimisation.

A central goal of this work is not only to develop a mathematical framework for optimisation on non-Archimedean spaces, but also to make such a framework computationally viable. To this end, we introduce \textsc{NonArchimedeanMachineLearning.jl} \cite{NAML}, an open-source \textsc{Julia} \cite{Julia} library implementing the geometric structures, function classes, and optimisation algorithms developed in this paper. The library enables practical experimentation with non-Archimedean optimisation problems and provides a software foundation for future research at the interface of non-Archimedean geometry, optimisation, and machine learning.

\subsection{Main Contributions and Structure of the Paper}

This paper develops a framework for representing hierarchical data and performing optimisation within a common geometric setting. Our contributions span the geometry of polydisc spaces, their use as representation spaces for hierarchical structures, a function theory adapted to optimisation, optimisation algorithms tailored to the non-Archimedean setting, and the development of \textsc{NonArchimedeanMachineLearning.jl}---an open-source software library implementing our developed theory. Together, these components provide both a mathematical and computational foundation for optimisation and machine learning on non-Archimedean spaces.

We begin in \cref{sec:polydiscs} by introducing the polydisc spaces $\mathcal B^n$ and $\mathcal H^n$. 
We develop their basic geometric properties, including geodesics, convexity, tangent directions, and behaviour under field extensions. Many of these constructions are inspired by ideas from Berkovich geometry, though our focus is on explicit and computationally tractable objects tailored for optimisation. An important result of this section is \cref{lem:metricTopologicallyEquivalent}, which shows that the metrics $d_{\mathcal B}$ and $d_{\mathcal H}$ induce the same topology on $\mathcal H^n$. This result allows us to move freely between the two metrics, using whichever is most convenient for the task at hand.

Having established the geometry of polydisc spaces, we turn to their role in \cref{sec:embedding} as representation spaces for hierarchical data. We show that metric trees admit natural and often isometric embeddings into $(\mathcal H^1,d_{\mathcal H})$. The central result \cref{prop:metricTreeEmbedding}, provides an explicit criterion guaranteeing the existence of such embeddings. When the criterion is not satisfied over the original field $K$, it yields a constructive procedure for extending the field to one in which an embedding exists. As a corollary, we obtain an extension property for embeddings, allowing embeddings of growing trees to be extended without the need to modify previously embedded subtrees.

In \cref{sec:functions}, we introduce the main class of objective functions considered in this paper, building on so-called \emph{absolute polynomials} and \emph{valuation polynomials}. While their definitions are naturally expressed in terms of suprema and infima, we derive explicit formulas that make them computationally tractable and suitable for practical use. \cref{thm:universalApproximation} is a universal approximation theorem showing that linear combinations of absolute polynomials are expressive enough to approximate broad classes of continuous functions on polydisc spaces.

The optimisation theory is developed in \cref{sec:optimisation}. \cref{thm:globalMinimumLocallyCompact} and \cref{thm:globalMinimumAffinePower} establish the existence of global minima for a large class of positive linear combinations of absolute polynomials, including objective functions such as mean squared error. \cref{thm:globalMinimumUnivariate} provides an explicit starting value for the optimisation in the univariate case. Building on the function theory, we adapt several optimisation methods to the polydisc setting, including best-first descent, best-first gradient descent, Monte-Carlo tree search (MCTS), and deterministic optimistic optimisation (DOO).  Moreover, \cref{thm:best-first-descent-convergence} and \cref{thm:best-first-gradient-descent-convergence} show that best-first and best-first gradient converge for univariate polynomials that split. \cref{thm:doo} shows that DOO converges provably.

Finally, in \cref{sec:applications}, we present \textsc{NonArchimedeanMachineLearning.jl}, an open-source Julia library accompanying this work. The library implements the core geometric objects, function classes, embedding procedures, and optimisation algorithms developed in the preceding sections, providing an end-to-end computational framework for optimisation on non-Archimedean polydisc spaces. We use the library to conduct a range of computational experiments and illustrate the practical behaviour of our proposed methods. Our software contribution is an important component of the work, adapting the theoretical framework developed into a practical platform for experimentation and future research.

\subsection{Connection to Existing Literature}
There is a growing body of work exploring the use of non-Archimedean fields in applications, much of it focused on adapting classical Archimedean theories to a non-Archimedean setting. Early examples include the development of probability theory over non-Archimedean fields by Evans, beginning with their foundational work on Gaussian measures \cite{Evans1989}. More recently, Zúñiga-Galindo and collaborators have investigated neural network architectures defined over non-Archimedean fields \cite{zuniga2024deep,zuniga2024hierarchical}, while Baker, McCallum, and Pattinson \cite{BMP2025}, as well as Martins \cite{martins2025learning}, have studied statistical and machine learning problems over the $p$-adics.

Among these works, the problems considered in \cite{BMP2025,martins2025learning} are most closely related to those we study here. The key distinction, however, is that these works formulate optimisation problems directly over the non-Archimedean space $K^n$, whereas we introduce and work over the larger polydisc spaces $\mathcal B^n$. This change of domain equips the optimisation problem with additional geometric structure while preserving the underlying hierarchical geometry of $K^n$. In particular, viewing $\mathcal B^n$ as a continuous geometric space makes notions such as geodesics, tangent directions, and gradient-based optimisation available in a natural way, whereas \cite[Section 3.3]{BMP2025} argues that gradient methods are not viable over $K^n$. At the same time, the rational skeleton of $\mathcal B^n$ admits a natural directed acyclic graph structure, allowing the use of classical tree-search methods such as Monte-Carlo tree search and deterministic optimistic optimisation.  As opposed to purely descending procedures such as \cite[Algorithm 1]{martins2025learning}, these methods can revisit and compare alternative branches of the search space through established exploration--exploitation strategies.


\section{Polydisc Spaces and their Geometry}
\label{sec:polydiscs}

In this section, we introduce the non-Archimedean domains of the real-valued functions which we will aim to optimise: \emph{polydisc spaces}.  Polydisc spaces are products of infinite regular metric trees.  We discuss their relevant metric and metric geometric properties for optimisation and briefly outline their connection to Berkovich spaces.

\begin{convention}
  \label{con:valuedField}
  In this paper, $K$ will be a field with non-trivial valuation $\val\colon K\rightarrow\RR\cup\{\infty\}$ that is complete with respect to the resulting non-Archimedean absolute value $|\cdot|\colon K\rightarrow \RR_{\geq 0}$. We will generally write $|a|=\exp(-\val(a))$ and $\val(a)=-\log(|a|)$ for $a\in K$ without specifying the base, which traditionally varies with the field $K$ and is a detail that has no effect on our results. Without loss of generality, we may assume that $1\in\val(K)$, and we fix $\pi$ to be an element of $K$ that has valuation 1. We further assume that there is a splitting of the valuation $(\val(K^\ast),+)\rightarrow (K^\ast,\cdot)$ and absolute value $(|K^\ast|,\cdot)\rightarrow (K^\ast,\cdot)$. We use $\pi^v$ and $\pi^{-\log(r)}$ to denote the image of $v\in\val(K^\ast)$ and $r\in|K^\ast|$ under said splitting map, respectively.

  We denote the residue field of $K$ by $\mathfrak K$ and the completion of the algebraic closure of $K$ by $\hat{K}$. We point out the standard fact that $\hat{K}$ is both algebraically closed and complete, and that both the valuation and absolute value of $K$ extend uniquely to $\hat K$ \cite[Section 3.2.4 Theorem 2]{BGR84}.

\end{convention}

We begin by presenting some foundational instances of well-known valued fields, which we will use as working examples throughout the paper.

\begin{example}
  \label{ex:padicNumbers}
  The field of \emph{$p$-adic numbers} for some prime $p$ is a valued field.  It is given by
  \begin{equation*}
    \QQ_p\coloneqq\left\{\sum_{k=k_0}^\infty c_k p^k \bigmid k_0\in\ZZ,\; c_k\in \{0,\dots,p-1\} \right\},
  \end{equation*}
  and endowed with the usual \emph{$p$-adic valuation} $\val_p$ and \emph{$p$-adic absolute value} $|\cdot|_p$:
  \begin{align*}
    \val_p\colon &\QQ_p\rightarrow \RR\cup\{\infty\}, & \sum_{k=k_0}^\infty c_k p^k&\mapsto \min\Big(\{k\in\ZZ_{\geq k_0}\mid c_k\neq 0\}\cup\{\infty\}\Big),\\
    |\cdot|_p\colon&\QQ_p \rightarrow \RR_{\geq 0}, & c&\mapsto p^{-\val_p(c)}.
  \end{align*}
  Its residue field is the finite field $\FF_p$ and the completion of its algebraic closure is also known as the \emph{complex $p$-adic numbers}, $\CC_p$. In these cases, we always take $\pi = p$.
\end{example}

\begin{example}
  \label{ex:LaurentSeries}
  The field of \emph{Laurent series} $F(\!(t)\!)$ over some coefficient field $F$ is a valued field.  It is given by
    \begin{equation*}
    F(\!(t)\!)\coloneqq\Big\{\sum_{k=k_0}^\infty c_k t^k \bigmid k_0\in\ZZ,\; c_k\in F \Big\}
  \end{equation*}
  and endowed with the usual $t$-adic valuation $\val_t$ and $t$-adic absolute value $|\cdot|_t$:
  \begin{align*}
    \val_t\colon &F(\!(t)\!)\rightarrow \RR\cup\{\infty\}, & \sum_{k=k_0}^\infty c_k t^k&\mapsto \min\Big(\{k\in\ZZ_{\geq k_0}\mid c_k\neq 0\}\cup\{\infty\}\Big),\\
    |\cdot|_t\colon&F(\!(t)\!) \rightarrow \RR_{\geq 0}, & c&\mapsto e^{-\val_t(c)}.
  \end{align*}
  Its residue field is the coefficient field $F$.  For $F=\RR$, its algebraic closure is known as the field \emph{complex Puiseux series} $\CC\{\!\{t\}\!\}$ and its completion is a \emph{Levi-Civita field}. In these cases, we always take $\pi = t$.
\end{example}

The field $\QQ_p$ of $p$-adic numbers can be viewed as a non-Archimedean counterpart to floating-point numbers: the valuation identifies the dominant term analogous to the exponent in a floating-point number, while the remaining coefficients encode increasingly fine-scale corrections.

\subsection{Polydisc Spaces}

We now introduce polydisc spaces and describe some basic properties of non-Archimedean polydiscs.

\begin{definition}\label{def:polydiscSpace}
  The \emph{polydisc} with centre $a=(a_1,\dots,a_n)\in K^n$ is a set of the form
  \begin{align*}
      B_{\mathcal{B}}(a,r)\coloneqq& \Big\{ (z_1,\dots,z_n)\in \hat{K}^n \bigmid |a_1-z_1|\leq r_1, \dots, |a_n-z_n|\leq r_n \Big\} \qquad\text{or}\\ 
    B_{\mathcal{H}}(a,v) \coloneqq& \Big\{ (z_1,\dots,z_n)\in \hat{K}^n \bigmid \val(a_1-z_1)\geq v_1, \dots, \val(a_n-z_n)\geq v_n \Big\}
  \end{align*}
  for some \emph{absolute multiradius} $r=(r_1,\dots,r_n)\in \RR_{\geq 0}^n$ or \emph{valuative multiradius} $v=(v_1,\dots,v_n)\in(\RR\cup\{\infty\})^n$.
  The polydiscs $B_{\mathcal{B}}(a, r)$ and $B_{\mathcal{H}}(a, v)$ (or their multiradii $r$ and $v$) are \emph{rational} if $r_i\in |K| = \left\{ |a| \mid a \in K \right\}$ or $v_i\in \val(K)= \left\{ \val(a) \mid a \in K \right\}$ for all $i=1,\dots,n$, respectively.

  The \emph{$n$-polydisc space} $\mathcal B^n$ and the \emph{hyperbolic $n$-polydisc space} $\mathcal H^n$ are the sets of polydiscs given by
  \begin{align*}
    &\mathcal B^n \coloneqq \Big\{ B_{\mathcal{B}}(a,r) \bigmid a\in K^n, r\in \RR_{\geq 0}^n \Big\} = \Big\{ B_{\mathcal{H}}(a,v) \bigmid a\in K^n, v\in(\RR\cup\{\infty\})^n \Big\} \quad\text{and}\\
    &\mathcal H^n \coloneqq \Big\{ B_{\mathcal{B}}(a,r) \bigmid a\in K^n, r\in \RR_{> 0}^n \Big\} = \Big\{ B_{\mathcal{H}}(a,v) \bigmid a\in K^n, v\in \RR^n \Big\}.
  \end{align*}
\end{definition}

There is a natural inclusion $K^n \hookrightarrow \mathcal B^n$ sending $a$ to $B_{\mathcal{B}}(a,0)$. We will often identify $K^n$ with its image in $\mathcal B^n$, and write $a$ for $B_{\mathcal{B}}(a,0)$ and refer to such points as \emph{$K$-points}.  The polydisc space itself is partially ordered by inclusion. We will use the symbols $\le$ and $\subseteq$ interchangeably to denote this inclusion.

Note that polydiscs are defined as discs in $\hat K^n$ rather than $K^n$, as different radii may give rise to the same polydisc over $K$. For example, consider the following discs centered around $a=0$:
    \begin{center}
      \begin{tikzpicture}
        \node[anchor=base east] (K0) at (-1,0) {$\{ z\in \QQ_p\mid |z|\leq p^{-1} \}$};
        \node[anchor=base west] (K1) at (1,0) {$\{ z\in \QQ_p\mid |z|\leq p^{-1/2} \}\subseteq \QQ_p$};
        \node[anchor=base east] (K0hat) at (-1,-1.15) {$\{ z\in \hat\QQ_p\mid |z|\leq p^{-1} \}$};
        \node[anchor=base west] (K1hat) at (1,-1.15) {$\{ z\in \hat\QQ_p\mid |z|\leq p^{-1/2} \}\subseteq \hat \QQ_p.$};
        \node[anchor=base] at (0,0) {$=\;p\cdot\ZZ_p\;=$};
        \node[anchor=base] at (0,-1.15) {$\not\ni\;\;p^{1/2}\;\;\in$};
        \draw[draw opacity=0] (K0) -- (K0hat) node[midway,sloped] {$\subseteq$};
        \draw[draw opacity=0] (K1) -- (K1hat) node[midway,sloped] {$\subseteq$};
      \end{tikzpicture}
    \end{center}

This gives rise to the observation that non-Archimedean discs behave very differently from their Archimedean counterparts: In particular, any two discs are either disjoint or one is contained in the other.

\begin{lemma}
  \label{lem:recentre}
  Let $B_{\mathcal{B}}(a,r), B_{\mathcal{B}}(a',r')\in \mathcal B^1$.
  \begin{enumerate}
  \item[(i)] If $B_{\mathcal{B}}(a,r) \subseteq B_{\mathcal{B}}(a',r')$, then $r\leq r'$ and $B_{\mathcal{B}}(a,r')=B_{\mathcal{B}}(a',r')$.
  \item[(ii)] If $B_{\mathcal{B}}(a,r) \nsubseteq B_{\mathcal{B}}(a',r')$ and $B_{\mathcal{B}}(a,r) \nsupseteq B_{\mathcal{B}}(a',r')$, then $B_{\mathcal{B}}(a,r)\cap B_{\mathcal{B}}(a',r')=\emptyset$.
  \end{enumerate}
\end{lemma}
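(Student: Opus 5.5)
The whole argument rests on one observation: every point of a closed disc is a centre of that disc. Concretely, if $b\in B_{\mathcal{B}}(a,r)$ with $a\in K$ and $r\geq 0$, then $B_{\mathcal{B}}(b,r)=B_{\mathcal{B}}(a,r)$ as subsets of $\hat K$. I will prove this first. Take $z\in\hat K$ with $|z-a|\leq r$. The strong triangle inequality for the extended absolute value on $\hat K$ gives $|z-b|\leq\max(|z-a|,|a-b|)\leq r$. The reverse inclusion follows by swapping the roles of $a$ and $b$. Centres in the lemma lie in $K$, but I will state and prove this recentring fact for arbitrary centres in $\hat K$, since the argument is the same.

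For (ii), I argue by contraposition. Suppose $z\in B_{\mathcal{B}}(a,r)\cap B_{\mathcal{B}}(a',r')$, and assume without loss of generality that $r\leq r'$. By the recentring fact, $B_{\mathcal{B}}(a,r)=B_{\mathcal{B}}(z,r)$ and $B_{\mathcal{B}}(a',r')=B_{\mathcal{B}}(z,r')$. Since $r\leq r'$, clearly $B_{\mathcal{B}}(z,r)\subseteq B_{\mathcal{B}}(z,r')$. Hence one disc contains the other, which contradicts the hypothesis of (ii).

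For (i), the equality $B_{\mathcal{B}}(a,r')=B_{\mathcal{B}}(a',r')$ is immediate. The centre $a$ lies in $B_{\mathcal{B}}(a,r)\subseteq B_{\mathcal{B}}(a',r')$, so the recentring fact applied to the disc $B_{\mathcal{B}}(a',r')$ and its point $a$ gives the claim.

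The main obstacle is the inequality $r\leq r'$, because the radius must be read off from the set itself. Here it matters that discs live in $\hat K$: as the example with $p^{1/2}$ shows, over $K$ alone the radius is not determined. The key input is that $\hat K$ is algebraically closed, so its value group $|\hat K^\ast|$ is divisible and contains $|K^\ast|\neq\{1\}$, making it dense in $\RR_{>0}$. Suppose for contradiction that $r>r'$. By density, choose $s\in|\hat K^\ast|$ with $r'<s\leq r$, and pick $c\in\hat K$ with $|c|=s$. Then $z=a+c$ satisfies $|z-a|=s\leq r$, so $z\in B_{\mathcal{B}}(a,r)\subseteq B_{\mathcal{B}}(a',r')=B_{\mathcal{B}}(a,r')$. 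This forces $s=|z-a|\leq r'$, a contradiction. The degenerate case $r'=0$ is covered by the same argument, since we only need some $s\in(0,r]$ in the value group. I therefore expect the density of $|\hat K^\ast|$, which is essentially a citation plus a one-line divisibility remark, to be the only non-formal ingredient.
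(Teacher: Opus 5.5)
Your proof is correct and takes the paper's approach: part (ii) is the same strong-triangle-inequality argument through a common point $c$, except that the paper does the estimate $|a'-b|\leq\max(|a'-c|,|c-a|,|a-b|)\leq r'$ directly rather than first isolating a recentring fact. For part (i), which the paper calls immediate, your argument for $r\leq r'$ from the density of $|\hat K^\ast|$ in $\RR_{>0}$ correctly supplies the one non-formal detail, and it is the same density fact the paper later uses in the proof of \cref{lem:algebraDefinition}.
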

\begin{proof}
The first statement is immediate.  For the second, we may assume without loss of generality that $r\leq r'$.  Suppose there is some $c\in B_{\mathcal{B}}(a,r)\cap B_{\mathcal{B}}(a',r')\neq\emptyset$.  We want to show that $B_{\mathcal{B}}(a,r)\subseteq B_{\mathcal{B}}(a',r')$.  To do this, let $b\in B_{\mathcal{B}}(a,r)$.  Then
  \begin{equation*}
    |a'-b| = |a'-c+c-a+a-b| \leq \max(\underbrace{|a'-c|}_{\leq r'},\underbrace{|c-a|}_{\leq r},\underbrace{|a-b|}_{\leq r}) = r'. \qedhere
  \end{equation*}
\end{proof}


\begin{figure}
  \centering
  \begin{tikzpicture}[y={(0,1.333)},every node/.style={fill=white}]
    \coordinate (b) at (3.5,2); 
    \coordinate (b0) at (1,1); 
    \coordinate (b1) at (6,1);
    \coordinate (b00) at (0,0); 
    \coordinate (b01) at (2,0);
    \coordinate (b10) at (5,0);
    \coordinate (b11) at (7,0);
    \coordinate (e00) at (-1,-1.5); 
    \coordinate (e01) at (1,-1.5);
    \coordinate (e10) at (4,-1.5);
    \coordinate (e11) at (6,-1.5);
    \coordinate (l) at (-5.5,2.75); 
    \coordinate (l0) at (-5.5,2); 
    \coordinate (l1) at (-5.5,1);
    \coordinate (l2) at (-5.5,0);
    \coordinate (l3) at (-5.5,-1.5);
    \coordinate (a) at (5.75,2.75); 
    \draw
    (b) -- (b0)
    (b) -- (b1)
    (b0) -- (b00)
    (b0) -- (b01)
    (b1) -- (b10)
    (b1) -- coordinate[midway] (pt) (b11);
    \draw[shorten <= 15mm]
    (b00) -- (e00);
    \draw[shorten >= 15mm]
    (b00) -- (e00);
    \draw[dotted,shorten <= 8mm,shorten >= 8mm]
    (b00) -- (e00);
    \draw
    (b00) -- ++(0.2,-0.4);
    \draw[dotted]
    (b00)++(0.2,-0.4) -- ++(0.1,-0.2);
    \draw[shorten <= 15mm]
    (b01) -- (e01);
    \draw[shorten >= 15mm]
    (b01) -- (e01);
    \draw[dotted,shorten <= 8mm,shorten >= 8mm]
    (b01) -- (e01);
    \draw
    (b01) -- ++(0.2,-0.4);
    \draw[dotted]
    (b01)++(0.2,-0.4) -- ++(0.1,-0.2);
    \draw[shorten <= 15mm]
    (b10) -- (e10);
    \draw[shorten >= 15mm]
    (b10) -- (e10);
    \draw[dotted,shorten <= 8mm,shorten >= 8mm]
    (b10) -- (e10);
    \draw
    (b10) -- ++(0.2,-0.4);
    \draw[dotted]
    (b10)++(0.2,-0.4) -- ++(0.1,-0.2);
    \draw[shorten <= 15mm]
    (b11) -- (e11);
    \draw[shorten >= 15mm]
    (b11) -- (e11);
    \draw[dotted,shorten <= 8mm,shorten >= 8mm]
    (b11) -- (e11);
    \draw
    (b11) -- ++(0.2,-0.4);
    \draw[dotted]
    (b11)++(0.2,-0.4) -- ++(0.1,-0.2);
    \draw (b) -- (a);
    \draw (a) -- ++(1,-0.3333);
    \draw[dotted] (a)++(1,-0.3333) -- ++(0.5,-0.1667);
    \draw (a) -- ++(1,0.25);
    \draw[dotted] (a)++(1,0.25) -- ++(0.5,0.125);
    \fill (pt) circle (2pt);
    \node at (a) {$B_{\mathcal{H}}(0,-1)$};
    \node at (b) {$B_{\mathcal{H}}(0,0)$};
    \node at (b0) {$B_{\mathcal{H}}(0,1)$};
    \node (b1Text) at (b1) {$B_{\mathcal{H}}(1,1)$};
    \node[anchor=base west,xshift=-1.5mm] at (b1Text.base east) {$=B_{\mathcal{H}}(3,1)$};
    \node at (b00) {$B_{\mathcal{H}}(0,2)$};
    \node at (b01) {$B_{\mathcal{H}}(2,2)$};
    \node at (b10) {$B_{\mathcal{H}}(1,2)$};
    \node at (b11) {$B_{\mathcal{H}}(3,2)$};
    \node[right,xshift=2mm,font=\footnotesize] at (pt) {$B_{\mathcal{H}}(3,3/2)$};
    \node at (e00) {$\{0\}$};
    \node at (e01) {$\{2\}$};
    \node at (e10) {$\{1\}$};
    \node at (e11) {$\{3\}$};
    \node[right] at (l) {$B_{\mathcal{B}}(\bullet,2^{1})=B_{\mathcal{H}}(\bullet,-1)\colon$};
    \node[right] at (l0) {$B_{\mathcal{B}}(\bullet,2^{-0})=B_{\mathcal{H}}(\bullet,0)\colon$};
    \node[right] at (l1) {$B_{\mathcal{B}}(\bullet,2^{-1})=B_{\mathcal{H}}(\bullet,1)\colon$};
    \node[right] at (l2) {$B_{\mathcal{B}}(\bullet,2^{-2})=B_{\mathcal{H}}(\bullet,2)\colon$};
    \node[right] at (l3) {$B_{\mathcal{B}}(\bullet,0)=B_{\mathcal{H}}(\bullet,\infty)\colon$};
  \end{tikzpicture}\vspace{-2mm}
  \caption{The polydisc space $\mathcal B^1$ for $K=\QQ_2$ arranged as an infinite tree.}
  \label{fig:polydiscSpace}
\end{figure}

\begin{example}\label{ex:polydiscSpace}
  \cref{fig:polydiscSpace} illustrates $\mathcal B^1$ for $K=\QQ_2$ arranged as an infinite binary tree.  Note that points on the edges also represent points in $\mathcal B^1$, such as the highlighted point $B_{\mathcal{H}}(3,3/2)$ on the edge between $B_{\mathcal{H}}(3,2)$ and $B_{\mathcal{H}}(3,1)=B_{\mathcal{H}}(1,1)$. However, only the points on the nodes are rational.  Points on the same vertical level represent discs of the same radius, and the edges also indicate inclusion. This reflects the dichotomy established in Lemma~\ref{lem:recentre}.  
\end{example}


\subsection{Metrics on Polydisc Spaces}

Having defined polydisc spaces, in order to study their metric geometric properties, we now define metrics on them, which extend (up to a factor of 2) the metric on the valued field, and describe geodesics on them.  In particular, we will show that the polydisc spaces are uniquely geodesic spaces, which is a key result for the goal of optimisation.

\begin{definition}\label{def:joinAndMetric}
  The set $\mathcal B^n$ has a natural partial order given by inclusion.  We first define the \emph{join} of two discs $B=B_{\mathcal{B}}(a,r)=B_{\mathcal{H}}(a,v),\, B'=B_{\mathcal{B}}(a',r')=B_{\mathcal{H}}(a',v')\in \mathcal B^1$ by
  \begin{align*}
    B\vee B'\coloneqq&
    \begin{cases}
        B\cup B' = B_{\mathcal{B}}(a,\, \max(r,r')) = B_{\mathcal{B}}(a',\, \max(r,r')) &\text{if } B\subseteq B' \text{ or } B\supseteq B',\\
        B_{\mathcal{B}}(a,|a-a'|) = B_{\mathcal{B}}(a',|a-a'|) &\text{otherwise}.
    \end{cases}\\[2mm]
    =&
    \begin{cases}
        B\cup B' = B_{\mathcal{H}}(a,\, \min(v,v')) = B_{\mathcal{H}}(a',\, \min(v,v')) &\text{if } B\subseteq B' \text{ or } B\supseteq B',\\
        B_{\mathcal{H}}(a,\val(a-a')) = B_{\mathcal{H}}(a',\val(a-a')) &\text{otherwise}.
    \end{cases}
  \end{align*}
  For polydiscs $B=B_1\times\dots\times B_n$ and $B'=B_1'\times\dots\times B_n'\in\mathcal B^n$, we define the join coordinate-wise:
  \begin{equation*}
    B\vee B'\coloneqq (B_1\vee B_1')\times\dots\times (B_n\vee B_n').
  \end{equation*}
  
  In $\mathcal B^1$, we define the distance between two discs $B=B_{\mathcal{B}}(a,r)$ and $B'=B_{\mathcal{B}}(a',r')$ to be
  \begin{equation*}
    d_{\mathcal{B}}(B, B') \coloneqq
    \begin{cases}
      |r-r'| &\text{if } B_{\mathcal{B}}(a,r) \subseteq B_{\mathcal{B}}(a',r') \text{ or } B_{\mathcal{B}}(a',r') \subseteq B_{\mathcal{B}}(a,r),\\[2mm]
      d_{\mathcal{B}}(B, B \vee B')+ d_{\mathcal{B}}(B \vee B', B') &\text{otherwise}.
    \end{cases}
  \end{equation*}
  
  In $\mathcal H^1$, we define the distance between two discs $B=B_{\mathcal{H}}(a,v)$ and $B'=B_{\mathcal{H}}(a',v')$ to be
  \begin{equation*}
    d_{\mathcal{H}}(B, B') \coloneqq
    \begin{cases}
      |v-v'| &\text{if } B_{\mathcal{B}}(a,r) \subseteq B_{\mathcal{B}}(a',r') \text{ or } B_{\mathcal{B}}(a',r') \subseteq B_{\mathcal{B}}(a,r),\\[2mm]
      d_{\mathcal{H}}(B, B \vee B')+ d_{\mathcal{H}}(B \vee B', B') &\text{otherwise}.
    \end{cases}
  \end{equation*}

  We use $d_{\mathcal{B}}^p$ and $d_{\mathcal{H}}^p$ to denote the $\ell^p$-metric on $\mathcal B^n$ and $\mathcal H^n$, respectively, i.e., for $B=B_1\times\dots\times B_n$ and $B'=B_1'\times\dots\times B_n'$, in each respective space,
  \begin{equation*}
    d_{\mathcal{B}}^p(B,B')\coloneqq\Big(\sum_{i=1}^n d_{\mathcal{B}}(B_i,B_i')^p\Big)^{\frac{1}{p}} \quad\text{and}\quad d_{\mathcal{H}}^p(B,B')\coloneqq\Big(\sum_{i=1}^n d_{\mathcal{H}}(B_i,B_i')^p\Big)^{\frac{1}{p}}.
  \end{equation*}
\end{definition}

Recall that $K^n$ can be canonically embedded into $\mathcal B^n$ by mapping a point $a\in K^n$ to a polydisc of radius zero $B_{\mathcal{B}}(a,0)\in \mathcal B^n$.  The metric $d_{\mathcal{B}}^p$ then extends the $\ell^p$-metric on $K^n$.

\begin{lemma}
  \label{lem:metricExtendingFieldMetric}
  For $a=(a_1,\dots,a_n)$, $a'=(a_1',\dots,a_n')\in K^n$, we have
  \begin{equation*}
    d_{\mathcal{B}}^p\big(B_{\mathcal{B}}(a,0),\, B_{\mathcal{B}}(a',0)\big) = 2\cdot \Big(\sum_{i=1}^n |a_i-a_i'|^p\Big)^{1/p}.
  \end{equation*}
\end{lemma}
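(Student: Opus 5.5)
By the definition of $d_{\mathcal B}^p$ as an $\ell^p$-combination of coordinatewise distances, it suffices to prove the one-dimensional identity
\begin{equation*}
  d_{\mathcal B}\big(B_{\mathcal B}(a_i,0),\,B_{\mathcal B}(a_i',0)\big)=2\,|a_i-a_i'| \qquad\text{for } a_i,a_i'\in K.
\end{equation*}
Indeed, raising to the $p$-th power, summing over $i$ and taking $p$-th roots then gives the factor $2$ in front of $\big(\sum_i|a_i-a_i'|^p\big)^{1/p}$.

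For the one-dimensional statement I would split into two cases according to whether $a_i=a_i'$.

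\emph{Case $a_i=a_i'$.} The two discs coincide. They are therefore nested, and the first case of the definition of $d_{\mathcal B}$ gives $|0-0|=0=2|a_i-a_i'|$.

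\emph{Case $a_i\neq a_i'$.} The discs $B_{\mathcal B}(a_i,0)=\{a_i\}$ and $B_{\mathcal B}(a_i',0)=\{a_i'\}$ are distinct singletons (as subsets of $\hat K$), so neither contains the other. We are thus in the second case of the definition, and the join from \cref{def:joinAndMetric} is $B_{\mathcal B}(a_i,|a_i-a_i'|)=B_{\mathcal B}(a_i',|a_i-a_i'|)$. Each singleton is contained in this join, so both summands fall under the nested case and each equals $\big||a_i-a_i'|-0\big|=|a_i-a_i'|$. Adding them gives $2|a_i-a_i'|$.

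There is no real obstacle. The only point needing care is well-definedness: the same disc might be written with different centres or radii. For radius $0$ the disc is a singleton, which determines its centre. For the join, \cref{lem:recentre}(i) shows that any element can serve as centre. The radius of the join is $|a_i-a_i'|>0$ and lies in $|K|$, so it is determined by the disc; this is why discs are taken in $\hat K$. Hence the computed value does not depend on the chosen representatives.
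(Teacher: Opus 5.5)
Your proof is correct and follows the paper's argument: reduce to coordinates, identify the join as $B_{\mathcal B}(a_i,|a_i-a_i'|)=B_{\mathcal B}(a_i',|a_i-a_i'|)$, add the two nested legs of length $|a_i-a_i'|$ each, and then take the $\ell^p$ combination. Your separate treatment of $a_i=a_i'$ and your well-definedness check are harmless additions that the paper omits; one small correction is that a radius lying in $|K|$ is already determined by the disc over $K$, so working in $\hat K$ only matters for radii outside $|K|$.
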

\begin{proof}
  Note that $B_{\mathcal{B}}(a_i,0)\vee B_{\mathcal{B}}(a'_i,0)=B_{\mathcal{B}}(a_i,r_i)=B_{\mathcal{B}}(a'_i,r_i)$ for $r_i\coloneqq |a_i-a_i'|$.  Hence
  \begin{align*}
    &d_{\mathcal{B}}^p\big(B_{\mathcal{B}}(a,0),\, B_{\mathcal{B}}(a',0)\big) = \left(\sum_{i=1}^n d_{\mathcal{B}}\big(B_{\mathcal{B}}(a_i,0),\, B_{\mathcal{B}}(a_i',0)\big)^p\right)^{1/p}\\
    &\quad = \left(\sum_{i=1}^n \Big(d_{\mathcal{B}}\big(B_{\mathcal{B}}(a_i,0),\, B_{\mathcal{B}}(a_i,0)\vee B_{\mathcal{B}}(a_i',0)\big)+d_{\mathcal{B}}\big(B_{\mathcal{B}}(a_i',0),\, B_{\mathcal{B}}(a_i,0)\vee B_{\mathcal{B}}(a_i',0)\big) \Big)^p\right)^{1/p}\\
    &\quad = \left(\sum_{i=1}^n \Big(d_{\mathcal{B}}\big(B_{\mathcal{B}}(a_i,0),B_{\mathcal{B}}(a_i,r_i)\big)+d_{\mathcal{B}}\big(B_{\mathcal{B}}(a_i',0),\, B_{\mathcal{B}}(a_i',r_i)\big) \Big)^p\right)^{1/p}\\
    &\quad = \left(\sum_{i=1}^n \Big(r_i+r_i\Big)^p\right)^{1/p} = 2\cdot \left(\sum_{i=1}^n r_i^p\right)^{1/p} = 2\cdot \left(\sum_{i=1}^n |a_i-a_i'|^p\right)^{1/p}. \qedhere
  \end{align*}
\end{proof}

Finally, note that all $\ell^p$-norms induce the same topology on $\mathcal B^n$ and $\mathcal H^n$, and that the two topologies coincide on $\mathcal H^n$.

\begin{lemma}
  \label{lem:metricTopologicallyEquivalent}
  All $d_{\mathcal{B}}^p$ induce the same topology on $\mathcal B^n$, and all $d_{\mathcal{H}}^p$ induce the same topology on $\mathcal H^n$.  Both topologies coincide when restricted to $\mathcal H^n$.
\end{lemma}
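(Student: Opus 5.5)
The first claim is standard finite-dimensional norm equivalence applied coordinatewise. For $B,B'\in\mathcal B^n$ write $\delta_i\coloneqq d_{\mathcal B}(B_i,B_i')$. For every $p\in[1,\infty]$ we have
\begin{equation*}
  \max_i \delta_i\;\le\; d_{\mathcal B}^p(B,B')\;\le\; n^{1/p}\max_i\delta_i .
\end{equation*}
So any two of the $d_{\mathcal B}^p$ are bi-Lipschitz equivalent with constants depending only on $n$, and hence induce the same topology. The same argument, with $d_{\mathcal H}$ in place of $d_{\mathcal B}$, handles $\mathcal H^n$.

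This reduction also shows that each of these topologies is the product topology of the coordinate topologies on $\mathcal B^1$ (respectively $\mathcal H^1$). For the second claim it therefore suffices to treat $n=1$. There we show that the identity map on $\mathcal H^1$ is continuous in both directions between $d_{\mathcal B}$ and $d_{\mathcal H}$.

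\textbf{Step 1: explicit formulas.} Unwinding \cref{def:joinAndMetric} with \cref{lem:recentre}, for $B=B_{\mathcal H}(a,v)$ and $B'=B_{\mathcal H}(a',v')$ set $w=\min(v,v',\val(a-a'))$; this is the valuative radius of $B\vee B'$. Then
\begin{equation*}
  d_{\mathcal H}(B,B')=(v-w)+(v'-w),\qquad d_{\mathcal B}(B,B')=(e^{-w}-e^{-v})+(e^{-w}-e^{-v'}).
\end{equation*}
The nested case is included, since there $w=\min(v,v')$.

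\textbf{Step 2: $d_{\mathcal H}$-balls contain $d_{\mathcal B}$-balls.} Fix $B$ with finite $v$ and $\varepsilon>0$. Choose $\eta>0$ small compared to $e^{-v}$. If $d_{\mathcal B}(B,B')<\eta$, then $e^{-w}-e^{-v}<\eta$, so $v-w<\log\bigl(1+\eta e^{v}\bigr)$. Also $|e^{-w}-e^{-v'}|<\eta$, which forces $e^{-v'}$ to lie within $2\eta$ of $e^{-v}$ and in particular to stay bounded away from $0$. Because $t\mapsto -\log t$ is Lipschitz on $[e^{-v}/2,\infty)$, this gives $v'-w<C\eta$ for a constant $C$ depending on $v$. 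Hence $d_{\mathcal H}(B,B')<\varepsilon$ once $\eta$ is small enough.

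\textbf{Step 3: $d_{\mathcal B}$-balls contain $d_{\mathcal H}$-balls.} If $d_{\mathcal H}(B,B')<\eta\le 1$, then both $v-w$ and $v'-w$ are less than $\eta$. The mean value theorem for $\exp$ then bounds each $e^{-w}-e^{-v}$ and $e^{-w}-e^{-v'}$ by $e^{-v+1}\eta$.

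\textbf{Main obstacle.} The delicate point is Step 2: the exponential map degenerates near radius $0$, so one must use that $v<\infty$ to keep the radius $e^{-v'}$ of nearby discs bounded away from $0$. This is precisely why the two topologies agree only on $\mathcal H^n$ and not on all of $\mathcal B^n$. Finally, since both topologies on $\mathcal H^n$ are the product topologies of the $n=1$ case, which coincide, they coincide for all $n$.
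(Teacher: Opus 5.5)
Your proposal is correct and follows the paper's route. The first claim comes from $\ell^p$-norm equivalence, and for the second you reduce to $n=1$ via the product structure and then compute directly with the join formulas. You supply the explicit constants and the two $\varepsilon$--$\eta$ estimates that the paper leaves as ``a direct calculation.''
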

\begin{proof}
  The fact that all $d_{\mathcal{B}}^p$ and all $d_{\mathcal{H}}^p$ induce the same topology is a consequence of the equivalence of $\ell^p$-norms on $\RR^n$.  To show that both topologies coincide when restricted to $\mathcal H^n$, it suffices to consider $n=1$ in which case the result follows from a direct calculation.
\end{proof}

\subsection{Geodesics in Polydisc Spaces}

The geometry of $\mathcal{B}^n$ and $\mathcal{H}^n$ is governed by a particularly tractable class of metric spaces, namely \emph{geodesic spaces}. This structure provides a canonical notion of paths and directional movement, which will play a central role in both the theoretical and algorithmic developments that follow. In particular, it allows us to define and analyse descent procedures in a manner analogous to gradient-based methods.

We recall the relevant notions from the theory of geodesic spaces and adapt them to the polydisc setting; for further background, see \cite{BuragoBuragoIvanov2001, Gromov2007}. Our notation follows \cite{Lang2013}.


\begin{definition}
  \label{def:geodesicsAndGeodesicSpace}
  Let $(X,d)$ be a metric space.  A \emph{geodesic} (with a fixed speed $\lambda\in\RR_{>0}$) is a map $\gamma\colon I\rightarrow X$ from some closed interval $I\subseteq\RR$ to $X$ such that $d(\gamma(t),\gamma(t'))=\lambda \cdot |t-t'|$ for all $t,t'\in I$. $(X,d)$ is \emph{geodesic} if any two points $x,y\in X$ are connected by a geodesic and it is \emph{uniquely geodesic} if any two points $x,y\in X$ are connected by exactly one geodesic of speed one.
\end{definition}

\begin{remark}
  \label{rem:geodesicsAndGeodesicsSpace}
  In \cite{BuragoBuragoIvanov2001} and \cite{Gromov2007}, our geodesics in \cref{def:geodesicsAndGeodesicSpace} are referred to as ``shortest paths'' and ``minimising geodesics'', respectively, while geodesics are locally shortest paths and locally minimising geodesics. By \cite[Lemma 2.2]{Lang2013}, any geodesic space is a ``length space'' in which this distinction is not necessary.  We do not distinguish between these two notions, because $\mathcal B^1$ is a metric tree. 
\end{remark}

The geodesic structure of $\mathcal{B}^n$ and $\mathcal{H}^n$ admits an explicit description, reflecting the hierarchical organisation of discs. In one dimension, geodesics are obtained by first moving ``upwards'' to the smallest disc containing both endpoints and then ``downwards'' to the target. This construction extends naturally to higher dimensions via a product structure. We collect the resulting properties in the following lemma.

\begin{lemma}
  \label{lem:geodesics}
  The geodesics in $\mathcal{B}^n$ and $\mathcal{H}^n$ admit the following explicit description.
  \begin{enumerate}
  \item[(i)] \label{enumitem:geodesic1} Let $B=B_{\mathcal{B}}(a,s)$ and $B'=B_{\mathcal{B}}(a',s') \in\mathcal B^1$ be two discs.  Let $r\coloneqq d_{\mathcal{B}}(B,B')$, $r_1\coloneqq d_{\mathcal{B}}(B,B\vee B')$, and $r_2\coloneqq d_{\mathcal{B}}(B',B\vee B')$, so that $r=r_1+r_2$ and $s+r_1=s'+r_2$.  Then a geodesic from $B$ to $B'$ is given by
  \begin{equation*}
    \gamma_{BB'}\colon\quad [0,1]\rightarrow \mathcal B^n,\quad t\mapsto
    \begin{cases}
        B_{\mathcal{B}}(a, s + r\cdot t)&\text{for }t\leq \frac{r_1}{r},\\
        B_{\mathcal{B}}(a', s'+ r\cdot (1-t))&\text{for }t\geq \frac{r_1}{r}.
    \end{cases}
  \end{equation*}
  Moreover, let $B=B_1\times\dots\times B_n, B'=B_1'\times\dots\times B_n' \in\mathcal B^n$ be two polydiscs.
  Then the product of geodesics in $\mathcal B^1$, 
  $\gamma_{BB'}\coloneqq \gamma_{B_1B_1'}\times\dots\times\gamma_{B_nB_n'}$, is a geodesic in $\mathcal B^n$.
  \item[(ii)] \label{enumitem:geodesic2} Let $B=B_{\mathcal{H}}(a,v), B'=B_{\mathcal{H}}(a',v') \in\mathcal H^1$ be two discs.  Let $w\coloneqq d_{\mathcal{H}}(B,B')$, $w_1\coloneqq d_{\mathcal{H}}(B,B\vee B')$, and $w_2\coloneqq d_{\mathcal{H}}(B',B\vee B')$, so that $w=w_1+w_2$ and $v-w_1=v'-w_2$.  Then a geodesic from $B$ to $B'$ is given by:
  \begin{equation*}
    \gamma_{BB'}\colon\quad [0,1]\rightarrow \mathcal H^n,\quad t\mapsto
    \begin{cases}
        B_{\mathcal{H}}(a, v - w\cdot t)&\text{for }t\leq \frac{w_1}{w},\\
        B_{\mathcal{H}}(a', v'- w\cdot (1-t))&\text{for }t\geq \frac{w_1}{w}.
    \end{cases}
  \end{equation*}
  Moreover, let $B=B_1\times\dots\times B_n, B'=B_1'\times\dots\times B_n' \in\mathcal H^n$ be two polydiscs.
  Then the product of geodesics in $\mathcal H^1$, 
  $\gamma_{BB'}\coloneqq \gamma_{B_1B_1'}\times\dots\times\gamma_{B_nB_n'}$, is a geodesic in $\mathcal H^n$.
  \item[(iii)] \label{enumitem:geodesic3} The polydisc spaces $(\mathcal B^n,d_{\mathcal{B}}^p)$ and $(\mathcal H^n,d_{\mathcal{H}}^p)$ are geodesic spaces for all $n\in\ZZ_{>0}$ and all $1\leq p\leq \infty$.  They are uniquely geodesic spaces if $n=1$ or $1<p<\infty$.
  \end{enumerate}
\end{lemma}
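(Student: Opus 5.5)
We treat the one-dimensional case first and reduce everything else to it via the product structure.

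\emph{Step 1: the formula in (i) is a geodesic.} For (i) we check directly that $\gamma_{BB'}$ is well defined and has constant speed $r$. Write $J=B\vee B'$; its radius is $s+r_1=s'+r_2$. Well-definedness at $t_0=r_1/r$ holds because $B_{\mathcal B}(a,s+r_1)=J=B_{\mathcal B}(a',s'+r_2)$, using the recentring statement \cref{lem:recentre}(i).

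For $t,t'\le t_0$, both points are discs centred at $a$, hence nested. By \cref{def:joinAndMetric} their distance is the difference of radii, $r|t-t'|$. The case $t,t'\ge t_0$ is symmetric.

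For $t\le t_0\le t'$, the disc $\gamma(t)=B_{\mathcal B}(a,s+rt)$ and the disc $\gamma(t')$ both lie in $J$. Either they are nested, or their join is $J$ itself. We argue this from the definition: if the join were a strictly smaller disc $D\subsetneq J$, it would contain both $a$ and a point in $B'$, hence contain $B\vee B'=J$, a contradiction.

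In the non-nested case the distance is $(s+r_1-s-rt)+(s'+r_2-s'-r(1-t'))=r(t'-t)$. The nested case only occurs at the endpoints of the segments, where the same value results. The proof of (ii) is identical, with valuative radii and reversed inequalities.

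\emph{Step 2: products of geodesics.} For the product statements, each coordinate geodesic $\gamma_i$ has speed $\lambda_i=d(B_i,B_i')$ on $[0,1]$. Hence
\[
d^p(\gamma(t),\gamma(t'))=\Big(\sum_i\lambda_i^p\Big)^{1/p}|t-t'|=d^p(B,B')\,|t-t'|,
\]
so the product is a geodesic for every $1\le p\le\infty$. This already gives the existence part of (iii).

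\emph{Step 3: uniqueness for $n=1$ (the main obstacle).} Uniqueness for $n=1$ is the heart of the matter: we need $\mathcal B^1$ to be a metric tree ($\RR$-tree). The plan is to show that any point $C$ with $d(B,C)+d(C,B')=d(B,B')$ lies on the image of $\gamma_{BB'}$, i.e.\ on the union of the chains $B\le\cdot\le J$ and $B'\le\cdot\le J$.

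Take such a $C$ and compute $d(B,C)$ and $d(C,B')$ through the joins $B\vee C$ and $C\vee B'$. Every disc containing $B$ lies on the linearly ordered chain above $B$, by the dichotomy of \cref{lem:recentre}. So both $B\vee C$ and $B\vee B'$ sit on this chain, and similarly for $B'$. A case analysis then applies. If $C$ is not below $J$, or if it branches off the chain at a point $D$ strictly between, the detour $d(D,C)>0$ is counted twice, giving strict inequality. This forces $C$ to lie on one of the two chains, at the radius dictated by its distance from $B$.

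Given that, a unit-speed geodesic $\sigma$ from $B$ to $B'$ satisfies $\sigma(t)=\gamma(t)$ pointwise, since points on the two chains are determined by their distance to $B$. The $\mathcal H^1$ case is the same argument.

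\emph{Step 4: uniqueness for $n\ge 2$, $1<p<\infty$.} Let $\sigma=(\sigma_i)$ be a geodesic of speed $\lambda=d^p(B,B')$ and let $\gamma$ be the product geodesic. Fix $t$ and set $x_i=d(B_i,\sigma_i(t))$ and $y_i=d(\sigma_i(t),B_i')$, so that $x_i+y_i\ge d_i\coloneqq d(B_i,B_i')$. Then
\[
\|x\|_p=t\lambda,\qquad \|y\|_p=(1-t)\lambda,\qquad \|x+y\|_p\ge\|d\|_p=\lambda=\|x\|_p+\|y\|_p .
\]
Combined with Minkowski's inequality this forces equality throughout. Strict convexity of $\ell^p$ for $1<p<\infty$ then gives $x$ and $y$ proportional, so $x=t\,d$ and $y=(1-t)\,d$ with $x_i+y_i=d_i$.

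Each $\sigma_i(t)$ therefore lies on the unique one-dimensional geodesic at parameter $t$, and by the $n=1$ case $\sigma=\gamma$.
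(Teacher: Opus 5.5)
Your proposal is correct. For (i), (ii) and the existence part of (iii) it is essentially the paper's argument: well-definedness via $B_{\mathcal B}(a,s+r_1)=B\vee B'=B_{\mathcal B}(a',s'+r_2)$, the two same-chain cases, and the mixed case routed through the join. You also supply details the paper skips, such as the $\ell^p$ speed computation for products.

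Where you genuinely differ is uniqueness. The paper disposes of $n=1$ by simply asserting that $\mathcal B^1$ is a metric tree, and of $1<p<\infty$ by citing Lang's Proposition~3.1(2) on $\ell^p$-products of uniquely geodesic spaces. You instead characterise the geodesic interval in $\mathcal B^1$ explicitly and run the Minkowski/strict-convexity argument by hand. The citation is shorter and more general. Your route is self-contained, and it supplies what the paper leaves unchecked: the tree property, and even the triangle inequality for $d_{\mathcal B}$ and $d_{\mathcal H}$, which your Step~4 needs for $x_i+y_i\ge d_i$.

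Your Step~3 case analysis is only sketched, but it closes in one line. Write $\rho(X)$ for the absolute radius. The definition gives $d_{\mathcal B}(X,Y)=2\rho(X\vee Y)-\rho(X)-\rho(Y)$, so
\[
d_{\mathcal B}(B,C)+d_{\mathcal B}(C,B')-d_{\mathcal B}(B,B')=2\bigl(\rho(B\vee C)+\rho(C\vee B')-\rho(C)-\rho(B\vee B')\bigr).
\]
Since $B\vee C$ and $C\vee B'$ both contain $C$, they are nested. The smaller has radius at least $\rho(C)$, and the larger contains $B\cup B'$, hence $B\vee B'$. So the defect is $\ge 0$, which is the triangle inequality. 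Equality holds exactly when $B\subseteq C\subseteq B\vee B'$ or $B'\subseteq C\subseteq B\vee B'$. In $\mathcal H^1$ the same works with $d_{\mathcal H}(X,Y)=v(X)+v(Y)-2v(X\vee Y)$.

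In Step~4, note one more hidden step. Passing from $\|x+y\|_p=\|d\|_p$ and $x+y\ge d$ (componentwise) to $x+y=d$ uses strict monotonicity of $\|\cdot\|_p$ on $\RR^n_{\ge 0}$, which again requires $p<\infty$.
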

\begin{proof}\
  \begin{enumerate}
  \item[(i)] Let $\hat s\in \RR$ so that $B\vee B'=B_\mathcal{B}(a,\hat s)=B_\mathcal{B}(a',\hat s)$.  First note that $\gamma_{BB'}$ is well-defined, since
    \begin{align*}
      B_{\mathcal{B}}(a,s+d_{\mathcal{B}}(B,B\vee B')) & = B_{\mathcal{B}}(a,s+(\hat s-s)) = B_{\mathcal{B}}(a,\hat s) = B_{\mathcal{B}}(a',\hat s) = B_{\mathcal{B}}(a,s'+(\hat s-s')) \\
      & = B_{\mathcal{B}}(a',s'+d_{\mathcal{B}}(B',B\vee B')) = B_{\mathcal{B}}(a',s'+(d_{\mathcal{B}}(B,B')-d_{\mathcal{B}}(B,B\vee B')))
    \end{align*}
    and $\gamma_{BB'}$ is a continuous path connecting $B$ and $B'$.

    To show that $\gamma_{BB'}$ is a minimising geodesic, pick $t_1,t_2\in [0,d_{\mathcal{B}}(B,B')]$ with $t_1<t_2$.  If $t_1,t_2\leq d_{\mathcal{B}}(B,B\vee B')$, we have
    \begin{equation*}
      d_{\mathcal{B}}\big(\gamma(t_1),\, \gamma(t_2)\big) = d_{\mathcal{B}}\big(B_{\mathcal{B}}(a, s + t_1),\, B_{\mathcal{B}}(a, s + t_2)\big) = \big|(s + t_1) - (s + t_2 )\big| = |t_1-t_2|.
    \end{equation*}
    If $t_1,t_2\geq d_{\mathcal{B}}(B,B\vee B')$, we can show the equality $d\big(\gamma(t_1),\, \gamma(t_2)\big)=|t_1-t_2|$ using similar arguments, and, if $t_1<d_{\mathcal{B}}(B,B\vee B')<t_2$, the same equality follows from the previous two cases using the fact that $d_{\mathcal{B}}\big(\gamma(t_1),\, \gamma(t_2)\big) = d_{\mathcal{B}}\big(\gamma(t_1),\gamma(d_{\mathcal{B}}(B,B\vee B'))\big) + d_{\mathcal{B}}\big(\gamma(d_{\mathcal{B}}(B,B\vee B')),\gamma(t_2)\big)$.
  \item[(ii)] Follows similarly to the proof of (i).
  \item[(iii)] We will show the statement for $\mathcal B^n$.  The statement for $\mathcal H^n$ follows similarly.

    The fact that $(\mathcal B^n,d_{\mathcal{B}}^p)$ is a geodesic space follows from (i).  The fact that  $(\mathcal B^1,d_{\mathcal{B}})$ is a uniquely geodesic space follows from it being a metric tree and any two points on a tree being connected by a unique path.
    The fact that $(\mathcal B^n,d_{\mathcal{B}}^p)$ for all $1<p<\infty$ is due to \cite[Proposition 3.1 (2)]{Lang2013}. \qedhere
  \end{enumerate}
\end{proof}

The description formalised in Lemma \ref{lem:geodesics} can be visualised concretely in the one-dimensional case, where the geodesic structure reflects the underlying tree of discs and their joins. The following example illustrates both the metrics and the corresponding geodesic paths.

\begin{figure}
  \centering
  \begin{tikzpicture}[y={(0,1.333)},every node/.style={fill=white}]
    \coordinate (b) at (3.5,2); 
    \coordinate (b0) at (1,1); 
    \coordinate (b1) at (6,1);
    \coordinate (b00) at (0,0); 
    \coordinate (b01) at (2,0);
    \coordinate (b10) at (5,0);
    \coordinate (b11) at (7,0);
    \coordinate (e00) at (-1,-1.5); 
    \coordinate (e01) at (1,-1.5);
    \coordinate (e10) at (4,-1.5);
    \coordinate (e11) at (6,-1.5);
    \coordinate (l) at (-5.5,2.75); 
    \coordinate (l0) at (-5.5,2);
    \coordinate (l1) at (-5.5,1);
    \coordinate (l2) at (-5.5,0);
    \coordinate (l3) at (-5.5,-1.5);
    \coordinate (a) at (5.75,2.75); 
    \draw
    (b) -- node[blue!50!black,fill=none,anchor=south east,font=\footnotesize] {$1/2$} node[red!50!black,fill=none,anchor=north west,font=\footnotesize] {$1$} (b0)
    (b) -- node[blue!50!black,fill=none,anchor=south west,font=\footnotesize] {$1/2$} node[red!50!black,fill=none,anchor=north east,font=\footnotesize] {$1$} (b1)
    (b0) -- node[blue!50!black,fill=none,anchor=south east,font=\footnotesize,inner sep=2pt] {$1/4$} node[red!50!black,fill=none,anchor=north west,font=\footnotesize] {$1$} (b00)
    (b0) -- node[blue!50!black,fill=none,anchor=south west,font=\footnotesize,inner sep=2pt] {$1/4$} node[red!50!black,fill=none,anchor=north east,font=\footnotesize] {$1$} (b01)
    (b1) -- node[blue!50!black,fill=none,anchor=south east,font=\footnotesize,inner sep=2pt] {$1/4$} node[red!50!black,fill=none,anchor=north west,font=\footnotesize] {$1$} (b10)
    (b1) -- node[blue!50!black,fill=none,anchor=south west,font=\footnotesize,inner sep=2pt] {$1/4$} node[red!50!black,fill=none,anchor=north east,font=\footnotesize] {$1$} (b11);
    \draw[shorten <= 15mm]
    (b00) -- (e00);
    \draw[shorten >= 15mm]
    (b00) -- (e00);
    \draw[dotted,shorten <= 8mm,shorten >= 8mm]
    (b00) -- (e00);
    \draw
    (b00) -- ++(0.2,-0.4);
    \draw[dotted]
    (b00)++(0.2,-0.4) -- ++(0.1,-0.2);
    \draw[shorten <= 15mm]
    (b01) -- (e01);
    \draw[shorten >= 15mm]
    (b01) -- (e01);
    \draw[dotted,shorten <= 8mm,shorten >= 8mm]
    (b01) -- (e01);
    \draw
    (b01) -- ++(0.2,-0.4);
    \draw[dotted]
    (b01)++(0.2,-0.4) -- ++(0.1,-0.2);
    \draw[shorten <= 15mm]
    (b10) -- (e10);
    \draw[shorten >= 15mm]
    (b10) -- (e10);
    \draw[dotted,shorten <= 8mm,shorten >= 8mm]
    (b10) -- (e10);
    \draw
    (b10) -- ++(0.2,-0.4);
    \draw[dotted]
    (b10)++(0.2,-0.4) -- ++(0.1,-0.2);
    \draw[shorten <= 15mm]
    (b11) -- (e11);
    \draw[shorten >= 15mm]
    (b11) -- (e11);
    \draw[dotted,shorten <= 8mm,shorten >= 8mm]
    (b11) -- (e11);
    \draw
    (b11) -- ++(0.2,-0.4);
    \draw[dotted]
    (b11)++(0.2,-0.4) -- ++(0.1,-0.2);
    \draw (b) -- node[blue!50!black,fill=none,anchor=south east,font=\footnotesize] {$1$} node[red!50!black,fill=none,anchor=north west,font=\footnotesize] {$1$} (a);
    \draw (a) -- ++(1,-0.3333);
    \draw[dotted] (a)++(1,-0.3333) -- ++(0.5,-0.1667);
    \draw (a) -- ++(1,0.25);
    \draw[dotted] (a)++(1,0.25) -- ++(0.5,0.125);
    \node at (a) {$B_{\mathcal{H}}(0,-1)$};
    \node at (b) {$B_{\mathcal{H}}(0,0)$};
    \node (b0Text) at (b0) {$B_{\mathcal{H}}(0,1)$};
    \node (b1Text) at (b1) {$B_{\mathcal{H}}(1,1)$};
    \node[anchor=base west,xshift=-1.5mm] at (b1Text.base east) {$=B_{\mathcal{H}}(1,2)\vee B_{\mathcal{H}}(3,2)$};
    \node (b00Text) at (b00) {$B_{\mathcal{H}}(0,2)$};
    \node (b01Text) at (b01) {$B_{\mathcal{H}}(2,2)$};
    \node (b10Text) at (b10) {$B_{\mathcal{H}}(1,2)$};
    \node (b11Text) at (b11) {$B_{\mathcal{H}}(3,2)$};
    \node (e00Text) at (e00) {$\{0\}$};
    \node (e01Text) at (e01) {$\{2\}$};
    \node (e10Text) at (e10) {$\{1\}$};
    \node (e11Text) at (e11) {$\{3\}$};
    \draw [decorate,decoration={brace,amplitude=5pt,mirror,raise=0mm},blue!50!black]
    (b00Text.225) -- (e00Text.90) node[midway,anchor=south east,xshift=-2mm,inner sep=1pt,font=\footnotesize] {$1/4$};
    \draw [decorate,decoration={brace,amplitude=5pt,mirror,raise=0mm},blue!50!black]
    (b01Text.225) -- (e01Text.90) node[midway,anchor=south east,xshift=-2mm,inner sep=1pt,font=\footnotesize] {$1/4$};
    \draw [decorate,decoration={brace,amplitude=5pt,mirror,raise=0mm},blue!50!black]
    (b10Text.225) -- (e10Text.90) node[midway,anchor=south east,xshift=-2mm,inner sep=1pt,font=\footnotesize] {$1/4$};
    \draw [decorate,decoration={brace,amplitude=5pt,mirror,raise=0mm},blue!50!black]
    (b11Text.225) -- (e11Text.90) node[midway,anchor=south east,xshift=-2mm,inner sep=1pt,font=\footnotesize] {$1/4$};
    \node[right] at (l) {$B_{\mathcal{B}}(\bullet,2^{1})=B_{\mathcal{H}}(\bullet,1)\colon$};
    \node[right] at (l0) {$B_{\mathcal{B}}(\bullet,2^{-0})=B_{\mathcal{H}}(\bullet,0)\colon$};
    \node[right] at (l1) {$B_{\mathcal{B}}(\bullet,2^{-1})=B_{\mathcal{H}}(\bullet,-1)\colon$};
    \node[right] at (l2) {$B_{\mathcal{B}}(\bullet,2^{-2})=B_{\mathcal{H}}(\bullet,-2)\colon$};
    \node[right] at (l3) {$B_{\mathcal{B}}(\bullet,0)=B_{\mathcal{H}}(\bullet,\infty)\colon$};
  \end{tikzpicture}\vspace{-2mm}
  \caption{The metric $d_{\mathcal{B}}$ (blue, top) on $\mathcal B^1$ and $d_{\mathcal{H}}$ (red, bottom) on $\mathcal H^1$ for $K=\QQ_2$.}
  \label{fig:metric}
\end{figure}

\begin{example}
  \label{ex:metric}
  \cref{fig:metric} illustrates the metric $d_{\mathcal{B}}$ on $\mathcal B^1$ and $d_{\mathcal{H}}$ on $\mathcal H^1$ over $\QQ_2$.  Note in particular how $d_{\mathcal{B}}(B_{\mathcal{B}}(a,0),d_{\mathcal{B}}(a',0)) = 2\cdot |a-a'|_2$ for $a,a'\in\{0,1,2,3\}$ as shown in \cref{lem:metricExtendingFieldMetric}.  
  Also, any two discs are connected by a unique geodesic path, which can be read off directly from the tree structure, as described in \cref{lem:geodesics}.
\end{example}

\subsection{Convexity in Polydisc Spaces}\label{sec:convexity}

The geodesic structure described above provides a natural notion of convexity in polydisc spaces. As in general metric geometry, convexity is defined by requiring that geodesics between points remain inside the set.

\begin{definition}
  \label{def:convexity}
  A subset $S\subseteq \mathcal{B}^n$ is said to be \emph{convex} if for any $B, B' \in S$ there is a geodesic $\gamma\colon I\rightarrow\mathcal B^n$ connecting $B$ to $B'$ with $\gamma(I)\subseteq S$.
  The \emph{convex hull} $\conv(S)$ of a subset $S \subseteq \mathcal B^n$ is the smallest convex subset of $\mathcal B^n$ containing the distinguished geodesics $\gamma_{BB'}$ for $B,B'\in S$.
\end{definition}

\begin{remark}
  \label{rem:convexity}
  In \cite[Definition 3.6.5]{BuragoBuragoIvanov2001}, convexity is defined by requiring that every geodesic between two points lies in the set. In our setting, this distinction becomes relevant when geodesics are not unique. If $n=1$ or $1<p<\infty$, then $(\mathcal{B}^n, d_{\mathcal{B}}^p)$ and $(\mathcal{H}^n, d_{\mathcal{H}}^p)$ are uniquely geodesic by \cref{lem:geodesics} and $\conv(S)$ is the unique convex set containing $S$. On the other hand, if $n>1$ and $p\in \{1,\infty\}$, geodesics need not be unique and different choices of geodesics lead to different convex sets. The definition above selects a canonical convex hull by working with the distinguished geodesics $\gamma_{BB'}$. In contrast, if we allowed all geodesics for $p = 1, \infty$ then in higher dimensions the convex hull of two points would no longer be a one dimensional object, similarly to convex hulls in $\mathbb R ^ n$ when working with the $\ell^1$ and $\ell^\infty$ metrics.
\end{remark}

This choice of convexity ensures that convex hulls maintain a tractable combinatorial structure that is relevant to the underlying tree geometry of $\mathcal{B}^n$.

\begin{example}
  \label{ex:convexHull1d}
  In \cref{fig:metric}, the edges labelled with their distances form the convex hull of $B_{\mathcal{B}}(0,0)$, $B_{\mathcal{B}}(1,0)$, $B_{\mathcal{B}}(2,0)$, $B_{\mathcal{B}}(3,0)$ and $B_{\mathcal{B}}(0,-1)=B_{\mathcal{H}}(0,2)$.
\end{example}

\begin{example}
  \label{ex:convexHull2d}
  Let $S\subseteq\mathcal H^2$ consist of the following four polydiscs over the $2$-adic numbers $\QQ_2$:
  \begin{equation*}
    B_\mathcal{H}(0,1)\times B_\mathcal{H}(0,1),\quad
    B_\mathcal{H}(1,1)\times B_\mathcal{H}(0,1),\quad
    B_\mathcal{H}(0,1)\times B_\mathcal{H}(1,1),\quad
    B_\mathcal{H}(1,1)\times B_\mathcal{H}(1,1).
  \end{equation*}
  \cref{fig:convexHull2d} illustrates $\conv(S)$. The polydiscs within a box are equal to each other.  Horizontal and vertical lines are geodesics, e.g., $B_\mathcal{H}(1,1)\times B_\mathcal{H}(0,0)$ lies on the red geodesic from $B_\mathcal{H}(1,1)\times B_\mathcal{H}(0,1)$ to $B_\mathcal{H}(1,1)\times B_\mathcal{H}(1,1)$.  Points within the shaded area also belong to $\conv(S)$, e.g. $B_\mathcal{H}(1,\frac12)\!\times\! B_\mathcal{H}(0,\frac12)$ lies on the red dashed geodesic from $B_\mathcal{H}(1,\frac12)\!\times\! B_\mathcal{H}(0,0)$ to $B_\mathcal{H}(1,\frac12)\!\times\! B_\mathcal{H}(0,1)$.
\end{example}

\begin{figure}[t]
  \centering
  \begin{tikzpicture}
    \fill[blue!20] (-6.5,-2.5) rectangle (6.5,2.5);

    \node[draw, fill=blue!10, inner sep=2pt] (B0000) at (0,0)
    {
      \begin{tikzpicture}[every node/.style={font=\footnotesize}]
        \node[anchor=south west,xshift=1mm,yshift=1mm] (b00) {$B(0,0)\!\times\! B(0,0)$};
        \node[anchor=south east,xshift=-1mm,yshift=1mm] (b10) {$B(1,0)\!\times\! B(0,0)$};
        \node[anchor=north west,xshift=1mm,yshift=-1mm] (b01) {$B(0,0)\!\times\! B(1,0)$};
        \node[anchor=north east,xshift=-1mm,yshift=-1mm] (b11) {$B(1,0)\!\times\! B(1,0)$};
        \draw[draw opacity=0]
        (b00) -- node[sloped] {$=$} (b10)
        (b00) -- node[sloped] {$=$} (b01)
        (b10) -- node[sloped] {$=$} (b11)
        (b01) -- node[sloped] {$=$} (b11);
      \end{tikzpicture}
    };
    \node[draw, fill=blue!10, inner sep=2pt] (B0001) at (0,2.5)
    {
      \begin{tikzpicture}[every node/.style={font=\footnotesize}]
        \node[anchor=west,xshift=1mm] (b00) {$B(0,0)\!\times\! B(0,1)$};
        \node[anchor=east,xshift=-1mm] (b10) {$B(1,0)\!\times\! B(0,1)$};
        \draw[draw opacity=0]
        (b00) -- node[sloped] {$=$} (b10);
      \end{tikzpicture}
    };
    \node[draw, fill=blue!10, inner sep=2pt] (B0011) at (0,-2.5)
    {
      \begin{tikzpicture}[every node/.style={font=\footnotesize}]
        \node[anchor=west,xshift=1mm] (b01) {$B(0,0)\!\times\! B(1,1)$};
        \node[anchor=east,xshift=-1mm] (b11) {$B(1,0)\!\times\! B(1,1)$};
        \draw[draw opacity=0]
        (b01) -- node[sloped] {$=$} (b11);
      \end{tikzpicture}
    };
    \node[draw, fill=blue!10, inner sep=2pt] (B0100) at (6.5,0)
    {
      \begin{tikzpicture}[every node/.style={font=\footnotesize}]
        \node[anchor=south,yshift=1mm] (b00) {$B(0,1)\!\times\! B(0,0)$};
        \node[anchor=north,yshift=-1mm] (b01) {$B(0,1)\!\times\! B(1,0)$};
        \draw[draw opacity=0]
        (b00) -- node[sloped] {$=$} (b01);
      \end{tikzpicture}
    };
    \node[draw, fill=blue!10, inner sep=2pt] (B1100) at (-6.5,0)
    {
      \begin{tikzpicture}[every node/.style={font=\footnotesize}]
        \node[anchor=south,yshift=1mm] (b10) {$B(1,1)\!\times\! B(0,0)$};
        \node[anchor=north,yshift=-1mm] (b11) {$B(1,1)\!\times\! B(1,0)$};
        \draw[draw opacity=0]
        (b10) -- node[sloped] {$=$} (b11);
      \end{tikzpicture}
    };

    \node[draw, fill=blue!10, inner sep=2pt, font=\footnotesize] (B0101) at (6.5,2.5) {$B(0,1)\!\times\! B(0,1)$};
    \node[draw, fill=blue!10, inner sep=2pt, font=\footnotesize] (B1101) at (-6.5,2.5) {$B(1,1)\!\times\! B(0,1)$};
    \node[draw, fill=blue!10, inner sep=2pt, font=\footnotesize] (B0111) at (6.5,-2.5) {$B(0,1)\!\times\! B(1,1)$};
    \node[draw, fill=blue!10, inner sep=2pt, font=\footnotesize] (B1111) at (-6.5,-2.5) {$B(1,1)\!\times\! B(1,1)$};

    \draw[very thick]
    (B0000) -- (B0100)
    (B0000) -- (B0001)
    (B0000) -- (B1100)
    (B0000) -- (B0011);
    \draw[very thick]
    (B0100) -- (B0101)
    (B0100) -- (B0111)
    (B0001) -- (B0101)
    (B0001) -- (B1101)
    (B0011) -- (B0111)
    (B0011) -- (B1111);
    \draw[very thick, red!90!black]
    (B1100) -- (B1101)
    (B1100) -- (B1111);

    \draw[draw opacity=0] (B0000) -- node (bb0) {} (B1100);
    \draw[draw opacity=0] (B0001) -- node (bb1) {} (B1101);
    \fill[white,draw=black] (bb0) circle (2.5pt);
    \fill[white,draw=black] (bb1) circle (2.5pt);
    \node[anchor=north, font=\footnotesize] at (bb0) {$B(1,\frac12)\!\times\! B(0,0)$};
    \node[anchor=south, font=\footnotesize] at (bb1) {$B(1,\frac12)\!\times\! B(0,1)$};
    \draw[very thick, red, dashed] (bb0) -- node (bb2) {} (bb1);
    \fill[white,draw=black] (bb2) circle (2.5pt);
    \node[anchor=west, font=\footnotesize] at (bb2) {$B(1,\frac12)\!\times\! B(0,\frac12)$};
  \end{tikzpicture}
  \caption{The convex hull of four points in $\mathcal B^2$ over $\QQ_2$ from \cref{ex:convexHull2d}.}
  \label{fig:convexHull2d}
\end{figure}

\subsection{Tangents on Polydisc Spaces}

To develop calculus on $\mathcal{B}^n$, we introduce a notion of tangent directions that captures how points can be infinitesimally perturbed within the space. This will be used further on in \cref{sec:optimisation} to define directional derivatives and guide optimisation procedures.

In contrast to Euclidean spaces, the geometry of $\mathcal{B}^n$ is inherently tree-like, as illustrated in \cref{ex:polydiscSpace}. Tangent directions should then be interpreted in terms of geodesic paths emanating from a point, rather than linear directions.

\begin{definition}
  \label{def:tangentSpace}
  A \emph{tangent direction} at $B\in\mathcal B^n$ is an equivalence class of geodesics $\gamma_{BB'}$ emanating from $B$, where two geodesics are equivalent if they share a common initial segment, i.e., the set of tangent directions at $B\in\mathcal B^n$ is given by
  \begin{equation*}
    T_B\mathcal B^n \coloneqq \bigslant{ \Big\{ \gamma_{BB'}\colon [0,1]\rightarrow\mathcal B^n \bigmid B'\in \mathcal B^n\setminus\{B\} \Big\} }{\sim},
  \end{equation*}
  where $\gamma_1\sim \gamma_2$ if and only if $\gamma_1([0,s_1]) = \gamma_2([0,s_2])$ for some $s_1,s_2>0$.  We write $\vec{v}_{BB'}\coloneqq \overline{\gamma_{BB'}}\in T_B\mathcal B^n$. We set $T_B \mathcal H^n = T_B \mathcal B^n$.
\end{definition}

Definition \ref{def:tangentSpace} is inspired by the notion of tangent directions in Berkovich spaces \cite{baker2008introduction} and more generally by the concept of directions in metric geometry \cite[Section 3.6.6]{BuragoBuragoIvanov2001}. In classical metric geometry, directions are defined via angles between geodesics: two geodesics have the same direction if the angle between them is zero, which is not well suited to polydisc spaces. As we will see in the example below, distinct (non-equivalent) geodesics can have angle zero or the angle may fail to be well-defined.

\begin{example}
    Consider the metric $d = d_{\mathcal B}^p$ for $1 < p < \infty$, so that 
    $\mathcal B$ is uniquely geodesic. Let 
    \begin{align*}
    B=B_{\mathcal B}((0,0),(1,1)),\;\;
    B'=B_{\mathcal B}((0,0),(2,1)),\;\;
    B''=B_{\mathcal B}((0,0),(1,2)) \in \mathcal B^2
    \end{align*}
    Let $\gamma'$ and $\gamma''$ be the unit-speed geodesics from $B$ to $B'$ and $B$ to $B''$, respectively and both defined on the interval $[0,1]$ (so that, for example, $\gamma'(t) = B_{\mathcal B}((0,0), (1+t,1))$). These geodesics are not equivalent in the sense of Definition \ref{def:tangentSpace}. 
    
    Using the definition of angle from \cite[Definition 3.6.26]{BuragoBuragoIvanov2001}, the angle between $\gamma'$ and $\gamma''$ is given by
    $$
    \lim\limits_{s,t \to 0} \mathrm{arccos}\frac{d(\gamma'(s),B)^2+d(\gamma''(t),B)^2 - d(\gamma'(s),\gamma''(t))^2}{2d(\gamma'(s),B)d(\gamma''(t),B)} = \mathrm{arccos} \lim\limits_{s,t \to 0}\frac{s^2+t^2 - (s^p+t^p)^{2/p}}{2st},
    $$
    if it exists. Along lines $t=sx$, as we allow $x$ to vary, we have 
    $$
    \lim\limits_{s \to 0}\frac{s^2+(sx)^2 - (s^p+(sx)^p)^{2/p}}{2s^2x} = \frac{1+x^2-(1+x^p)^{2/p}}{2x}.
    $$
    As this depends on $x$ (unless $p=1,2$) the limit does not exist and so the angle between $\gamma'$ and $\gamma''$ is not well defined.

    We can obtain a more robust notion if using the \emph{upper angle} (\cite[Definition 3.6.32]{BuragoBuragoIvanov2001}), which depends only on arbitrarily small initial segments of geodesics and so respects the equivalence relation in Definition \ref{def:tangentSpace} and thus agrees with our notion of tangent directions. First notice that for any paths $\gamma', \gamma'': [0,1] \to \mathcal B^n$ emanating from $B$, the angle between $\gamma'$ and $\gamma''$ depends only on $\gamma_i\mid_{[0,\epsilon]}$ for small $\epsilon >0$. Equivalent geodesics thus always have the same upper angle. 
    
    Conversely, if two geodesics are not equivalent, then their upper angle is strictly positive: suppose that $\gamma' = \gamma_{BB'}$ and $\gamma''=\gamma_{BB''}$ are non-equivalent geodesics. By shrinking the geodesics, we may assume that $B' = B_1' \times \dots \times B_n'$ and $B''=B_1'' \times \dots \times B_n''$ are such that, for $B= B_1 \times \dots \times B_n$, all $B_i$ and $B_i'$ may be compared (in the sense that $B_i \geq B_i'$ or $B_i \leq B_i'$), as can all $B_i$ and $B_i''$. The \emph{comparison angle} (\cite[Definition 3.6.25]{BuragoBuragoIvanov2001}) between $B'$ and $B''$ at $B$ is defined to be
    $$\angle(B'BB'') =  \mathrm{arccos}\frac{d(B',B)^2+d(B'',B)^2 - d(B',B'')^2}{2d(B',B)d(B'',B)}.$$
    
    Denote by $\frac{1}{2}B_i'$ and $\frac{1}{2}B_i''$ the midpoints of a geodesic of any speed from $B_i$ to $B_i'$ and $B_i$ to $B_i''$ in $\mathcal B^1$ respectively. Denote $\frac{1}{2}B' = \prod \frac{1}{2}B_i'$ and $\frac{1}{2}B'' = \prod \frac{1}{2}B_i''$. Then $d(\frac{1}{2}B',B) = \frac{1}{2}d(B',B)$ (and similarly for $B''$). Moreover, notice that in $\mathcal B^1$, $d(\frac{1}{2}B_i',\frac{1}{2}B_i'') = \frac{1}{2}d(B_i',B_i'')$. Therefore, $d(\frac{1}{2}B',\frac{1}{2}B'')=\frac{1}{2}d(B',B'')$, and so $\angle((\frac{1}{2}B')B(\frac{1}{2}B'')) = \angle(B'BB'')$. The upper angle between $\gamma'$ and $\gamma''$ is the upper limit of the comparison angle between points on these geodesics as they approach $\mathcal B$. This argument implies that this upper angle is at least the comparison angle $\angle(B'BB'')$. It therefore suffices to show that this angle is not 0. For it to be 0, we must have $d(B',B'') = |d(B',B)-d(B'',B)|$. Without loss of generality, assume $d(B',B)\geq d(B'',B)$. Then $d(B',B)=d(B'',B)+d(B',B'')$ and so the composition of the geodesic from $B'$ to $B''$ with the geodesic from $B''$ to $B$ is a geodesic from $B'$ to $B$. By uniqueness of geodesics this must be $\gamma'$. But then $\gamma'$ and $\gamma''$ are equivalent, a contradiction.
\end{example}

This example shows that the equivalence relation in Definition \ref{def:tangentSpace} captures precisely the intrinsic notion of direction in $\mathcal{B}^n$, while also avoiding the degeneracies of angle-based definitions.  

We next describe tangent directions explicitly in dimension one. Removing a disc $B$ from the tree $\mathcal B^1$ separates the remaining space into connected components; these components are precisely the possible directions emanating from $B$. The form of this decomposition depends on whether $B$ is a singleton, an irrational disc, or a rational disc of positive radius. We now propose the following notation to keep track of these components, inspired by the description of tangent directions in Berkovich spaces \cite[Section 3]{baker2008introduction}.

\begin{definition}
  \label{def:tangentDirectionResidueField}
  Let $B=B_{\mathcal{B}}(a,r)\in\mathcal B^1$ or $B=B_{\mathcal{H}}(a,v)\in\mathcal H^1$ be a disc.  If $B$ is rational and $r>0$, we define for each element of the residue field $\bar c\in\mathfrak K$:
  \begin{align*}
    C_{\mathcal B^1\setminus B}(\bar c)&\coloneqq
    \big\{B_{\mathcal{B}}(a',r')\in \mathcal B^1 \mid B_{\mathcal{B}}(a',r')\subsetneq B \text{ and } \overline{\pi^{-\log(r)}\cdot (a-a')}=\bar c \big\},\\
    C_{\mathcal H^1\setminus B}(\bar c)&\coloneqq
    \big\{B_{\mathcal{H}}(a',v')\in \mathcal H^1 \mid B_{\mathcal{H}}(a',v')\subsetneq B \text{ and } \overline{\pi^{-v}\cdot (a-a')}=\bar c \big\},
  \end{align*}
  where $\overline{(\cdot)}$ maps an element of $K$ with non-negative valuation to its residue in $\mathfrak K$.
  If $B$ is irrational, we only define for $0\in\mathfrak K$:
  \begin{align*}
    C_{\mathcal B^1\setminus B}(\bar 0)\coloneqq
    \big\{B'\in \mathcal B^1 \mid B' \subsetneq B \big\}\quad\text{and}\quad
    C_{\mathcal H^1\setminus B}(\bar 0)\coloneqq
    \big\{B' \in \mathcal H^1 \mid B'\subsetneq B \big\}.
  \end{align*}  
  Furthermore, we define for all $B$:
  \begin{align*}
    C_{\mathcal B^1\setminus B}(\infty)\coloneqq
    \big\{B'\in \mathcal B^1 \mid B' \nsubseteq B \big\}\quad\text{and}\quad
    C_{\mathcal H^1\setminus B}(\infty)\coloneqq
    \big\{B' \in \mathcal H^1 \mid B'\nsubseteq B \big\}.
  \end{align*}
  We write \(\vec{v}_{B,c}\coloneqq \vec{v}_{BB'}\) for any \(B'\in C_{\mathcal B^1\setminus B}(c)\) or \(B'\in C_{\mathcal H^1\setminus B}(c)\).
  
\end{definition}

The lemmas that we will present further on (\cref{lem:tangentDirections0}, \ref{lem:tangentDirectionsIrrational} and \ref{lem:tangentDirectionsRational}) will show that this notation is well-defined.

The following example illustrates the notation pictorially and shows the two basic behaviours over $\QQ_2$: rational discs of positive radius branch according to residue classes, while irrational discs have only an inward and an outward direction.

\begin{example}
  \label{ex:tangentDirectionResidueField}
  Consider the space of discs $\mathcal B^1$ over the $2$-adic numbers $\QQ_2$.
  \begin{enumerate}
  \item For $B=B_\mathcal{H}(0,1)\in \mathcal B^1$, $\mathcal B^1\setminus B$ consists of three connected components, see \cref{fig:tangentDirectionsResidueField}(1):
    \begin{align*}
      C_{\mathcal B^1\setminus B}(\infty) &= \{ B_\mathcal{H}(a,v) \mid v<1 \text{ or } a = 1+a' \text{ for some } a'\in\QQ_2 \text{ with }\val(a')>0 \}, \\
      C_{\mathcal B^1\setminus B}(\bar 0) &= \{ B_\mathcal{H}(0+a,v) \mid v>1 \text{ and } a\in \QQ_2 \text{ with } \val(a)>1 \}, \\
      C_{\mathcal B^1\setminus B}(\bar 1) &= \{ B_\mathcal{H}(2+a,v) \mid v>1 \text{ and } a\in \QQ_2 \text{ with } \val(a)>1 \}.
    \end{align*}
  \item For $B=B_\mathcal{H}(0,\frac12)\in \mathcal B^1$, $\mathcal B^1\setminus B$ consists of two connected components, see \cref{fig:tangentDirectionsResidueField}(2):
    \begin{align*}
      C_{\mathcal B^1\setminus B}(\infty) &= \{ B_\mathcal{H}(a,v) \mid v<\textstyle\frac 12 \text{ or } a = 1+a' \text{ for some } a'\in\QQ_2 \text{ with }\val(a')>0 \}, \\
      C_{\mathcal B^1\setminus B}(\bar 0) &= \{ B_\mathcal{H}(0+a,v) \mid v>\textstyle\frac 12 \text{ and } a\in \QQ_2 \text{ with } \val(a)>\frac 12 \}.
    \end{align*}
  \end{enumerate}
\end{example}
\begin{figure}[t]
  \centering
  \begin{tikzpicture}
    \node (left) at (-3.5,0)
    {
      \begin{tikzpicture}
        \useasboundingbox (-2,3.25) rectangle (3,-1.75);
        \node (B00) at (1,2.5) {$B(0,0)$};
        \node (B11) at (2,1.5) {$B(1,1)$};
        \node (B01) at (0,0) {$B(0,1)$};
        \node (B02) at (-1.5,-1) {$B(0,2)$};
        \node (B12) at (1.5,-1) {$B(2,2)$};
        \draw[thick]
        (B00) -- (B11)
        (B00) -- (B01) node[pos=1] (innerUp) {}
        (B01) -- (B02) node[pos=0] (innerDownLeft) {}
        (B01) -- (B12) node[pos=0] (innerDownRight) {};
        \draw[thick]
        (B11) -- ++(-0.333,-0.5) node (veryTop0) {}
        (B11) -- ++(0.333,-0.5) node (veryTop1) {}
        (B00) -- ++(0,0.5) node (up) {}
        (B02) -- ++(-0.333,-0.5) node (down1) {}
        (B02) -- ++(0.333,-0.5) node (down2) {}
        (B12) -- ++(-0.333,-0.5) node (down3) {}
        (B12) -- ++(0.333,-0.5) node (down4) {};
        \draw[thick,dotted]
        (veryTop0.center) -- ++(-0.15,-0.25)
        (veryTop1.center) -- ++(0.15,-0.25)
        (up.center) -- ++(0,0.333)
        (down1.center) -- ++(-0.222,-0.333)
        (down2.center) -- ++(0.222,-0.333)
        (down3.center) -- ++(-0.222,-0.333)
        (down4.center) -- ++(0.222,-0.333);
        \draw[->,very thick,red!70!black,shorten >=15mm]
        (B01) -- (B00) node[right,pos=0.2,font=\footnotesize,xshift=1mm] {$\vec{v}_{B,\infty}$};
        \draw[->,very thick,red!70!black,shorten >=3mm]
        (B01) -- (B02) node[below,pos=0.3,font=\footnotesize,yshift=-1mm] {$\vec{v}_{B,\bar 0}$};
        \draw[->,very thick,red!70!black,shorten >=3mm]
        (B01) -- (B12) node[below,pos=0.3,font=\footnotesize,yshift=-1mm] {$\vec{v}_{B,\bar 1}$};
        \draw[blue] plot [smooth] coordinates
        {
          ($(innerUp)+(-1.5,3)$)
          ($(innerUp)+(-0.5,0.25)$)
          ($(innerUp)+(0,-0.05)$)
          ($(innerUp)+(0.5,-0.06)$)
          ($(innerUp)+(2.6,-0.07)$)
        };
        \draw[blue] plot [smooth] coordinates
        {
          ($(innerDownLeft)+(-2.25,0.3)$)
          ($(innerDownLeft)+(0.05,0.05)$)
          ($(innerDownLeft)+(0.35,-1.75)$)
        };
        \draw[blue] plot [smooth] coordinates
        {
          ($(innerDownRight)+(2.25,0.3)$)
          ($(innerDownRight)+(-0.05,0.05)$)
          ($(innerDownRight)+(-0.35,-1.75)$)
        };
        \node[blue,anchor=west] at ($(innerUp)+(-1.4,2.75)$) {$C_{\mathcal B^1\setminus B}(\infty)$};
        \node[below,blue,yshift=0.25mm] at ($(innerDownLeft)+(-1.75,0.25)$) {$C_{\mathcal B^1\setminus B}(\bar 0)$};
        \node[below,blue,yshift=0.25mm] at ($(innerDownRight)+(1.75,0.25)$) {$C_{\mathcal B^1\setminus B}(\bar 1)$};
      \end{tikzpicture}
    };
    \node[anchor=north west] at (left.north west) {$(1)$};
    \node (right) at (3.5,0)
    {
      \begin{tikzpicture}
        \useasboundingbox (-2,3.25) rectangle (3,-1.75);
        \node (B00) at (1,2.5) {$B(0,0)$};
        \node (B11) at (2,1.5) {$B(1,1)$};
        \node (B01) at (0,0) {$B(0,1)$};
        \node (B02) at (-1.5,-1) {$B(0,2)$};
        \node (B12) at (1.5,-1) {$B(2,2)$};
        \draw[thick]
        (B00) -- (B11)
        (B00) -- (B01) node[pos=0.5] (pt) {}
        (B01) -- (B02)
        (B01) -- (B12);
        \draw[thick]
        (B11) -- ++(-0.333,-0.5) node (veryTop0) {}
        (B11) -- ++(0.333,-0.5) node (veryTop1) {}
        (B00) -- ++(0,0.5) node (up) {}
        (B02) -- ++(-0.333,-0.5) node (down1) {}
        (B02) -- ++(0.333,-0.5) node (down2) {}
        (B12) -- ++(-0.333,-0.5) node (down3) {}
        (B12) -- ++(0.333,-0.5) node (down4) {};
        \draw[thick,dotted]
        (veryTop0.center) -- ++(-0.15,-0.25)
        (veryTop1.center) -- ++(0.15,-0.25)
        (up.center) -- ++(0,0.333)
        (down1.center) -- ++(-0.222,-0.333)
        (down2.center) -- ++(0.222,-0.333)
        (down3.center) -- ++(-0.222,-0.333)
        (down4.center) -- ++(0.222,-0.333);
        \draw[->,very thick,red!70!black,shorten >=3mm]
        (pt) -- (B00) node[right,pos=0.525,font=\footnotesize,xshift=0.5mm] {$\vec{v}_{B,\infty}$};
        \draw[->,very thick,red!70!black,shorten >=3mm]
        (pt) -- (B01) node[right,pos=0.65,font=\footnotesize,xshift=1mm] {$\vec{v}_{B,\bar 0}$};
        \fill[white] (pt) circle (5pt);
        \fill[black] (pt) circle (2pt);
        \draw[blue] plot [smooth] coordinates
        {
          ($(pt)+(-1.5,2)$)
          ($(pt)+(0,0.15)$)
          ($(pt)+(2.5,-1.5)$)
        };
        \draw[blue] plot [smooth] coordinates
        {
          ($(pt)+(-2.5,0.25)$)
          ($(pt)+(0,-0.15)$)
          ($(pt)+(2.5,-1.75)$)
        };
        \node[blue,anchor=south west,xshift=1mm,yshift=-4mm] at ($(pt)+(-1.5,2)$) {$C_{\mathcal B^1\setminus B}(\infty)$};
        \node[blue,anchor=north west,yshift=-1mm] at ($(pt)+(-2.5,0.25)$) {$C_{\mathcal B^1\setminus B}(\bar 0)$};
      \end{tikzpicture}
    };
    \node[anchor=north west] at (right.north west) {$(2)$};
  \end{tikzpicture}
  \caption{Connected components of $\mathcal B^1\setminus B$ in \cref{ex:tangentDirectionResidueField}.}
  \label{fig:tangentDirectionsResidueField}
\end{figure}

We now verify that the sets introduced above are exactly the connected components of the complement of $B$ and thus determine the tangent directions at $B$. The verification splits naturally into three cases: singleton discs; irrational discs; and rational discs of positive radius. The singleton case turns out to be degenerate: a point of \(K\subset \mathcal B^1\) has only one outgoing direction in \(\mathcal B^1\).

\begin{lemma}
  \label{lem:tangentDirections0}
  Let $B=B_{\mathcal{B}}(a,0)\in\mathcal B^1$.  Then $C_{\mathcal B^1\setminus B}(\infty)$ is the only connected component of $\mathcal B^1\setminus B$ and we have $\vec{v}_{BB_1}=\vec{v}_{BB_2}$ for all $B_1,B_2\in C_{\mathcal B^1\setminus B}(\infty)$.
\end{lemma}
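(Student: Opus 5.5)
Since $B=B_{\mathcal B}(a,0)=\{a\}$ is a singleton, the only disc strictly contained in $B$ is none: any $B'\in\mathcal B^1$ with $B'\subseteq B$ must equal $B$, because $B'$ is a nonempty disc containing its centre. Hence every $B'\in\mathcal B^1\setminus B$ satisfies $B'\nsubseteq B$. This gives $\mathcal B^1\setminus B=C_{\mathcal B^1\setminus B}(\infty)$ directly from \cref{def:tangentDirectionResidueField}. It then remains to show that this set is connected and that all geodesics from $B$ into it share an initial segment.

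For both, I will use the explicit geodesics of \cref{lem:geodesics}(i). Take $B'=B_{\mathcal B}(a',s')\neq B$ and set $\hat s$ to be the radius of $B\vee B'$. Then $\hat s>0$: if $B'\supseteq B$ then $\hat s=s'>0$, and otherwise $\hat s=|a-a'|>0$. By \cref{lem:geodesics}(i), the first leg of $\gamma_{BB'}$ is $t\mapsto B_{\mathcal B}(a,r t)$ for $t\le r_1/r$, where $r_1=\hat s>0$.

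For any two $B_1,B_2\neq B$, let $\hat s_1,\hat s_2>0$ be the corresponding join radii and put $\epsilon=\min(\hat s_1,\hat s_2)$. Both $\gamma_{BB_1}$ and $\gamma_{BB_2}$ trace the same set $\{B_{\mathcal B}(a,u)\mid 0\le u\le\epsilon\}$ on suitable initial intervals $[0,s_1]$ and $[0,s_2]$ with $s_i=\epsilon/d_{\mathcal B}(B,B_i)$. So $\gamma_{BB_1}([0,s_1])=\gamma_{BB_2}([0,s_2])$, giving $\vec v_{BB_1}=\vec v_{BB_2}$ by \cref{def:tangentSpace}.

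Connectedness follows from the same paths. Each $B'\ne B$ is joined to $B_{\mathcal B}(a,\epsilon')$, for small $\epsilon'>0$, by the restriction of $\gamma_{BB'}$ to $t>0$, which avoids $B$ since the radius is positive there. Any two such points $B_{\mathcal B}(a,\epsilon')$ and $B_{\mathcal B}(a,\epsilon'')$ are joined along the segment $u\mapsto B_{\mathcal B}(a,u)$ with $u\in[\min(\epsilon',\epsilon''),\max(\epsilon',\epsilon'')]$, which also avoids $B$. Thus $\mathcal B^1\setminus B$ is path connected, hence the unique component.

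The main subtlety is the reparametrisation. The geodesics in \cref{lem:geodesics}(i) are parametrised on $[0,1]$ with speed $d_{\mathcal B}(B,B')$, so the initial segments have to be matched by choosing the $s_i$ as above, rather than by comparing parameters directly.
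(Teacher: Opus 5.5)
Your proof is correct. The tangent-direction half is the paper's argument made explicit. The paper reduces to $B_i\supsetneq B$ via $\gamma_{BB_i}\sim\gamma_{B,B_i\vee B}$ and then uses \cref{lem:recentre} to get $B_1\subseteq B_2$ or $B_2\subseteq B_1$. You instead read the common ray $u\mapsto B_{\mathcal B}(a,u)$ directly off \cref{lem:geodesics}(i) and match the $[0,1]$-parametrisations, a point the paper glosses over.

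The connectedness half takes a different route. The paper uses the single geodesic $\gamma_{B_1B_2}$ and notes that it cannot pass through $B$: each of its points contains $B_1$ or $B_2$, and no disc is strictly contained in a singleton. You instead join each $B_i$ to a small disc $B_{\mathcal B}(a,\epsilon_i)$ along $\gamma_{BB_i}$ and connect these along the ray. The paper's argument is shorter and is the template reused in \cref{lem:tangentDirectionsIrrational,lem:tangentDirectionsRational}. Yours lets one computation serve both claims, and in effect derives ``same component'' from ``same tangent direction'', which is the identification \cref{prop:tangentSpaceDisk} rests on. You also state the equality $\mathcal B^1\setminus B=C_{\mathcal B^1\setminus B}(\infty)$ explicitly, which the paper uses without comment.

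Two small fixes. The radius of $\gamma_{BB'}(t)$ is not positive at $t=1$ when $B'$ is a $K$-point; the correct reason the second leg avoids $B$ is that its points contain $B'\neq B$. You also need $\epsilon'\le\hat s$, so that $B_{\mathcal B}(a,\epsilon')$ actually lies on the first leg.
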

\begin{proof}
  To show that $C_{\mathcal B^1\setminus B}(\infty)$ is a single connected component, let $B_1,B_2\in\mathcal B^1$ with $B_1\neq B\neq B_2$ and consider the path $\gamma_{B_1B_2}$.  If $B\in \Im(\gamma_{B_1B_2})$, then either $B\in \Im(\gamma_{B_1,B_1\vee B_2})$ or $B\in \Im(\gamma_{\gamma_{B_2,B_1\vee B_2}})$.  This implies that either $B_1\subsetneq B$ or $B_2\subsetneq B$, contradicting that $r=0$.

  To show that $\vec{v}_{BB_1}=\vec{v}_{BB_2}$ for $B_1,B_2\in\mathcal B^1$ with $B_1\neq B\neq B_2$, note that $r=0$ implies $B_i\vee B\neq B$.  Since $\gamma_{BB_i}\sim \gamma_{B,B_i\vee B}$ we may assume without loss of generality that $B_i\supsetneq B$.  By \cref{lem:recentre}, we then have either $B_1\subseteq B_2$ or $B_2\subseteq B_1$, which shows that $\gamma_{BB_1}\sim\gamma_{BB_2}$.
\end{proof}

For irrational discs, the complement has two components: the discs strictly contained in $B$ and the discs not contained in $B$, which yields exactly two tangent directions.

\begin{lemma}
  \label{lem:tangentDirectionsIrrational}
  Let $B=B_{\mathcal{B}}(a,r)\in\mathcal B^1$ or $B=B_{\mathcal{H}}(a,v)\in\mathcal H^1$ be an irrational disc.  Then
  \begin{enumerate}
  \item[(i)] $\mathcal B^1\setminus B = \bigcup_{c\in\{\bar 0,\infty\}} C_{\mathcal B^1\setminus B}(c)$, and each $C_{\mathcal B^1\setminus B}(c)$ is a connected component of $\mathcal B^1\setminus B$,
  \item[(ii)] $\mathcal H^1\setminus B = \bigcup_{c\in\{\bar 0,\infty\}} C_{\mathcal H^1\setminus B}(c)$ and each $C_{\mathcal H^1\setminus B}(c)$ is a connected component of $\mathcal H^1\setminus B$,
  \item[(iii)] $\vec{v}_{BB_1}=\vec{v}_{BB_2}$ if and only if $B_1,B_2\in C_{\mathcal B^1\setminus B}(c)$ or $B_1,B_2\in C_{\mathcal H^1\setminus B}(c)$ for some $c\in\{0,\infty\}$.
  \end{enumerate}
\end{lemma}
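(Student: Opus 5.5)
The plan is to follow the pattern of the proof of \cref{lem:tangentDirections0}: work in the tree $\mathcal B^1$, where connectivity is witnessed by the explicit geodesics $\gamma_{B_1B_2}$ of \cref{lem:geodesics}, and where a point separates exactly when it lies on the geodesic. I will treat $\mathcal B^1$ in detail; the $\mathcal H^1$ case (ii) is identical with $B_{\mathcal H}$ and valuative radii.

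\textbf{Step 1: the union in (i).} Any $B'\neq B$ either satisfies $B'\nsubseteq B$, so $B'\in C(\infty)$, or $B'\subsetneq B$, so $B'\in C(\bar 0)$. The two sets are disjoint by definition, so $\mathcal B^1\setminus B=C(\bar 0)\sqcup C(\infty)$.

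\textbf{Step 2: connectedness of each piece.} Take $B_1,B_2\subsetneq B$. Their join is contained in $B$, since the join is the smallest disc containing both. It cannot equal $B$: by \cref{lem:recentre} and \cref{def:joinAndMetric}, the join is either one of the $B_i$, or $B_{\mathcal B}(a_1,|a_1-a_2|)$ with $|a_1-a_2|\in|K|$ rational, whereas $B$ is irrational. Hence the whole geodesic $\gamma_{B_1B_2}$ consists of discs between $B_i$ and $B_1\vee B_2\subsetneq B$, and so stays in $C(\bar 0)$.

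For $B_1,B_2\nsubseteq B$, suppose $B$ lay on $\gamma_{B_1B_2}$. Then $B$ would lie on one of the two monotone halves, forcing $B_i\subsetneq B$ for some $i$, which is a contradiction. So the geodesic avoids $B$. We must also rule out that it enters $C(\bar 0)$. Any point $B''$ on the geodesic contains $B_1$ or $B_2$. If $B''\subsetneq B$, then $B_i\subseteq B''\subsetneq B$, again a contradiction. Thus $C(\infty)$ is path connected.

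\textbf{Step 3: separation.} $C(\bar 0)=\{B'\in\mathcal B^1: B'\subsetneq B\}$ should be open and closed in $\mathcal B^1\setminus B$. I plan to show that for $B_1\subsetneq B\subsetneq B_2$, every point of $\gamma_{B_1B_2}$ is comparable to $B$, and that the geodesic is monotone through radius $r$. In that situation $d_{\mathcal B}(B_1,B_2)\ge d(B_1,B)+d(B,B_2)$, so $B$ must lie on the path and the geodesic passes through $B$.

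For general $B_1\in C(\bar0)$ and $B_2\in C(\infty)$, the path goes from $B_1$ up to $B_1\vee B_2$, which is not contained in $B$ and therefore must contain $B$ by \cref{lem:recentre}. It therefore passes through radius $r$ along discs containing $B_1$, and the disc $B_{\mathcal B}(a_1,r)$ equals $B$ by \cref{lem:recentre}(i). Using the metric-tree structure of $\mathcal B^1$ from \cref{lem:geodesics}(iii), any path between the two sets must pass through $B$. Concretely, the function $B'\mapsto$ radius is continuous, and the set $\{B'\subsetneq B\}$ equals $\{B' : d(B',B\vee B')\ \text{realized inside }B\}$. I expect making this point-set separation rigorous to be the main obstacle. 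I would handle it by showing that $C(\bar0)$ is open, via a small ball argument: discs within distance $<r-r'$ of $B'$ stay inside $B$. I would show $C(\infty)$ is open similarly, using that the distance to $B$ is bounded below by the radius gap or the join distance.

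\textbf{Step 4: part (iii).} Geodesics $\gamma_{BB_1}$ into $C(\bar 0)$ start by shrinking the radius around a centre $a_1\in B$. Two such initial segments $B_{\mathcal B}(a_i,r-t)$ coincide for small $t$, because $\val(a_1-a_2)\ge \val$ of the radius and irrationality makes the inequality strict, which gives room. Geodesics into $C(\infty)$ start as $B_{\mathcal B}(a,r+t)$, so they are always equivalent. Conversely, the initial segments of the two types have radii $<r$ and $>r$ respectively, so they are never equivalent.
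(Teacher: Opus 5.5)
Your proposal is correct and follows the same route as the paper. The paper's proof also gets path-connectedness of $C_{\mathcal B^1\setminus B}(\bar 0)$ from $B_1\vee B_2\subsetneq B$ (by irrationality of $B$), handles $C_{\mathcal B^1\setminus B}(\infty)$ by the argument of \cref{lem:tangentDirections0}, and proves (iii) via common initial segments, with inward and outward directions distinguished by radii $<r$ versus $>r$.

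Your Step 3 supplies something the paper omits: it shows both pieces are open, hence clopen in $\mathcal B^1\setminus B$, which is what makes them connected components rather than merely disjoint path-connected sets. Rely on that ball argument rather than on unique geodesicity, since unique geodesicity alone does not force every path between the two pieces to pass through $B$.
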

\begin{proof}\
  \begin{enumerate}
  \item[(i)] We have $\mathcal B^1\setminus B = \bigcup_{c\in\{\bar 0,\infty\}} C_{\mathcal B^1\setminus B}(c)$ by definition, and the fact that $C_{\mathcal B^1\setminus B}(\infty)$ is connected follows similarly as in \cref{lem:tangentDirections0}.

    To see that $C_{\mathcal B^1\setminus B}(\bar 0)$ is a connected component, let $B_1, B_2 \in C_{\mathcal B^1\setminus B}(\bar 0)$ with $B_1\neq B_2$ and consider the path $\gamma_{B_1B_2}$. Since $B_1,B_2\subseteq B$, we have $B_1\vee B_2\subseteq B$ by definition.  Moreover, the radius of $B_1\vee B_2$ must be rational, hence for $B_1\vee B_2\subsetneq B$. This shows that $\Im(\gamma_{B_1B_2})\subseteq C_{\mathcal B^1\setminus B}(\bar 0)$.
  \item[(ii)] Follows similar to (i).
  \item[(iii)] The fact that $\vec{v}_{B B_1}= \vec{v}_{B B_2}$ for $B_1, B_2\in C_{\mathcal B^1\setminus B}(\bar 0)$ or $B_1, B_2\in C_{\mathcal H^1\setminus B}(\bar 0)$ follows from $B_1\vee B_2\subsetneq B$ and hence $\gamma_{B B_1}\sim \gamma_{B,B_1\vee B_2} \sim \gamma_{B B_2}$.
  
  The proof that $\vec{v}_{B B_1}= \vec{v}_{B B_2}$ for $B_1, B_2\in C_{\mathcal B^1\setminus B}(\infty)$ or $B_1, B_2\in C_{\mathcal H^1\setminus B}(\infty)$ is similar to that of \cref{lem:tangentDirections0}.

    Finally, $\vec{v}_{B B_1}\neq \vec{v}_{B B_2}$ for $B_1\in C_{\mathcal B^1\setminus B}(\bar 0)$ and $B_2\in C_{\mathcal B^1\setminus B}(\infty)$ or $B_1\in C_{\mathcal H^1\setminus B}(\bar 0)$ and $B_2\in C_{\mathcal H^1\setminus B}(\infty)$ follows from the fact that the sets are disjoint. \qedhere
  \end{enumerate}
\end{proof}

For rational discs of positive radius, the inward component splits according to residue classes, so the tangent directions are indexed by $\mathbb P^1_{\mathfrak K}=\mathfrak K\cup\{\infty\}$, with $\infty$ corresponding to the outward direction.

\begin{lemma}
  \label{lem:tangentDirectionsRational}
  Let $B=B_{\mathcal{B}}(a,r)\in\mathcal B^1$, $r>0$, or $B=B_{\mathcal{H}}(a,v)\in\mathcal H^1$ be a rational disc.  Then
  \begin{enumerate}
  \item[(i)] $\mathcal B^1\setminus B = \bigcup_{c\in\PP^1_{\mathfrak K}} C_{\mathcal B^1\setminus B}(c)$ and each $C_{\mathcal B^1\setminus B}(c)$ is a connected component of $\mathcal B^1\setminus B$;
  \item[(ii)] $\mathcal H^1\setminus B = \bigcup_{c\in\PP^1_{\mathfrak K}} C_{\mathcal H^1\setminus B}(c)$ and each $C_{\mathcal H^1\setminus B}(c)$ is a connected component of $\mathcal H^1\setminus B$; and
  \item[(iii)] $\vec{v}_{BB_1}=\vec{v}_{BB_2}$ if and only if $B_1,B_2\in C_{\mathcal B^1\setminus B}(c)$ or $B_1,B_2\in C_{\mathcal H^1\setminus B}(c)$ for some $c\in\PP^1_{\mathfrak K}$.
  \end{enumerate}
\end{lemma}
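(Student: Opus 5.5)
I will mirror the proof of \cref{lem:tangentDirectionsIrrational}. The new feature is that the inward part $\{B'\in\mathcal B^1\mid B'\subsetneq B\}$ now splits according to residue classes. It suffices to treat $\mathcal B^1$; the $\mathcal H^1$ case (ii) is identical after replacing $r$ by $v$ and $\pi^{-\log(r)}$ by $\pi^{-v}$.

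\textbf{Covering and disjointness.} First I check that $\mathcal B^1\setminus B$ is the disjoint union of the sets $C_{\mathcal B^1\setminus B}(c)$. Any $B'\neq B$ either satisfies $B'\nsubseteq B$, in which case it lies in $C(\infty)$, or $B'\subsetneq B$. In the second case take $B'=B_{\mathcal B}(a',r')$. Then $|a-a'|\le r$, so $\pi^{-\log(r)}(a-a')$ has non-negative valuation and hence a residue $\bar c$.

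The residue must not depend on the chosen centre. If $a''$ is another centre of $B'$, then $|a'-a''|\le r'<r$, so the two residues agree.

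Disjointness of $C(\bar c)$ and $C(\infty)$ is immediate from the definitions. For $\bar c\neq\bar c'$, the classes are disjoint because a common disc would give two centres whose residues differ, contradicting the independence of the centre.

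\textbf{Each $C(\bar c)$ is connected.} Take $B_1,B_2\in C(\bar c)$ with centres $a_1,a_2$. Since the residues agree, $|a_1-a_2|<r$, so $B_1\vee B_2\subsetneq B$. Along $\gamma_{B_1B_2}$ every disc either contains $B_1$ or $B_2$ and is contained in $B_1\vee B_2$. Each such disc can therefore be recentred at $a_1$ or $a_2$ using \cref{lem:recentre}, so it lies in $C(\bar c)$.

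\textbf{$C(\infty)$ is connected.} Here I repeat the argument of \cref{lem:tangentDirections0}. If the geodesic between two elements of $C(\infty)$ passed through $B$ or entered $B$, then one of its endpoints would be contained in $B$, which is impossible.

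\textbf{Components are distinct.} It remains to see that no path in $\mathcal B^1\setminus B$ joins different classes, i.e.\ each class is also open and closed in the complement. Since $\mathcal B^1$ is a metric tree, it suffices to show that the unique geodesic between $B_1\in C(\bar c)$ and $B_2\in C(\bar c')$, with $\bar c'\neq\bar c$ or $\bar c'=\infty$, passes through $B$.

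In the first case $|a_1-a_2|=r$, so $B_1\vee B_2=B$, which lies on the geodesic. In the second case $B_1\subsetneq B$ while $B_2\nsubseteq B$. Then $B_1\vee B_2\supseteq B$, and the ascending segment from $B_1$, whose radius grows continuously, must pass through $B$.

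\textbf{Tangent directions (iii).} For $B_1,B_2$ in the same $C(\bar c)$, the disc $B_1\vee B_2$ is strictly inside $B$. Then $\gamma_{BB_1}$ and $\gamma_{BB_2}$ share the initial segment from $B$ towards $B_1\vee B_2$, so the directions agree. For $C(\infty)$ I reduce to the ascending case as in \cref{lem:tangentDirections0}, using \cref{lem:recentre}.

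Conversely, if $B_1,B_2$ lie in different classes, any common initial segment of $\gamma_{BB_1}$ and $\gamma_{BB_2}$ would lie in both connected components. This is impossible since the components are disjoint.

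\textbf{Main obstacle.} The step I expect to be the main obstacle is the well-definedness of the residue, i.e.\ its independence of the centre. The one genuinely new argument is that two residue classes give $B_1\vee B_2=B$ exactly. This uses $\val(a_1-a_2)=v$ in the $\mathcal H^1$ case, equivalently $|a_1-a_2|=r$ in the $\mathcal B^1$ case, and it relies on the splitting $\pi^{-\log(r)}$ having absolute value $r^{-1}$.
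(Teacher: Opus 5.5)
Your proposal is correct and follows the paper's proof. Both arguments use the residue classes for the inward part, the inclusion $B_1\vee B_2\subsetneq B$ for connectedness of $C(\bar c)$ and for equality of directions, the argument of \cref{lem:tangentDirections0} for $C(\infty)$, and disjointness for the converse in (iii). Beyond the paper, you check that the residue label does not depend on the chosen centre, and that different classes are separated by $B$ (via $B_1\vee B_2=B$, or via the ascending segment hitting $B$); the paper's proof leaves both implicit, and separation is what makes each $C(c)$ a whole component rather than just a connected subset. One tightening: separation gives $d(B_1,B_2)=d(B_1,B)+d(B,B_2)$, so each class contains the $d(B',B)$-ball around each of its points $B'$, and this yields the clopen-ness you claim rather than merely the absence of connecting paths.
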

\begin{proof}\
  \begin{enumerate}
  \item[(i)] We have $\mathcal B^1\setminus B = \bigcup_{c\in\PP_{\mathfrak K}^1} C_{\mathcal B^1\setminus B}(c)$ by definition, and the fact that $C_{\mathcal B^1\setminus B}(\infty)$ is connected follows similarly as in \cref{lem:tangentDirections0}.

    To show that $C_{\mathcal B^1\setminus B}(\bar c)$ is connected for $\bar c\in\mathfrak K$, let $B_1=B_{\mathcal{B}}(a_1,r_1), B_2=B_{\mathcal{B}}(a_2,r_2) \in C_{\mathcal B^1\setminus B}(\bar c)$ with $B_1\neq B_2$ and consider some $B'=B_{\mathcal{B}}(a',r')\in\Im(\gamma_{B_1B_2})$.  We now show that $B'\in C_{\mathcal B^1\setminus B}(\bar c)$, i.e., $B'\subseteq B$ and $\overline{t^{-\log(r)}(a-a')}=\bar c$.  The first follows from $B'\subseteq B_1\vee B_2\subseteq B$, in which the first inclusion follows from $B'\in\Im(\gamma_{B_1B_2})$ and the last inclusion follows from $B_1,B_2\subseteq B$ and the definition of join.  For the second, assume that $\overline{t^{-\log(r)}(a-a')}\neq \bar c$.  Pick $i\in\{1,2\}$ such that $B_i\subseteq B'$. Then we have
    \begin{equation*}
      0\neq \overline{t^{-\log(r)}(a-a')} - \overline{t^{-\log(r)}(a-a_i)} =\overline{t^{-\log(r)}(a_i-a')},
    \end{equation*}
    which implies that $|a_i-a'|\geq r$.  This contradicts $B_i,B'\subsetneq B$.
  \item[(ii)] Follows similar to (i).
  \item[(iii)] The fact that $\vec{v}_{B B_1}= \vec{v}_{B B_2}$ for $B_1, B_2\in C_{\mathcal B^1\setminus B}(\bar c)$ for some $\bar c\in\PP_{\mathfrak K}^1$ follows from $B_1\vee B_2\in C_{\mathcal B^1\setminus B}(\bar c)$ and hence $\gamma_{B B_1}\sim \gamma_{B,B_1\vee B_2} \sim \gamma_{B B_2}$.

    The proof that $\vec{v}_{B B_1}= \vec{v}_{B B_2}$ for $B_1, B_2\in C_{\mathcal B^1\setminus B}(\infty)$ or $B_1, B_2\in C_{\mathcal H^1\setminus B}(\infty)$ is similar to that of \cref{lem:tangentDirections0}.

    Finally, $\vec{v}_{B B_1}\neq \vec{v}_{B B_2}$ for $B_1\in C_{\mathcal B^1\setminus B}(\bar c_1)$ and $B_2\in C_{\mathcal B^1\setminus B}(\bar c_2)$ for $\bar c_1\neq \bar c_2$ follows from the fact that the sets are disjoint. \qedhere
  \end{enumerate}
\end{proof}

Combining \cref{lem:tangentDirections0}, \ref{lem:tangentDirectionsIrrational}, and \ref{lem:tangentDirectionsRational} we obtain a complete classification of tangent directions in dimension one. In particular, the tangent space is determined by whether the base disc is a point, an irrational disc, or a rational disc of positive radius.

\begin{proposition}
  \label{prop:tangentSpaceDisk}
  The set of tangent directions $T_B=T_B\mathcal B^1$ or $T_B=T_B\mathcal H^1$ around a disc $B\in \mathcal B^1$ or $B\in \mathcal H^1$ is given by
  \begin{equation*}
    \label{eq:tangentSpaceDisk}
    T_B = \{\vec{v}_{B,\bar c}\mid \bar c\in O \},
  \end{equation*}
  where $O\subseteq\PP_{\mathfrak K}^1$ is given by
  \begin{enumerate}
  \item[(i)] $O=\{\infty\}$ if $B$ has zero radius,
  \item[(ii)] $O=\{0,\infty\}$ if $B$ is irrational,
  \item[(iii)] $O=\PP_{\mathfrak K}^1$ if $B$ is rational and has positive radius.
  \end{enumerate}
\end{proposition}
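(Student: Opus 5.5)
The plan is to reduce the proposition to the three preceding lemmas by showing that the map sending a geodesic $\gamma_{BB'}$ to the index $c\in\PP^1_{\mathfrak K}$ with $B'\in C_{\mathcal B^1\setminus B}(c)$ (respectively $C_{\mathcal H^1\setminus B}(c)$) induces a well-defined bijection $T_B\to O$. Every tangent direction at $B$ is by \cref{def:tangentSpace} the class of some $\gamma_{BB'}$ with $B'\neq B$. So it suffices to check three things: every $B'\neq B$ lies in $C(c)$ for some $c\in O$; each $C(c)$ with $c\in O$ is nonempty; and two geodesics $\gamma_{BB_1},\gamma_{BB_2}$ are equivalent exactly when $B_1,B_2$ lie in the same $C(c)$.

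I would then split into the three cases of the statement. For (i), $B=B_{\mathcal B}(a,0)$ is a $K$-point. Since $B$ contains no disc other than itself, $\mathcal B^1\setminus B=C_{\mathcal B^1\setminus B}(\infty)$. \Cref{lem:tangentDirections0} says all geodesics from $B$ are equivalent, so $T_B=\{\vec v_{B,\infty}\}$; nonemptiness is witnessed by $B_{\mathcal B}(a,1)$. (Zero-radius discs do not lie in $\mathcal H^1$, so only $\mathcal B^1$ is relevant here.) For (ii), $B$ irrational, \cref{lem:tangentDirectionsIrrational}(i),(ii) gives the covering by $C(\bar 0)$ and $C(\infty)$, and part (iii) gives exactly the equivalence criterion. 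Nonemptiness is witnessed by $B_{\mathcal B}(a,r/2)\in C(\bar 0)$ and $B_{\mathcal B}(a,2r)\in C(\infty)$ (or their valuative analogues), so $O=\{\bar 0,\infty\}$. For (iii), $B$ rational with positive radius, \cref{lem:tangentDirectionsRational} supplies the covering by $C(c)$ for $c\in\PP^1_{\mathfrak K}$ and the equivalence criterion.

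The remaining point in case (iii), and the step I expect to need the most care, is surjectivity onto all of $\PP^1_{\mathfrak K}$: every residue class $\bar c\in\mathfrak K$ must give a nonempty $C(\bar c)$. I would lift $\bar c$ to some $c\in K$ with $\val(c)\ge 0$ and set $a'=a-\pi^{-\log(r)}\,c$, using the splitting from \cref{con:valuedField}, which exists because $r\in|K^\ast|$. Then $\overline{\pi^{-\log(r)}(a-a')}=\bar c$, and $|a-a'|\le r$ since $|\pi^{-\log(r)}|=r$ and $|c|\le 1$. Hence $B_{\mathcal B}(a',r/2)\subsetneq B$, so this disc lies in $C(\bar c)$; the valuative version uses $B_{\mathcal H}(a',v+1)$.

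I would also check that the index attached to a disc does not depend on which centre is chosen, either for $B$ itself or for the smaller disc. Replacing $a$ or $a'$ by another centre changes $a-a'$ by an element of absolute value less than $r$, which does not change the residue. This bookkeeping is what makes the notation $\vec v_{B,c}$ in \cref{def:tangentDirectionResidueField} well defined, and it completes the identification $T_B=\{\vec v_{B,\bar c}\mid \bar c\in O\}$ in all three cases.
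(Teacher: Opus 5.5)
Your route is the paper's: its proof is just the identification of tangent directions with the components $C(c)$ from \cref{lem:tangentDirections0,lem:tangentDirectionsIrrational,lem:tangentDirectionsRational}. Your nonemptiness and centre checks fill in details the paper leaves implicit. Two of those details need correcting, though neither affects the conclusion.

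\textbf{The residue normalisation.} In the surjectivity step you use $|\pi^{-\log(r)}|=r$ to get $|a-a'|\le r$. But then $\pi^{-\log(r)}(a-a')=(\pi^{-\log(r)})^2c$, which for $r\neq 1$ generally does not reduce to $\bar c$; for $r<1$ its residue is always $\bar 0$. The residue has to be taken of $(\pi^{-\log(r)})^{-1}(a-a')=c$. This normalisation matches $\pi^{-v}$ in the $\mathcal H^1$ half of \cref{def:tangentDirectionResidueField}, and it is the one implicitly used in the proof of \cref{lem:tangentDirectionsRational}; the $\mathcal B^1$ formula as printed has the sign of the exponent flipped. With it, your construction goes through, and so does your valuative version with $B_{\mathcal H}(a',v+1)$.

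\textbf{Dependence on the centre of $B$.} The index is \emph{not} independent of the centre of $B$ itself. Another centre $a_2$ only satisfies $|a-a_2|\le r$, and equality can occur. For instance, over $\QQ_2$ we have $B_{\mathcal H}(1,1)=B_{\mathcal H}(3,1)$, and switching the centre from $1$ to $3$ swaps the labels $\bar 0,\bar 1$ of $B_{\mathcal H}(1,2)$ and $B_{\mathcal H}(3,2)$. So the labelling $\bar c\mapsto\vec v_{B,\bar c}$ is defined only after fixing a centre of $B$, and a different choice changes it by a translation of $\mathfrak K$.

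Your argument is correct for the centre of $B'$: there the shift has absolute value at most the radius of $B'$, which is $<r$. The set $\{\vec v_{B,\bar c}\mid \bar c\in O\}$ does not depend on the choice of centre of $B$, so the proposition still follows.
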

\begin{proof}
  The statement follows by identifying tangent directions with the connected components of the complement of $B$, as described in \cref{lem:tangentDirections0,lem:tangentDirectionsIrrational,lem:tangentDirectionsRational}.
\end{proof}

The one-dimensional description extends to polydisc spaces by keeping track of both a tangent direction and a relative speed (in each coordinate). Thus a tangent direction in $\mathcal B^n$ is not only a choice of direction in each factor $\mathcal B^1$, but also a choice of how fast the geodesic moves in each coordinate, which leads to the following product description.

\begin{proposition}
  \label{prop:tangentSpacePolydisk}
  The set of tangent directions $T_B=T_B\mathcal B^n$ or $T_B=T_B\mathcal H^n$ at a polydisc $B=B_1\times\dots\times B_n\in\mathcal B^n$ or $B=B_1\times\dots\times B_n\in\mathcal H^n$ is given by
  \begin{equation*}
    T_B = \bigslant{
            \Bigl(
              O\times (\mathbb{R}_{\geq0}^n\setminus\{0\})
            \Bigr)}{\sim}
  \end{equation*}
  where $O\coloneqq O_1\times\dots\times O_n$ with
  \begin{enumerate}
    \item[(i)] $O_i=\{\infty\}$ if $B_i$ has zero radius,
    \item[(ii)] $O_i=\{0,\infty\}$ if $B_i$ irrational,
    \item[(iii)] $O_i=\PP_{\mathfrak K}^1$ if $B_i$ rational and has positive radius,
  \end{enumerate}
  and we identify
  \begin{equation*}
    ((c_1,\dots,c_n),(u_1,\dots,u_n)) \sim ((c_1',\dots,c_n'),(u_1',\dots,u_n'))
  \end{equation*}
  if and only if there is some $\lambda>0$ such that
  \begin{enumerate}
  \item[(a)] $u_i = u_i' = 0$ for all $i \in [n]$ with $c_i \neq c_i'$,
  \item[(b)] $u_i = \lambda\cdot u_i'$ for all $i \in [n]$ with $c_i = c_i'$.
  \end{enumerate}
\end{proposition}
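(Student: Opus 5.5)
The plan is to write down an explicit map from representatives to tangent directions, show it is well defined and surjective, and then check that its fibres are exactly the classes of $\sim$.

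\textbf{The map.} A pair $((c_1,\dots,c_n),(u_1,\dots,u_n))$ with $u\neq 0$ should go to the class of the geodesic $\gamma_{BB'}$. Here $B'=B_1'\times\dots\times B_n'$ is chosen as follows:
\begin{itemize}
\item if $u_i>0$, let $B_i'$ be the point at distance $\epsilon u_i$ from $B_i$ along a geodesic in direction $\vec{v}_{B_i,c_i}$;
\item if $u_i=0$, set $B_i'=B_i$.
\end{itemize}
Such points exist by \cref{prop:tangentSpaceDisk}, and $\epsilon>0$ is small. By \cref{lem:geodesics}, $\gamma_{BB'}$ is the product of the one-dimensional geodesics $\gamma_{B_iB_i'}$. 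All of these are parametrised on $[0,1]$, so coordinate $i$ moves at speed proportional to $u_i$.

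\textbf{Well-definedness.} I would show the class $\overline{\gamma_{BB'}}$ depends neither on $\epsilon$ nor on the particular $B_i'$ in the component $C(c_i)$. Two such choices give one-dimensional geodesics that agree on an initial segment, by \cref{lem:tangentDirections0,lem:tangentDirectionsIrrational,lem:tangentDirectionsRational}. After reparametrisation their products have a common initial image, because each coordinate moves linearly in the arclength of its factor.

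\textbf{Surjectivity.} Start from an arbitrary $\gamma_{BB'}$. Take $c_i$ to be the direction of $B_i'$ at $B_i$ and set $u_i=d(B_i,B_i')$, with $u_i=0$ when $B_i'=B_i$. Restricting $\gamma_{BB'}$ to a short initial segment shows it is equivalent to the geodesic built from this pair.

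\textbf{Fibres of the map.} First, equivalent pairs give the same direction. Rescaling $u$ by $\lambda$ only rescales $\epsilon$, and the label $c_i$ is irrelevant when $u_i=0$ because that coordinate stays constant. This gives conditions (a) and (b).

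Conversely, suppose two geodesics $\gamma$ and $\gamma'$ have $\gamma([0,s_1])=\gamma'([0,s_2])$. Projecting to each factor, the images $\gamma_i([0,s_1])$ and $\gamma'_i([0,s_2])$ coincide. If either is a nontrivial segment, then \cref{prop:tangentSpaceDisk} gives $c_i=c_i'$. Otherwise both coordinates are constant and $u_i=u_i'=0$, which is exactly condition (a).

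\textbf{Main obstacle: recovering the speeds.} What remains is to read off the ratios $u_i:u_j$ from the shared image, which gives condition (b). The image is a segment that is monotone in every coordinate. Its point at which the $i$-th coordinate has moved distance $\delta u_i$ is the same point at which the $j$-th has moved $\delta u_j$. So the ratios $u_i/u_j$ are determined by the image alone: parametrise the common segment by its $i$-th coordinate, which is injective whenever $u_i>0$.

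Equal images therefore force $u=\lambda u'$ on all coordinates where the labels agree and the speeds are nonzero. The care needed here is that monotone injectivity in each factor holds only on a short initial segment. That segment should lie before any join point, i.e.\ a point where the one-dimensional geodesic turns from upward to downward, which I would ensure by shrinking $s_1$ and $s_2$.
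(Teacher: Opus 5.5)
Your proposal is correct and follows the same route as the paper's proof. Both define an explicit map sending a labelled speed vector to the class of the product geodesic toward suitable $B_i'\in C_{\mathcal B^1\setminus B_i}(c_i)$, then check well-definedness (independence of the chosen $B_i'$ and of the $\sim$-representative), injectivity via the one-dimensional classification together with the speed ratios, and surjectivity by restricting an arbitrary $\gamma_{BB'}$ to a short initial segment. Your recovery of the ratios $u_i/u_j$ from the common image is more careful than the paper's injectivity case (ii), as is your observation that a coordinate is constant on the image exactly when its speed vanishes; the latter also covers $c_i=c_i'$ with exactly one of $u_i,u_i'$ zero.
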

\begin{proof}
  We will show the statement for $B=B_1\times\dots\times B_n\in\mathcal B^n$, the statement for $B\in\mathcal H^n$ follows similarly.

  Note that given any $\overline{((c_1,\dots,c_n),(u_1,\dots,u_n))}\in (O\times (\mathbb{R}_{\geq0}^n\setminus\{0\})) / \sim$, we may assume without loss of generality that $0\leq u_i\leq r_i$. This allows us to define a map
  \begin{equation*}
    \varphi\colon \bigslant{\Bigl(O\times (\mathbb{R}_{\geq0}^n\setminus\{0\})\Bigr)}{\sim} \longrightarrow T_B, \quad \overline{\big((c_1,\dots,c_n),(u_1,\dots,u_n)\big)} \longmapsto \vec{v}_{B,B_1'\times\dots\times B_n'},
  \end{equation*}
  where $B_i'\in C_{\mathcal B^1\setminus B_i}(c_i)$ is any disk with $\vec{v}_{\mathcal B}(B_i,B_i')=u_i$.  We show three properties:
  \begin{description}
  \item[\textbf{$\varphi$ is well-defined:}]
    First, we show that $\vec{v}_{B,B_1'\times\dots\times B_n'}$ is independent of the choice of $B_i'\in C_{\mathcal B^1\setminus B_i}(c_i)$ with $\vec{v}_{\mathcal B}(B_i,B_i')=u_i$.  Consider $B_i''\in C_{\mathcal B^1\setminus B_i}(c_i)$ with $\vec{v}_{\mathcal B}(B_i,B_i'')=u_i$. Set $\hat B_i\coloneqq B_i'\vee B_i''$ and $s_i\coloneqq \frac{\vec{v}_{\mathcal B}(B_i,\hat B_i)}{\vec{v}_{\mathcal B}(B_i,B_i')}$ so that $0<s_i\leq 1$.  Let $s\coloneqq\min(s_1,\dots,s_n)$.  Then $\gamma_{B,B_1'\times\dots B_n'}([0,s])=\gamma_{B,B_1''\times \dots \times B_n''}([0,s])$ so that indeed $\vec{v}_{B,B_1'\times\dots B_n'}=\vec{v}_{B,B_1''\times \dots \times B_n''}$.

    Finally, suppose that $((c_1,\dots,c_n),(u_1,\dots,u_n))\sim ((c_1',\dots,c_n'),(u_1',\dots,u_n'))$, i.e., $u_i = u_i' = 0$ for all $c_i \neq c_i'$ and $u_i = \lambda\cdot u_i'$ otherwise for some $\lambda>0$.  Assume without loss of generality that $\lambda>1$. Let $B_i'\in C_{\mathcal B^1\setminus B}(c_i)$ with $d_{\mathcal B}(B_i,B_i')=u_i$, and set $B_i''\coloneqq \gamma_{B_i,B_i'}(\lambda^{-1})$.  Because $\gamma_{B_i,B_i'}$ is of constant speed, we have $d_{\mathcal B}(B_i,B_i'')=\lambda^{-1}\cdot u_i=u_i'$.  By construction $\gamma_{B,B_1'\times\dots B_n'}([0,\lambda^{-1}])=\gamma_{B,B_1''\times \dots \times B_n''}([0,1])$ so that $\vec{v}_{B,B_1'\times\dots B_n'}=\vec{v}_{B,B_1''\times \dots \times B_n''}$.
  \item[\textbf{$\varphi$ is injective:}]  Let $(c,u)=((c_1,\dots,c_n),(u_1,\dots,u_n)), (c',u')=((c_1',\dots,c_n'),(u_1',\dots,u_n'))\in O\times (\RR_{\geq 0}^n\setminus\{0\})$.  Let $B_i'\in C_{\mathcal B^1\setminus B_i}(c_i)$ and $B_i''\in C_{\mathcal B^1\setminus B_i}(c_i')$ with $d_{\mathcal B^1}(B_i,B_i')=u_i$ and $d_{\mathcal B^1}(B_i,B_i'')=u_i'$.  Suppose that $(c,u)\not\sim (c',u')$, then we distinguish between two cases:
    \begin{enumerate}
    \item[(i)] There exists $i\in [n]$ with $c_i\neq c_i'$ but either $u_i\neq 0$ or $u_i'\neq 0$.  Assume without loss of generality that $u_i\neq 0$.  If $u_i'=0$, then $\gamma_{B_iB_i''}$ is constant while $\gamma_{B_iB_i'}$ is not, hence $\gamma_{B_iB_i''}\not\sim\gamma_{B_iB_i'}$ which shows that $\vec{v}_{B,B_1'\times\dots B_n'}\neq \vec{v}_{B,B_1''\times \dots \times B_n''}$.  If $u_i'\neq 0$, then $\gamma_{B_iB_i''}\not\sim\gamma_{B_iB_i'}$ by \cref{prop:tangentSpaceDisk} and thus $\vec{v}_{B,B_1'\times\dots B_n'}\neq \vec{v}_{B,B_1''\times \dots \times B_n''}$.
    \item[(ii)] There are $i,j\in [n]$, $i\neq j$, with $c_i=c_i'$ and $c_j=c_j'$ but $u_i=\lambda_i\cdot u_i'$ and $u_j=\lambda_j\cdot u_j'$ for $\lambda_i\neq \lambda_j$.  This implies that $\gamma_{B_iB_i'}\times \gamma_{B_jB_j'}$ and $\gamma_{B_iB_i''}\times \gamma_{B_jB_j''}$ cannot share an initial segment and thus $\vec{v}_{B,B_1'\times\dots B_n'}\neq \vec{v}_{B,B_1''\times \dots \times B_n''}$.
    \end{enumerate}
  \item[\textbf{$\varphi$ is surjective:}] This is immediate since for any $\vec{v}_{B,B_1'\times\dots\times B_n'}\in T_B\mathcal B^n$, we may find a preimage $$\overline{((c_1,\dots,c_n),(u_1,\dots,u_n))}\in (O\times (\mathbb{R}_{\geq0}^n\setminus\{0\})) / \sim$$ by picking $c_i\in O_i$ with $B_i'\in C_{\mathcal B^n\setminus B_i}(c_i)$ and $u_i\coloneqq d_{\mathcal B}(B_i,B_i')$. \qedhere
  \end{description}
\end{proof}
One notable consequence of this result is that the points in $\mathcal B ^n$ that have ``the most tangent directions'' are the rational polydiscs with positive radii. Intuitively, these are the points at which the local structure of the space is the most complicated. 
Later in \cref{sec:optimisation}, we study algorithms that navigate this subspace by choosing tangent directions to move along.

\subsection{Algebra in Polydisc Spaces}

To work effectively with functions on $\mathcal B^n$, it is useful to extend the basic algebraic operations from the field $K$ to polydiscs. Intuitively, a polydisc represents a set of possible values, so addition and multiplication should reflect the range of outcomes obtained by combining elements from two such sets. This leads to natural definitions of Minkowski-type operations on discs.

\begin{definition}
  \label{def:algebra}
  For $B,B'\in\mathcal B^1$ we define
  \begin{equation*}
    B+B' \coloneqq \big\{ b+b'\in \hat K\mid b\in B, b'\in B' \big\}.
  \end{equation*}
  Similarly, we define $B\cdot B'$ to be the smallest disc containing the set
  $$
  \big\{ b\cdot b'\in \hat K\mid b\in B, b'\in B' \big\}.
  $$
  For $B=B_1\times\dots\times B_n$, $B'=B_1'\times\dots\times B_n'\in\mathcal B^n$, we define
  \begin{equation*}
    B+B'\coloneqq (B_1+B_1')\times\dots\times (B_n+B_n') \quad\text{and}\quad B\cdot B'\coloneqq (B_1\cdot B_1')\times\dots\times(B_n\cdot B_n').
  \end{equation*}
\end{definition}

These operations admit explicit descriptions in terms of centres and radii, reflecting the ultrametric structure of $K$.

\begin{lemma}
  \label{lem:algebraDefinition}
  Let $B=B_{\mathcal{B}}(a,r)$ and $B'=B_{\mathcal{B}}(a',r')\in \mathcal B^1$. Then
  \begin{align*}
    B+B' &= B_{\mathcal{B}}(a+a',\, \max(r,r')),\\
    B\cdot B' &= B_{\mathcal{B}}\big(aa',\, \max(|a|r',|a'|r,rr')\big).
  \end{align*}
\end{lemma}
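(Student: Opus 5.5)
For the sum I will prove the two inclusions directly; for the product I will bound the set of products from above and then show no smaller disc can contain it. All discs live in $\hat K$, and I will use the ultrametric inequality together with \cref{lem:recentre} throughout.

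\emph{Sum.} Write $R\coloneqq\max(r,r')$ and take $b\in B$, $b'\in B'$. Then
\[|(b+b')-(a+a')|\le\max(|b-a|,|b'-a'|)\le R,\]
which gives $B+B'\subseteq B_{\mathcal B}(a+a',R)$. For the reverse inclusion, assume without loss of generality that $R=r$ and take $z$ with $|z-(a+a')|\le r$. I write $z=(z-a')+a'$ with $b\coloneqq z-a'\in B$ and $b'\coloneqq a'\in B'$. Since $|z-a'-a|\le r$, the element $b$ does lie in $B$, so $z\in B+B'$.

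\emph{Product, upper bound.} Set $\rho\coloneqq\max(|a|r',|a'|r,rr')$. For $b=a+x$ and $b'=a'+y$ with $|x|\le r$ and $|y|\le r'$, I expand
\[bb'-aa'=ay+a'x+xy.\]
By the ultrametric inequality $|bb'-aa'|\le\rho$. Hence the product set lies in $B_{\mathcal B}(aa',\rho)$.

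\emph{Product, minimality.} This is the main step. Suppose $D$ is any disc containing the product set. It contains $aa'$, so by \cref{lem:recentre} I can recentre and write $D=B_{\mathcal B}(aa',s)$. It remains to show $s\ge\rho$. Because $\hat K$ is algebraically closed with a dense value group, and radii may be irrational, I will work with suprema rather than exact attainment. I choose $x,y\in\hat K$ with $|x|$ arbitrarily close to $r$ and $|y|$ arbitrarily close to $r'$; when $r=0$ or $r'=0$ I take the corresponding element to be $0$. I then split into cases according to which term attains the maximum in $\rho$.

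\emph{Case $\rho=|a|r'$, i.e.\ $|a|r'$ strictly dominates.} I take $x=0$, so $bb'-aa'=ay$ has absolute value $|a||y|$, which approaches $|a|r'$.

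\emph{Case $\rho=|a'|r$.} This is symmetric to the previous case.

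\emph{Case $\rho=rr'$, or two terms tie.} Here I must avoid cancellation among $ay$, $a'x$, $xy$, which may have equal absolute value. I use the freedom to pick several values: for instance $y$ and $y'=uy$ with $|u|=1$ and $\bar u$ generic in the residue field, which is infinite because $\hat K$ is algebraically closed. Then $ay+a'x+xy$ cannot have small absolute value for both choices. A cleaner way to see this is to note that the difference of two products is
\[(a+x)(y-y')\quad\text{or}\quad(a'+y)(x-x'),\]
and the strict triangle inequality then forces one of the two products to be at distance close to $\rho$ from $aa'$.

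In every case $s$ is at least every value approaching $\rho$, so $s\ge\rho$. Hence $B_{\mathcal B}(aa',\rho)$ is the smallest disc containing the product set, as claimed. The expected obstacle is this cancellation bookkeeping in the tie case, together with handling radii not in $|\hat K|$ by a limiting argument.
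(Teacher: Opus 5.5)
Your sum argument, the upper bound $|bb'-aa'|\le\rho$, and your first two cases match the paper's proof. The gap is in your final case. There the choice of $x,y$ is constrained only by $|x|,|y|$ being close to $r,r'$, and that is exactly where cancellation occurs. As stated, neither formulation is a proof.

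\emph{First formulation.} It is false for admissible choices. Take $a=1$, $r=1$, $a'=0$, $r'=1$. This is a tie $|a|r'=rr'=\rho=1$, so it lands in your third case. With $x=-1$ one has $ay+a'x+xy=(a+x)y+a'x=0$ for every $y$. Hence both $y$ and $uy$ give $0$.

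\emph{``Cleaner'' formulation.} The ultrametric inequality only yields $s\ge|a+x|\,|y-y'|$ and $s\ge|a'+y|\,|x-x'|$. These bounds are vacuous when $a+x$ or $a'+y$ cancels. For instance, if $r=|a|$ and $r'=|a'|$, the admissible choice $x=-a$, $y=-a'$ makes both differences vanish.

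\emph{The tie case is self-inflicted.} In your Case 1 you do not need strict domination. With $x=0$ the quantity $bb'-aa'=ay$ is a single term, so $|bb'-aa'|=|a|\,|y|\to|a|r'$ whenever $\rho=|a|r'$, ties or not. Symmetrically, take $y=0$ when $\rho=|a'|r$. The only remaining case is $rr'>\max(|a|r',|a'|r)$, which forces $r>|a|$ and $r'>|a'|$. Choose $|a|<|x|\le r$ and $|a'|<|y|\le r'$ close to $r,r'$. Then $|xy|$ strictly exceeds $|ay|$ and $|a'x|$, so $|bb'-aa'|=|xy|$ and no cancellation is possible. This is how the paper finishes.

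\emph{Rescuing your difference idea.} It gives a uniform proof once the choice is pinned down. Take $x=0$ if $r\le|a|$, and $|a|<|x|\le r$ otherwise, so that $|a+x|\to\max(|a|,r)$. Take $y'=0$ and $|y|\to r'$. Then $s\ge\max(|a|,r)\,r'$, and symmetrically $s\ge\max(|a'|,r')\,r$. The larger of these two bounds is $\rho$. No generic $\bar u$ and no infinite residue field is needed.
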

\begin{proof}
The inclusion $B+B'\subseteq B_{\mathcal{B}}(a+a',\max(r,r'))$ follows from the ultrametric inequality. 
Conversely, if $z\in B_{\mathcal{B}}(a+a',\max(r,r'))$, then when $r\geq r'$ we can write $z=(a+z-a-a')+a'$, and when $r'\geq r$ we can write $z=a+(a'+z-a-a')$. 
This proves the formula for $B+B'$.

For the product formula, write elements of $B$ and $B'$ as $a+x$ and $a'+y$, with $|x|\leq r$ and $|y|\leq r'$. 
Then note that
$$
  (a+x)(a'+y)-aa'=ay+a'x+xy,
$$
so every product lies in $B_{\mathcal{B}}\big(aa',R\big)$, where $R=\max(|a|r',|a'|r,rr')$. 
We claim that this radius is minimal: since $aa'$ itself is a product, it suffices to find products whose distance from $aa'$ is arbitrarily close to $R$. 
If $R=0$, this is trivial, so we can assume that $R \ne 0$.
If $R=|a|r'$ with $a\neq 0$, then take $x=0$ and choose $y$ with $|y|$ arbitrarily close to $r'$. 
The case $R=|a'|r$ is analogous. 
Finally consider the case where $R=rr'$ and we have $r>|a|$ and $r'>|a'|$. 
Then we can pick $x,y$ with $|x|$ and $|y|$ arbitrarily close to $r$ and $r'$, respectively, and $|x|>|a|$ and $|y|>|a'|$, so that
$d((a+x)(a'+y),aa') = \lvert x y \rvert$, and we are done.
In each of these cases, we used the fact that the value group $\lvert \hat K ^ \times \rvert$ is dense in $\RR_{>0}$.
\end{proof}

The explicit formulas in \cref{lem:algebraDefinition} show that these operations behave well with respect to the ultrametric structure. In particular, they satisfy the expected algebraic properties:

\begin{lemma}
  \label{lem:algebraRules}
    On $\mathcal B^1$, the operations $+$ and $\cdot$ are commutative and associative, with identities
    $B_{\mathcal{B}}(0,0)$ and $B_{\mathcal{B}}(1,0)$, respectively.  Moreover, $+$ distributes over the join operation:
    \[
      B+(B'\vee B'')=(B+B')\vee(B+B'') \qquad\text{for all }B,B',B''\in\mathcal B^1.
    \]
    The coordinate-wise operations on $\mathcal B^n$ satisfy the same properties.
\end{lemma}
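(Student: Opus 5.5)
The plan is to reduce everything to the explicit formulas of \cref{lem:algebraDefinition} and then work with centres and radii. For $+$, commutativity and associativity follow directly from the set-theoretic definition $B+B'=\{b+b'\}$: Minkowski addition inherits both properties from addition in $\hat K$. Alternatively, we can read them off from $B_{\mathcal B}(a+a',\max(r,r'))$, since $+$ on $K$ and $\max$ on $\RR_{\geq 0}$ are commutative and associative. The identity is immediate: $B_{\mathcal B}(a,r)+B_{\mathcal B}(0,0)=B_{\mathcal B}(a,\max(r,0))=B_{\mathcal B}(a,r)$.

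For $\cdot$, commutativity is clear from the symmetric formula $B_{\mathcal B}(aa',\max(|a|r',|a'|r,rr'))$. The identity is also immediate: with $B'=B_{\mathcal B}(1,0)$ the radius becomes $\max(0,r,0)=r$. For associativity, we compute both bracketings with \cref{lem:algebraDefinition}. For $(BB')B''$ the centre is $aa'a''$ and the radius is
\[
\max\bigl(|aa'|r'',\,|a''|\max(|a|r',|a'|r,rr'),\,r''\max(|a|r',|a'|r,rr')\bigr).
\]
This equals the maximum of all seven terms $|a'a''|r$, $|aa''|r'$, $|aa'|r''$, $|a''|rr'$, $|a'|rr''$, $|a|r'r''$ and $rr'r''$, which is symmetric in the three factors. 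Hence it equals the radius of $B(B'B'')$. This is the one step requiring care. Since the product of discs is defined via a smallest enclosing disc rather than as a set, the abstract argument from $\hat K$ does not apply directly, and the explicit formula is what makes it work. One subtlety is that \cref{lem:algebraDefinition} must be applied with the centre $aa'$ of $BB'$. Its absolute value $|aa'|$ is the correct one to use in the formula, because the formula holds for any centre: recentring a disc of radius $R$ changes $|a|$ only when $|a|\le R$, and in that case the $\max$ is unaffected. I would verify this recentring invariance briefly first.

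For distributivity over the join, we split into the two cases of \cref{def:joinAndMetric}. Translation by $a$ is an isometric bijection of $\hat K$ preserving inclusion. Hence $B'\subseteq B''$ if and only if $a+B'\subseteq a+B''$, where $a+B'$ denotes the translate of $B'$. If $r\le\min(r',r'')$, then $B+B^{(i)}=a+B^{(i)}$, so the join commutes with translation. In general $B+B'$ is the translate of $B_{\mathcal B}(a',\max(r,r'))$. Write $s=\max(r,r')$ and $s''=\max(r,r'')$. The right-hand side is then the smallest disc containing $a+B_{\mathcal B}(a',s)$ and $a+B_{\mathcal B}(a'',s'')$, which is $a+B_{\mathcal B}(a',\max(s,s'',|a'-a''|))$. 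The left-hand side is $a+B_{\mathcal B}(a',\max(r,\rho))$, where $\rho=\max(r',r'',|a'-a''|)$ is the join radius, using that the join radius is always $\max(r',r'',|a'-a''|)$ in both cases of the definition. The two maxima agree.

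Finally, the $n$-dimensional statements follow coordinatewise, since all operations on $\mathcal B^n$ are defined factor by factor.
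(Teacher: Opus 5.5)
Your proposal is correct and follows the paper's proof. Both arguments read commutativity and the identities off the explicit formulas of \cref{lem:algebraDefinition}. Associativity of $\cdot$ comes from the symmetric seven-term radius $\max(|a'a''|r,|aa''|r',|aa'|r'',|a''|rr',|a'|rr'',|a|r'r'',rr'r'')$, and distributivity from both sides equalling $B_{\mathcal{B}}(a+a',\max(r,r',r'',|a'-a''|))$. Your recentring check is harmless but redundant, since \cref{lem:algebraDefinition} is stated and proved for an arbitrary choice of centres.
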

\begin{proof}
Commutativity of $+$ and $\cdot$ follows directly from the formulas in
\cref{lem:algebraDefinition}. Associativity of $+$ is immediate.  For
associativity of $\cdot$, we verify from the product formula that both
$(B_{\mathcal{B}}(a,r)\cdot B_{\mathcal{B}}(b,s))\cdot B_{\mathcal{B}}(c,t)$
and
$B_{\mathcal{B}}(a,r)\cdot (B_{\mathcal{B}}(b,s)\cdot B_{\mathcal{B}}(c,t))$
have centre $abc$ and radius
\[
  \max(|ab|t,\,  |ac|s,\,  |bc|r,\,  |a|st,\,  |b|rt,\, |c|rs,\, rst).
\]
The identity statements are also immediate from the formulae.

For distributivity over joins, let
$B=B_{\mathcal{B}}(a,r)$, $B'=B_{\mathcal{B}}(b,s)$, and
$B''=B_{\mathcal{B}}(c,t)$.  Then
\[
  B+(B'\vee B'')=B_{\mathcal{B}}(a+b,\, \max(r,s,t,|b-c|)),
\]
and the same expression is obtained from $(B+B')\vee(B+B'')$.
\end{proof}

\begin{example}
  The operation $\cdot$ does not, in general, distribute over $+$. For example,
if $A=B_{\mathcal{B}}(0,1)$, $B=B_{\mathcal{B}}(1,0)$ and
$C=B_{\mathcal{B}}(-1,0)$, then $A\cdot(B+C)=B_{\mathcal{B}}(0,0)$
whereas $(A\cdot B)+(A\cdot C)=B_{\mathcal{B}}(0,1)$.
\end{example}

This failure of distributivity demonstrates that $(\mathcal B^1,+,\cdot)$ does not form a semiring in the usual sense. However, a closely related structure does arise by modifying the operations slightly.

Appending an absorbing empty disc $\emptyset$
gives rise to an extended disc space $\overline{\mathcal B^1}=\mathcal B^1\cup\{\emptyset\}$. Equipping this space with addition given by the join $\vee$ and multiplication given by $+$, this becomes an idempotent semiring: $\emptyset$ is the additive identity, $B_{\mathcal{B}}(0,0)$ is the multiplicative identity, and the distributive law follows from \cref{lem:algebraRules}. The same construction extends coordinate-wise to $\mathcal B^n$.


\subsection{Extending Polydisc Spaces}

We consider the behaviour of polydisc spaces under extensions of the base field. Many natural constructions on polydisc spaces require passing to larger valued fields, for example, when the value group or the residue field of $K$ is not sufficiently large to realise the embeddings. These extensions will play an important role later when studying embeddings of polydisc spaces in \cref{sec:embedding}.

From a geometric perspective, such extensions have a simple interpretation: extending the value group refines the metric structure (introducing new intermediate rational radii), while extending the residue field increases the branching at rational discs. The following lemma makes this notion precise.

\begin{lemma}
  \label{lem:extension}
  Let $K\subseteq L$ be a valued field extension and let $\mathfrak K\subseteq \mathfrak L$ be the corresponding residue field extension.  Let $B_K(\cdot)$ and $B_L(\cdot)$ denote disks over $K$ and $L$, respectively, and let $\mathcal B^1(K)$, $\mathcal B^1(L)$ denote their respective disc spaces.  We will consider $\mathcal B^1(K)$ as a subset of $\mathcal B^1(L)$ under the inclusion
  \begin{equation*}
    B^1(K)\lhook\joinrel\longrightarrow B^1(L), \qquad B_K(a,r) \longmapsto B_L(a,r).
  \end{equation*}
  Then for $B=B_L(a,r)\in\mathcal B^1(L)$ with positive radius, we have
  \begin{enumerate}
  \item $B\in \mathcal B^1(K)$ if and only if $a=a'+a''$ for some $a'\in K$ and $|a''|\leq r$, and
  \item if $B\in \mathcal B^1(K)$, then
  \begin{equation*}
    T_B\mathcal B^n(L) = T_B\mathcal B^n(K) \cup \{ \vec{v}_{B,\bar c}\mid \bar c\in O_{B,K\hookrightarrow L} \},
  \end{equation*}
  where
  \begin{enumerate}
  \item $O_{B,K\hookrightarrow L}=\emptyset$ if $B\in \mathcal B^1(L)$ irrational,
  \item $O_{B,K\hookrightarrow L}=\mathfrak L\setminus\{0\}$ if $B\in \mathcal B^1(L)$ rational and $B\in \mathcal B^1(K)$ irrational, and
  \item $O_{B,K\hookrightarrow L}=\mathfrak L\setminus\mathfrak K$ if $B\in \mathcal B^1(K)$ rational.
  \end{enumerate}
\end{enumerate}
\end{lemma}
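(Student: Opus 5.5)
The plan is to work entirely inside $\hat L$, the completion of the algebraic closure of $L$. All discs in both $\mathcal B^1(K)$ and $\mathcal B^1(L)$ live there, since $\hat K\subseteq\hat L$ and the absolute value extends uniquely. For part (1) I would use \cref{lem:recentre}(i). The disc $B=B_L(a,r)$ lies in the image of $\mathcal B^1(K)$ exactly when $B=B_L(a',r)$ for some $a'\in K$; radii agree because the radius of a disc of positive radius in $\hat L$ is intrinsic, by density of $|\hat L^\ast|$. Recentring shows this happens if and only if $a'\in B$, that is, $|a-a'|\le r$. Setting $a''\coloneqq a-a'$ gives the stated decomposition, and the converse is the same computation read backwards. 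No further input is needed.

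For part (2) I would reduce to $n=1$ and then apply \cref{prop:tangentSpaceDisk} to both fields. For $n=1$ and $B\in\mathcal B^1(K)$ of positive radius, the direction $\vec v_{B,\infty}$ exists over both fields. By (1) we may take the centre $a\in K$; the residue maps are then computed with the same splitting element $\pi^{-\log(r)}\in K$ whenever $r\in|K^\ast|$. The three cases go as follows.

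If $B$ is irrational over $L$, it is irrational over $K$ too, since $|K^\ast|\subseteq|L^\ast|$. Then both tangent sets are $\{\vec v_{B,\bar 0},\vec v_{B,\infty}\}$, which gives $O=\emptyset$.

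If $B$ is rational over $L$ but irrational over $K$, then $T_B\mathcal B^1(K)=\{\vec v_{B,\bar 0},\vec v_{B,\infty}\}$ and $T_B\mathcal B^1(L)=\PP^1_{\mathfrak L}$. Here I must check that the inward $K$-direction $\vec v_{B,\bar0}$, the component of discs strictly inside $B$, contains the $L$-component $C(\bar 0)$ and is represented by it. A disc $B_K(a',r')\subsetneq B$ with $a'\in K$ has $|a-a'|<r$ because $r\notin|K|$, so its $L$-residue is $\bar 0$. The new directions are therefore $\mathfrak L\setminus\{0\}$.

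If $B$ is rational over $K$, the $K$-directions indexed by $\bar c\in\mathfrak K$ map to the $L$-directions with the same residue. This is compatible because the residue of $\pi^{-\log r}(a-a')$ with $a,a'\in K$ lies in $\mathfrak K\subseteq\mathfrak L$. The remaining directions are $\mathfrak L\setminus\mathfrak K$.

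The step I expect to be the main obstacle is making the comparison of tangent sets precise, because $T_B\mathcal B^1(K)$ and $T_B\mathcal B^1(L)$ are equivalence classes of geodesics in different spaces. I would first verify that the inclusion $\mathcal B^1(K)\hookrightarrow\mathcal B^1(L)$ is isometric; this holds because joins and radii are computed identically in $\hat L$. It follows that geodesics $\gamma_{BB'}$ from \cref{lem:geodesics} map to geodesics, so there is an injective map $T_B\mathcal B^1(K)\to T_B\mathcal B^1(L)$. Injectivity comes from \cref{lem:tangentDirectionsRational}(iii): distinct components stay distinct because their residue labels differ.

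For general $n$ the statement should be read coordinatewise via \cref{prop:tangentSpacePolydisk}. I would then note that the $O_i$ enlarge by exactly the sets computed above, and leave the product bookkeeping as routine.
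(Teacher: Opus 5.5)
Your proposal is correct and follows the paper's route. The paper's proof is only the one-line appeal to \cref{prop:tangentSpaceDisk} over both $K$ and $L$, and your steps supply exactly what that appeal leaves implicit: the recentring argument for (1), the observation that $|a-a'|<r$ puts the inward $K$-direction into the $L$-component $C(\bar 0)$ when $r\in|L^\ast|\setminus|K^\ast|$, and the label comparison when $r\in|K^\ast|$. One step you present as automatic is that both fields compute residues with the same element $\pi^{-\log(r)}$. This is really a compatibility assumption on the splittings of $K$ and $L$, since otherwise the old labels can sit in $\mathfrak L$ as a rescaled copy of $\mathfrak K$; the paper leaves the same assumption tacit.
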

\begin{proof}
  The statement follows from the description of tangent directions in \cref{prop:tangentSpaceDisk}.
\end{proof}

The lemma shows that field extensions affect the local geometry of $\mathcal B^1$ in two distinct ways: new radii appear when the value group is enlarged and new tangent directions appear when the residue field is extended.

\begin{example}
  \label{ex:extension}\
  Let $\FF_2$ be the field with two elements and let $K=\FF_2(\!(t)\!)$ be the field of Laurent series thereover. Consider the following two extensions:
  \begin{enumerate}
  \item[(i)] $\FF_2(\!(t)\!)\subseteq L_1\coloneqq \FF_2(\!(t^{1/2})\!)=\{\sum_{k\geq k_0}a_kt^{k/2}\mid k_0\in\ZZ, a_k\in\FF_2\}$.
  \item[(ii)] $\FF_2(\!(t)\!)\subseteq L_2\coloneqq \FF_4(\!(t)\!)=\{\sum_{k\geq k_0}a_kt^{k}\mid k_0\in\ZZ, a_k\in\FF_4\}$, where $\FF_4$ is the field with $4$ elements, i.e., $\FF_4=\{0,1,\alpha,\alpha+1\}$ with $\alpha+\alpha=0$ and $\alpha\cdot\alpha=\alpha+1$.
  \end{enumerate}

These two extensions illustrate the two phenomena described above. \cref{fig:extension} illustrates $\mathcal B^1(K)\subseteq \mathcal B^1(L_1)$ and $\mathcal B^1(K)\subseteq \mathcal B^1(L_2)$.  Note that $L_1$ extends the value group and hence $\mathcal B^1(L_1)$ has edges of length $1/2$ while $L_2$ extends the residue field and hence $\mathcal B^1(L_2)$ has vertices of degree $5$.
\end{example}

\begin{figure}[t]
  \centering
  \begin{tikzpicture}
    \node (left) at (0,0)
    {
      \begin{tikzpicture}[x={(1.2,0)},y={(0,1.5)}]
        \coordinate (v0) at (0,0);
        \coordinate (v1) at (-2,-1);
        \coordinate (v2) at (2,-1);
        \draw
        (v0) -- ++(0,0.3)
        (v0) -- node[pos=0.5,inner sep=0] (w1) {} (v1)
        (v0) -- node[pos=0.5,inner sep=0] (w2) {} (v2)
        (v1) -- ++(-0.3,-0.3)
        (v1) -- ++(0.3,-0.3)
        (v2) -- ++(-0.3,-0.3)
        (v2) -- ++(0.3,-0.3);
        \draw[dotted]
        (v0) ++(0,0.3) -- ++(0,0.2)
        (v1) ++(-0.3,-0.3) -- ++(-0.2,-0.2)
        (v1) ++(0.3,-0.3) -- ++(0.2,-0.2)
        (v2) ++(-0.3,-0.3) -- ++(-0.2,-0.2)
        (v2) ++(0.3,-0.3) -- ++(0.2,-0.2);
        \fill
        (v0) circle (2pt)
        (v1) circle (2pt)
        (v2) circle (2pt);
        \coordinate (w3) at (-0.666,-1);
        \coordinate (w4) at (0.666,-1);
        \fill[blue!70!black]
        (w1) circle (2pt)
        (w2) circle (2pt)
        (w3) circle (2pt)
        (w4) circle (2pt);
        \draw[blue!70!black]
        (w1) -- (w3)
        (w2) -- (w4)
        (w3) -- ++(-0.3,-0.3)
        (w3) -- ++(0.3,-0.3)
        (w4) -- ++(-0.3,-0.3)
        (w4) -- ++(0.3,-0.3);
        \draw[dotted,blue!70!black]
        (w3) ++(-0.3,-0.3) -- ++(-0.2,-0.2)
        (w3) ++(0.3,-0.3) -- ++(0.2,-0.2)
        (w4) ++(-0.3,-0.3) -- ++(-0.2,-0.2)
        (w4) ++(0.3,-0.3) -- ++(0.2,-0.2);
        \node[anchor=base west,font=\footnotesize] at (v0) {$B(0,0)=B(1,0)$};
        \node[anchor=base east,font=\footnotesize] at (v1) {$B(0,1)$};
        \node[anchor=base west,font=\footnotesize] at (v2) {$B(1,1)$};
        \node[anchor=base east,font=\footnotesize,blue!70!black] at (w1) {$B(t^{1/2},\frac 12)=B(0,\frac 12)$};
        \node[anchor=base west,font=\footnotesize,blue!70!black] at (w2) {$B(1,\frac 12)=B(1+t^{1/2},\frac 12)$};
        \node[anchor=east,font=\scriptsize,blue!70!black] at (w3) {$B(t^{1/2}\!,\!1)$};
        \node[anchor=west,font=\scriptsize,blue!70!black] at (w4) {$B\!(\!1\!+\!t^{1\!/\!2}\!,\!1\!)$};
      \end{tikzpicture}
    };
    \node (right) at (7.5,0)
    {
      \begin{tikzpicture}[x={(1.2,0)},y={(0,1.5)}]
        \coordinate (v0) at (0,0);
        \coordinate (v1) at (-2,-1);
        \coordinate (v2) at (2,-1);
        \draw
        (v0) -- ++(0,0.3)
        (v0) -- (v1)
        (v0) -- (v2)
        (v1) -- ++(-0.3,-0.3)
        (v1) -- ++(0.3,-0.3)
        (v2) -- ++(-0.3,-0.3)
        (v2) -- ++(0.3,-0.3);
        \draw[dotted]
        (v0) ++(0,0.3) -- ++(0,0.2)
        (v1) ++(-0.3,-0.3) -- ++(-0.2,-0.2)
        (v1) ++(0.3,-0.3) -- ++(0.2,-0.2)
        (v2) ++(-0.3,-0.3) -- ++(-0.2,-0.2)
        (v2) ++(0.3,-0.3) -- ++(0.2,-0.2);
        \coordinate (w1) at (-0.666,-1);
        \coordinate (w2) at (0.666,-1);
        \draw[red!50!black]
        (v0) -- (w1)
        (v0) -- (w2)
        (v1) -- ++(-0.133,-0.3)
        (v1) -- ++(0.133,-0.3)
        (v2) -- ++(-0.133,-0.3)
        (v2) -- ++(0.133,-0.3)
        (w1) -- ++(-0.3,-0.3)
        (w1) -- ++(-0.133,-0.3)
        (w1) -- ++(0.133,-0.3)
        (w1) -- ++(0.3,-0.3)
        (w2) -- ++(-0.3,-0.3)
        (w2) -- ++(-0.133,-0.3)
        (w2) -- ++(0.133,-0.3)
        (w2) -- ++(0.3,-0.3);
        \draw[red!80!black,dotted]
        (v1) ++(-0.133,-0.3) -- ++(-0.087,-0.2)
        (v1) ++(0.133,-0.3) -- ++(0.087,-0.2)
        (v2) ++(-0.133,-0.3) -- ++(-0.087,-0.2)
        (v2) ++(0.133,-0.3) -- ++(0.087,-0.2)
        (w1) ++(-0.3,-0.3) -- ++(-0.2,-0.2)
        (w1) ++(-0.133,-0.3) -- ++(-0.087,-0.2)
        (w1) ++(0.133,-0.3) -- ++(0.087,-0.2)
        (w1) ++(0.3,-0.3) -- ++(0.2,-0.2)
        (w2) ++(-0.3,-0.3) -- ++(-0.2,-0.2)
        (w2) ++(-0.133,-0.3) -- ++(-0.087,-0.2)
        (w2) ++(0.133,-0.3) -- ++(0.087,-0.2)
        (w2) ++(0.3,-0.3) -- ++(0.2,-0.2);
        \fill
        (v0) circle (2pt)
        (v1) circle (2pt)
        (v2) circle (2pt);
        \fill[red!80!black]
        (w1) circle (2pt)
        (w2) circle (2pt);
        \node[anchor=base west,font=\footnotesize] at (v0) {$B(0,0)=B(1,0)$};
        \node[anchor=base east,font=\footnotesize] at (v1) {$B(0,1)$};
        \node[anchor=base west,font=\footnotesize] at (v2) {$B(1,1)$};
        \node[anchor=base east,font=\scriptsize,red!80!black] at (w1) {$B(\alpha,1)$};
        \node[anchor=base west,font=\scriptsize,red!80!black] at (w2) {$B(1\!+\!\alpha,1)$};
      \end{tikzpicture}
    };
    \node[below] at (left.south) {$\FF_2(\!(t)\!)\subseteq \FF_2\textcolor{blue!70!black}{(\!(t^{1/2})\!)}$};
    \node[below] at (right.south) {$\FF_2(\!(t)\!)\subseteq \textcolor{red!80!black}{\FF_4}(\!(t)\!)$};
  \end{tikzpicture}\vspace{-2mm}
  \caption{Disc spaces under field extensions (all radii in valuation).}
  \label{fig:extension}
\end{figure}

In summary, extending the value group refines distances, while extending the residue field increases branching. We point out that in practice, standard computer algebra packages support working over arbitrary algebraic extensions (see for instance \cite{OSCAR}).

\subsection{Polydisc Spaces and Berkovich Geometry}\label{sec:Berkovich}

The polydisc spaces we introduce in this paper are related to Berkovich analytic geometry, but are designed for a rather different purpose. While Berkovich spaces provide a highly general framework for non-Archimedean analytic geometry, our focus is on constructing a concrete geometric setting suitable for optimisation, metric geometry, and computation. In particular, the spaces $\mathcal B^n$ and $\mathcal H^n$ are equipped with explicit order structures, metrics, convexity notions, tangent directions, and hierarchical decompositions that support algorithmic procedures developed later in the paper.

Nevertheless, there is a natural connection between our construction and Berkovich affine space. To a polydisc \(B=B_{\mathcal{B}}(a,r)\in\mathcal B^n\), we associate the multiplicative seminorm
\[
  |f|_B \coloneqq \sup_{z\in B}|f(z)|,\qquad f\in K[x_1,\dots,x_n].
\]
Equivalently, if $f=\sum_\alpha c_\alpha (x-a)^\alpha$, then
\[
  |f|_B=\max_\alpha |c_\alpha| r^\alpha,
\]
with the usual multi-index convention $r^\alpha=\prod_i r_i^{\alpha_i}$. This is precisely the Gauss seminorm associated with the closed polydisc \(B\), and therefore determines a canonical embedding
\(
  \mathcal B^n \hookrightarrow \BA_K^{n,\mathrm{an}}.
\)

Accordingly, the real-valued functions considered throughout this paper, such as $B \mapsto |f|_B$, may be viewed as restrictions of the standard evaluation functions on Berkovich affine space.

In dimension one, this embedding recovers the familiar disc-type points of the Berkovich affine line. When $K$ is algebraically closed, the points of radius zero correspond to type I points, rational discs of positive radius correspond to type II points, and irrational discs correspond to type III points \cite{baker2008introduction,BakerRumely2010}. Type IV points, represented by nested families of discs with empty intersection, are intentionally excluded from our framework: restricting to genuine discs preserves the explicit tree-like and hierarchical structures needed for the optimisation procedures developed later in the paper. For general $K$, the full Berkovich affine line contains additional points not represented by discs centred in $K$ \cite[Corollary I.5.6]{poineau2021berkovich}; in this sense, $\mathcal B^1$ should be viewed as the space of closed discs over $\widehat K$ whose centres lie in $K$.

For $n>1$, the distinction from the full Berkovich analytification becomes much more apparent. The space $\mathbb A_K^{n,\mathrm{an}}$ contains a large collection of multiplicative seminorms with no comparably explicit classification \cite{jonsson2014}, whereas $\mathcal B^n$ consists only of product Gauss points associated with closed polydiscs. This restriction is deliberate: it yields a tractable geometric space with explicit combinatorial and metric structure, making it possible to define and analyse the optimisation procedures developed later in the paper. In particular this allows us to define very explicit additional structure such as the order, joins, metrics, tangent spaces, and hierarchical decompositions studied here.


\section{A Structural Example: Embedding Metric Trees in $\mathcal{H}^1$}

\label{sec:embedding}
We illustrate the hierarchical nature of polydisc spaces by studying isometric embeddings of metric trees into the hyperbolic polydisc space $\mathcal{H}^1$. Metric trees form a fundamental class of geodesic metric spaces characterised by their branching structure and provide a natural setting in which the properties developed in the previous section, such as the geodesic structure, partial order, and combinatorial nature of the space, can be showcased explicitly.

We show that the embedding problem into $\mathcal{H}^1$ admits a simple and explicit characterisation in terms of the local combinatorics of the tree and the arithmetic of the base field. In particular, the existence of an isometric embedding can be expressed in terms of vertex degrees and edge lengths, which reflects the discrete and hierarchical structure of both the tree and the ambient space.

\begin{proposition}
  \label{prop:metricTreeEmbedding1}
  Let $\Gamma$ be a metric tree and let $\Gamma'$ be the metric tree obtained by suppressing all degree~$2$ vertices of $\Gamma$.  Then there is an isometric embedding $\Gamma\hookrightarrow \mathcal H^1$ if and only if there is an isometric embedding $\Gamma'\hookrightarrow \mathcal H^1$.
\end{proposition}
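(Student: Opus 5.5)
The key observation is that $\Gamma$ and $\Gamma'$ are the same metric space. Suppressing a degree~$2$ vertex $x$ with incident edges $[u,x]$ and $[x,w]$ of lengths $\ell_1,\ell_2$ replaces them by a single edge $[u,w]$ of length $\ell_1+\ell_2$. The underlying topological space is unchanged, and so is the path metric: the distance between any two points is the length of the unique arc joining them, and this arc is the same point set in both trees. Hence there is a canonical isometry $\iota\colon\Gamma\to\Gamma'$, which is the identity on underlying point sets and forgets that $x$ was a vertex. I would first make this precise by induction on the number of suppressed vertices. For an infinite tree, the same argument works edge by edge, since each maximal chain of degree~$2$ vertices is replaced by one edge whose length is the sum of the lengths along the chain.

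The plan is as follows.

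\begin{itemize}
\item[(1)] If $\phi\colon\Gamma'\hookrightarrow\mathcal H^1$ is an isometric embedding, then $\phi\circ\iota\colon\Gamma\hookrightarrow\mathcal H^1$ is an isometric embedding, being a composition of isometric embeddings.
\item[(2)] Conversely, if $\psi\colon\Gamma\hookrightarrow\mathcal H^1$ is an isometric embedding, then $\psi\circ\iota^{-1}$ is an isometric embedding of $\Gamma'$.
\end{itemize}

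The only point that needs care is what ``isometric embedding'' means. If it means a distance-preserving map of the whole metric tree, including interior points of edges, then it is purely metric, and (1) and (2) finish the proof.

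If instead the paper intends embeddings to be determined by vertex images, with edges sent to geodesics $\gamma_{BB'}$ from \cref{lem:geodesics}, I would add a short argument for each direction. For (1), send a suppressed vertex $x$ on the edge $[u,w]$ to the point $\gamma_{\phi(u)\phi(w)}(\ell_1/(\ell_1+\ell_2))$. Because this geodesic has constant speed, the point lies at distances $\ell_1$ and $\ell_2$ from the endpoints. Since $(\mathcal H^1,d_{\mathcal H})$ is uniquely geodesic, by \cref{lem:geodesics}(iii), the geodesics from $\phi(u)$ to $\phi(x)$ and from $\phi(x)$ to $\phi(w)$ are the two pieces of $\gamma_{\phi(u)\phi(w)}$. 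For (2), the geodesics $\psi([u,x])$ and $\psi([x,w])$ concatenate to a path of length $\ell_1+\ell_2=d(\psi(u),\psi(w))$. This path is therefore a geodesic, and by uniqueness it equals $\gamma_{\psi(u)\psi(w)}$, so forgetting $x$ gives a valid embedding of $\Gamma'$.

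I expect the main obstacle to be this bookkeeping about what ``embedding'' means, together with infinite trees, where infinitely many vertices may be suppressed. There are no genuine geometric difficulties, because unique geodesicity of $\mathcal H^1$ handles everything else.
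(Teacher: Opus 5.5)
Your proposal is correct and follows the paper's argument. The paper's proof is the single observation that suppressing degree~$2$ vertices does not change the intrinsic metric structure of the tree, which is exactly your canonical isometry $\iota\colon\Gamma\to\Gamma'$ composed with the given embedding; your extra vertex-based bookkeeping via unique geodesicity of $(\mathcal H^1,d_{\mathcal H})$ is sound but goes beyond what the paper supplies.
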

\begin{proof}
  The statement follows directly from the fact that suppressing degree 2 vertices does not change the intrinsic metric structure of the tree.
\end{proof}

A key feature of embeddings into $\mathcal{H}^1$ is their local nature. As we will now see, the existence of an embedding depends only on local information: Given a subtree $\Gamma \subseteq \Gamma'$, the extendability of an embedding to $\Gamma'$ is determined entirely by the newly added edges and their lengths. In particular, whenever $\Gamma'$ is embeddable, any embedding of $\Gamma$ extends to an embedding of $\Gamma'$. 

\begin{proposition}
  \label{prop:metricTreeEmbedding}
  Let $\Gamma=(V,E,d_\Gamma)$ be a metric tree with no degree $2$ vertices, vertex set $V$, edge set $E\subseteq \binom{V}{2}$, and metric $d_\Gamma$. Let $K$ be the underlying field of the disk space.  Then there is an isometric embedding $\Gamma\hookrightarrow \mathcal H^1$ if and only if
  \begin{enumerate}
  \item \label{enumitem:metricTreeEmbedding1} $\max(\{\deg_\Gamma(v)\mid v\in V\})\leq |\mathfrak K|+1$, and
  \item \label{enumitem:metricTreeEmbedding2} $\{ d_\Gamma(v,v')\mid (v,v')\in E, (v,v') \text{ not a leaf}\} \subseteq \val(K^\ast)$.
  \end{enumerate}
\end{proposition}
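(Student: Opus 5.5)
The plan is to treat both $\Gamma$ and $\mathcal H^1$ as metric trees and to compare branching data. By \cref{lem:geodesics}, $(\mathcal H^1,d_{\mathcal H})$ is uniquely geodesic and tree-like. Its local structure is given by \cref{prop:tangentSpaceDisk}: a rational disc of positive radius has $|\mathfrak K|+1$ tangent directions, indexed by $\PP^1_{\mathfrak K}$, and an irrational disc has exactly two. Since $\mathcal H^1$ contains no radius-zero discs, every point of $\mathcal H^1$ has at least two directions.

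\textbf{Necessity.} Let $\iota\colon\Gamma\hookrightarrow\mathcal H^1$ be an isometric embedding. Each edge of $\Gamma$ is a geodesic, so its image is the unique geodesic $\gamma_{\iota(v)\iota(v')}$. Consider a vertex $v$ of degree $\geq 3$. The edges at $v$ leave $\iota(v)$ in pairwise distinct tangent directions: if two of them shared an initial segment, points near $v$ on the two edges would be at distance $<$ their $\Gamma$-distance, contradicting isometry.

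Hence $\iota(v)$ has at least three directions. By \cref{prop:tangentSpaceDisk} it must be a rational disc, and $\deg_\Gamma(v)\leq|T_{\iota(v)}\mathcal H^1|=|\mathfrak K|+1$. Degree-$1$ vertices impose no condition, and there are no degree-$2$ vertices, so condition~(1) holds.

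Now take a non-leaf edge $(v,v')$. Both endpoints have degree $\geq3$, so $\iota(v)=B_{\mathcal H}(a,w)$ and $\iota(v')=B_{\mathcal H}(a',w')$ with $w,w'\in\val(K^\ast)$. The distance is then either $|w-w'|$ or $(w-\val(a-a'))+(w'-\val(a-a'))$, and both lie in the group $\val(K^\ast)$. This gives condition~(2).

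\textbf{Sufficiency.} I would build $\iota$ recursively. If $\Gamma$ is a single edge, map it to a vertical segment $B_{\mathcal H}(0,t)$, $t\in[0,\ell]$. Otherwise:
\begin{itemize}
\item Root $\Gamma$ at a vertex $v_0$ of degree $\geq3$ and set $\iota(v_0)=B_{\mathcal H}(0,0)$, which is rational.
\item Process vertices in breadth-first order. Suppose $\iota(v)$ is a rational disc and the edge to the parent of $v$ already occupies one direction at $\iota(v)$. By condition~(1), at least $\deg(v)-1\leq|\mathfrak K|$ unused directions remain. Assign distinct unused directions $\vec v_{\iota(v),c}$ to the child edges of $v$.
\item Map each child $v'$ to the point at distance $d_\Gamma(v,v')$ along the chosen direction. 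Going down in residue direction $\bar c$ gives $B_{\mathcal H}(a+\pi^{w}\tilde c,\,w+d)$, and going up in direction $\infty$ gives $B_{\mathcal H}(a,w-d)$. In both cases the centre stays in $K$.
\item If $v'$ is a non-leaf, condition~(2) makes the new radius lie in $\val(K)$, so $\iota(v')$ is again rational and the induction continues. If $v'$ is a leaf, its image may be irrational, which is harmless because the recursion stops there.
\end{itemize}
The directions at the root are handled by the same bookkeeping, with no parent direction to exclude.

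\textbf{Isometry.} Take two vertices of $\Gamma$. Their $\Gamma$-path maps to a concatenation of geodesic segments. Consecutive segments meet at a vertex image and leave it in distinct tangent directions, so the concatenation never backtracks. In a metric tree, a path without backtracking is the unique geodesic, so its length equals the $d_{\mathcal H}$-distance. This extends to interior points of edges.

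The step I expect to be the main obstacle is making ``distinct directions implies no backtracking implies geodesic'' rigorous using only what the paper provides, namely uniqueness of geodesics and the component description in \cref{lem:tangentDirectionsRational}. I would argue this via the connected components $C_{\mathcal H^1\setminus B}(c)$. Continuing along a fresh direction moves into a different component of $\mathcal H^1\setminus\iota(v)$ from the one containing the already-embedded part. Any path between the two parts must therefore pass through $\iota(v)$, which forces distances to add.
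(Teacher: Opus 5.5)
Your proposal is correct and follows essentially the paper's route: the paper also proves sufficiency by starting at a rational disc and recursively sending the edges at each vertex into distinct tangent directions indexed by $\PP^1_{\mathfrak K}$, and it calls necessity straightforward, which your tangent-direction count supplies. Your extra care fills in details the paper leaves implicit: excluding the parent direction at later steps, the component argument showing that distances add, and rooting at a vertex of degree at least $3$ (the paper's ``pick any $v_0$'' fails if $v_0$ is a leaf whose edge length is not in $\val(K^\ast)$, since its neighbour would then land on an irrational disc with only two directions).
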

\begin{proof}
  The proof of the ``$\Rightarrow$'' direction is straightforward.
  For the ``$\Leftarrow$'' direction, we construct an embedding explicitly:  Pick any $v_0\in V$ and any rational $B_0=B(a_0,v_0)\in \mathcal H^1$ with $a_0\in K$ and $v_0\in\val(K)$.  By Assumption \ref{enumitem:metricTreeEmbedding1}, we can pick a distinct $c_i\in \PP^1_{\mathfrak K}$ for any $v_i\in V$ with $(v_0,v_i)\in E$. Let $B_i=(a_i,v_i)$ be given by either
  \begin{itemize}
  \item $a_i\coloneqq a_0$ and $v_i\coloneqq v_0-d(v_0,v_i)$ if $c_i=\infty$, or
  \item $a_i\in K$ with $\overline{t^{-v_0}(a_0-a_i)}=c_i$ and $v_i\coloneqq v_0+d(v_0,v_i)$ if $c_i\in\mathfrak K$.
  \end{itemize}
  This defines an isometric embedding of $v_0$ and its adjacent vertices $v_i$.  Repeating this process yields an isometric embedding of $\Gamma$ into $\mathcal H^1$.
\end{proof}

\begin{example}
  \label{ex:metricTreeEmbedding}
  Note that \cref{prop:metricTreeEmbedding} not only shows when embedding a metric tree into $\mathcal H^1$ is possible. Combined with \cref{lem:extension}, it also explains how the valued field $K$ must be extended to make it possible if it is not; see \cref{fig:metricTreeEmbedding}.

  Let $\Gamma_1$ be the metric tree with vertices $1,2,3,4$, edges $(1,2),(2,3),(2,4)$, and all edge lengths $1$.  Then $\Gamma_1$ is embeddable into $\mathcal H^1(\FF_2(\!(t)\!))$ via
  \begin{equation*}
    \iota_1\colon \quad \Gamma_1\rightarrow \mathcal H^1(\FF_2(\!(t)\!)), \quad 1\mapsto B(1,0), \quad 2\mapsto B(1,1), \quad 3\mapsto B(1,2), \quad 4\mapsto B(1+t,2).
  \end{equation*}

  Let $\Gamma_2$ be $\Gamma_1$ with two extra vertices $5,6$ and extra edges $(3,5)$, $(3,6)$ which have length $1/2$.  While $\Gamma_2$ cannot be rationally embedded into $\mathcal H^1(\FF_2(\!(t)\!))$ since $d_{\Gamma_2}(5,6)=\frac 12\notin \val(\FF_2(\!(t)\!))$, it can be embedded into $\mathcal H^1(\FF_2(\!(t^{1/2})\!))$ by extending $\iota_1$ by
  \begin{equation*}
    \iota_2\colon \quad \Gamma_2\rightarrow \mathcal H^1(\FF_2(\!(t^{1/2})\!)), \quad 5\mapsto B(1,2.5), \quad 6\mapsto B(1+t^2,2.5).
  \end{equation*}

  Let $\Gamma_3$ be $\Gamma_2$ with two extra vertices $7,8$ and extra edges $(2,7)$, $(2,8)$ which have length $1$.  Then $\Gamma_3$ cannot be embedded into $\mathcal H^1(\FF_2(\!(t^{1/2})\!))$ since $\deg_{\Gamma}(2)=5>3=|\FF_2|+1$, however it can be embedded into $\mathcal H^1(\FF_4(\!(t^{1/2})\!))$ by extending $\iota_2$ by
  \begin{equation*}
    \iota_3\colon \quad \Gamma_3\rightarrow \mathcal H^1(\FF_4(\!(t^{1/2})\!)), \quad 7\mapsto B(1+\alpha\cdot t,2), \quad 8\mapsto B(1+(1+\alpha)\cdot t,2),
  \end{equation*}
  where, as in \cref{ex:extension}, $\FF_4$ is the field with $4$ elements, i.e., $\FF_4=\{0,1,\alpha,\alpha+1\}$ with $\alpha+\alpha=0$ and $\alpha\cdot\alpha=\alpha+1$.
\end{example}

\begin{figure}[t]
  \centering
  \begin{tikzpicture}
    \node (m11) at (0,0)
    {
      \begin{tikzpicture}[x={(1,0)},y={(0,1.2)}]
        \useasboundingbox (-1.75,0.25) rectangle (1.75,-2.25);
        \coordinate (p1) at (0,0);
        \coordinate (p2) at (0,-1);
        \coordinate (p3) at (-1,-2);
        \coordinate (p4) at (1,-2);
        \coordinate (p5) at (-1.4,-2.75);
        \coordinate (p6) at (-0.6,-2.75);
        \coordinate (p7) at (0,-2);
        \coordinate (p8) at (2,-2);
        \node[font=\small] (v1) at (p1) {$1$};
        \node[font=\small] (v2) at (p2) {$2$};
        \node[font=\small] (v3) at (p3) {$3$};
        \node[font=\small] (v4) at (p4) {$4$};
        \draw[thick]
        (v1) -- node[left, font=\scriptsize] {$1$} (v2)
        (v2) -- node[left, font=\scriptsize] {$1$} (v3)
        (v2) -- node[right, font=\scriptsize] {$1$} (v4);
      \end{tikzpicture}
    };
    \node (m21) at (0,-4)
    {
      \begin{tikzpicture}[x={(1,0)},y={(0,1.2)}]
        \useasboundingbox (-1.75,0.25) rectangle (1.75,-3);
        \coordinate (p1) at (0,0);
        \coordinate (p2) at (0,-1);
        \coordinate (p3) at (-1,-2);
        \coordinate (p4) at (1,-2);
        \coordinate (p5) at (-1.4,-2.75);
        \coordinate (p6) at (-0.6,-2.75);
        \coordinate (p7) at (0,-2);
        \coordinate (p8) at (2,-2);
        \node[font=\small] (v1) at (p1) {$1$};
        \node[font=\small] (v2) at (p2) {$2$};
        \node[font=\small] (v3) at (p3) {$3$};
        \node[font=\small] (v4) at (p4) {$4$};
        \node[font=\small,red!80!black] (v5) at (p5) {$5$};
        \node[font=\small,red!80!black] (v6) at (p6) {$6$};
        \draw[thick]
        (v1) -- (v2)
        (v2) -- (v3)
        (v2) -- (v4);
        \draw[thick,red!80!black]
        (v3) -- node[left, font=\scriptsize] {$0.5$} (v5)
        (v3) -- node[right, font=\scriptsize] {$0.5$} (v6);
      \end{tikzpicture}
    };
    \node (m31) at (0,-8.5)
    {
      \begin{tikzpicture}[x={(1,0)},y={(0,1.2)}]
        \useasboundingbox (-1.75,0.25) rectangle (1.75,-3);
        \coordinate (p1) at (0,0);
        \coordinate (p2) at (0,-1);
        \coordinate (p3) at (-1,-2);
        \coordinate (p4) at (1,-2);
        \coordinate (p5) at (-1.4,-2.75);
        \coordinate (p6) at (-0.6,-2.75);
        \coordinate (p7) at (0,-2);
        \coordinate (p8) at (2,-2);
        \node[font=\small] (v1) at (p1) {$1$};
        \node[font=\small] (v2) at (p2) {$2$};
        \node[font=\small] (v3) at (p3) {$3$};
        \node[font=\small] (v4) at (p4) {$4$};
        \node[font=\small] (v5) at (p5) {$5$};
        \node[font=\small] (v6) at (p6) {$6$};
        \node[font=\small,red!80!black] (v7) at (p7) {$7$};
        \node[font=\small,red!80!black] (v8) at (p8) {$8$};
        \draw[thick]
        (v1) -- (v2)
        (v2) -- (v3)
        (v2) -- (v4)
        (v3) -- (v5)
        (v3) -- (v6);
        \draw[thick,red!80!black]
        (v2) -- node[left, font=\scriptsize] {$1$} (v7)
        (v2) -- node[right, xshift=2mm, font=\scriptsize] {$1$} (v8);
      \end{tikzpicture}
    };
    \node[anchor=north west,xshift=3mm,yshift=-13mm] at (m11.north west) {$\Gamma_1:$};
    \node[anchor=north west,xshift=3mm,yshift=-13mm] at (m21.north west) {$\Gamma_2:$};
    \node[anchor=north west,xshift=3mm,yshift=-13mm] at (m31.north west) {$\Gamma_3:$};
    \node (m12) at (7,0)
    {
      \begin{tikzpicture}[x={(1.5,0)},y={(0,1.2)}]
        \useasboundingbox (-1.75,0.25) rectangle (2.75,-2.25);
        \coordinate (p1) at (0,0);
        \coordinate (p2) at (0,-1);
        \coordinate (p3) at (-1,-2);
        \coordinate (p4) at (1,-2);
        \coordinate (p5) at (-1.4,-2.75);
        \coordinate (p6) at (-0.6,-2.75);
        \coordinate (p7) at (0,-2);
        \coordinate (p8) at (2.2,-2);
        \node[font=\scriptsize] (v1) at (p1) {$B(1,0)$};
        \node[font=\scriptsize] (v2) at (p2) {$B(1,1)$};
        \node[font=\scriptsize] (v3) at (p3) {$B(1,2)$};
        \node[font=\scriptsize] (v4) at (p4) {$B(1\!+\!t,2)$};
        \draw[thick]
        (v1) -- (v2)
        (v2) -- (v3)
        (v2) -- (v4);
      \end{tikzpicture}
    };
    \node (m22) at (7,-4)
    {
      \begin{tikzpicture}[x={(1.5,0)},y={(0,1.2)}]
        \useasboundingbox (-1.75,0.25) rectangle (2.75,-3);
        \coordinate (p1) at (0,0);
        \coordinate (p2) at (0,-1);
        \coordinate (p3) at (-1,-2);
        \coordinate (p4) at (1,-2);
        \coordinate (p5) at (-1.5,-2.75);
        \coordinate (p6) at (-0.5,-2.75);
        \coordinate (p7) at (0,-2);
        \coordinate (p8) at (2.2,-2);
        \node[font=\scriptsize] (v1) at (p1) {$B(1,0)$};
        \node[font=\scriptsize] (v2) at (p2) {$B(1,1)$};
        \node[font=\scriptsize] (v3) at (p3) {$B(1,2)$};
        \node[font=\scriptsize] (v4) at (p4) {$B(1\!+\!t,2)$};
        \node[font=\scriptsize,red!80!black] (v5) at (p5) {$B(1,2.5)$};
        \node[font=\scriptsize,red!80!black] (v6) at (p6) {$B(1\!+\!t^2,2.5)$};
        \draw[thick]
        (v1) -- (v2)
        (v2) -- (v3)
        (v2) -- (v4);
        \draw[thick,red!80!black]
        (v3) -- (v5)
        (v3) -- (v6);
      \end{tikzpicture}
    };
    \node (m32) at (7,-8.5)
    {
      \begin{tikzpicture}[x={(1.5,0)},y={(0,1.2)}]
        \useasboundingbox (-1.75,0.25) rectangle (2.75,-3);
        \coordinate (p1) at (0,0);
        \coordinate (p2) at (0,-1);
        \coordinate (p3) at (-1,-2);
        \coordinate (p4) at (1,-2);
        \coordinate (p5) at (-1.5,-2.75);
        \coordinate (p6) at (-0.5,-2.75);
        \coordinate (p7) at (0,-2);
        \coordinate (p8) at (2.2,-2);
        \node[font=\scriptsize] (v1) at (p1) {$B(1,0)$};
        \node[font=\scriptsize] (v2) at (p2) {$B(1,1)$};
        \node[font=\scriptsize] (v3) at (p3) {$B(1,2)$};
        \node[font=\scriptsize] (v4) at (p4) {$B(1\!+\!t,2)$};
        \node[font=\scriptsize] (v5) at (p5) {$B(1,2.5)$};
        \node[font=\scriptsize] (v6) at (p6) {$B(1\!+\!t^2,2.5)$};
        \node[font=\scriptsize,red!80!black] (v7) at (p7) {$B(1\!+\!\alpha t,2)$};
        \node[font=\scriptsize,red!80!black] (v8) at (p8) {$B(1\!+\!(1\!+\!\alpha)t,2)$};
        \draw[thick]
        (v1) -- (v2)
        (v2) -- (v3)
        (v2) -- (v4)
        (v3) -- (v5)
        (v3) -- (v6);
        \draw[thick,red!80!black]
        (v2) -- (v7)
        (v2) -- (v8);
      \end{tikzpicture}
    };
    \node[anchor=north west,xshift=-30mm,yshift=-13mm] at (m12.north east) {$\subseteq \mathcal H^1(\FF_2(\!(t)\!))$};
    \node[anchor=north west,xshift=-30mm,yshift=-13mm] at (m22.north east) {$\subseteq \mathcal H^1(\FF_2\textcolor{red!80!black}{(\!(t^{1/2})\!)})$};
    \node[anchor=north west,xshift=-30mm,yshift=-13mm] at (m32.north east) {$\subseteq \mathcal H^1(\textcolor{red!80!black}{\FF_4}(\!(t^{1/2})\!))$};
    \draw[right hook-latex] (m11) -- node[above] {$\iota_1$} (m12);
    \draw[right hook-latex] ($(m21.east)+(0,0.5)$) -- node[above] {$\iota_2$} ($(m22.west)+(0,0.5)$);
    \draw[right hook-latex] ($(m31.east)+(0,0.5)$) -- node[above] {$\iota_3$} ($(m32.west)+(0,0.5)$);
    \draw[left hook-latex] (m11) -- (m21);
    \draw[left hook-latex] (m21) -- (m31);
    \draw[left hook-latex] ($(m12.south)+(-0.75,0)$) -- ($(m22.north)+(-0.75,0)$);
    \draw[left hook-latex] ($(m22.south)+(-0.75,0)$) -- ($(m32.north)+(-0.75,0)$);
  \end{tikzpicture}\vspace{-3mm}
  \caption{Embeddings in \cref{ex:metricTreeEmbedding}.}
  \label{fig:metricTreeEmbedding}
\end{figure}

The local dependence discussed above in Proposition \ref{prop:metricTreeEmbedding} contrasts with isometric embeddings into the full polydisc space $\mathcal{B}^1$, where additional global constraints arise and create additional difficulties:

\begin{example}
  \label{ex:metricTreeEmbeddingTechnicalities}
  Let $\Gamma_1$ and $\Gamma_2$ both be binary trees given by
  \begin{equation*}
    V(\Gamma_i) = \{1,\dots,8\}, \quad E(\Gamma_i) = \{(1,2),(2,3),(2,4),(3,5),(3,6),(4,7),(4,8)\}
  \end{equation*}
  and edge length $d_{\Gamma_1}(2,3)=d_{\Gamma_1}(2,4)=e^{-1}$. Let $K=\FF_2(\!(t)\!)$ as in \cref{ex:metricTreeEmbedding}.
  Then $\Gamma_1$ and $\Gamma_2$ are isometrically embeddable into $\mathcal B^1$.  However, because vertices $2,3,4$ have higher degree, they must be mapped to rational discs, and because of the edge lengths these polydiscs must have radii $0,1,1$ respectively.
  Consequently, the only gluing of $\Gamma_1$ and $\Gamma_2$ that can be embedded into $\mathcal B^1$ is by identifying $1\in V(\Gamma_1)$ and $1\in V(\Gamma_2)$.
  This shows that a metric tree may be locally embeddable into $\mathcal B^1$ but not globally.
\end{example}


\section{Function Theory on Polydisc Spaces}

\label{sec:functions}
We now turn to the study of real-valued functions on polydisc spaces $\mathcal{B}^n$. We are interested in identifying classes of functions that are both compatible with the underlying non-Archimedean structure and admit explicit analyses.

We focus on functions arising from polynomials with coefficients in $\mathcal{B}$ through their associated absolute values and valuations. Such functions form a tractable class adapted to the geometry of polydisc spaces: they admit explicit evaluation formulas; are piecewise polynomial or piecewise linear along geodesics in their behaviour; and encode directional information through their slopes. In particular, they provide a natural link between the algebraic structure of their coefficients and the metric geometry of the ambient space. We furthermore show that linear combinations of absolute values of polynomials satisfy a universal approximation property on compact subsets. Together, these results provide a natural function-theoretic counterpart to the metric geometry of polydisc spaces.\\

We begin by introducing the basic function class considered throughout this section. These functions arise from polynomials whose coefficients are themselves discs and are evaluated on polydiscs by either taking the extremal absolute value or the extremal valuation over all choices of coefficients and points in the underlying discs. This construction arises naturally in the non-Archimedean perspective and gives rise to functions that retain both algebraic and geometric information.

\begin{definition}
  \label{def:discPolynomials}
  Using the multi-index notation, a \emph{polynomial over $\mathcal B$} in the variables $x=(x_1,\dots,x_n)$ is a formal sum of the form
  \begin{equation*}
    f\coloneqq \sum_{\alpha\in\ZZ_{\geq 0}^n} C_\alpha\cdot x^\alpha \quad\text{where } C_\alpha=B_\mathcal{B}(0,0) \text{ for all but finitely many } \alpha\in\ZZ_{\geq 0}^n,
  \end{equation*}
  and we denote the set of polynomials over $\mathcal B$ by $\mathcal B[x]$. 
  Given a polynomial $f\in \mathcal B[x]$ and a polydisc $B\in\mathcal B^n$, we define the \emph{absolute value} and the \emph{valuation} of $f$ at $B$ by
  \begin{equation}
    \label{eq:absolutePolynomial}
      |f(B)|\coloneqq \sup_{\substack{c_\alpha\in C_\alpha\\ z\in B}}\Big|\sum_{\alpha\in\ZZ_{\geq 0}^n} c_\alpha z^\alpha\Big|, \qquad\text{and}\qquad
      \val(f(B))\coloneqq \inf_{\substack{c_\alpha\in C_\alpha\\ z\in B}} \val\Big(\sum_{\alpha\in\ZZ_{\geq 0}^n} c_\alpha z^\alpha\Big).
  \end{equation}
  We will refer to the function of the form $|f|\colon\mathcal B^n\rightarrow\RR_{\geq 0}, B\mapsto |f(B)|$ and $\val(f)\colon\mathcal B^n\rightarrow\RR \cup \{\infty\}, B\mapsto \val(f(B))$ as \emph{absolute polynomials} and \emph{valuation polynomials}, respectively.
\end{definition}

The two quantities in~\eqref{eq:absolutePolynomial} are related by $|f(B)|= \exp(-\val(f(B)))$.  Before presenting examples, we note that while both quantities are defined via suprema and infima, their values can be computed explicitly.

\begin{lemma}
  \label{lem:formula}
  Let $f=\sum_{|\alpha|\leq d} C_\alpha\cdot x^\alpha\in \mathcal B[x]$ be a polynomial over $\mathcal B$ and let $B\in\mathcal B^n$ be a polydisc, say $C_\alpha=B_{\mathcal{B}}(c_\alpha,s_\alpha)=B_{\mathcal{H}}(c_\alpha,w_\alpha)$ for some $c_\alpha\in K$, $s_{\alpha}\in\RR$, $w_\alpha\in\TT$, and $B=B_{\mathcal{B}}(a,r)=B_{\mathcal{H}}(a,v)$ for some $a=(a_1,\dots,a_n)\in K^n$, $r=(r_1,\dots,r_n)\in\RR^n$, $v=(v_1,\dots,v_n)\in \TT^n$.
  Pick $c_\beta'\in K$ for $|\beta|\leq d$ and $c_{\alpha,\beta}'\in K$ for $|\alpha|\leq d$, $\beta\leq\alpha$, so that
  \begin{equation}
      \label{eq:formula}
      h(y_\alpha+c_\alpha; x+a) \eqqcolon 
      \sum_{|\beta|\leq d} c_{\beta}'\cdot x^\beta + \sum_{\substack{|\alpha|\leq d\\ \beta\leq\alpha}} c_{\alpha,\beta}'\cdot y_\alpha\cdot x^\beta  
  \end{equation}
  for $h\coloneqq \sum_{|\alpha|\leq d} y_\alpha x^\alpha$ in the polynomial ring $K[y,x]\coloneqq K[y_\alpha\mid \alpha\in\ZZ_{\geq 0}^n, |\alpha|\leq d][x_i\mid i=1,\dots,n]$.  Then
  \begin{align*}
    |f(B)| &= \max\left\{
      \max_{|\beta|\leq d} |c_\beta'|\cdot r_1^{\beta_1}\cdot \ldots\cdot r_n^{\beta_n},
      \max_{\substack{|\alpha|\leq d\\ \beta \le \alpha}} |c_{\alpha,\beta}'|\cdot s_\alpha\cdot r_1^{\beta_1}\cdot \ldots\cdot r_n^{\beta_n}
    \right\} \quad \text{and}\\
    \val(f(B)) &= \min\left\{
      \min_{|\beta|\leq d} \val(c_\beta') + \beta\cdot v,
      \min_{\substack{|\alpha|\leq d\\ \beta \le \alpha}} \val(c_{\alpha,\beta}') + w_\alpha + \beta\cdot v
    \right\}.
  \end{align*}
\end{lemma}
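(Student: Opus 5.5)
The plan is to reduce \cref{lem:formula} to the Gauss-norm (maximum modulus) principle for a single polynomial over $\hat K$ evaluated on a polydisc. We substitute $c_\alpha\mapsto c_\alpha+y_\alpha$ and $z\mapsto a+x$. As $(c_\alpha)$ ranges over $\prod C_\alpha$ and $z$ over $B$, the new variables range over the polydisc
\[
D=\{(y,x)\in\hat K^{N}\times\hat K^n : |y_\alpha|\le s_\alpha,\ |x_i|\le r_i\}.
\]
By \eqref{eq:formula} the quantity inside the supremum becomes $|g(y,x)|$, where $g=h(y_\alpha+c_\alpha;x+a)$. Since $h$ is linear in the $y_\alpha$, the expansion of $g$ contains only monomials $x^\beta$ and $y_\alpha x^\beta$, which is why the statement has exactly these two families of coefficients. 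Therefore $|f(B)|=\sup_{(y,x)\in D}|g(y,x)|$, and the valuation formula follows from the absolute-value formula by applying $-\log$, since $|f(B)|=\exp(-\val(f(B)))$.

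It remains to show that for $g=\sum_\gamma e_\gamma u^\gamma\in K[u]$ and radii $\rho_j\ge 0$ we have
\[
\sup_{|u_j|\le\rho_j}|g(u)|=\max_\gamma|e_\gamma|\rho^\gamma .
\]
The inequality ``$\le$'' is immediate from the strong triangle inequality. For ``$\ge$'', we first discard the coordinates with $\rho_j=0$: any monomial involving such a coordinate contributes $0$ to the right-hand side, and we may set $u_j=0$. For the remaining coordinates we use that the value group of $\hat K$ is dense, as already noted in the proof of \cref{lem:algebraDefinition}. This allows us to approximate each $\rho_j$ from below by values $|u_j|$ realised in $\hat K$, and the right-hand side is continuous in $\rho$, so it suffices to treat radii $\rho_j\in|\hat K^\ast|$.

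For such radii we rescale $u_j=\lambda_j u_j'$ with $|\lambda_j|=\rho_j$. This reduces to the unit polydisc with coefficients $e_\gamma\lambda^\gamma$. We then normalise by the coefficient of maximal absolute value, so that the scaled polynomial has integral coefficients and a nonzero reduction $\bar g\in\bar{\mathfrak K}[u']$ over the residue field of $\hat K$. That residue field is algebraically closed and in particular infinite, so a nonzero polynomial over it does not vanish identically. We can therefore choose units $u'$ whose residues avoid the zero set of $\bar g$, and for these $|g|$ equals the Gauss norm. This argument is the standard one behind the multiplicativity of the Gauss norm, and it is the identification already stated in \cref{sec:Berkovich}.

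I expect the main obstacle to be the lower-bound step, namely passing between $K$ and $\hat K$ and handling irrational or zero radii carefully. I would handle it as above: prove the result over the infinite, algebraically closed residue field of $\hat K$, and use density of $|\hat K^\ast|$ together with continuity to reach arbitrary radii. The rest is the bookkeeping of the substitution in \eqref{eq:formula}.
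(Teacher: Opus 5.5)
Your proposal is correct. The upper bound, via the strong triangle inequality after the substitution $c_\alpha\mapsto c_\alpha+y_\alpha$, $z\mapsto a+x$, is exactly what the paper does. The lower bound is where you take a different route.

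The paper argues as follows. If the maximum in the formula is attained by a single monomial, equality holds, because a strictly dominant term determines the absolute value. Otherwise it perturbs the $s_\alpha$ and $r_i$ until one term dominates, and concludes ``by continuity''.

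You instead first reduce to radii in $|\hat K^\ast|$. You then handle ties directly: the normalised polynomial reduces to a nonzero polynomial over the residue field of $\hat K$, which is algebraically closed and hence infinite, so you can pick a point where no cancellation occurs.

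\emph{What the paper's route buys.} It avoids the residue field entirely. It uses only the dominant-term principle and the fact that distinct monomials in $(y,x)$ can be separated by a generic choice of radii.

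\emph{Where the paper's route is thin.} Its continuity step is left implicit. Continuity of the supremum in the radii is part of what is being proved. So one really has to perturb downward and use monotonicity of the supremum. Zero radii cannot be decreased, and they need exactly the separate treatment you give them (set those variables to $0$).

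\emph{What your route buys.} It gives exact attainment of the Gauss norm at value-group radii. It also makes the Gauss-seminorm identification in \cref{sec:Berkovich} precise. The price is invoking the residue field of $\hat K$. Both arguments still need density of $|\hat K^\ast|$ to reach irrational radii.

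In your reduction step, state explicitly that the supremum is monotone in the radii. That monotonicity is what makes approximation from below, together with continuity of the right-hand side, sufficient.
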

\begin{proof}
  We will prove the formula for $|f(B)|$, the formula for $\val(f(B))$ then follows from it straightforwardly.
  Let $g\coloneqq \sum_{|\alpha|\leq d} y_\alpha x^\alpha\in K[y,x]$ so that $|g(\prod_{|\alpha|\leq d}C_\alpha\times B)|=|f(B)|$.  By definition, we have for all $|\beta|\leq d$:
  \begin{align*}
    &\sup_{z\in B}|(z_1-a_1)^{\beta_1}\cdot\ldots\cdot (z_n-a_n)^{\beta_n}|\\
    &\qquad = \sup_{z_1\in B(a_1,r_1)}\Big(|(z_1-a_1)|^{\beta_1}\Big) \cdot\ldots\cdot \sup_{z_n\in B(a_n,r_n)}\Big(|(z_n-a_n)|^{\beta_n}\Big)
    = r_1^{\beta_1}\cdots r_n^{\beta_n}.
  \end{align*}
  Similarly, for all $|\alpha|\leq d$ and $\beta\leq\alpha$, we have
  \begin{align*}
    &\sup_{\substack{z_\alpha\in C_\alpha\\ z\in B}} |(z_\alpha-c_\alpha)(z_1-a_1)^{\beta_1}\cdot\ldots\cdot (z_n-a_n)^{\beta_n}|\\
    &\qquad = \sup_{z_\alpha\in C_\alpha} |z_\alpha-c_\alpha| \cdot \sup_{z_1\in B(a_1,r_1)}\Big(|(z_1-a_1)|^{\beta_1}\Big) \cdot\ldots\cdot \sup_{z_n\in B(a_n,r_n)}\Big(|(z_n-a_n)|^{\beta_n}\Big)\\
    &\qquad = s_\alpha r_1^{\beta_1}\cdots r_n^{\beta_n}.
  \end{align*}
  Combining the above with Equation \eqref{eq:formula} and the strong triangle inequality thus yields
  \begin{equation*}
    \left|g\Big(\prod_{|\alpha|\leq d}C_\alpha\times B\Big)\right|\leq
    \max\left\{
      \max_{|\beta|\leq d}|c_\beta'|\cdot r_1^{\beta_1}\cdots r_n^{\beta_n},
      \max_{\substack{|\alpha|\leq d\\ \beta \le \alpha}} |c_{\alpha,\beta}'|\cdot s_\alpha\cdot r_1^{\beta_1}\cdots r_n^{\beta_n}
    \right\}.
  \end{equation*}
  If the maximum is attained once, then we have equality.  If not, then perturbing the $s_\alpha$ and $r_i$ will make it so and we obtain equality via continuity.
\end{proof}

As an immediate corollary, we obtain the following result.

\begin{corollary}\label{cor:absolutePolynomialPiecewisePolynomial}
  The absolute polynomial $|f|\colon \mathcal B^n\rightarrow\RR_{\geq 0}$ is finite piecewise polynomial along geodesics.  The valuative polynomial $\val(f)\colon\mathcal H^n\rightarrow\RR$ is piecewise linear (or a tropical polynomial) along geodesics.
\end{corollary}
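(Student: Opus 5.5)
The plan is to parametrise a geodesic explicitly and substitute it into the closed formulas of \cref{lem:formula}. Let $\gamma=\gamma_{BB'}=\gamma_{B_1B_1'}\times\dots\times\gamma_{B_nB_n'}$ be a geodesic as in \cref{lem:geodesics}. In each coordinate $i$, the path $\gamma_{B_iB_i'}$ has a single breakpoint $t_i=r_{i,1}/r_i$ (the time at which it reaches $B_i\vee B_i'$). On $[0,t_i]$ it is $B_{\mathcal B}(a_i, s_i+r_i t)$, with a \emph{fixed} centre and radius affine in $t$. On $[t_i,1]$ it is $B_{\mathcal B}(a_i', s_i'+r_i(1-t))$, again with a fixed centre and affine radius. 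The breakpoints $t_1,\dots,t_n$ cut $[0,1]$ into at most $n+1$ closed subintervals. On each subinterval every coordinate has a constant centre, so there is a constant $a\in K^n$ such that $\gamma(t)=B_{\mathcal B}(a,r(t))$, where each $r_i(t)$ is affine in $t$.

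On such a subinterval, fix $a$ and compute the coefficients $c'_\beta$ and $c'_{\alpha,\beta}$ of \cref{eq:formula} once. These depend only on $f$ and $a$, not on the radii. \cref{lem:formula} then gives
\[
|f(\gamma(t))|=\max\Big\{\max_\beta |c'_\beta|\,r(t)^\beta,\ \max_{\alpha,\beta}|c'_{\alpha,\beta}|\,s_\alpha\, r(t)^\beta\Big\}.
\]
This is a maximum of finitely many polynomials in $t$, since each $r(t)^\beta$ is a product of affine functions. Two distinct real polynomials agree at only finitely many points, so the upper envelope of finitely many polynomials on a compact interval switches between them only finitely often. Hence $|f|\circ\gamma$ is piecewise polynomial with finitely many pieces on each subinterval, and therefore on all of $[0,1]$.

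For the valuation statement on $\mathcal H^n$, use part (ii) of \cref{lem:geodesics} in the same way. On each piece the centre is constant and $v_i(t)$ is affine in $t$. The second formula of \cref{lem:formula} then expresses $\val(f(\gamma(t)))$ as a minimum of finitely many terms $\val(c'_\beta)+\beta\cdot v(t)$ and $\val(c'_{\alpha,\beta})+w_\alpha+\beta\cdot v(t)$. Each term is affine in $t$, so the minimum is a tropical polynomial in $t$, and hence piecewise linear with finitely many pieces.

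The main point needing care is that \cref{lem:formula} must be applied with the centre held fixed along each piece. The coefficients $c'_\beta$ depend on the chosen centre. At the breakpoints $t_i$ the centre changes from $a_i$ to $a_i'$, but both centres describe the same disc there, namely $B_i\vee B_i'$, so $|f|$ is well defined and continuous across the switch. Terms whose coefficient vanishes or equals $\infty$ are simply dropped. Degenerate coordinates with $r_i=0$ are harmless, because their radii are then constant.
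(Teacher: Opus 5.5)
Your proposal is correct, and it is exactly the argument the paper leaves implicit when it calls this an immediate corollary of \cref{lem:formula}. On each of the at most $n+1$ pieces of the distinguished geodesic $\gamma_{BB'}$ from \cref{lem:geodesics}, the centre is fixed and the absolute (resp.\ valuative) radii are affine in $t$, so $|f|$ is a maximum of finitely many polynomials and $\val(f)$ a minimum of finitely many affine functions.

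Like the paper, you are implicitly working with these distinguished geodesics. They are the only geodesics when $n=1$ or $1<p<\infty$. For $p\in\{1,\infty\}$ and $n\geq 2$ there are other geodesics, for instance ones whose coordinates move at non-constant speeds, and along those the conclusion can fail. So the statement should be read as referring to $\gamma_{BB'}$.
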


For polynomials with coefficients in $K$, \cref{lem:formula} simplifies as follows.

\begin{corollary}
  \label{cor:formulaCoefficientsInField}
  For $f\coloneqq \sum_{|\alpha|\leq d} c_\alpha\cdot x^\alpha \in K[x_1, \dotsc, x_n]$ and $B=B_{\mathcal B}(a,r)\in\mathcal B^n$ or $B=B_{\mathcal H}(a,v)\in\mathcal H^n$ we have
  \begin{equation*}
    |f(B)| = \max_{|\alpha|\leq d} \big(|c_\alpha'| \cdot r_1^{\alpha_1}\cdot\ldots\cdot r_n^{\alpha_n}\big) \quad \text{and} \quad \val(f(B)) = \min_{|\alpha|\leq d} \big(\val(c_\alpha') + \alpha_1 \cdot v_1 +\dots+ \alpha_n\cdot v_n\big),
  \end{equation*}
  where $c_\alpha'\in K$ are chosen such that $f(x+a)\eqqcolon\sum_{|\alpha|\leq d} c_\alpha'\cdot x^\alpha$.
\end{corollary}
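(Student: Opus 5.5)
This follows by specialising \cref{lem:formula} to the case where every coefficient disc has radius zero. I will view $f=\sum_{|\alpha|\leq d} c_\alpha x^\alpha\in K[x]$ as a polynomial over $\mathcal B$ with coefficient discs $C_\alpha=B_{\mathcal B}(c_\alpha,0)$, so that $s_\alpha=0$ and $w_\alpha=\infty$ for all $\alpha$. Each $C_\alpha$ is then the singleton $\{c_\alpha\}$. Hence the supremum and infimum in \eqref{eq:absolutePolynomial} range only over $z\in B$, and $|f(B)|$, $\val(f(B))$ agree with the quantities in the statement.

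Next I will compute the coefficients $c'_\beta$ and $c'_{\alpha,\beta}$ of \eqref{eq:formula} for this choice. Substituting $y_\alpha+c_\alpha$ for $y_\alpha$ and $x+a$ for $x$ in $h=\sum y_\alpha x^\alpha$ gives
\begin{equation*}
  h(y+c;x+a)=\sum_{|\alpha|\leq d} c_\alpha (x+a)^\alpha+\sum_{|\alpha|\leq d} y_\alpha (x+a)^\alpha .
\end{equation*}
The first sum is $f(x+a)$, so $c'_\beta$ is exactly the coefficient denoted $c'_\beta$ in the corollary. The terms involving $y_\alpha$ each carry a factor $s_\alpha=0$ in the absolute value formula, or $w_\alpha=\infty$ in the valuation formula. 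Since the terms of the second sum drop out of the max/min, the formulas of \cref{lem:formula} reduce to $\max_\beta |c'_\beta| r^\beta$ and $\min_\beta \val(c'_\beta)+\beta\cdot v$, which is the claim.

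The only point needing care is the convention for the degenerate factors: $0\cdot r^\beta=0$ and $\infty+\beta\cdot v=\infty$. These must not affect the max/min, which holds because at least the $c'_\beta$ terms remain. I also need the edge case $f=0$, where both sides are $0$ (respectively $\infty$).

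Alternatively, to avoid relying on the perturbation argument in \cref{lem:formula}, I could argue directly. The bound $\leq$ follows from the strong triangle inequality applied to $f(z)=\sum c'_\beta (z-a)^\beta$. For equality, I would use the standard Gauss norm argument: the supremum of $|g|$ over a polydisc centred at $0$ equals the maximum of the monomial terms. This can be seen by choosing $z-a$ coordinates in $\hat K$ with prescribed absolute values and generic residues, using that the residue field of $\hat K$ is infinite. I expect this equality step to be the main obstacle if done from scratch, but here it is inherited from \cref{lem:formula}.
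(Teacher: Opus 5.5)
Your proposal is correct and matches the paper, which gives no separate proof but obtains the corollary by specialising \cref{lem:formula} to radius-zero coefficient discs ($s_\alpha=0$, $w_\alpha=\infty$); the $y_\alpha$-terms then drop out, and the $y$-free part of $h(y+c;x+a)$ is exactly $f(x+a)$. One small caveat on your optional direct route: for radii outside $|\hat K^\ast|$ you cannot choose $z-a$ with exactly prescribed absolute values, so equality there must come from a limiting (supremum) argument rather than from a single choice of $z$.
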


We have seen that absolute and valuation polynomials exhibit highly structured behaviour along geodesics and are controlled by finitely many polynomial or linear regimes. We now give examples to illustrate this piecewise structure in explicit situations and demonstrate how it reflects the algebraic properties of the underlying polynomial. In particular, we will see how both the location of the roots and the radii of the coefficients influence the resulting piecewise structure.

\begin{example}
  \label{ex:formula}
  Let $K = \FF_2(\!(t)\!)$ and consider $f=(x+1)(x+t^2)(x+t^4)\in K[x]\subseteq \mathcal B[x]$ which can also be written as follows:
  {
    \setlength{\arraycolsep}{2pt}
    \begin{align*}
      \begin{array}{rcrcrrcrrcr}
        f&=&(x+1)^3&+&(t^4 + t^2)&(x+1)^2&+&(t^6 + t^4 + t^2 + 1)&(x+1)\phantom{.}&&\\[1mm]
         &=&(x+t)^3&+&(t^4 + t^2 + t + 1)&(x+t)^2&+&(t^6 + t^4)&(x+t)\phantom{.}&+&(t^7 + t^5 + t^4 + t^2)\\[1mm]
         &=&(x+t^2)^3&+&(t^4 + 1)&(x+t^2)^2&+&(t^6 + t^2)&(x+t^2)\phantom{.}\\[1mm]
         &=&(x+t^4)^3&+&(t^2 + 1)&(x+t^4)^2&+&(t^8 + t^6 + t^4 + t^2)&(x+t^4).    \end{array}
    \end{align*}
  }
  By \cref{cor:formulaCoefficientsInField}, we then have for $v\in\RR$:
  \begin{align*}
    \val(f(B_{\mathcal{H}}(1,v)))
    &=\min(0+3v,2+2v,0+v)
    &\val(f(B_{\mathcal{H}}(t^2,v)))
    &=\min(0+3v,0+2v,2+v)\\
    &=
      \begin{cases}
        3v &\text{if }v\leq 0\\
        v&\text{if }v\geq 0
      \end{cases}
    & &=
        \begin{cases}
          3v&\text{if }v\leq 0\\
          2v&\text{if }0\leq v\leq 2\\
          2+v&\text{if }v\geq 2
        \end{cases}
    \\
    \val(f(B_{\mathcal{H}}(t,v)))&=\min(0+3v,0+2v,4+v,2)
    &\val(f(B_{\mathcal{H}}(t^4,v)))&=\min(0+3v,0+2v,2+v)\\
    &=
      \begin{cases}
        3v&\text{if }v\leq 0\\
        2v&\text{if }0\leq v\leq 1\\
        2&\text{if }v\geq 1
      \end{cases}
    & &=
        \begin{cases}
          3v&\text{if }v\leq 0\\
          2v&\text{if }0\leq v\leq 2\\
          2+v&\text{if }v\geq 2
        \end{cases}
  \end{align*}
  \cref{fig:formula} illustrates the values of $\val(f(B))$ on the geodesics $B_{\mathcal{H}}(a,v)$ for $a\in\{1,t,t^2,t^4\}$ as well as on $\mathcal H^1$. Notice how the function diverges to $\infty$ on branches ending at the roots of $f$ and eventually becomes constant on the other branches.
\end{example}

\begin{figure}[t]
  \centering
  \begin{tikzpicture}
    \coordinate (b0) at (0,0);
    \coordinate (b1) at (-1,-1);
    \coordinate (b2) at (-2,-2);
    \coordinate (e0) at (3,-3);
    \coordinate (e1) at (1,-3);
    \coordinate (e2) at (-1,-3);
    \coordinate (e4) at (-3,-3);
    \draw
    (b0) -- node[blue!70!black,right,inner sep=2pt] {$3v$} ++(0,1)
    (b0) -- node[blue!70!black,above,sloped,inner sep=2pt] {$v$} (e0)
    (b0) -- node[blue!70!black,above,sloped,inner sep=2pt] {$2v$} (b1)
    (b1) -- node[blue!70!black,above,sloped,inner sep=2pt] {$2v$} (b2)
    (b1) -- node[blue!70!black,above,sloped,inner sep=2pt] {$2$}  (e1)
    (b2) -- node[blue!70!black,above,sloped,inner sep=2pt] {$v$} (e2)
    (b2) -- node[blue!70!black,above,sloped,inner sep=2pt] {$v$} (e4);
    \draw[dotted]
    (e4) -- ++(-0.25,-0.25) node[anchor=north east] {$t^4$}
    (e2) -- ++(0.25,-0.25) node[anchor=north west] {$t^2$}
    (e1) -- ++(0.25,-0.25) node[anchor=north west] {$t$}
    (e0) -- ++(0.25,-0.25) node[anchor=north west] {$1$};
    \fill (b0) circle (2pt);
    \fill (b1) circle (2pt);
    \fill (b2) circle (2pt);
    \node[anchor=south east] at (b0) {$B_{\mathcal{H}}(1,0)$};
    \node[anchor=south east] at (b1) {$B_{\mathcal{H}}(t,1)$};
    \node[anchor=south east] at (b2) {$B_{\mathcal{H}}(t^2,2)$};
    \draw[draw opacity=0.5,very thin,loosely dashed]
    (-3.5,-2) -- (4,-2) node[right] {$v=2$}
    (-3.5,-1) -- (4,-1) node[right] {$v=1$}
    (-3.5,-0) -- (4,0) node[right] {$v=0$};
  \end{tikzpicture}\vspace{-3mm}
  \caption{Values of $\val(f)$ on $\mathcal H^1$ for $f$ from \cref{ex:formula}.}
  \label{fig:formula}
\end{figure}

The next example illustrates how the coefficient radii prevent $\val(f(B))$ from attaining arbitrarily high values as $B$ converges to a root of $f_0$:

\begin{example}
  \label{ex:formula2}
  Suppose $K = \FF_2(\!(t)\!)$ and consider the following linear univariate polynomial over $\mathcal B$:
  \begin{align*}
    f = B_1\cdot x+B_0 \qquad\text{with}\quad B_1=B_{\mathcal{H}}\Big(\sum_{i=0}^{\infty} t^i,w_1\Big) \quad\text{and}\quad B_0=B_{\mathcal{H}}\Big(\sum_{i=0}^{\infty}t^{2i},w_0\Big) \text{ for some } w_1,w_0\geq 0,
  \end{align*}
  and consider the two families of discs $B_\calH(0,v)$ and $B_\calH(\sum_{i=0}^{\infty}t^i,v)$ for $v\in\RR$.  In the notation of \cref{lem:formula}, we then have $h = y_1x+y_0$ and
  \begin{align*}
     h\Big(y_1 + \sum_{i=0}^{\infty} t^i, y_0 + \sum_{i=0}^{\infty} t^{2i}, x+0\Big) &= \underbrace{\Big(\sum_{i=0}^{\infty} t^i\Big)}_{\val(\ldots)=0}\cdot x + \Big[\underbrace{\Big(\sum_{i=0}^{\infty} t^i\Big)\cdot 0 + \Big(\sum_{i=0}^{\infty} t^{2i}\Big)}_{\val(\ldots)=0}\Big] + y_1x+0\cdot y_1 + y_0 \\
     h\Big(y_1 + \sum_{i=0}^{\infty} t^i, y_0 + \sum_{i=0}^{\infty} t^{2i}, x+\sum_{i=0}^{\infty}t^i\Big)&= \underbrace{\Big(\sum_{i=0}^{\infty} t^i\Big)}_{\val(\ldots)=0}\cdot x + \Big[\underbrace{\Big(\sum_{i=0}^{\infty}t^i\Big)^2 + \Big(\sum_{i=0}^{\infty} t^{2i}\Big)}_{\val(\ldots)=\infty}\Big] + y_1x+\Big(\sum_{i=0}^{\infty}t^i\Big)\cdot y_1 + y_0.
  \end{align*}
  By \cref{lem:formula} we thus have:
  \begin{align*}
    \val(f(B_\calH(0,v))) &= \min\Big(0+v,0,0+w_1+v,\infty+w_1,0+w_0\Big) = \min(0,v,w_0) \qquad\text{and}\\
    \val(f(B_\calH(\textstyle\sum_{i=0}^{\infty}t^i,v))) &= \min\Big(0+v,\infty,0+w_1+v,0+w_1,0,0+w_0\Big) = \min(v,w_1,w_0).
  \end{align*}
  As in \cref{ex:formula}, we see that evaluating $\val(f)$ at $B=B_{\mathcal{H}}(0,v)$ not centered around a root of $f$ leads to $\val(f(B))$ having a natural lower bound of $0$, which is not the case for evaluating $\val(f)$ at $B=B_{\mathcal{H}}(\sum_{i=0}^{\infty}t^i,v)$.  In contrast to \cref{ex:formula}, the coefficient radii $w_1$, $w_0$ naturally bound $\val(f)$ from below.
\end{example}

\cref{cor:absolutePolynomialPiecewisePolynomial} motivates the following definition:

\begin{definition}
  Let $f \in \mathcal B[x_1,\dots,x_n]$ be a polynomial and let $\vec{v}_{BB'}\in T_B \mathcal{B}^n$ be a tangent vector. Let $\gamma^{\mathcal{B}}_{BB'}$ and $\gamma^{\mathcal H}_{BB'}$ be the geodesics from $B$ to $B'$ with respect to the standard and hyperbolic metrics. By \cref{cor:absolutePolynomialPiecewisePolynomial}, we have for $t\in[0,1]$ sufficiently small
  \begin{equation*}
    |f(\gamma^{\mathcal B}_{BB'}(t))|=p(t) \qquad\text{and}\qquad \val(f(\gamma^{\mathcal H}_{BB'}(t)))=c'-s \cdot t \qquad\text{ for some }p\in\RR[t] \text{ and } c,c',s\in \RR.
  \end{equation*}
  As a consequence of \cref{lem:formula}, we have $\deg(p)=s$, and we refer to it as the \emph{degree} of $|f|$ or the \emph{slope} of $\val(f)$ in direction $\vec{v}_{BB'}$. We denote it by $\deg_{\vec{v}_{BB'}}(|f|)$ or $\slope_{\vec{v}_{BB'}}(\val(f))$, respectively.
\end{definition}

The observation in \cref{ex:formula} for polynomials over $K$ can be formalised as follows.

\begin{lemma}
  \label{lem:slopeAndRootsUnivariateField}
  Let $f\in K[x]$ be a univariate polynomial over $K$, let $B=B_{\calB}(a,r)\in\mathcal B^1$ be a disc, and let $\vec{v}\in T_B\mathcal B$ be a tangent direction.  Without loss of generality, we may assume that $\vec{v}=\vec{v}_{B,0}$ or $\vec{v}=\vec{v}_{B,\infty}$.  Then
  \begin{equation*}
    \deg_{\vec{v}_{B,\infty}}(|f|)=- \slope_{\vec{v}_{B,\infty}}(\val(f))=|V_{\hat K}(f)\cap B| \quad\text{and}\quad \deg_{\vec{v}_{B,0}}(|f|)=\slope_{\vec{v}_{B,0}}(\val(f))=|V_{\hat K}(f)\cap B^\circ|
  \end{equation*}
  where $V_{\hat K}(f)$ denotes the solutions of $f$ in $\hat K$, $B^\circ\coloneqq \{z\in \hat K\mid |z-a|<r\}$ is the open disc that is the interior of $B$, and $|\cdot|$ counts solutions with multiplicity.
\end{lemma}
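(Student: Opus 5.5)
Since $\hat K$ is algebraically closed, I would factor $f = c\prod_{j=1}^d (x-\rho_j)$ over $\hat K$, with roots $\rho_j$ listed with multiplicity, and reduce everything to linear factors. The sup-norm $|f(B)|$ is the Gauss seminorm of \cref{sec:Berkovich}, hence multiplicative, so $|f(B')| = |c|\prod_j |(x-\rho_j)(B')|$ for every disc $B'$. By \cref{cor:formulaCoefficientsInField}, applied over $\hat K$ (re-centring a disc does not change it, by \cref{lem:recentre}), each linear factor is $|(x-\rho)(B_{\mathcal B}(a',r'))| = \max(|a'-\rho|, r')$. Both the degree and the slope in a fixed direction are additive over products: the degree of a product of polynomials in $t$ is the sum of the degrees, and valuations turn products into sums. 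So it suffices to treat $f = x-\rho$ and show that its contribution is $1$ exactly when $\rho$ lies in the set named in the statement, and $0$ otherwise.

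\textbf{Outward direction.} Here $\gamma(t) = B_{\mathcal B}(a, r+t)$, so the factor is $\max(|a-\rho|, r+t)$. If $\rho\in B$, then $|a-\rho|\le r$ and the factor equals $r+t$, which has degree $1$. In valuative coordinates it equals $v - t$, with slope $-1$; this sign matches the minus sign in the statement. If $\rho\notin B$, then $|a-\rho|>r$ and the factor is the constant $|a-\rho|$ for small $t$, which has degree $0$. Summing over the roots gives $|V_{\hat K}(f)\cap B|$.

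\textbf{Inward direction $\vec v_{B,0}$.} This direction is represented by $\gamma(t) = B_{\mathcal B}(a, r-t)$, and it only exists when $r>0$. The factor is $\max(|a-\rho|, r-t)$. If $|a-\rho|<r$, it equals $r-t$ for small $t$, giving exactly one unit of slope; in valuation it is $v+t$, with slope $+1$. If $|a-\rho|\ge r$, it is constant near $t=0$. Summing over the roots counts those in $B^\circ$.

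\textbf{Remaining directions $\vec v_{B,\bar c}$.} These occur at rational $B$ and are handled by re-centring at a point of that residue class. This is the content of the ``without loss of generality'' clause: after replacing $a$ by a suitable $a'$, the direction becomes the inward direction $\vec v_{B,0}$, and $B^\circ$ is understood as the corresponding open residue disc.

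\textbf{Main obstacle.} The difficulty is the sign and degree convention in the $\mathcal B$-metric. For $f$ written at the moving radius, $|f(\gamma(t))|$ is a product of linear factors in $t$ multiplied by a constant. In the inward case the factors $r-t$ decrease, so I must interpret ``degree'' as the number of factors of $t$ appearing nontrivially, i.e.\ the genuine polynomial degree of $p(t) = C(r\pm t)^k$, which is $k$. Once that is settled, I would also check that the multiplicativity step is legitimate. I would do this directly from \cref{cor:formulaCoefficientsInField} by comparing the dominant terms for small $t$, rather than quoting Gauss-norm multiplicativity.
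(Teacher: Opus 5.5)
Your proposal is correct and follows essentially the same route as the paper: factor $f$ over $\hat K$, observe that on $B_{\mathcal B}(a,r\pm t)$ each root contributes the factor $r\pm t$ if it lies in $B$ (resp.\ $B^\circ$) and a constant otherwise, and read off the degree as the number of such roots. Your separate treatment of the valuative slope, and your justification of multiplicativity, make explicit what the paper's one-line product formula for $|f(B_\varepsilon)|$ uses implicitly.
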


\begin{proof}
  Write $f = c \cdot \prod_{i = 1}^d (x - z_i)$ where $c, z_i \in \hat{K}$.  Let $B_\varepsilon\coloneqq B_\calB(a,r+\varepsilon)$.  We then have
  \begin{align*}
    \lvert f(B_\varepsilon) \rvert
    = \lvert c \rvert \cdot \max_{z\in B_\varepsilon} \Big(\prod_{i=1}^d |z-z_i|\Big)
    = \lvert c \rvert \cdot \Big(\prod_{\substack{i=1,\dots,d\\ z_i\in B_\varepsilon}} (r+\varepsilon) \Big) \cdot \Big( \prod_{\substack{i=1,\dots,d\\ z_i\notin B}} \lvert a - z_i \rvert \Big).
  \end{align*}
  For $\varepsilon\geq 0$ sufficiently small, we have $z_i\in B_\varepsilon$ if and only if $z_i\in B$, which implies the claim in direction $\vec{v}_{B,\infty}$.  For $\varepsilon<0$ sufficiently large, we have $z_i\in B_\varepsilon$ if and only if $z_i\in B^\circ$, which implies the claim in direction $\vec{v}_{B,0}$.
\end{proof}

We conclude our discussion on function theory for polydisc spaces by establishing a universal approximation theorem for the function classes introduced, which shows that linear combinations of absolute and valuation polynomials are sufficiently rich to approximate arbitrary continuous functions on compact subsets of $\mathcal{B}^n$. In this sense, these functions play a role analogous to classical polynomial or algebraic function classes in approximation theory.

\begin{theorem}
  \label{thm:universalApproximation}
  Let $D \subseteq \mathcal B^n$ be a compact subset, $f\colon D \to \RR$ a continuous function, and $\varepsilon > 0$.  Then there exist linear combinations of absolute polynomials $g\colon \mathcal B^n \to \RR$ such that $\lvert \lvert f - g \rvert \rvert _\infty \le \varepsilon$.
\end{theorem}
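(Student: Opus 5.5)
Our plan is to apply the Stone--Weierstrass theorem on the compact Hausdorff space $D$ (metrisable, hence Hausdorff, via $d_{\mathcal B}^p$; by \cref{lem:metricTopologicallyEquivalent} the choice of $p$ is irrelevant). Let $\mathcal A$ be the set of real linear combinations of restrictions to $D$ of absolute polynomials $|f|$ with $f\in\mathcal B[x]$. The statement then follows once we show that $\mathcal A$ is a subalgebra of $C(D,\RR)$ containing the constants and separating points. The uniform closure of $\mathcal A$ is then all of $C(D,\RR)$, so some $g\in\mathcal A$ satisfies $\|f-g\|_\infty\le\varepsilon$.

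The first step is continuity. By \cref{lem:formula}, $|f(B)|$ is a maximum of finitely many expressions of the form $|c'|\,s_\alpha\, r^\beta$. Here the coefficients $c'$ are computed after recentring at $a$, and they are locally constant as $B$ moves. The reason is that moving inside the tree changes the centre only within the current disc, and recentring within a disc does not change the Gauss seminorm. Hence $|f|$ is continuous; alternatively this follows from \cref{cor:absolutePolynomialPiecewisePolynomial} together with the tree structure. The constants are obtained by taking $f=C_0=B_{\mathcal B}(c,0)$, which gives $|f|\equiv|c|$, and then rescaling by real numbers.

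The second step is closure under products. For polynomials $f,g\in K[x]$ the seminorm $|\cdot|_B$ is the Gauss norm, which is multiplicative, so $|f(B)|\cdot|g(B)|=|(fg)(B)|$ (see the discussion in \cref{sec:Berkovich}). Products of linear combinations of such functions are therefore again linear combinations of absolute polynomials. To avoid having to prove multiplicativity for disc coefficients, we will restrict $\mathcal A$ from the outset to polynomials with coefficients in $K$. Stone--Weierstrass only requires that this smaller algebra separate points.

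The third step, point separation, is the main obstacle. Let $B=B_1\times\dots\times B_n\neq B'=B'_1\times\dots\times B'_n$ differ in some coordinate $i$, and consider polynomials in the single variable $x_i$. Write $B_i=B_{\mathcal B}(a,r)$ and $B_i'=B_{\mathcal B}(a',r')$. By \cref{cor:formulaCoefficientsInField}, $|(x_i-a)(B)|=r$ and $|(x_i-a)(B')|=\max(r',|a-a'|)$. If these two values differ, the function $|x_i-a|$ separates the points. If they are equal, two cases remain:
\begin{itemize}
\item If $B_i\not\subseteq B'_i$ and $B'_i\not\subseteq B_i$, the discs are disjoint by \cref{lem:recentre}. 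Then $|a-a'|>\max(r,r')$, so $|(x_i-a)(B')|=|a-a'|>r=|(x_i-a)(B)|$, and $|x_i-a|$ already separates them. So this case does not arise.
\item If $B'_i\subsetneq B_i$, we use $|x_i-a'|$ instead. Since $a'\in B_i$, we get $|(x_i-a')(B)|=r$ and $|(x_i-a')(B')|=r'<r$. The case $B_i\subsetneq B'_i$ is symmetric.
\end{itemize}
Hence linear polynomials with centres in $K$ always separate points. The only delicate point is making sure the centres can be chosen in $K$ rather than $\hat K$, which holds because polydiscs in $\mathcal B^n$ have centres in $K$ by definition.
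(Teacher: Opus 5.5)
Your proposal is correct and takes the same route as the paper: restrict to coefficients in $K$ and apply Stone--Weierstrass. Your arguments for multiplicativity of the Gauss norm, for the constants, and for separation by $|x_i-a|$ or $|x_i-a'|$ supply exactly the details the paper's one-line proof leaves implicit. The one slip is in the continuity step: the recentred coefficients are not locally constant (the constant term $f(a')$ varies with $a'$), but continuity still follows from two facts, namely that $|f(B)|$ is unchanged by recentring within a disc and that, for a fixed centre, it depends continuously on the radii along the path through the join $B\vee B'$.
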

\begin{proof}
  It suffices to show that absolute polynomials with coefficients in $K$ are universal approximators.
  This follows from the Stone--Weierstrass theorem \cite[Theorem 7.32]{Rudin1976}, since the set of $\RR$-linear combinations of absolute polynomials is a subalgebra of the ring of functions $\mathcal{C}(D, \RR)$ that separates points and vanishes nowhere.
\end{proof}


\section{Optimisation on Polydisc Spaces}

\label{sec:optimisation}
Given our geometric and functional setup on polydisc spaces, we are now ready to study minimisation problems on $\mathcal{B}^n$. Given a function $\ell\colon\mathcal{B}^n \to \mathbb{R}$, we consider the problem of determining points at which $\ell$ attains its minimum:
\begin{equation*}
  \text{Find }B \in \mathcal{B}^n\text{ such that }\ell(B) = \inf_{B' \in \mathcal{B}^n} \ell(B').
\end{equation*}
In contrast to classical Euclidean settings, the geometry of $\mathcal{B}^n$ presents several technical challenges: The space is not isotropic and exhibits a branching structure, so that the local geometry varies significantly from point to point; see \cref{prop:tangentSpacePolydisk}.  As a result, standard notions and approaches in smooth optimisation do not apply directly and the behaviour of minimisers is instead controlled by the combinatorial and order-theoretic structure of $\mathcal{B}^n$.

Our goal is to develop a basic theory of optimisation adapted to the setting of polydisc spaces. We establish general conditions under which minimisers exist and describe their structure in terms of the geometry of $\mathcal{B}^n$: see \cref{thm:globalMinimumLocallyCompact}, \cref{thm:globalMinimumAffinePower} and \cref{thm:globalMinimumUnivariate}.

We are particularly interested in functions that are monotone with respect to the inclusion order (i.e., $\ell(B_1) \le \ell(B_2)$ whenever $B_1 \subseteq B_2$). This class is natural because monotonicity imposes strong constraints on the structure of the minimisers, reflecting the hierarchical geometry of the space. A central class of examples studied is sums of absolute polynomials, we show that functions of the form
$$
x \mapsto \sum_i a_i |f_i(x)|,
$$
where $f_i \in K[x_1, \ldots, x_n]$ and $a_i > 0$, admit global minimisers under mild conditions on the associated hypersurfaces. These examples illustrate how the algebraic structure previously developed gives rise to well-posed variational problems on $\mathcal{B}^n$.

\subsection{Absolute Polynomial Sums}

An important class of variational problems on polydisc spaces is obtained from positive linear combinations of absolute polynomials. These functions arise naturally from the function theory previously developed and retain enough algebraic structure for a detailed analysis of their minimisation behaviour.

Here, we study the geometry of absolute polynomial sums and relate their optimisation properties to the zero loci of the underlying polynomials. We begin with general structural results and show how the local behaviour of $|f|$ is controlled by the presence or absence of roots in the corresponding limit sets. This study leads to an existence theorem for global minimisers of positive linear combinations of absolute polynomials under natural conditions on the associated projective hypersurfaces. In the univariate setting, the theory simplifies further and yields an explicit bounded region of $\mathcal{B}^1$ containing all global minimisers.

We first establish a basic non-Archimedean rigidity property, which is that away from its zero locus, the absolute value of a polynomial is locally constant on products of discs.

\begin{lemma} \label{lem:clopenDiscsConstant}
    Let $f \in K[x_1, \dotsc, x_n]$ be a polynomial and $P$ be a product of $n$ discs in $\hat{K}$ (which may be open or closed) that contains no root of $f$. Then $|f(z)|=|f(z')|$ for any $z,z'\in P$.
\end{lemma}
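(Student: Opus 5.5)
The plan is to reduce to the univariate case by moving from $z$ to $z'$ one coordinate at a time, and then to handle one variable by factoring over the algebraically closed field $\hat K$ and applying the strong triangle inequality to each linear factor.

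\emph{Univariate case.} Let $n=1$, and let $P$ be a closed disc $\{x\in\hat K\mid |x-a|\le r\}$ or an open disc $\{x\in\hat K\mid |x-a|<r\}$ containing no root of $f$. The polynomial $f$ is not the zero polynomial, since otherwise every point of $P$ would be a root. Since $\hat K$ is algebraically closed (\cref{con:valuedField}), write $f=c\prod_{i=1}^d(x-z_i)$ with $c\in\hat K^\ast$ and $z_i\in\hat K$; if $d=0$ there is nothing to show. Fix $z,z'\in P$ and a root $z_i$, which lies outside $P$.
\begin{itemize}
\item If $P$ is closed, then $|z-z_i|>r\ge|z-z'|$.
\item If $P$ is open, then $|z-z_i|\ge r>|z-z'|$, where $|z-z_i|\ge r$ holds because $|z-a|<r$ and $|z_i-a|\ge r$.
\end{itemize}
In both cases $|z-z_i|>|z-z'|$. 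The strong triangle inequality in its "isosceles" form then gives $|z'-z_i|=|(z'-z)+(z-z_i)|=|z-z_i|$. Taking the product over $i$ yields $|f(z)|=|f(z')|$.

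\emph{Multivariate case.} Write $P=P_1\times\dots\times P_n$ and, for $z,z'\in P$, set
\[
w^{(k)}\coloneqq(z_1',\dots,z_k',z_{k+1},\dots,z_n)\qquad\text{for } k=0,\dots,n,
\]
so that $w^{(0)}=z$ and $w^{(n)}=z'$. Every $w^{(k)}$ lies in $P$, because $P$ is a product. It therefore suffices to show $|f(w^{(k-1)})|=|f(w^{(k)})|$ for each $k$.

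These two points differ only in the $k$-th coordinate. Consider the one-variable polynomial
\[
g(x)\coloneqq f(z_1',\dots,z_{k-1}',x,z_{k+1},\dots,z_n)\in\hat K[x].
\]
Its coefficients lie in $\hat K$ rather than $K$, but the univariate argument above only uses that $g$ splits over $\hat K$. A root $y\in P_k$ of $g$ would give the root $(z_1',\dots,z_{k-1}',y,z_{k+1},\dots,z_n)\in P$ of $f$, which is excluded. Hence $g$ has no root in $P_k$, and in particular $g\neq 0$. Applying the univariate case to $g$ on $P_k$ gives $|g(z_k)|=|g(z_k')|$, which is exactly $|f(w^{(k-1)})|=|f(w^{(k)})|$. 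Chaining these equalities over $k=1,\dots,n$ proves the lemma.

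The main point needing care is the univariate step. One must get the strict inequality $|z-z_i|>|z-z'|$ in both the open and the closed case, and rule out the degenerate restriction $g\equiv 0$; the latter follows because that would produce a root of $f$ in $P$. The remaining steps are bookkeeping.
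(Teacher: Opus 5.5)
Your proof is correct and follows the same route as the paper: factor the univariate polynomial over $\hat K$, apply the strong triangle inequality to each linear factor, and pass from $z$ to $z'$ one coordinate at a time. Your version is more careful than the paper's on two points the paper leaves implicit: you check the strict inequality $|z-z_i|>|z-z'|$ separately for open and closed discs, and you note that the restricted one-variable polynomial has coefficients in $\hat K$ rather than $K$, which is harmless because it still splits over $\hat K$.
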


\begin{proof}
  Let $z,z'\in P$.
  If $n=1$, write $f = c\cdot \prod_{i=1}^d (x - z_i)$, $c, z_i \in \hat{K}$.  Since $z_i\notin P$, we have $d(z, z') < d(z, z_i)$.  By the ultrametric inequality we then have $\lvert z - z_i \rvert = \max(|z-z'|,|z'-z_i|) = \lvert z' - z_i \rvert$ and thus $|f(z)|=|f(z')|$.
  For $n>1$, write $z=(z_1,\dots,z_n)$ and $z'=(z_1',\dots,z_n')$, and apply the $n=1$ case repeatedly to obtain
  \begin{equation*}
    \lvert f(z) \rvert = |f(z_1,\dots,z_n)| = \lvert f(z_1', z_2, \dotsc, z_n) \rvert = \dotsb = \lvert f(z_1', \dotsc, z_{n -1}', z_n) \rvert = \lvert f(z_1', \dotsc, z_n') \rvert = \lvert f(z') \rvert. \qedhere
  \end{equation*}
\end{proof}

The next technical definition is helpful for simplifying some of the later statements and corresponds to the set of $K$-points contained in all polydiscs near the source of a geodesic. 

\begin{definition}
    Let $\vec{v} = \overline\gamma \in T_p \mathcal{B}^n$ be a tangent vector. The \emph{limit set} of $\vec{v}$ at zero is defined as
    $$
    P + \varepsilon \cdot \mathbf{v} := \{z \in \hat{K}^n \mid z \in \gamma(t) \text{ for small enough } t > 0 \}.
    $$
\end{definition}

The examples below illustrate how the limit set $P + \varepsilon \cdot \mathbf{v}$ reflects the local changes in radii along a given tangent direction.

\begin{example}
    \begin{enumerate}
        \item Suppose $\vec{v}_{PQ}\in T_P\mathcal B^n$ with $P \le Q$. Then $P + \varepsilon \cdot \mathbf{v}_{PQ} = P.$
        \item Suppose that for small $t$, $\gamma(t) = B(x, r(t))$ where each coordinate of $r$ is linear in $t$. If $\mathcal{I}$ is the set of indices $i$ for which $r_i(t)$ is non-decreasing in $t$ then the limit set is given by
        $$
        P + \varepsilon \cdot \mathbf{v} = D_1 \times \dotsb \times D_n
        $$
        where $D_i = B(x_i, r_i(0))$ if $i \in \mathcal{I}$ and $D_i = D(x_i, r_i(0))$ otherwise, where we write
        $$
        D(x, r) \coloneqq \{y \in \hat{K} \mid d(y, x_i) < r \}.
        $$
    \end{enumerate}
\end{example}

The next lemma relates the limit set $P + \varepsilon \cdot \vec v$ to the behaviour of absolute polynomials along tangent directions; in particular, whether $|f|$ varies or is constant along a direction is related the presence of roots in the associated limit set.

\begin{lemma} \label{lem:degreeCriterion}
    Let $P = B(\alpha, r)$ be a polydisc, $f \in K[x_1, \dotsc, x_n]$ a polynomial, and $\vec{v} = \overline \gamma \in T_P \mathcal{B}^n$ a tangent vector. Suppose further that one of the following holds:
    \begin{itemize}
        \item The radii of $\gamma$ are all non-increasing and $P + \varepsilon \cdot \mathbf{v}$ contains no root of $f$; or
        \item The radii are all increasing, and for sufficiently small $t$, the set $\gamma(t)$ contains no root.
    \end{itemize} 
    Then $\deg_{\vec{v}} \lvert f \rvert = 0$. 
    Conversely, if all the radii are non-constant along $\gamma$ (i.e. the geodesic $\gamma$ joins two polydiscs $P, Q$ such that for all $i$, the discs $P_i$ and $Q_i$ differ) and $\deg_{\vec{v}} \lvert f \rvert  = 0$ then $f$ vanishes either nowhere or everywhere on $P + \varepsilon \cdot \mathbf{v}$, assuming that none of the radii are constant along $\gamma$.
\end{lemma}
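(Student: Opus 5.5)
The plan is to reduce everything to \cref{lem:clopenDiscsConstant}, together with the observation that $|f(B)|=\sup_{z\in B}|f(z)|$ for $f$ with coefficients in $K$. Along $\gamma$, for small $t>0$, each coordinate radius $r_i(t)$ is affine in $t$ by \cref{lem:geodesics}. By \cref{cor:formulaCoefficientsInField}, $t\mapsto|f(\gamma(t))|$ is then a polynomial $p(t)$ near $0$. Hence $\deg_{\vec v}|f|=0$ is equivalent to $|f(\gamma(t))|$ being constant for small $t>0$.

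\emph{First case (radii non-increasing).} Each $\gamma(t)$ for small $t>0$ is a polydisc contained in $P$. I would check that for small $t$ it is contained in the limit set $P+\varepsilon\cdot\mathbf v$, which by the example preceding the lemma is a product of discs $D_1\times\dots\times D_n$. In each coordinate, $D_i$ is the open disc $D(\alpha_i,r_i)$ when the radius strictly decreases, and the closed disc when it is constant. If $\gamma$ moves off-centre in coordinate $i$, one has to recentre using \cref{lem:recentre} so that $D_i$ is the open residue disc in direction $\vec v_{P_i,\bar c}$. By hypothesis this product contains no root of $f$. \cref{lem:clopenDiscsConstant} then makes $|f|$ constant on it, so $|f(\gamma(t))|=\sup_{z\in\gamma(t)}|f(z)|$ equals that constant value for all small $t$, and the degree is $0$.

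\emph{Second case (radii increasing).} For small $t$, $\gamma(t)$ is an increasing family of polydiscs, namely enlargements of $P$ (possibly after recentring). By hypothesis each contains no root, so $|f|$ is constant on $\gamma(t_0)$ for a fixed small $t_0$, by \cref{lem:clopenDiscsConstant}. Since $\gamma(t)\subseteq\gamma(t_0)$ for $t\le t_0$, the supremum is the same value for all $t\le t_0$, and the degree is again $0$.

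\emph{Converse.} Assume all radii vary and $\deg_{\vec v}|f|=0$, and suppose $f$ vanishes at some $z_0\in P+\varepsilon\cdot\mathbf v$ but not at some $z_1$ there. The idea is to restrict to a one-dimensional family. Consider the line $L$ through $z_0$ and $z_1$ and the univariate polynomial $g(\lambda)=f(z_0+\lambda(z_1-z_0))$, which is nonzero with a root at $\lambda=0$. Along $\gamma$, for each small $t$ the polydisc $\gamma(t)$ contains both points. I would lower-bound $|f(\gamma(t))|$ by the sup of $|g|$ over the disc of $\lambda$ whose image lies in $\gamma(t)$. Because every radius changes with $t$, the admissible $\lambda$-radius changes strictly with $t$ as well. \cref{lem:slopeAndRootsUnivariateField} applied to $g$ then shows that this sup varies with $t$, because the disc contains a root of $g$. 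The aim is to turn this into a nonconstant contribution forcing $\deg p>0$. Expanding $f(x+\alpha')$ via \cref{cor:formulaCoefficientsInField}, the polynomial $p(t)=\max_\beta|c'_\beta|\prod r_i(t)^{\beta_i}$ is constant only if the dominating monomial is $\beta=0$. In that case $|f|$ is constant, equal to $|c'_0|$, on the open/closed product, which contradicts vanishing at $z_0$ while $|f(z_1)|\neq0$ unless $c'_0$ is small.

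The main obstacle is this last step: showing that dominance of the constant term on a neighbourhood in every coordinate forces $|f|\equiv|c'_0|$ throughout the limit set. I would attempt it by noting that if the constant term strictly dominates for small $t$ (which requires all radii to move, hence the hypothesis), the ultrametric inequality gives $|f(z)|=|c_0'|$ for all $z$ in the limit set. That value is either $0$, so $f$ vanishes everywhere, or positive, so $f$ vanishes nowhere.
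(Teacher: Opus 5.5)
Once you discard the detour through the line, your proposal is correct, and its backbone is the paper's. The forward direction rests on \cref{lem:clopenDiscsConstant}. The converse rests on the observation that, when every $r_i$ is non-constant, $t\mapsto\max_\beta|c'_\beta|\,r(t)^\beta$ can only be constant if the $\beta=0$ term is its value.

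In the forward direction, your handling of the increasing case is exactly what the stated hypothesis calls for: you apply \cref{lem:clopenDiscsConstant} to $\gamma(t_0)$ and use $P\subseteq\gamma(t)\subseteq\gamma(t_0)$. The paper only invokes the lemma on the limit set, which in that case is $P$ itself and does not by itself control $\gamma(t)$ for $t>0$. Your version is the more careful one here.

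In the converse, drop the restriction to the line through $z_0,z_1$. A strictly varying lower bound $h(t)\le|f(\gamma(t))|$ is perfectly compatible with $|f(\gamma(t))|$ being constant, so that route cannot conclude, and you do not need it.

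Your single-centre expansion argument does work, but the strictness you flag needs one line of justification. Take $\beta\neq0$ with $c'_\beta\neq0$. Then $r(t)^\beta$ is a product of non-constant affine functions, positive for small $t>0$. It is therefore a non-constant polynomial, hence strictly monotone on some $(0,t_0)$ on which also $|c'_\beta|r(t)^\beta\le|c'_0|$. Any $z$ in the limit set lies in $\gamma(s)$ for all small $s$. Choose $s$ with $z\in\gamma(s)$ and compare with a second point of $(0,t_0)$ where $r(\cdot)^\beta$ is larger; this gives $|c'_\beta(z-\alpha')^\beta|<|c'_0|$. The ultrametric inequality then yields $|f(z)|=|c'_0|$. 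If $c'_0=0$, domination instead forces every $c'_\beta=0$, so $f\equiv0$.

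The paper avoids this bookkeeping by recentring. For an arbitrary $x$ in the limit set it writes $\gamma(t)=B(x,r(t))$ and expands $f$ around $x$. Constancy then forces $|f(\gamma(t))|=|f(x)|$, so $|f(x)|$ is independent of $x$ immediately. This is shorter than propagating from a single centre, though both routes are valid, including when some radii increase and others decrease.
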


\begin{proof}

We show the equivalence by relating the slope of $|f|$ along the direction $\vec{v}$ to the behaviour of $|f|$ on the associated limit set $P + \varepsilon \cdot \mathbf{v}$.

    We first show that the condition $\deg_{\vec{v}} \lvert f \rvert  = 0$ is equivalent to local constancy along the path $\gamma$. Let $x \in P + \varepsilon \cdot \mathbf{v}$. Clearly, if $\lvert f \rvert$ is constant along $\gamma(t)$ for small $t$, then $\deg_\mathbf{v} \lvert f \rvert  = 0$.
    Conversely, if $\deg_\vec{v} \lvert f \rvert  = 0$ and none of the coordinates of $\gamma$ are constant then for sufficiently small $t$, $\lvert f(\gamma(t)) \rvert = \lvert f(x) \rvert$. Indeed, for sufficiently small $t$ we may write $\gamma(t) = B(x, r(t))$ for some $r$ that is monotone in $t$. We then have
    $$
    \lvert f(\gamma(t)) \rvert = \max_{\vec{n}} \lvert a_\vec{n} \rvert r(t) ^ n, 
    $$
    where the $a_\mathbf{n}$ are the coefficients of the expansion of $f$ around $x$, i.e.
    $$
    f(\mathbf x) = \sum_n a_\mathbf{n} (\mathbf x - x)^ \mathbf n
    $$
    and the right hand side is only constant for small $t$ if it is equal to $\lvert a_0 \rvert = \lvert f(x) \rvert$. This establishes the claim.
    
    Next, we show that if $\deg_{\vec{v}} \lvert f \rvert =0$ and $\gamma$ has non-constant radii, then $f$ vanishes either nowhere or everywhere on $P + \varepsilon \cdot \mathbf{v}$. By above, if $\deg_\vec{v} \lvert f \rvert  = 0$ then for all $x, y \in P + \varepsilon \cdot \mathbf{v}$ we have $\lvert f(x) \rvert = \lvert f(y) \rvert$. In particular, $f$ either vanishes everywhere on the set $P + \varepsilon \cdot \mathbf{v}$, or nowhere.

    For the converse, note that the case where $f$ vanishes everywhere on the set $P + \varepsilon \cdot \mathbf{v}$ is trivial, so we can assume that $f$ vanishes nowhere on that set. Since $P + \varepsilon \cdot \mathbf{v}$ is a product of open or closed discs in $\hat{K}$, we can apply Lemma \ref{lem:clopenDiscsConstant} to get that $\lvert f \rvert$ is constant on $P + \varepsilon \cdot \mathbf{v}$, which concludes our proof.
\end{proof}

\begin{example}
    The condition that no radius of $\gamma$ is constant is necessary for the converse part of \cref{lem:degreeCriterion} to hold. Otherwise, one can for example take $f = x$ and $\gamma(t) = B_\mathcal{B}(0, 1) \times B_\mathcal{B}(0, 1 + t)$. Then, $\deg_{\overline \gamma} \lvert f \rvert  = 0$ but $f$ has roots in $P + \varepsilon \cdot \vec v = B_\mathcal B(0, 1) \times B_\mathcal B(0, 1)$ without vanishing everywhere on that set.
\end{example}

The preceding lemma identifies the slope with a purely algebraic and geometric condition on roots, by showing that the variation of $|f|$ along a given direction is entirely controlled by the presence of roots in the corresponding limit set.  In particular, away from roots, $|f|$ remains constant along paths, which leads to the following consequence: 

\begin{corollary} \label{cor:constantIfNoRootsPath}
    Suppose $P, Q$ are polydiscs such that the open segment $[P, Q]$ has only finitely many points that lie above a root of the polynomial $f$. Then $\lvert f(P) \rvert = \lvert f(Q) \rvert$. 
\end{corollary}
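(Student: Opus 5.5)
The plan is to show that the continuous function $\phi(t)\coloneqq|f(\gamma_{PQ}(t))|$ on $[0,1]$ is locally constant away from finitely many parameters, and then conclude from continuity and the piecewise polynomial structure of \cref{cor:absolutePolynomialPiecewisePolynomial}. Concretely, let $t_1<\dots<t_k$ be the finitely many parameters in $(0,1)$ for which $\gamma_{PQ}(t_j)$ lies above a root of $f$, i.e.\ contains a root of $f$ in $\hat K^n$. I read ``lies above a root'' as ``contains a root'', and I also add the endpoints $0,1$ to this list. On each open interval $(t_j,t_{j+1})$ the polydiscs $\gamma_{PQ}(t)$ then contain no root of $f$.

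First, on such an interval, consider any $t$ in it and the tangent direction at $\gamma_{PQ}(t)$ pointing toward $\gamma_{PQ}(t+\delta)$. I split into the two cases of \cref{lem:degreeCriterion}. Suppose the radii along this direction are non-increasing. Then the limit set $\gamma(t)+\varepsilon\cdot\mathbf v$ is contained in $\gamma_{PQ}(t)$ and so contains no root. Suppose instead the radii are increasing. Then $\gamma_{PQ}(t+\delta)$ contains no root for small $\delta$, since these points lie in the same root-free interval. By \cref{lem:degreeCriterion}, in either case $\deg_{\vec v}|f|=0$, so $\phi$ is constant on $[t,t+\delta]$ for small $\delta>0$. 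The same argument applied to the reversed geodesic gives constancy on $[t-\delta,t]$.

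The geodesic $\gamma_{PQ}$ is a product of one-dimensional geodesics, each of which goes first up and then down. Along it, individual coordinates may switch monotonicity at finitely many breakpoints, or some coordinates may stay constant while others move. This is the step I expect to be the main obstacle: \cref{lem:degreeCriterion} as stated requires all radii to be non-increasing, or all to be increasing. To handle mixed behaviour, I would bypass the lemma and argue directly with \cref{lem:clopenDiscsConstant}. For small $\delta$, the union (or, in the shrinking coordinates, the intersection) of the polydiscs $\gamma_{PQ}(s)$, $s\in[t,t+\delta]$, is contained in a product $\Pi$ of closed and open discs. Each factor of $\Pi$ is contained in the corresponding factor of $\gamma_{PQ}(t+\delta)$ or of $\gamma_{PQ}(t)$, according to whether that coordinate is growing or shrinking. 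Choosing $\Pi$ as the coordinatewise larger of these two factors keeps it root-free by hypothesis, although I expect the root-freeness of such a mixed product to need care. Then $|f|$ is constant on $\Pi$ by \cref{lem:clopenDiscsConstant}. Since $|f(B)|=\sup_{z\in B}|f(z)|$ for $B\subseteq\Pi$, we get $\phi$ constant on $[t,t+\delta]$.

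Having established local constancy on each open interval, $\phi$ is constant on each $(t_j,t_{j+1})$ by connectedness. At the breakpoints $t_j$ the function $\phi$ is continuous, since the absolute polynomial is continuous (it is piecewise polynomial in the radii by \cref{lem:formula}). Hence the constants on adjacent intervals agree, and $\phi$ is constant on all of $[0,1]$. This gives $|f(P)|=\phi(0)=\phi(1)=|f(Q)|$.
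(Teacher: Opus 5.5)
Your skeleton is exactly the paper's: cut $[0,1]$ at the finitely many bad parameters, prove $\phi$ is locally constant on each open piece, and glue by continuity. You are also right that \cref{lem:degreeCriterion} only covers directions whose radii are all non-increasing or all increasing; the paper's proof invokes it at every $R$ without addressing this. But your patch for mixed directions has a genuine gap, and in fact fails. The coordinatewise larger product $\Pi$ is the join $\gamma_{PQ}(t)\vee\gamma_{PQ}(t+\delta)$. When one radius grows while another shrinks, this join does not lie on $[P,Q]$, so its root-freeness is not ``by hypothesis''. Nor is it true for small $\delta$. Take $K=\QQ_p$, $f=xy-1$, $A=p^{-\sqrt2}$, $C=p^{\sqrt2}$, and
\[
  \gamma_{PQ}(s)=B_{\mathcal B}\bigl(0,A(\tfrac12+s)\bigr)\times B_{\mathcal B}\bigl(0,C(\tfrac32-s)\bigr),\qquad s\in[0,1].
\]
Since $|\hat K^\ast|=p^{\QQ}$, a product $B_{\mathcal B}(0,r_1)\times B_{\mathcal B}(0,r_2)$ contains a root of $f$ if and only if some $\rho\in p^{\QQ}$ satisfies $1/r_2\le\rho\le r_1$. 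Along this geodesic $r_1r_2=1-(s-\tfrac12)^2$, so for $s\neq\tfrac12$ that interval is empty. At $s=\tfrac12$ it is $\{A\}$, which misses $p^{\QQ}$. Hence every point of the segment is root-free. Yet $\gamma_{PQ}(\tfrac12)\vee\gamma_{PQ}(\tfrac12+\delta)=B_{\mathcal B}(0,A(1+\delta))\times B_{\mathcal B}(0,C)$ contains a root for every $\delta>0$. This also holds with an open disc in the shrinking coordinate, and the same happens to the left of $\tfrac12$. So the root-free locus is not open at such polydiscs, and no single root-free product covers $\gamma_{PQ}([t,t+\delta])$.

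The fix is to use a common point instead of a common superset. Choose $\delta>0$ so that $[t,t+\delta]$ stays inside the root-free open piece and no coordinate of $\gamma_{PQ}$ switches from going up to going down inside $(t,t+\delta)$. There are only finitely many such switches, at most one per coordinate. Then in each coordinate the discs $\gamma_{PQ}(s)_i$, $s\in[t,t+\delta]$, form a monotone chain. Therefore $\bigcap_{s\in[t,t+\delta]}\gamma_{PQ}(s)=\prod_i\bigl(\gamma_{PQ}(t)_i\cap\gamma_{PQ}(t+\delta)_i\bigr)$ is non-empty; pick $a$ in it. Each $\gamma_{PQ}(s)$ is root-free, so by \cref{lem:clopenDiscsConstant} $|f|$ is constant on it, and $\phi(s)=\sup_{z\in\gamma_{PQ}(s)}|f(z)|=|f(a)|$. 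This treats growing, shrinking, constant and mixed coordinates uniformly and removes the need for \cref{lem:degreeCriterion}. The rest of your argument, namely connectedness and continuity at the finitely many bad parameters, then goes through.
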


\begin{proof}
    First, note that by assumption we can write $[P, Q]$ as a union
    $$
        [P, Q] = \bigcup_{i=1}^{n-1} [P_i, P_{i+1}]
    $$
    with $P_1=P$ and $P_n = Q$.
    By continuity, it suffices to show that $\lvert f \rvert$ is constant on each open interval $I := [P_i, P_{i+1}] \setminus \{P_i, P_{i+1}\}$.
    Given any $R \in I$, let $\vec{v} = d_{RP_{i+1}}$ be the tangent vector corresponding to the path $\gamma$ from $R$ to $P_{i+1}$. Note that we have
    $P + \varepsilon \cdot \mathbf{v} \subset R$, so $P + \varepsilon \cdot \mathbf{v}$ does not contain any root of $f$. Thus, $\deg_{\vec{v}} |f| = 0$. This implies that $\lvert f \rvert$ is constant on $I$.
\end{proof}

Effectively, absolute polynomials are constant along large regions of geodesic paths and can vary only when the path lies above the zero locus of the polynomial. As a consequence, minimising positive sums of absolute polynomials is strongly related to the global arrangement of the corresponding zero sets. The next few theorems make this more precise, characterising the existence of local minima in terms of the intersections of the associated projective hypersurfaces, and describing where these global minima lie in relation to these hypersurfaces.

\begin{theorem}
  \label{thm:globalMinimumLocallyCompact}
  Let $\ell \coloneqq a_1|f_1| + \dots + a_m|f_m|$ be a positive linear combination of non-constant absolute polynomials, i.e., $a_i > 0$ and $f_i\in K[x_1,\dots,x_n]$, and denote the projective closure of the zero locus of $f_i$ by $V_i \subseteq \mathbb{P}^n$.  Suppose $K$ is locally compact.  Then $\ell$ has a global minimum in $\mathcal B^n$ if
  \begin{itemize}
    \item the $V_i$ intersect in $K^n$, i.e., $V_1 \cap \dots \cap V_m \cap K^n \neq \emptyset$; or
    \item the $V_i$ never intersect, i.e., $V_1 \cap \dots \cap V_m = \emptyset$.
  \end{itemize}
  Moreover, any such global minimum can be attained on $K^n$.
    In particular if $(V_1 \cap \dots \cap V_m) \setminus K^n = \emptyset$ then $\ell$ has a global minimum.
\end{theorem}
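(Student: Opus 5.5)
The plan has two steps. First reduce to $K^n$: I show that every polydisc is dominated by one of its $K$-points. Then I show that $\ell|_{K^n}$ attains its infimum, splitting into the two cases of the hypothesis.

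\emph{Step 1 (reduction to $K$-points).} Each $|f_i|$ is monotone for inclusion, directly from the supremum in \cref{def:discPolynomials}: if $B\subseteq B'$ then $|f_i(B)|\le|f_i(B')|$. Since $a_i>0$, $\ell$ is monotone as well. The centre $a\in K^n$ of a polydisc $B=B_{\mathcal B}(a,r)$ satisfies $B_{\mathcal B}(a,0)\subseteq B$, so $\ell(a)\le \ell(B)$. Hence
\[
\inf_{\mathcal B^n}\ell=\inf_{K^n}\ell ,
\]
and any minimiser on $K^n$ is a global minimiser on $\mathcal B^n$. This also gives the ``moreover'' clause, and it remains to show that $z\mapsto \sum_i a_i|f_i(z)|$ attains its infimum on $K^n$.

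\emph{Step 2, first case.} Suppose $V_1\cap\dots\cap V_m\cap K^n\neq\emptyset$. Any $z$ in this intersection satisfies $f_i(z)=0$ for all $i$, so $\ell(z)=0$. Since $\ell\ge 0$, $z$ is a global minimum.

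\emph{Step 2, second case.} Suppose $V_1\cap\dots\cap V_m=\emptyset$. The plan is a compactness argument on $\mathbb P^n(K)$, which is compact because $K$ is locally compact. Write $d_i=\deg f_i$, let $F_i$ be the homogenisation of $f_i$, and for a point $[x_0:\dots:x_n]$ normalised so that $\max_j|x_j|=1$ set
\[
\phi([x]) \coloneqq \max_i |F_i(x)|^{1/d_i}.
\]
This is well defined, since $|F_i(\lambda x)|^{1/d_i}=|\lambda|\,|F_i(x)|^{1/d_i}$ and a rescaling by $|\lambda|=1$ preserves the normalisation. It is continuous, and it is strictly positive because the $V_i$ have no common zero; by compactness $\phi\ge c>0$. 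For $z\in K^n$ with $\|z\|\coloneqq\max_j|z_j|\ge 1$, normalise $x=(1,z)/\|z\|$ to get
\[
|f_i(z)| = \|z\|^{d_i}\,|F_i(x)| .
\]
Choosing $i$ with $|F_i(x)|^{1/d_i}\ge c$ gives $\ell(z)\ge a_i (c\|z\|)^{d_i}$, and $d_i\ge 1$ since the $f_i$ are non-constant. So $\ell$ is coercive: $\ell\to\infty$ as $\|z\|\to\infty$.

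Coercivity means that some sublevel set $\{z\in K^n : \ell(z)\le \ell(0)\}$ lies in a bounded ball of $K^n$, which is compact by local compactness. The function $\ell$ is continuous on $K^n$ because polynomials and the absolute value are continuous, so it attains its minimum on that ball, and this minimum is the infimum over $K^n$. The final ``in particular'' sentence follows by combining the two cases: if $(V_1\cap\dots\cap V_m)\setminus K^n=\emptyset$, then the intersection is either empty or meets $K^n$.

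The main obstacle is the coercivity step. One has to handle the normalisation carefully so that $\phi$ is well defined and continuous on $\mathbb P^n(K)$. One also has to make sure the hypersurfaces $V_i$ are taken over $K$ (or over $\hat K$, which only strengthens the emptiness hypothesis). Only $K$-rational common zeros at infinity would break the compactness argument, and those are exactly what the emptiness hypothesis excludes.
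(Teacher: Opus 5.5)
Your proof is correct and follows the same route as the paper. Monotonicity under inclusion reduces everything to $K^n$, a common $K$-rational root is trivially a minimiser, and the non-intersecting case is a compactness argument on $\PP^n(K)$. Your bound $\ell(z)\ge \min_i a_i (c\|z\|)^{d_i}$ is a uniform, quantitative version of the paper's one-line assertion that $|f_i(z_j)|\to\infty$ when $z_j$ tends to a point at infinity outside $V_i$; after that, the paper extends $\ell$ by $\infty$ to $\PP^n$, whereas you minimise over a compact ball.

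One cosmetic fix: you cannot divide $(1,z)$ by the real number $\|z\|$. Divide instead by a coordinate $z_j\in K$ with $|z_j|=\|z\|$, which gives the same identity $|f_i(z)|=\|z\|^{d_i}|F_i(x)|$.
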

\begin{proof}
  Since $a_i>0$, we have $\ell(B)\leq \ell(B')$ for $B,B'\in\mathcal B^n$ with $B\subseteq B'$.  Consequently, $\ell$ attains a global minimum on $\mathcal B^n$ if and only if it attains a global minimum on $K^n\subseteq \mathcal B^n$.  Thus, it suffices to restrict attention to $\ell$ on $K^n$. Moreover, if the $V_i$ intersect in $K^n$ then such an intersection must be a global minimum of $\ell$ on $K^n$, so we can assume that the $V_i$ never intersect.
  Our strategy is to extend $\ell$ to a continuous function $\PP^n \to \RR\cup\{\infty\}$ by setting $\ell(z) = \infty$ for $z\in\PP^n\setminus K^n$ (note that as shown in \cref{ex:polynomialNoExtension} this must be a property of the whole sum rather than of the individual summands $\lvert f_i \rvert$) and then use a compactness argument.

  We first show that setting $\ell(z) = \infty$ for $z\in\PP^n\setminus K^n$ results in a continuous function. It suffices to show that for any sequence $(z_j) \subset K^n$ converging to $z$, we have $\ell(z_j) \to \infty$. This follows from our assumption of the $V_i$: since $V_1 \cap \dotsb \cap V_m = \emptyset$, there exists an $i$ such that $z \notin V_i$. This implies that $\lvert f_i(z_j) \rvert \to \infty$ as $j \to \infty$, and thus $\ell(z_j) \to \infty$.

  Since $K$ is locally compact, $\PP^n$ is compact, so $\ell$ attains a global minimum $x_*$ over $\PP^n$. Since $\ell = \infty$ on $\PP^n \setminus K^n$, it must be the case that $x_* \in K^n$, which concludes the proof. \qedhere
\end{proof}

\begin{remark}
  We note that the result does not assume that the absolute value on the field $K$ is non-Archimedean. In particular, the theorem above also holds when $K = \RR$ or $K = \CC$.
\end{remark}

The compactness argument used above is in a sense quite natural, but fails in general, as the following example shows.

\begin{example} \label{ex:polynomialNoExtension}
In general, an absolute polynomial does not admit a continuous extension to all of projective space. For instance, $|x_2|$ is not defined at the point $[1:0:0]$, and given any $\alpha \in |K^*|$, there exists a sequence of points $(a_n, b_n)$ in $\BA^2$ converging to $[1:0:0]$ such that $|b_n| = |x_2(a_n, b_n)|$ converges to $\alpha$.
\end{example}

The theorem above does not hold if we remove the local compactness assumption on $K$. In particular, there exist examples of absolute polynomial sums over $\CC_p$ that do not have a global minimum. A similar result can still be recovered at the cost of placing additional assumptions on the $f_i$.

\begin{theorem} \label{thm:globalMinimumAffinePower}
     Let $\ell \coloneqq a_1|f_1| + \dots + a_m|f_m|$ be a positive linear combination of non-constant absolute polynomials, i.e., $a_i > 0$ and $f_i\in K[x_1,\dots,x_n]$, and denote the projective closure of the zero locus of $f_i$ by $V_i\coloneqq V(f_i) \subseteq \mathbb{P}^n$. Suppose further that $f_i = g_i ^ {m_i}$ where $m_i>0$ and $g_i = \sum_{j=1}^na_{i,j} \cdot x_j + b_i$ for some $a_{i,j},b_i\in K$. Then $\ell$ has a global minimum in $\mathcal B^n$ if
  \begin{itemize}
    \item the $V_i$ intersect in $K^n$, i.e., $V_1 \cap \dots \cap V_m \cap K^n \neq \emptyset$; or
    \item the $V_i$ never intersect, i.e., $V_1 \cap \dots \cap V_m = \emptyset$.
  \end{itemize}
  In particular if $(V_1 \cap \dots \cap V_m) \setminus K^n = \emptyset$ then $\ell$ has a global minimum.
\end{theorem}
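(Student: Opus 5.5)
The plan is to reduce, exactly as in the proof of \cref{thm:globalMinimumLocallyCompact}, to minimising $\ell$ on $K^n$. Monotonicity of $\ell$ under inclusion (since $a_i>0$) shows that a minimiser on $\mathcal B^n$ exists if and only if one exists on $K^n$. If the $V_i$ meet in $K^n$, any common zero gives $\ell=0$ and we are done. So assume $V_1\cap\dots\cap V_m=\emptyset$. Since $|f_i|=|g_i|^{m_i}$ on $K^n$, we have
\begin{equation*}
  \ell(z)=\sum_i a_i\,|g_i(z)|^{m_i},
\end{equation*}
a function of the finitely many values $|g_i(z)|$ of affine forms. Instead of compactness of $\PP^n$, I will replace local compactness by a \emph{finiteness/discreteness} argument adapted to affine forms.

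\textbf{Step 1: translate the hypothesis into linear algebra.} The projective closure of $V(g_i)$ is the hyperplane $\{b_i x_0+\sum_j a_{i,j}x_j=0\}$. So $V_1\cap\dots\cap V_m=\emptyset$ means that the homogenised forms $\tilde g_i=b_ix_0+\sum_j a_{i,j}x_j$ have no common zero in $\PP^n$. Over a field this says the $(n+1)$-vectors $(b_i,a_{i,1},\dots,a_{i,n})$ span $K^{n+1}$. In particular the linear parts $A=(a_{i,j})$ have rank $n$, and hence the map $\phi\colon K^n\to K^m$, $z\mapsto(g_1(z),\dots,g_m(z))$, is an injective affine map. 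Choose $n$ indices $I$ with $A_I$ invertible. Then $z=A_I^{-1}(g_I(z)-b_I)$, which gives a coercivity bound $\|z\|\le C\max_i|g_i(z)|+C'$. So on any sublevel set $\{\ell\le c\}$ all $|g_i|$ are bounded and $z$ ranges in a bounded set.

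\textbf{Step 2: reduce to finitely many candidate values.} Rather than use compactness of a bounded set, which may fail over non-locally-compact $K$ such as $\CC_p$, I will show the set of attainable vectors $(|g_1(z)|,\dots,|g_m(z)|)$ on a sublevel set has a minimising element. The key observation is that $\ell$ is a coordinatewise nondecreasing function of $u=(|g_i(z)|)_i$. The image $\phi(K^n)$ is an affine subspace $L\subseteq K^m$, and the set $U=\{(|y_1|,\dots,|y_m|): y\in L\}$ can be described tropically. Since $L$ is not contained in any coordinate-constrained position forcing all $y_i=0$, I will argue that the minimal elements of $U$ are realised. Concretely, I will run a descent: starting from $z$, if some $|g_i(z)|$ can be decreased without increasing the others, move $z$ within $L$ to do so. Each such move uses the ultrametric inequality, exactly as in \cref{lem:clopenDiscsConstant}, to keep the other values fixed. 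One can always make a variable $g_i$ vanish on a coset, so values either drop to $0$ or get stuck at a threshold $|c|$ for $c\in K$ coming from the constants of $L$ after Gaussian elimination. This yields finitely many possible values below the starting level, and hence a minimum.

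\textbf{Main obstacle.} Step 2 is the delicate part: one must show the infimum of $\ell$ over $L$ is attained, even though $K$ is not locally compact. My first attempt is induction on $n$. Pick a pivot $g_{i_0}$, parametrise $L$ by $g_{i_0}=y$ together with a hyperplane $H$, and note that on $H$ the restricted forms still satisfy the non-intersection hypothesis (or have a common zero, which is handled by the first case). Then minimise over $y\in K$ for each fixed slice. The univariate problem $\sum_i a_i|\lambda_i y+\mu_i|^{m_i}$ attains its minimum among the finitely many points $y=-\mu_i/\lambda_i$, because it is piecewise constant or monotone away from these roots by \cref{cor:constantIfNoRootsPath}. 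Thus the minimum over each line lies at one of finitely many affine-defined points, and recursion produces finitely many candidate points in $K^n$ at which $\ell$ must be minimal. Finally, the ``in particular'' clause follows because $(V_1\cap\dots\cap V_m)\setminus K^n=\emptyset$ forces one of the two bulleted cases.
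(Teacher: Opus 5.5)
Your final argument, the induction in the ``Main obstacle'' paragraph, is correct in substance. It reaches the result by a somewhat different route from the paper.

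The paper works as follows. It isolates the finite set $S$ of maximal non-empty intersections $V_J=\bigcap_{j\in J}V_j$ and checks that each $f_i$, hence $\ell$, is constant on every $V_J$. It then shows by induction on $n$ that each $z'\in K^n$ is dominated by a point of some $V_J$. It passes from $z'$ to a nearest point $p$ of a \emph{nearest} hyperplane $V_i$, so that $\ell(z')=\ell(z'\vee p)\geq\ell(p)$ by \cref{cor:constantIfNoRootsPath}, and recurses inside $V_i$. Attainment follows from $\inf\ell=\min_{V_J\in S}\ell(V_J)$.

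You instead fibre $K^n$ by the parallel lines $z+Kw$ in a fixed direction $w$ along which $g_{i_0}$ is non-constant, and use the one-variable fact on each line. If $t_*$ is a root of a non-constant term nearest to $t'$, then $|t_*-t_i|\leq\max(|t_*-t'|,|t'-t_i|)=|t'-t_i|$ for every such root $t_i$. So each term at $t_*$ is at most its value at $t'$. This gives $\inf_{K^n}\ell=\min_{i:\lambda_i\neq0}\inf_{V(g_i)}\ell$, where $\lambda_i$ is the slope of $g_i$ along $w$: a minimum over finitely many lower-dimensional problems.

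What each route buys:
\begin{itemize}
\item Your route avoids the nearest-point geometry and the maximal-intersection bookkeeping. It never uses Step~1 or the bullet hypotheses, so it proves attainment for arbitrary affine $g_i$.
\item The paper's route gives the explicit list of candidate values $\ell(V_J)$ and the localisation in \cref{rmk:minimumInMaxiamlAffineSubspace}. Your recursion also ends on an intersection of the $V(g_i)$ on which every form is constant, so it recovers this as well.
\end{itemize}

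To make this a proof, tighten three points.
\begin{itemize}
\item Discard the descent paragraph of Step~2. It is not an argument, and its claim that only finitely many values of $\ell$ lie below the starting level is false. For $\ell(z)=|z|+|z-1|$ over $\QQ_p$, all values $1+p^{-k}$ with $k\geq 1$ occur below $\ell(p^{-1})=2p$. What is finite is the set of values at terminal points, and the induction supplies that.
\item Apply the recursion to $\ell$ restricted to each hyperplane $V(g_i)$ with $\lambda_i\neq 0$, not to the forms $g_k|_H$. Equivalently, recurse on $\ell$ composed with the affine section $H\to V(g_i)$ that picks the root on each line. Your remark about ``the restricted forms on $H$'' concerns the wrong forms.
\item Formulate the inductive claim with constant forms allowed. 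On $V(g_i)$ the form $g_i$ vanishes identically, and every $g_k$ with $V(g_k)$ parallel to $V(g_i)$ becomes a non-zero constant, so the theorem's ``non-constant'' hypothesis is not inherited. Prove instead that $\sum_i a_i|g_i|^{m_i}$ attains its infimum on $K^n$ for all affine $g_i$, with the trivial base case in which every form is constant.
\end{itemize}
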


\begin{proof}
  Many parts of the theorem can be proven exactly as in \cref{thm:globalMinimumLocallyCompact}.  The only notable difference is showing the existence of a global minimum of $\ell$ on $K^n$ if the $V_i$ do not intersect.
  Consider the finite set $S$ of subsets of $K^n$ consisting of maximal non-empty intersections of the $V_i$:
  \begin{equation*}
    S = \Big\{ \bigcap_{j\in J} V_{j} \bigmid J\subseteq [m]\text{ with } \bigcap_{j\in J} V_{j}\neq\emptyset \text{ and } V_{j'}\cap \bigcap_{j\in J} V_{j}=\emptyset \text{ for }j'\notin J\Big\}
  \end{equation*}
  Note that each $V_j$ and thus also each element of $S$ is an affine subspace of $K^n$.

  We first claim that every $f_i$ is constant along each $V_J\coloneqq \bigcap_{j\in J} V_{j}$.  If \(i \in J\), then \(f_i\) vanishes identically on \(V_J\). We therefore assume \(i \notin J\). If $f_i$ were not constant on $V_J$, then $g_i(V_J)$ is a non-zero dimensional affine subspace of $K$, which implies $g_i(z) = 0$ for some $z\in V_J$, contradicting $V_i\cap V_J=\emptyset$.  Thus $\ell$ attains a unique value on each $V_J$, which we denote by $\ell(V_J)$.

  Next, we claim that for any $z' \in K^n$, there exist $V_J \in S$ and $z \in V_J$ such that $\ell(z')\geq\ell(z)$, which can be proven by induction on $n$.

  For $n=1$, each $V_i$ consists of a single point and we can pick $z\in\bigcup_{i=1}^mV_i$ to be a point that minimises the distance to $z'$.  Then we have $\ell(z') = \ell(z' \vee z) \ge \ell(z)$, where the first equality follows from \cref{cor:constantIfNoRootsPath}, and $z\in V_J$ for $J=\{i\in [n]\mid z\in V_i\}$.

  For $n>1$, pick any $V_i$ such that the distance from $z'$ to $V_i$ is minimal. Let $p \in V_i$ be a point that minimises this distance so that we again have $\ell(z') = \ell(z' \vee p) \ge \ell(p)$.  Restricting ourselves to $V_i\cong K^{n-1}$, we can apply induction to obtain some $z\in V_i$ such that $\ell(z')\geq\ell(p)\geq\ell(z)$ and $z\in \bigcap_{j\in J} V_j$ for some $J\subseteq [m]\setminus \{i\}$ with $V_{j'}\cap \bigcap_{j\in J} V_j=\emptyset$ for all $j'\in ([m]\setminus \{i\})\setminus J$.  Consequently, $z\in V_{\{i\}\cup J}\in S$.

 Thus we have $\inf_{x \in K^n} \ell(x) = \min_{V_J\in S} \ell(V_J)$, and $\ell$ has a global minimum in $K^n$.
\end{proof}

\begin{remark} \label{rmk:minimumInMaxiamlAffineSubspace}
  The proof of \cref{thm:globalMinimumAffinePower} actually indicates a stronger result, namely that the global minimum of the function $\ell$ lies in some maximal intersection of affine subspaces
  $
  V (g_i).
  $
  In other words, since the $g_i$ are assumed to have degree 1, the global minimum must lie in some affine subspace that can be directly computed from the $g_i$. 
  Since we can directly compute the value of $\ell$ on any such subspace, this gives us an explicit method for finding a global minimum $x_*$. 
\end{remark}

The geometric criterion just established applies to a broad range of concrete optimisation problems. We illustrate this first in the linear setting, where the existence of a minimiser can be interpreted in terms of the solvability of a system of equations.

\begin{example} \label{ex:linearPreimage}
    The condition on the zero loci is quite natural. For example, given a matrix $A$ and a vector $y$, consider the problem of finding a vector $x$ such that $Ax = y$. We can reframe this as an optimisation problem, namely the ``loss'' minimisation problem:
    \begin{equation} \label{eqn:optimisationProblem}
        \argmin_x \ell (x) \quad \text{where}\quad
        \ell(x) = \lvert \lvert A x - y \rvert \rvert ^ 2 = \sum_i \lvert g_i(x) ^ 2 \rvert \text{ and } g_i(x) = \sum_j A_{ij} x_j - y_i.
    \end{equation}

    Let $V_j$ be the projective closure of the variety defined by $g_j$ (this coincides with the projective closure of the variety defined by $g_j^2$). This is then just the zero locus of the homogenisation of $g_j$, namely 
    $$
        \sum_j A_{ij} x_j - y_i T \in K[x_1, \dotsc, x_n, T]
    $$
    A point $[x_1 : \dotsb : x_n : T]$ lies in the intersection $\bigcap_j V_j$ if and only if it satisfies 
    $
        A x = T y.
    $
    Now, a point $[x_1 : \dotsb : x_n : T]$ lies in $\mathbb{P} ^ n \setminus \mathbb{A}^n$ if and only if $T = 0$, so $\bigcap_j V_j \setminus \mathbb{A}^n \ne \emptyset$ if and only if there is a non-zero $x$ such that $A x = 0$. In particular, Problem (\ref{eqn:optimisationProblem}) has a solution whenever $A$ is non-singular. We shall investigate a generalisation of this problem further on in Section \ref{sec:applications}.
\end{example}

We have seen above how the criterion guarantees existence in a classical setting. We now show that the geometric condition on the zero loci is also essential: if the hypersurfaces meet only at infinity then the infimum need not be attained.

\begin{example}
  Consider the absolute polynomial sum $\ell(x_1,x_2)\coloneqq |x_1| + |x_1 x_2 - 1|$ on $\mathcal{B}^2$, and $K = \CC_p$. Clearly $x_1$ and $x_1x_2-1$ have no common root so $\ell > 0$ on $\mathcal{B}^2$, but one can find points in $\mathcal{B}^2$ where $\ell$ is arbitrarily small, e.g., if we take the sequence of points $(p^{j} + p^{2j}, p^{-j})$ then we have
  $$
  \ell(p^{j} + p^{2j}, p^{-j}) = 2p^{-j}
  $$
  which tends to zero at $j \to \infty$. Note that in $\PP^n$, the sequence of points $(p^{j} + p^{2j}, p^{-j})$ converges to the point $[0:1:0]$, which lies in the intersection of $V_1$ and $V_2$.
\end{example}

The previous example shows that intersections at infinity may obstruct attainment of a minimum. The mechanism behind the failure of existence is illustrated schematically in Figure~\ref{fig:no-global-minimum}.

\begin{figure}[t]
    \centering
\begin{tikzpicture}[x={(1.3,0)}, y={(0,1)}]
    
    \draw[->, thick, black] (-0.5,0) -- (5,0) node[right] {$x$};
    \draw[->, thick, black] (0,-0.5) -- (0,6) node[above] {$y$};
    
    \draw[red, very thick] (0,-0.3) -- (0,6);
    \node[red, anchor=east] at (0, 3) {$V(x)$};
    
    \draw[red, very thick, domain=0.18:4.8, samples=100] 
        plot (\x, {1/\x});
    \node[red, anchor=west] at (3.5, 0.45) {$V(xy - 1)$};
    
    
    \fill[blue] (2.2, 0.6) circle (2.5pt);
    \node[blue, anchor=west, font=\footnotesize] at (2.3, 0.65) {$j=1$};
    
    \fill[blue] (1.4, 1.0) circle (2.5pt);
    \node[blue, anchor=west, font=\footnotesize] at (1.5, 1.05) {$j=2$};
    
    \fill[blue] (0.85, 1.6) circle (2.5pt);
    \node[blue, anchor=west, font=\footnotesize] at (0.95, 1.65) {$j=3$};
    
    \fill[blue] (0.5, 2.5) circle (2.5pt);
    \node[blue, anchor=west, font=\footnotesize] at (0.6, 2.55) {$j=4$};
    
    \fill[blue] (0.28, 3.8) circle (2.5pt);
    \node[blue, anchor=west, font=\footnotesize] at (0.38, 3.85) {$j=5$};
    
    \fill[blue] (0.15, 5.2) circle (2.5pt);
    
    \draw[blue, dashed, thin] (2.2, 0.6) -- (1.4, 1.0) -- (0.85, 1.6) -- (0.5, 2.5) -- (0.28, 3.8) -- (0.15, 5.2);
    
    \draw[blue, thin, ->, dashed] (0.15, 5.2) -- (0.11, 5.8);
    
    \node[anchor=north east] at (-0.1,-0.1) {$O$};
    
    \node[blue, anchor=north west, font=\small] at (2.2, 5.8) {%
        $(p^{j} + p^{2j}, p^{-j})$
    };
    
\end{tikzpicture}\vspace{-2mm}
\caption{Absolute polynomial sum with no global minimum. As $j \to \infty$, the sequence converges to a point at infinity in $V(x) \cap V(xy-1)$}
  \label{fig:no-global-minimum}
\end{figure}

The absence of common affine roots does not by itself exclude the existence of global minima. We now consider a situation in which the minimum is attained despite the zero loci having no common point in \(\mathbb A^2\).

\begin{example}
    Suppose $K$ has characteristic different from 2.
    We work with polynomials in $K[x, y]$ and their homogenisations, which live in $K[x, y, z]$, for instance, $\ell = \lvert x ^ 2 + y^2 \rvert + \lvert x + 1 \rvert + \lvert y + 1 \rvert$. Let us first compute the projective hypersurfaces cut out by each of these polynomials. These are $V_1 = V(x ^ 2 + y ^ 2)$, $V_2 = V(x + z)$, and $V_3 = V(y + z)$.
    Then,
    $$
    V_2 \cap V_3 = \left\{[x : y : z] \mid x + z = y + z\right\} = \left\{[-z : -z : z] \mid z \ne 0 \right\}.
    $$
    In particular, points in $V_2 \cap V_3 \setminus \mathbb{A}^2 = \emptyset$. Thus,
    $$
    \left( V_1 \cap V_2 \cap V_3 \right) \setminus \mathbb{A}^2 = \emptyset
    $$
    and by Theorem \ref{thm:globalMinimumLocallyCompact}, $\ell$ has a global minimum. Notice that this cannot lie at a common root of the polynomials, since there is no such root.
\end{example}

\begin{example}
    Note that the converse of \cref{thm:globalMinimumLocallyCompact} and \cref{thm:globalMinimumAffinePower} does not hold. For example, consider the absolute polynomial sum
    $
    \ell(x, y) = \lvert y \rvert + \lvert y + 1 \rvert.
    $
    This attains a global minimum at any point in the lines $\{y = 0\}$ and $\{y = -1\}$, but these two lines only intersect at infinity.
\end{example}

The preceding examples concern existence of minimisers. We now turn to their geometry and describe where they lie in terms of the hypersurfaces cut out by the $f_i$. 

\begin{proposition}
  \label{prop:minIsRoot}
  Suppose $K$ is algebraically closed. If $\ell = a_1|f_1| + \dotsc + a_N |f_N|$ with $f_i\in K[x_1,\dots,x_n]\setminus K$ is a positive linear combination of non-constant absolute polynomials over $K^n$ and $B_*$ is a global minimum of $\ell$ over $\mathcal B^n$. Then $B_*$ contains a root of one of the $f_i$. 
\end{proposition}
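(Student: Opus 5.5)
We argue by contradiction. Suppose $B_*=B(a,r)$ is a global minimum of $\ell$ and that $B_*$, viewed as a polydisc in $\hat K^n=K^n$, contains no root of any $f_i$. By \cref{lem:clopenDiscsConstant}, each $|f_i|$ is then constant on the points of $B_*$. By \cref{lem:formula} and \cref{cor:formulaCoefficientsInField}, the supremum defining $|f_i(B_*)|$ equals this constant value $|f_i(a)|$. Hence $\ell(B_*)=\ell(a)$, and the $K$-point $a$ is also a global minimiser. So it suffices to show that a global minimum cannot be attained at a polydisc free of roots. Equivalently, we want a point $B'$ with $\ell(B')<\ell(B_*)$.

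The plan is to grow $B_*$ until it first meets a root. Since $\ell$ is monotone for inclusion, growing the disc will not decrease $\ell$ by itself. Instead we use the fact that, once a root is met, we can descend into a different branch. Concretely, since $K$ is algebraically closed and each $f_i$ is non-constant, $V(f_i)\cap K^n\neq\emptyset$. Consider the family $B(a,r+t\mathbf 1)$ for $t\ge 0$, and let $t_0$ be the infimum of the $t$ for which this family contains a root $z$ of some $f_{i_0}$. Such a $t_0$ exists, and the infimum is attained because the discs are closed and $|z-a|$ ranges over the values coming from roots.

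On $[0,t_0)$ no $f_i$ has a root in the disc. By \cref{lem:clopenDiscsConstant} together with \cref{lem:formula}, each $|f_i|$ is constant along this segment. By continuity (\cref{cor:constantIfNoRootsPath}) we get $\ell(B(a,r+t_0\mathbf 1))=\ell(B_*)$. Now descend from $B_1\coloneqq B(a,r+t_0\mathbf 1)$ toward the $K$-point $z$ along $\gamma_{B_1 z}$.

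In the direction $\vec v_{B_1z}$, the polynomial $f_{i_0}$ has a root in the limit set. Choose a root of $f_{i_0}$ lying in $B_1$ but whose coordinates avoid the residue directions of the other roots of the $f_i$; this is possible because $B_1$ is rational with infinite residue field. With that choice, \cref{lem:slopeAndRootsUnivariateField} applied coordinatewise along a suitable line gives a strictly negative degree of $|f_{i_0}|$ along the descent. Meanwhile every other $|f_i|$ is non-increasing by monotonicity. Hence $\ell$ strictly decreases below $\ell(B_*)$, which is a contradiction.

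\textbf{Main obstacle.} The hard step is exhibiting a descent direction from $B_1$ along which some $|f_{i_0}|$ strictly decreases. In several variables, the fact that $f_{i_0}$ vanishes at a point of $B_1$ does not immediately give a strictly negative degree; compare the example following \cref{lem:degreeCriterion}. My first attempt would be to restrict to a line through $a$ and $z$. This reduces to the univariate \cref{lem:slopeAndRootsUnivariateField}, which shows that moving from the disc $B_1$ into the branch containing $z$ strictly lowers $|f_{i_0}|$. The other terms cannot increase under inclusion, so their contribution is harmless.
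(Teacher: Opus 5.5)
Your overall route is the paper's: keep $\ell$ constant while enlarging $B_*$ until a root $z$ of some $f_{i_0}$ is reached, then drop to $z$. The paper instead enlarges $B_*$ to $B_*\vee p$ for an $\ell^\infty$-nearest root $p$ of the centre. Your uniform enlargement is actually the safer ascent, because every $B(a,r+t\mathbf 1)$ with $t<t_0$ is root-free. By contrast, the paper's path $B_*\to B_*\vee p$ can cross root-containing polydiscs when $B_*$ has unequal radii, unless $B_*$ is first replaced by its centre.

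However, two steps fail as written.

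\emph{Attainment.} You claim $t_0$ is attained, but for $n\ge 2$ the root set is infinite and radii can leave $|K^\ast|$. Over $\mathbb{C}_p$ take $f_1=1-x_1x_2$, $a=0$, some $\sigma>1$ with $\sigma\notin|K^\ast|$, $0<t_0<\sigma^{-1}$, and $r=(\sigma-t_0,\sigma^{-1}-t_0)$. Then $B(0,r)$ is root-free, and $B(0,r+t\mathbf 1)$ contains a root if and only if $t>t_0$. At $t=t_0$ a root would need $|z_1|=\sigma$, which is impossible.

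\emph{Descent.} Your justification of the descent rests on claims that are false or unsupported. First, $B_1$ need not be rational. For example, with $f_1=x_2-1$, $a=0$, $r=(\sigma-1,0)$ the value $t_0=1$ is attained, yet $B_1=B(0,(\sigma,1))$. Second, restricting to a line through $a$ and $z$ computes suprema over discs in that line. It does not compute the polydisc values $|f_{i_0}(\gamma_{B_1z}(\tau))|$, so \cref{lem:slopeAndRootsUnivariateField} tells you nothing about the degree along $\gamma_{B_1z}$.

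The step you call the main obstacle is not needed, and removing it also repairs the attainment problem. You never need an initial direction of strict decrease, only the endpoint. If $z\in B_1$ is a root of $f_{i_0}$, monotonicity under inclusion gives
\[
  \ell(z)=\sum_{i\neq i_0}a_i|f_i(z)|\le\sum_{i\neq i_0}a_i|f_i(B_1)|\le \ell(B_1)-a_{i_0}|f_{i_0}(a)|<\ell(B_1)=\ell(B_*),
\]
since $a\in B_*$ is not a root. This is the content of the paper's inequality $\ell(B_*\vee p)>\ell(p)$.

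The same estimate works at $B_t\coloneqq B(a,r+t\mathbf 1)$ for any $t>t_0$. Such a $B_t$ contains a root $z$ of some $f_{i_0}$, and therefore $\ell(z)\le\ell(B_t)-m$ with $m\coloneqq\min_i a_i|f_i(a)|>0$. By continuity of the formula in \cref{cor:formulaCoefficientsInField}, $\ell(B_t)\to\ell(a)=\ell(B_*)$ as $t\downarrow t_0$. Hence $\ell(z)<\ell(B_*)$ for $t$ close to $t_0$, whether or not $t_0$ is attained. No rationality of $B_1$, choice of residue directions, or slope computation is required.
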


\begin{proof}
    Suppose for a contradiction that $B_*(x_*, r_*)$ does not contain any roots of the $f_i$.
    Suppose $p$ is a root of some $f_j$, chosen to minimise the distance (with respect to the $\infty$-norm on $K^n$) to $x_*$. Without loss of generality, we can assume that for all $i$, $(x_*)_i \ne p_i$ (otherwise, just substitute $X_i = p_i$ and work with the resulting absolute polynomial sum defined over $\mathcal B ^ {n-1}$). Then, note that by \cref{lem:degreeCriterion} $\ell$ is constant along the geodesic path from $B_*$ to $B_* \vee p$. Moreover, each $f_i$ is decreasing along the path from $x_* \vee p$ to $p$, and $f_j$ is non-constant (again by \cref{lem:degreeCriterion}). Thus we have
    $$
        \ell(B_*) = \ell(B_* \vee p) > \ell(p),
    $$
    contradicting the fact that $B_*$ is a global minimum.
\end{proof}

The univariate case allows for a much more precise description. The one-dimensional space $\mathcal{B}^1$ is a tree, which therefore reduces the global minimisation problem to locating roots along a one-dimensional branching structure. Thus, global minima always exist and all minimisers are confined to an explicit compact region.

\begin{theorem}
  \label{thm:globalMinimumUnivariate}
  Let $\ell = a_1|f_1| + \dotsc + a_N |f_N|$ be a sum of non-constant absolute polynomials in one variable. Suppose that $K$ is locally compact.
  We write
  $$
    f_i(x)  = \sum_{j=0}^{n_i} b_{ij}x^j.
  $$
  Then $\ell$ has a global minimum on $\mathcal B^1$ and all global minima lie below the disc  $B(0, R)$ where
    $$
    R = \max_i \frac{|f_i(B_\mathrm{Gauss})|}{|b_{in_i}|}, \quad B_\mathrm{Gauss} = B_\mathcal{B}(0, 1).
    $$
    Moreover, if all the $f_i$ split over $K$ then any global minimum is attained at a root of one of the $f_i$.
\end{theorem}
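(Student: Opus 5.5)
The plan has three parts: establish existence, locate the minimisers inside $B(0,R)$, and then treat the split case.

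\emph{Existence.} Since the $a_i>0$, $\ell$ is monotone with respect to inclusion, as in the proof of \cref{thm:globalMinimumLocallyCompact}. It therefore suffices to minimise $\ell$ over $K\subseteq\mathcal B^1$. In one variable the projective closures $V_i\subseteq\PP^1$ are finite sets of roots. Every nonconstant $f_i$ satisfies $|f_i(z)|\to\infty$ as $|z|\to\infty$, so the point at infinity never contributes a common zero in the relevant sense: each $|f_i|$ individually extends continuously to $\PP^1$ with value $\infty$ at infinity. The compactness argument of \cref{thm:globalMinimumLocallyCompact} then applies verbatim to $\ell$ on $\PP^1(K)$, which is compact because $K$ is locally compact. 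This gives a global minimiser $x_*\in K$, and hence one in $\mathcal B^1$.

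\emph{Location below $B(0,R)$.} Let $B$ be any disc not contained in $B(0,R)$, i.e.\ $B\vee B(0,R)\neq B(0,R)$. I aim to show that $\ell(B)>\ell(B(0,R))\ge \ell(x_*)$ for some $x_*\in B(0,R)$, or more precisely that $\ell$ strictly decreases along the geodesic from $B$ towards $B(0,R)$.
\begin{itemize}
\item First I claim that all roots of each $f_i$ in $\hat K$ lie in $B(0,R_i)$ with $R_i=|f_i(B_{\mathrm{Gauss}})|/|b_{in_i}|=\max_j|b_{ij}|/|b_{in_i}|$. This is the standard Newton-polygon/Cauchy bound: if $|z|>R_i$ and $|z|\geq 1$, then the leading term $|b_{in_i}||z|^{n_i}$ strictly dominates, since $|b_{ij}||z|^j<|b_{in_i}||z|^{n_i}$ for $j<n_i$. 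If $R_i\le1$, roots lie in $B(0,1)$ by the same domination. I will check this case split carefully and, if needed, replace $R$ by $\max(R,1)$.
\item Next, along the path from $B$ upward to $B\vee B(0,R)$ and then down to $B(0,R)$, \cref{lem:slopeAndRootsUnivariateField} shows that each $|f_i|$ has degree equal to the number of roots in the current disc in the upward direction. The downward direction into the branch containing $B(0,R)$ carries all roots, so each $|f_i|$ strictly decreases there. Meanwhile \cref{lem:clopenDiscsConstant} and \cref{cor:constantIfNoRootsPath} show that $|f_i|$ is constant on the rootless upward portion from $B$ (when $B$ lies outside $B(0,R)$).
\end{itemize}
Combining these gives $\ell(B)=\ell(B\vee B(0,R))>\ell(B(0,R))$, because the segment from $B\vee B(0,R)$ down to $B(0,R)$ has positive length and every $f_i$ is nonconstant with all of its roots below. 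Hence no disc outside $B(0,R)$ is a global minimum.

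\emph{Split case.} If all $f_i$ split over $K$, I argue as in \cref{prop:minIsRoot}, but working over $K$ rather than requiring algebraic closure. Let $B_*$ be a global minimum. If $B_*$ has positive radius, or is a point that is not a root, take a nearest root $p\in K$ of some $f_j$. Then $\ell$ is constant from $B_*$ to $B_*\vee p$, since the intermediate discs' inward limit sets contain no roots, and it strictly decreases from $B_*\vee p$ to $p$, since $f_j$ has positive degree there by \cref{lem:slopeAndRootsUnivariateField}. This contradicts minimality unless $B_*=p$ already.

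The main obstacle is getting the geodesic monotonicity exactly right at the branch point $B\vee B(0,R)$ and in the second part of the plan. There I must ensure that no roots sit in side branches in a way that could make some $|f_i|$ increase. I will handle this by noting that along any descending direction every $|f_i|$ has nonnegative degree, so only the upward segment needs the root-free argument.
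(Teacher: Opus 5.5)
Your proposal is correct. Your existence step, root bound and split case agree with the paper's proof. The one real difference is how you place the minimisers inside $B(0,R)$.

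The paper takes a global minimum $B_*=B(x_*,r_*)$ and a root $p$ nearest to $x_*$. It observes that $\ell$ is constant up to $x_*\vee p$ and strictly smaller on any strict subdisc containing $p$. From this it concludes that $B_*$ lies below a \emph{minimal} disc of $\mathcal B^1$ containing a root, and since all roots lie in $B(0,R)$, so do these minimal discs.

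You instead compare an arbitrary $B\not\subseteq B(0,R)$ directly with $B(0,R)$. First, $\ell$ is constant from $B$ up to the join: either $B$ strictly contains $B(0,R)$, so the upward segment is trivial, or $B$ is disjoint from $B(0,R)$, so the open upward segment is root-free. Second, $\ell$ strictly decreases from the join $B(0,S)$, $S>R$, down to $B(0,R)$, because every intermediate open disc contains all roots.

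Your route proves exactly the stated claim with less machinery. It also avoids having to make sense of a minimal disc in $\mathcal B^1$ containing a root $p\in\hat K\setminus K$; that requires the distance from $p$ to $K$ to be attained, which uses local compactness. The paper's route gives finer structural information, namely that minimisers sit below minimal root-containing discs, and it is the same nearest-root mechanism the paper reuses for the split case.

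One small remark: $R_i\ge 1$ automatically, since the $j=n_i$ term occurs in $\max_j|b_{ij}|$. So your contingency of replacing $R$ by $\max(R,1)$ is never needed.
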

\begin{proof}

  The existence of global minima follows directly from Theorem \ref{thm:globalMinimumLocallyCompact} and the fact that the projective variety defined by a univariate polynomial does not contain any point at infinity.
  We begin by showing that
  \begin{equation} \label{eqn:inclusion}
    \bigcup_i V_{\hat{K}} (f_i) \subset B(0, R).
  \end{equation}
  To see this, note that we have
  $
  |f_i(B(0, r))| = \max_j |b_{ij}| r^j.
  $
  In particular for $r \ge R$ this is a polynomial of degree $n$ in $r$, so if we set $\vec{u}$ to be the upwards-pointing tangent vector to $\mathcal{B}^1$ at $B(0, R)$ we have
  $
  \deg_\vec{u} \lvert f \rvert  = n.
  $
  
  We now check that if $B_* = B_\mathcal{B}(x_*, r_*)$ is a global minimum then $B_*$ is contained in a \emph{minimal disc containing a root of one of the $f_i$}. Indeed, let $p$ be a root of some $f_j$ that minimises the distance to $x_*$. Then, using \cref{lem:degreeCriterion}, one can check that $\ell(x_*) = \ell(x_* \vee p)$ and that for any strict subdisc $C \subset B_* \vee p$ containing $p$ we have $\ell(C) < \ell(B_* \vee p)$.
  The conclusion now follows from this fact, noting that \eqref{eqn:inclusion} implies that all minimal points of $\mathcal{B}^1$ containing a point in $\bigcup_i V_{\hat{K}} (f_i)$ must lie in $B(0, R)$.

  The fact that global minima are attained at roots if the polynomials split follows from an argument similar to the proof of \cref{prop:minIsRoot}.
\end{proof}

The case where all of the polynomials split over K is particularly straightforward: the global minimum is attained at a root of one of the polynomials. Moreover, no assumptions on the field K are required.

This theorem may be viewed as a geometric reinterpretation of the fact that the valuation ring $\mathcal{O}_K$ is integrally closed: if $f$ is a polynomial with coefficients in $\mathcal{O}_K$ then every root of $f$ (possibly in some larger valued field $L$) must also be integral (equivalently, have absolute value at most 1).

The bounded localisation established in Theorem \ref{thm:globalMinimumUnivariate} is a particular feature of one-dimensional geometry and does not extend to higher dimensions, which we illustrate in the example below.

\begin{example}
    In the higher dimensional case, we can in general no longer bound the size of the space we search over. For example in dimension 2, if we take $f = ax + by + c$ and $g = \alpha x + \beta y + \gamma$ such that $a\beta - b\alpha \ne 0$
    then the absolute polynomial sum
    $$
      \ell(x, y) = \lvert f(x, y) \rvert + \lvert g(x, y) \rvert 
    $$
    has exactly one global minimum, at the point
    $$
        \begin{pmatrix}
            u \\ v
        \end{pmatrix} =- \begin{pmatrix}
            a & b \\
            \alpha & \beta
        \end{pmatrix} ^ {-1} 
        \begin{pmatrix}
            c \\ \gamma
        \end{pmatrix}
    $$
    The coefficients $a, b, c, \alpha, \beta, \gamma$ may be chosen so that the minimiser $(u, v)$ lies arbitrarily far from the origin.
\end{example}

This shows that in dimensions $n\geq 2$, global minimisers need not lie in any uniformly bounded search region determined only by the coefficients of the defining polynomials.

\subsection{Extended Example: Fr\'echet Means}

We now illustrate the theory developed in this section with two extended examples. The first concerns Fr\'echet means, a classical notion of average in general metric spaces (see e.g., \cite{Lin2025-qn,Ferry2026-ns,talbut2026uniqueness}), which we study in the non-Archimedean setting.

\begin{definition}
    Let $x_1, \dotsc, x_m$ be points in $K^n$. We say $p \in K^n$ is a Fr\'echet mean of $(x_1, \dotsc, x_m)$ if it minimises the \emph{Fr\'echet variance}
    $$
        \psi(p) = \sum_{i=1}^m |x_i - p|^2.
    $$
    Here we are endowing $K^n$ with the natural $\ell^2$ norm.
\end{definition}

\begin{lemma}
    Every sequence of data points $x_1, \dotsc, x_m$ in $K^n$ has a Fr\'echet mean.
\end{lemma}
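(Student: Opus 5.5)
The plan is to reduce to a univariate problem and then use the ultrametric inequality to restrict the minimisation to a finite candidate set. With the $\ell^2$ norm on $K^n$ we have $|x_i-p|^2=\sum_{j=1}^n |x_{ij}-p_j|^2$, where $x_i=(x_{i1},\dots,x_{in})$. Hence
\[
  \psi(p)=\sum_{j=1}^n \psi_j(p_j),\qquad \psi_j(q)\coloneqq\sum_{i=1}^m |x_{ij}-q|^2 \quad (q\in K).
\]
Each $\psi_j$ depends only on the coordinate $p_j$. It therefore suffices to show that every $\psi_j$ attains its minimum on $K$: a minimiser of $\psi$ is then obtained coordinatewise. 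Note that $\psi_j=\sum_i |(x-x_{ij})^2|$ is a positive sum of univariate absolute polynomials of the form $g_i^{2}$ with $g_i$ affine.

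One could invoke \cref{thm:globalMinimumAffinePower} in the case $n=1$. The zero loci $V_i=\{x_{ij}\}$ have no points at infinity, so either they all share a point in $K$ or they have empty common intersection, and the theorem gives a minimum. I prefer a direct argument, since it also avoids any local compactness hypothesis.

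Fix $j$ and $q\in K$, and choose $k$ with $|q-x_{kj}|$ minimal among the $|q-x_{ij}|$. For every $i$ the strong triangle inequality gives
\[
  |x_{ij}-x_{kj}|\le\max\bigl(|x_{ij}-q|,\,|q-x_{kj}|\bigr)=|x_{ij}-q|,
\]
where the equality uses the choice of $k$. Squaring and summing over $i$ yields $\psi_j(x_{kj})\le\psi_j(q)$. Consequently
\[
  \inf_{q\in K}\psi_j(q)=\min_{1\le k\le m}\psi_j(x_{kj}),
\]
which is a minimum over a finite set and is therefore attained. Taking $p_j$ to be such a minimiser for each $j$ produces a Fréchet mean $p$. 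Moreover, each of its coordinates coincides with the corresponding coordinate of some data point, in line with \cref{rmk:minimumInMaxiamlAffineSubspace}.

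The only delicate step is the coordinatewise splitting of $\psi$. Minimising $\psi$ directly over data points fails, because the nearest data point in the $\ell^2$ norm need not dominate coordinatewise. Once $\psi$ is split into the $\psi_j$, the ultrametric comparison above is routine.
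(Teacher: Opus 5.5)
Your proof is correct, but it takes a different route from the paper, whose proof is a one-line appeal to \cref{thm:globalMinimumAffinePower} applied to $\psi$ on all of $K^n$. There one writes $\psi(p)=\sum_{i,j}|(p_j-x_{ij})^2|$ as a positive sum of squares of affine forms. The homogenised zero loci $p_j=x_{ij}T$ have no common point with $T=0$, so the theorem gives a minimiser without any splitting. You instead use the fact that $\psi$ separates in the coordinates and then prove the univariate case by hand. Your nearest-point inequality $|x_{ij}-x_{kj}|\le|x_{ij}-q|$ is exactly the $n=1$ base case of the induction in the proof of \cref{thm:globalMinimumAffinePower}. The difference is that you use the strong triangle inequality directly, where the paper uses the polydisc argument $\ell(z')=\ell(z'\vee z)\ge\ell(z)$ via \cref{cor:constantIfNoRootsPath}. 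The paper's route buys generality: it handles sums of powers of arbitrary affine forms in several variables, where no coordinatewise splitting exists. Yours buys a self-contained, elementary argument that bypasses the polydisc machinery. It also places a minimiser in the finite candidate set of the subsequent proposition. Moreover, if $q\notin\{x_{1j},\dots,x_{mj}\}$, the $i=k$ term drops strictly from $|q-x_{kj}|^2>0$ to $0$, which shows that \emph{every} Fr\'echet mean lies in that set. One small correction: avoiding local compactness is not an advantage over the paper's route here. Unlike \cref{thm:globalMinimumLocallyCompact}, \cref{thm:globalMinimumAffinePower} carries no such hypothesis.
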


\begin{proof}
    This is a straightforward consequence of \cref{thm:globalMinimumAffinePower}.
\end{proof}

\begin{proposition}
    Every Fr\'echet mean of $x_1, \dotsc, x_m$ lies in the finite set with at most $m^n$ elements
    $$
    S = \left\{ a=(a_1,\dots,a_n) \in K^n \mid \text {for all } i=1,\dots,n \text{ there is some } j=1,\dots,m \text{ with } a_i = x_{ij} \right\}
    $$
\end{proposition}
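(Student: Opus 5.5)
The Fr\'echet variance separates across coordinates. With the $\ell^2$ norm on $K^n$ we have
\[
\psi(p)=\sum_{j=1}^m\sum_{i=1}^n |x_{ji}-p_i|^2=\sum_{i=1}^n \psi_i(p_i),\qquad \psi_i(q)\coloneqq\sum_{j=1}^m |x_{ji}-q|^2 .
\]
Here I write $x_{ji}$ for the $i$-th coordinate of $x_j$. The statement's $x_{ij}$ is read with the indices in whichever order is intended; what matters is that it is a coordinate of a data point in the relevant position.

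Since the $\psi_i$ depend on disjoint variables, $p$ minimises $\psi$ if and only if each $p_i$ minimises $\psi_i$ over $K$. So it suffices to prove the univariate claim: every minimiser $q\in K$ of $\psi_i$ equals $x_{ji}$ for some $j$. Once that holds, every mean lies in $S$. Moreover $|S|\le m^n$, since each coordinate has at most $m$ choices.

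For the univariate claim, note that $\psi_i=\sum_j |(x-x_{ji})^2|$ is a positive sum of absolute polynomials. Each summand is the square of a linear polynomial with a root in $K$, so all of them split over $K$. The last assertion of \cref{thm:globalMinimumUnivariate} then says any global minimum is attained at a root, that is, at some $x_{ji}$.

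Because that theorem is stated with local compactness, and the remark after it only asserts that this is unnecessary in the split case, I would rather give a direct argument. Suppose $q\in K$ differs from every $x_{ji}$. Choose $j_0$ with $\delta\coloneqq|q-x_{j_0i}|$ minimal, so $\delta>0$. Now compare $|x_{ji}-q|$ with $|x_{ji}-x_{j_0i}|$ for each $j$.

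If $|x_{ji}-q|>\delta$, the ultrametric equality gives $|x_{ji}-x_{j_0i}|=|x_{ji}-q|$. If $|x_{ji}-q|=\delta$ (the minimum), then $|x_{ji}-x_{j_0i}|\le\delta$. In every case $|x_{ji}-x_{j_0i}|\le |x_{ji}-q|$. For $j=j_0$ the left side is $0$ while the right side is $\delta>0$, so the inequality is strict there. Summing the squares gives $\psi_i(x_{j_0i})<\psi_i(q)$, so $q$ is not a minimiser.

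This is exactly the content of \cref{cor:constantIfNoRootsPath} and \cref{lem:degreeCriterion}: moving from $q$ up to the join $q\vee x_{j_0i}$ keeps $\psi_i$ constant, and moving down to $x_{j_0i}$ strictly decreases it.

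I expect no real obstacle; the argument is elementary. The main care point is the index convention for $x_{ij}$ in the definition of $S$. A secondary point is that existence of the mean (the previous lemma) is not actually needed for the containment claim, which is purely a property of minimisers.
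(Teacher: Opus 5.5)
Your proof is correct. It shares the paper's first step: splitting $\psi$ into coordinate functions and reducing to the univariate claim. It handles the univariate claim differently, though.

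The paper cites the split-case clause of \cref{thm:globalMinimumUnivariate} and works over $\hat K$. It then needs an extra transfer step. Since $\psi$ has a global minimum over $\hat K^n$ lying in $K^n$, any minimiser over $K^n$ is a minimiser over $\hat K^n$, and hence lies in $S$.

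You stay in $K$ throughout. Your nearest-data-point ultrametric comparison shows directly that any coordinate not equal to a data coordinate is strictly beaten. This gains several things:
\begin{itemize}
\item it needs no local compactness, which the statement of that theorem assumes and which the paper only waives in a remark for the split case;
\item it makes the detour through $\hat K^n$ and the transfer step unnecessary;
\item it shows, via the strict inequality, that \emph{every} minimiser has its coordinates among the data, which is exactly what the containment requires;
\item it does not use existence of a Fr\'echet mean.
\end{itemize}

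In return, the paper's route is shorter once the general machinery is in place, and it presents Fr\'echet means as an instance of the theory of absolute polynomial sums. Your argument is the univariate mechanism behind that theory made explicit: constancy going up to the join $q\vee x_{j_0i}$, then strict decrease going down to the data point.
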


\begin{proof}
    The Fr\'echet variance is an absolute polynomial on $\hat{K}^n$, and decomposes as the sum
    $$
        \psi = \sum_j \psi_j\qquad \text{where } \psi_j(a) = \sum_i |x_{ij} - a_j|^2.
    $$
    Notice that $\psi_j$ only depends on the $j$-th coordinate, and we will also write $\psi_j(a) = \psi_j(a_j)$. In particular, $a \in \hat{K}$ is a global minimum of $\psi$ if and only if each $a_j$ is a global minimum of $\psi_j$. In particular by \cref{thm:globalMinimumUnivariate}, if $a \in \hat{K}$ is a global minimum then $a \in S$. In order to conclude, we need to check that this is still true for global minima of $\psi$ when considered as a function over $K^n$. This follows from the fact that $\psi$ has a global minimum over $\hat{K}^n$ that lies inside $K^n$.
\end{proof}

\begin{example}
    The one-dimensional case is particularly rigid: the Fr\'echet mean of a set of points in $K$ is always one of the points. Note also that in this setting Fr\'echet means need not be unique. For instance, if $x_1 = 0$ and $x_2 = 1$ in $\mathbb{Q}_p$, then both 0 and 1 are Fr\'echet means of the data set.
\end{example}

We point out that the proof also shows that the problem of minimising the Fr\'echet variance is equivalent to minimising the $n$ components $\psi_j$.

\subsection{Extended Example: The Ordinary Least Squares Problem}

The second extended example concerns ordinary least squares, where the structure of global minimisers is made especially explicit by the theory developed above. The \emph{ordinary least squares problem} for data points $(x_1, y_1), \dotsc, (x_N, y_N)$ in $K^n \times K^{m}$ is the optimisation problem
$$
    \argmin_{A, b} \ \sum_i ||A x_i + b - y_i||^2 \qquad\text{where } A \in K^{m \times n} \text{ and } b \in K^{m}.
$$

\begin{corollary} \label{cor:ordinaryLeastSquares}
  The ordinary least squares problem has a solution over $\mathcal B ^ {(n+1)m}$ and over $K ^ {(n+1)m}$ whenever the data points are sufficiently general, in the sense that there is no pair $(A, b) \ne 0$ with $A x_i = b$ for all $i$.
\end{corollary}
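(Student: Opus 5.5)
We treat the least squares loss as an absolute polynomial sum in the unknowns and apply \cref{thm:globalMinimumAffinePower}. The unknowns are the $(n+1)m$ entries of $(A,b)$. Write $A_k$ for the $k$-th row of $A$ and $b_k$ for the $k$-th entry of $b$. For each data index $i$ and each output coordinate $k=1,\dots,m$ set
\begin{equation*}
  g_{ik}(A,b)\coloneqq \sum_{j=1}^n A_{kj}\,(x_i)_j + b_k - (y_i)_k ,
\end{equation*}
which is an affine-linear polynomial in $K[A,b]$. Then the loss is $\ell=\sum_{i,k}|g_{ik}^2|$, a positive combination of absolute polynomials of the form $f=g^{m_i}$ with $g$ affine and $m_i=2$. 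This is exactly the shape required in \cref{thm:globalMinimumAffinePower}. If some $g_{ik}$ were constant, that would require $x_i=0$ and the term would be a constant $|(y_i)_k|^2$; such terms can be dropped without changing the minimisers.

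Next we verify the hypothesis $(V_1\cap\dots\cap V_M)\setminus K^{(n+1)m}=\emptyset$, following \cref{ex:linearPreimage}. The homogenisation of $g_{ik}$ with a new variable $T$ is $A_k x_i + b_k - T (y_i)_k$. A point at infinity has $T=0$. It therefore lies in every $V_{ik}$ exactly when $(A,b)\neq 0$ satisfies $A_k x_i + b_k=0$ for all $i,k$, that is, $Ax_i+b=0$ for all $i$. The general-position assumption rules this out; it is stated as $Ax_i=b$, which is the same condition after replacing $b$ by $-b$. Hence the common intersection has no points at infinity. By \cref{thm:globalMinimumAffinePower}, either the $V_{ik}$ meet in $K^{(n+1)m}$ or they do not meet at all, and in both cases $\ell$ has a global minimum on $\mathcal B^{(n+1)m}$.

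Finally, we pass from $\mathcal B^{(n+1)m}$ to $K^{(n+1)m}$. Since all $a_i>0$, $\ell$ is monotone under inclusion, as in the proof of \cref{thm:globalMinimumLocallyCompact}. So any minimising polydisc can be shrunk to a $K$-point inside it without increasing $\ell$. Alternatively, the proof of \cref{thm:globalMinimumAffinePower} exhibits the minimiser directly at a point of some $V_J\subseteq K^{(n+1)m}$ (\cref{rmk:minimumInMaxiamlAffineSubspace}). One technical point needs care here: a $K$-point of the polydisc must be chosen rather than merely a $\hat K$-point. This is fine because the minimum found in that proof already lies in $K^{(n+1)m}$, and there the infimum over $\mathcal B$ equals the infimum over $K$-points.

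The main obstacle, though a mild one, is the bookkeeping of homogenisation in the second step. We must check that the projective closure of $V(g_{ik}^2)$ coincides with that of $V(g_{ik})$, and that a point at infinity lying in all of them forces exactly the linear relation excluded by the genericity hypothesis. Everything else is a direct application of the earlier theorem.
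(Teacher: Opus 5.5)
Your proposal is correct and follows the paper's proof: both apply \cref{thm:globalMinimumAffinePower} to $\ell=\sum_{i,k}|g_{ik}|^2$ and rule out common points at infinity by setting $T=0$ in the homogenisations $A_kx_i+b_k-T(y_i)_k$. This forces some $(A,b)\neq 0$ with $Ax_i+b=0$ for all $i$, contradicting the genericity hypothesis; your remark about the sign change $b\mapsto -b$ is a fair tidy-up that the paper skips. Two small points: no $g_{ik}$ is ever constant, since $b_k$ occurs with coefficient $1$ (for $x_i=0$ the term is $|b_k-(y_i)_k|^2$, not a constant), so that aside can be deleted; and the passage to $K^{(n+1)m}$ needs no extra care, because every polydisc in $\mathcal B^{(n+1)m}$ has its centre in $K^{(n+1)m}$ and monotonicity gives $\ell(\text{centre})\le \ell(B)$.
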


\begin{proof}
    Again, we need to show that we can apply \cref{thm:globalMinimumAffinePower}. Consider the absolute polynomial sum
    $$
        \ell(A, b) = \sum_i \lvert \lvert A x_i + b - y_i \rvert \rvert ^2 = \sum_{i, j}  \lvert (A x_i)_j + b_j - y_{ij} \rvert ^2.
    $$
    We need to check that
    \begin{equation*}
        (\PP^{(n+1)m} \setminus \mathbb{A}^{(n+1)m}) \cap \bigcap_{i, j} V_{ij} = \emptyset
    \end{equation*}
    where $V_{ij}$ is the zero locus of the homogeneous polynomial $(A x_i)_j + b_j - Ty_{ij}$ in the variables $A, b, T$.

    The infinity points in this zero locus correspond to the points in $V_{ij}$ where $T = 0$. Thus if $P$ is a point in the right hand side of $(*)$ then we have
    $
    (A x_i)_j + b_j = 0
    $
    for all $i, j$, where $[A : b : T]$ are projective coordinates for $P$. This contradicts the assumption on the $x_i$.
\end{proof}

Note that the non-degeneracy assumption on the data points $(x_1, y_1), \dotsc, (x_N, y_N)$ implies that $N \ge n + 1$.
In the case where $m = 1$, \cref{rmk:minimumInMaxiamlAffineSubspace} allows us to recover a version of Theorem 1 in \cite{BMP2025}:

\begin{corollary}
    Assume that the data points $(x_i, y_i)$ lie in $K ^ n \times K$ and satisfy the assumptions of \cref{cor:ordinaryLeastSquares} and let $(A, b)$ be a solution to the Ordinary Least Squares problem. Then we have $A x_i + b = y_i$ for at least $n+1$ different indices $i$.
\end{corollary}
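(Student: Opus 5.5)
The plan is to realise the least squares objective as an absolute polynomial sum built from affine-linear forms, and then apply \cref{rmk:minimumInMaxiamlAffineSubspace}. With $m=1$ the unknowns are $(A,b)\in K^{n}\times K$, which we view as a point of $K^{n+1}$. We write
\begin{equation*}
  \ell(A,b)=\sum_{i=1}^N |g_i(A,b)|^2,\qquad g_i(A,b)=Ax_i+b-y_i .
\end{equation*}
Each $g_i$ is affine-linear in the $n+1$ variables $(A,b)$, with linear part $(x_i,1)$. By \cref{cor:ordinaryLeastSquares}, the hypotheses of \cref{thm:globalMinimumAffinePower} hold, so a minimiser exists.

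The remark then gives more. The global minimum is attained on, and the minimal value equals the value on, some maximal intersection $V_J=\bigcap_{j\in J}V(g_j)$ belonging to the set $S$ from the proof of \cref{thm:globalMinimumAffinePower}. We must check that the given solution $(A,b)$ itself lies in such a $V_J$, not merely that some minimiser does. For this I would reuse the descent step from that proof, in the strict form of \cref{prop:minIsRoot}. Let $(A,b)$ be any minimiser and let $J=\{i : g_i(A,b)=0\}$. Suppose the affine subspace $V_J$ (all of $K^{n+1}$ if $J=\emptyset$) still meets some $V(g_{j'})$ with $j'\notin J$. Then moving within $V_J$ towards a nearest such zero leaves every $|g_i|$ with $i\in J$ at $0$. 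The other $|g_i|$ are constant up to the join by \cref{cor:constantIfNoRootsPath}, and $|g_{j'}|$ then strictly decreases. This strictly lowers $\ell$, a contradiction. Hence $V_J$ is maximal, in the sense that $V_J\cap V(g_{j'})=\emptyset$ for every $j'\notin J$.

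The main step is to show that this maximal $V_J$ is a single point. Suppose instead that $V_J$ is an affine subspace of positive dimension. Its direction space is $W=\{(u,c): ux_j+c=0 \text{ for all } j\in J\}\neq 0$. Maximality says each $g_{j'}$ with $j'\notin J$ has no zero on $V_J$. This forces $g_{j'}$ to be constant on $V_J$, by the argument already used in the proof of \cref{thm:globalMinimumAffinePower}: a nonconstant affine function on $V_J$ is surjective onto $K$. Constancy means $ux_{j'}+c=0$ for all $(u,c)\in W$. So any nonzero $(u,c)\in W$ satisfies $ux_i+c=0$ for every $i$. This is exactly the degenerate pair excluded by the genericity hypothesis of \cref{cor:ordinaryLeastSquares}, up to the sign of $c$. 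Therefore $W=0$ and $V_J$ is a point.

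Finally, $V_J$ is cut out by the linear forms $(x_j,1)$, $j\in J$, in $K^{n+1}$. Being zero-dimensional forces these forms to span $K^{n+1}$, so $|J|\ge n+1$. Hence $Ax_i+b=y_i$ for at least $n+1$ indices $i$.

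I expect the delicate point to be the strict-decrease descent step for an arbitrary minimiser. The earlier proof only produces a weak inequality $\ell(z')\ge \ell(z)$. I would handle it as in \cref{prop:minIsRoot}, using \cref{lem:degreeCriterion} to get a nonzero degree of $|g_{j'}|$ along the downward path to its root.
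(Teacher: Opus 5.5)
Your proposal is correct and follows the same route as the paper: the solution lies in a maximal intersection of the hyperplanes $\{Ax_i+b=y_i\}$, and the genericity hypothesis of \cref{cor:ordinaryLeastSquares} forces any such maximal intersection to be a single point, hence cut out by at least $n+1$ of them. Your strict-descent step, which shows that \emph{every} minimiser (not merely some minimiser) lies in a maximal $V_J$, together with your constancy argument, makes explicit what the paper's two-line proof leaves implicit. Two refinements are worth making. First, carry out the descent in an affine parametrisation $V_J\cong K^d$, as in the proof of \cref{thm:globalMinimumAffinePower}, so that the $|g_i|$ with $i\in J$ really stay $0$ along the path. Second, the strict decrease is then immediate from $|g_{j'}(z\vee p)|=|g_{j'}(z)|>0=|g_{j'}(p)|$, since the other terms cannot increase from $z\vee p$ down to $p$, so you do not need \cref{lem:degreeCriterion}.
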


\begin{proof}
    This follows from the fact that $(A, b)$ lies in a maximal intersection of hypersurfaces of the form
    $$
        \left\{ (A, b) \mid A \cdot x_i + b = y_i \right\}
    $$
    and that because of our assumptions on the data, any such maximal intersection consists of at least $n+1$ hypersurfaces.
\end{proof}

In the case where $m  > 1$, a similar result holds coordinate-wise.
The linear nature of this optimisation problem means that it is rather well behaved. In particular, in this situation, moving from $K$ to a larger field does not give us better minima, i.e. we can already find ``the best possible solution'' over $K$.

\begin{lemma}
    Assume that we are given an absolute polynomial sum $\ell = \sum_i |f_i|^{r_i}$ such that each $f_i$ has degree 1 and coefficients in $K$. Let $L/K$ be an extension of ultrametric fields. Then any global minimum of $\ell$ over $K^n$ is also a global minimum of $\ell$ over $L^n$.
\end{lemma}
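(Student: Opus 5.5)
The plan is to show directly that for every $z\in L^n$ there is a point $w\in K^n$ with $\ell(w)\leq \ell(z)$. Then, if $x_*\in K^n$ is a global minimum over $K^n$, we get $\ell(x_*)\leq\ell(w)\leq\ell(z)$ for every $z\in L^n$, which is the claim. I read $\ell=\sum_i|f_i|^{r_i}$ with $r_i>0$, and write $f_i=\sum_j a_{ij}x_j+b_i$ with $a_{ij},b_i\in K$. Since $t\mapsto t^{r_i}$ is increasing, it suffices to find $w$ with $|f_i(w)|\leq|f_i(z)|$ for \emph{every} $i$ simultaneously. The argument is a pointwise, coordinate-wise version of the descent used in the proof of \cref{thm:globalMinimumAffinePower}. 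It avoids continuity and compactness, so it works over an arbitrary (possibly non-locally compact) $L$.

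\textbf{Descent step.} Consider the indices $i$ for which $f_i$ is non-constant, and set $\|a_i\|\coloneqq\max_j|a_{ij}|$. Among these indices, pick $i$ minimising $|f_i(z)|/\|a_i\|$, and pick $j$ with $|a_{ij}|=\|a_i\|$. Let $z'$ be obtained from $z$ by changing only the $j$-th coordinate by $\delta\coloneqq -f_i(z)/a_{ij}$, so that $f_i(z')=0$. For any other non-constant $f_k$ we have $f_k(z')=f_k(z)+a_{kj}\delta$ and
\[
|a_{kj}\delta|\leq\|a_k\|\cdot\frac{|f_i(z)|}{\|a_i\|}\leq |f_k(z)|,
\]
so $|f_k(z')|\leq|f_k(z)|$ by the strong triangle inequality. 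Constant $f_k$ are unaffected. Hence every $|f_k|$ weakly decreases.

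\textbf{Induction on dimension.} The new point lies on the hyperplane $V(f_i)$. Solving $f_i=0$ for $x_j$ gives an affine parametrisation $K^{n-1}\to V(f_i)$ with coefficients in $K$, and the same formula parametrises $V(f_i)(L)$ by $L^{n-1}$. The restrictions of all $f_k$ are again affine with coefficients in $K$. We therefore iterate in the smaller affine space. The process stops after at most $n$ steps, once every restricted $f_k$ is constant. At that point we have an affine subspace $W$, cut out by $K$-linear equations, containing a point $z^*\in W(L)$ with $|f_k(z^*)|\leq|f_k(z)|$ for all $k$. Moreover each $f_k$ is constant on $W$, with value in $K$, since each restriction is a constant polynomial with coefficients in $K$.

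\textbf{Transferring to a $K$-point.} A system of linear equations with coefficients in $K$ that has a solution over $L$ also has a solution over $K$: solvability is the rank condition $\operatorname{rank}(A)=\operatorname{rank}(A\mid b)$, which does not depend on the field. So there is $w\in W(K)$, and since each $f_k$ is constant on $W$ we get $|f_k(w)|=|f_k(z^*)|\leq|f_k(z)|$, as required.

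I expect the main obstacle to be the bookkeeping in the descent step. One must check that the choice of $i$ minimising $|f_i(z)|/\|a_i\|$, together with moving only a single coordinate, really gives the simultaneous inequality for all $k$. One must also make sure the reparametrisation keeps everything defined over $K$ at each stage. That is why I would fix the parametrisation by solving for the coordinate $x_j$ with $|a_{ij}|$ maximal, rather than choosing an arbitrary basis of the hyperplane.
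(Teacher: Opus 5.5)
Your argument is correct, but it follows a different route from the paper's.

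\textbf{The paper's route.} The paper works with minimisers. It takes a global minimum $P$ over $L^n$ and uses the descent from the proof of \cref{thm:globalMinimumAffinePower} to place a minimiser in a maximal intersection $A$ of the hyperplanes $V(f_i)$, on which $\ell$ is constant. It then notes that $A$ is cut out by equations over $K$, so $A\cap K^n\neq\varnothing$.

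\textbf{Your route.} You instead prove a pointwise domination statement: every $z\in L^n$ admits some $w\in K^n$ with $|f_k(w)|\le|f_k(z)|$ for all $k$ at once. You choose $i$ to minimise $|f_i(z)|/\|a_i\|$, which is the sup-norm distance from $z$ to $V(f_i)$, and then move a single coordinate. This is an explicit, algebraic replacement for the paper's step $\ell(z')=\ell(z'\vee p)\ge\ell(p)$, which uses a closest point $p$ and \cref{cor:constantIfNoRootsPath}.

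\textbf{What this gains.} First, you never need a minimiser over $L^n$ to exist. Second, you avoid the polydisc machinery altogether. Third, the conclusion holds for any loss that is non-decreasing in each $|f_k|$, for instance with positive weights or arbitrary exponents $r_i>0$.

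\textbf{Shared final step.} The transfer to a $K$-point is common to both proofs. In your setup the rank criterion is not even needed, since the composed parametrisation $K^m\to W$ already produces $K$-points.

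\textbf{A small correction.} Your closing remark misattributes the role of maximality. Any $j$ with $a_{ij}\neq 0$ keeps the parametrisation defined over $K$. Maximality of $|a_{ij}|$ is what the descent inequality needs, not what keeps things over $K$.
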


\begin{proof}
    Assume we are given a global minimum $P$ with coordinates in $L$. We need to show that we can obtain a global minimum $P'$ with coordinates in $K$. As seen in the proof of \cref{thm:globalMinimumAffinePower}, this minimum must lie in some affine subspace $A$ of $L ^ {n}$ that is the zero locus of several of the $f_i$, and $\ell$ must be constant on $A$. Since the $f_i$ have coefficients in $K$, it follows that $A$ is determined by an affine equation that has coefficients in $K$. This implies that $A \cap K^n \ne \varnothing$, and any point in this intersection yields a global minimum of $\ell$ over $L ^ n$ with coefficients in $K$.
\end{proof}

\begin{example}
    Here uniqueness also fails: if we take data points $(0, 1), (1, \frac{1}{3}), (2, 0) \in \QQ_3^2$ then any of the lines joining two of the points minimises the loss.
\end{example}

We note that the results do not use the fact that the loss function was constructed using square errors, and also apply for a loss function of the form
$$
    \ell(A, b) = \sum_i \lvert \lvert A x_i + b - y_i \rvert \rvert ^ q \qquad\text{for }q \in \{1, 2, \dotsc, \infty\}.
$$

\subsection{Optimisation Algorithms}

Where we previously considered structural existence results, we now turn to the algorithmic problem of computing minimisers on polydisc spaces. As opposed to Euclidean settings, optimisation on $\mathcal{B}^n$ must account for the non-Archimedean order structure and branching geometry of the underlying space.

Two broad classes of optimisation problems naturally arise in this setting:
\begin{enumerate}
\item \textbf{Lifted problems from $K^n$:} A function originally defined on $K^n$ is extended to $\mathcal{B}^n$ and optimisation is carried out on the larger space in order to ultimately recover a minimiser in $K^n$.
\item \textbf{Intrinsic problems on $\mathcal{B}^n$:} The objective function is defined directly on $\mathcal{B}^n$ and the minimiser search is carried out within polydisc space.
\end{enumerate}
Theoretically, the first class may be viewed as a special case of the second. However, from the algorithmic standpoint, the distinction is important, since lifted problems typically require procedures that will guarantee convergence to $K$-points.

Here, we mostly focus on the lifted setting, though the techniques proposed here are relevant more broadly: given a function $\ell:K^n\to\mathbb R$, we consider its natural extension to $\mathcal{B}^n$,
$$
\ell(B)=\sup_{z\in B} \ell(z),
$$
and develop several optimisation strategies adapted to the geometry of $\mathcal{B}^n$. These include descent methods based on the partial order and tree structure, best-first variants that prioritise promising branches, gradient-guided search procedures using directional information, deterministic optimistic optimisation methods, and Monte Carlo tree search methods balancing exploration and exploitation.

\begin{convention}
    \label{con:optimisation}
    Throughout this section, we assume that $K$ is locally compact and discretely valued. We will generally refer to the point $B_\mathcal{B}(0, 1) \in \mathcal B ^n$ as the \emph{Gauss point} and denote it $B_\mathrm{Gauss}$.
\end{convention}

\subsubsection{Descent Methods and Tree Search}

We begin with a broad and computationally natural class of optimisation procedures based on the hierarchical structure of polydisc spaces. The key idea is to search for minimisers by successively refining candidate regions: starting from a large polydisc, we repeatedly pass to smaller sub-polydiscs that appear more promising according to the objective function. This converts optimisation over the continuous space $\mathcal{B}^n$ into a discrete search problem on a nested family of regions.

Such methods are particularly well-suited to the non-Archimedean setting. The inclusion order on polydiscs provides a canonical notion of descent, while the branching structure of the space naturally induces tree- or graph-based search algorithms. Moreover, each iterate retains geometric meaning: rather than producing a single point estimate, the algorithm maintains a region known to contain candidate minimisers at the current resolution.

To make this notion precise, we focus first on a discrete skeleton of \(\mathcal B^n\) given by rational polydiscs, which serve as computational states for the search procedures developed below.

\paragraph{Hierarchical Structure on Rational Polydiscs.} In the next few sections, we focus on a discrete substructure of $\mathcal{B}^n$, namely the set of all rational polydiscs.

\begin{definition}
    The rational skeleton $\mathcal{B}^n_\text{rat}$ is the set of all rational polydiscs, i.e. all polydiscs whose absolute radii lie in the value group of $K$.
\end{definition}

These form an infinite directed acyclic graph, where directed edges $a \rightarrow b$ occur whenever polydisc $b$ can be obtained from polydisc $a$ by decreasing \emph{exactly} one radial coordinate. 
Equivalently, there exists an edge $a \to b$ whenever $b \subset a$ and there is no other rational polydisc contained strictly between the two. 
In particular, the set of rational discs $\mathcal{B}^1_\text{rat}$ forms an infinite (discrete) tree; see Figure \ref{fig:nestedPolydiscs3D}.

\begin{figure}[t]
  \centering
  \resizebox{\textwidth}{!}{%
  \begin{tikzpicture}[
      x={(1.4cm, 0cm)},         
      y={(0.6cm, 0.4cm)},       
      z={(0cm, 1.8cm)},         
      every node/.style={inner sep=2pt, font=\scriptsize, fill=white}, 
      arr/.style={-{Stealth[length=4pt,width=3pt]}, semithick, black!50},
      sarr/.style={-{Stealth[length=4pt,width=3pt]}, semithick, densely dashed, black!50}
    ]

    \begin{scope}[on background layer]
      \draw[draw=black!20, fill=white, thick, rounded corners=5pt] 
        (-5.2, -5.2, 0) -- (5.2, -5.2, 0) -- (5.2, 5.2, 0) -- (-5.2, 5.2, 0) -- cycle;

      \draw[draw=black!20, fill=white, thick, rounded corners=5pt] 
        (0, 4.0, 2) -- (4.0, 0, 2) -- (0, -4.0, 2) -- (-4.0, 0, 2) -- cycle;
    \end{scope}

    \node (root) at (0, 0, 4) {$B_{\mathcal{H}}((0,0),(0,0))$};

    \node (L1a) at (-2.5,  0.0, 2) {$B_{\mathcal{H}}((0,0),(1,0))$};
    \node (L1b) at ( 2.5,  0.0, 2) {$B_{\mathcal{H}}((1,0),(1,0))$};
    \node (L1c) at ( 0.0, -2.5, 2) {$B_{\mathcal{H}}((0,0),(0,1))$};
    \node (L1d) at ( 0.0,  2.5, 2) {$B_{\mathcal{H}}((0,1),(0,1))$};

    \node (n00_11) at (-1.5, -1.5, 0) {$B_{\mathcal{H}}((0,0),(1,1))$};
    \node (n01_11) at (-1.5,  1.5, 0) {$B_{\mathcal{H}}((0,1),(1,1))$};
    \node (n10_11) at ( 1.5, -1.5, 0) {$B_{\mathcal{H}}((1,0),(1,1))$};
    \node (n11_11) at ( 1.5,  1.5, 0) {$B_{\mathcal{H}}((1,1),(1,1))$};

    \node (n00_20) at (-3.5, -1.5, 0) {$B_{\mathcal{H}}((0,0),(2,0))$};
    \node (n20_20) at (-3.5,  1.5, 0) {$B_{\mathcal{H}}((2,0),(2,0))$};
    \node (n10_20) at ( 3.5, -1.5, 0) {$B_{\mathcal{H}}((1,0),(2,0))$};
    \node (n30_20) at ( 3.5,  1.5, 0) {$B_{\mathcal{H}}((3,0),(2,0))$};
    \node (n00_02) at (-1.5, -3.5, 0) {$B_{\mathcal{H}}((0,0),(0,2))$};
    \node (n02_02) at ( 1.5, -3.5, 0) {$B_{\mathcal{H}}((0,2),(0,2))$};
    \node (n01_02) at (-1.5,  3.5, 0) {$B_{\mathcal{H}}((0,1),(0,2))$};
    \node (n03_02) at ( 1.5,  3.5, 0) {$B_{\mathcal{H}}((0,3),(0,2))$};

    \begin{scope}[on background layer]
      \foreach \dest in {L1a, L1b, L1c, L1d}
        \draw[arr] (root) -- (\dest);

      \draw[arr]  (L1a) -- (n00_20);
      \draw[arr]  (L1a) -- (n20_20);
      \draw[sarr] (L1a) -- (n00_11);
      \draw[sarr] (L1a) -- (n01_11);

      \draw[arr]  (L1b) -- (n10_20);
      \draw[arr]  (L1b) -- (n30_20);
      \draw[sarr] (L1b) -- (n10_11);
      \draw[sarr] (L1b) -- (n11_11);

      \draw[sarr] (L1c) -- (n00_11);
      \draw[sarr] (L1c) -- (n10_11);
      \draw[arr]  (L1c) -- (n00_02);
      \draw[arr]  (L1c) -- (n02_02);

      \draw[sarr] (L1d) -- (n01_11);
      \draw[sarr] (L1d) -- (n11_11);
      \draw[arr]  (L1d) -- (n01_02);
      \draw[arr]  (L1d) -- (n03_02);
    \end{scope}

    \node[anchor=east] at (-6.5, 0, 4) {$B_{\mathcal{H}}(\bullet, |r|=0)\colon$};
    \node[anchor=east] at (-6.5, 0, 2) {$B_{\mathcal{H}}(\bullet, |r|=1)\colon$};
    \node[anchor=east] at (-6.5, 0, 0) {$B_{\mathcal{H}}(\bullet, |r|=2)\colon$};

  \end{tikzpicture}%
  }
  \caption{A portion of the DAG for $\mathcal{B}^2$ over $K = \QQ_2$, starting with the Gauss point at the top.}
  \label{fig:nestedPolydiscs3D}
\end{figure}

From now on, we will focus on descent algorithms on $\mathcal{B}^n$, i.e., optimisation procedures that construct sequences $B_1 \ge  B_2 \ge \dotsc$ where the $B_i$ are rational points in $\mathcal{B}^n$ and $B_{i+1}$ is computed from $B_i$ by some known procedure, which may be deterministic or stochastic.

Focusing on such algorithms has two immediate advantages. First, the optimisation process is reduced to a search over a simpler discrete hierarchical structure. Second, nested descent sequences are automatically well behaved: under completeness of the ground field, successive refinements cannot oscillate indefinitely and must converge to a limiting polydisc or point. The next lemma makes this idea precise.

\begin{lemma}
  Every chain of decreasing polydiscs converges in $\mathcal B^n$.
\end{lemma}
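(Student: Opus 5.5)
The plan is to reduce to one coordinate and then treat separately the cases where the radii shrink to zero and where they stay bounded away from zero. Since $d_{\mathcal B}^p$ is the $\ell^p$-combination of the coordinatewise metrics $d_{\mathcal B}$, and all $\ell^p$-metrics induce the same topology by \cref{lem:metricTopologicallyEquivalent}, a chain of polydiscs converges in $\mathcal B^n$ as soon as each coordinate chain converges in $\mathcal B^1$. So let $B_1\supseteq B_2\supseteq\cdots$ be a chain of polydiscs. Writing $B_k=B_{k,1}\times\dots\times B_{k,n}$, inclusion of products forces $B_{k,i}\supseteq B_{k+1,i}$ for every $i$, so each coordinate is again a decreasing chain. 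It therefore suffices to treat $n=1$.

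Write $B_k=B_{\mathcal B}(a_k,r_k)$ with $a_k\in K$. By \cref{lem:recentre}(i) the radii $r_k$ are non-increasing, so $r_k\to r\geq 0$. Nestedness also gives $a_m\in B_k$ for $m\geq k$, i.e.\ $|a_m-a_k|\leq r_k$. The key observation is that for any disc $B$ with $B\subseteq B_k$ we have $d_{\mathcal B}(B_k,B)=r_k-\mathrm{rad}(B)$. Hence it is enough to produce a limit disc of radius $r$ that is contained in every $B_k$; then $d_{\mathcal B}(B_k,B_\infty)=r_k-r\to 0$.

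\emph{Case $r=0$.} The bound $|a_m-a_k|\leq r_k\to 0$ shows that $(a_k)$ is Cauchy in $K$. By completeness (\cref{con:valuedField}) it has a limit $a\in K$. Each $B_k$ is closed and contains $a_m$ for all $m\geq k$, so $a\in B_k$ for all $k$. Therefore $B_\infty\coloneqq B_{\mathcal B}(a,0)$ works, with $d_{\mathcal B}(B_k,B_\infty)=r_k\to 0$.

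\emph{Case $r>0$.} This is the step I expect to be the main obstacle, because in general nested discs with radii decreasing to a positive limit may have empty intersection (type IV points). These are exactly the points excluded from $\mathcal B^1$. Here I would use \cref{con:optimisation}: $K$ is discretely valued, so $|K^\ast|$ is discrete in $\RR_{>0}$. Let $\rho$ be the largest element of $|K|$ with $\rho\leq r$, and let $\rho'$ be the smallest element of $|K|$ with $\rho'>r$; both exist by discreteness. Choose $N$ with $r_k<\rho'$ for all $k\geq N$. For $m\geq k\geq N$ the value $|a_m-a_k|$ lies in $|K|$ and is at most $r_k<\rho'$, hence $|a_m-a_k|\leq\rho\leq r$. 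Consequently $B_\infty\coloneqq B_{\mathcal B}(a_N,r)$ satisfies $a_m\in B_\infty$ for all $m\geq N$. Moreover $B_\infty\subseteq B_{\mathcal B}(a_m,r_m)=B_m$, since $r\leq r_m$ and $B_\infty=B_{\mathcal B}(a_m,r)$ by \cref{lem:recentre}(i). For $k<N$ the containment $B_\infty\subseteq B_k$ holds trivially because $B_N\subseteq B_k$. Thus $B_\infty$ lies in every $B_k$ and $d_{\mathcal B}(B_k,B_\infty)=r_k-r\to 0$.

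Finally I would remark that without the discreteness hypothesis the $r>0$ case can genuinely fail, for instance over $\CC_p$. So the proof should explicitly cite \cref{con:optimisation} at that point.
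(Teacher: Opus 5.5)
Your proof is correct, but it finds the common centre by a different route than the paper.

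\textbf{The paper's route.} The paper neither reduces to $n=1$ nor splits into cases. It uses local compactness of $K^n$ to get one point $a\in K^n$ lying in every $B_i$: the sets $B_i\cap K^n$ are nonempty closed subsets of the compact set $B_0\cap K^n$. It then applies \cref{lem:recentre} to write all the discs concentrically as $B_i=B_{\mathcal B}(a,r_i)$. Convergence then reduces to convergence of the radii; this uses your observation $d_{\mathcal B}(B_k,B_\infty)=r_k-r$, which the paper leaves implicit.

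\textbf{Your route.} You get the common centre from completeness when $r=0$. When $r>0$ you get it from discreteness of $|K^\ast|$: there $a_N$ lies in every $B_k$. Both arguments draw on \cref{con:optimisation}, the paper on its local-compactness half and you on its discreteness half.

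\textbf{What each buys.} The paper's argument is shorter and uniform. Yours needs only completeness and a discrete value group, so it also covers complete, discretely valued fields that are not locally compact, such as $\CC(\!(t)\!)$ with its infinite residue field. In effect you prove that such fields are spherically complete. Your version also isolates exactly where a hypothesis beyond completeness is needed: the $r>0$ case. That is precisely where nested discs with empty intersection (type IV behaviour) break the statement over fields like $\CC_p$, as you correctly remark.
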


\begin{proof}
  Let $B_0 \supseteq B_1 \supseteq B_2 \supseteq \dots $ be a chain of decreasing polydiscs.  By local compactness of $K^n$ and \cref{lem:recentre}, we may write $B_i=B_{\mathcal B}(a,r_i)$ for some fixed $a\in K^n$ and $r_i\in\RR_{\geq 0}^n$ coordinatewise decreasing.  Then the $r_i$ must converge in $\RR_{\geq 0}^n$, which implies that the $B_i$ also converge.
\end{proof}

Given a rational polydisc $B \in \mathcal{B}^n$, we denote by $\mathcal{C}(B)$ the set of children of $B$, namely those rational polydiscs obtained from $B$ by a single refinement step. More specifically, let $\mathcal{C}_i(B)$ be the subset consisting of children obtained by increasing the $i$th valuative radial coordinate. Thus
$$
\mathcal{C}(B)=\bigcup_{i=1}^n \mathcal{C}_i(B),
$$
and we refer to the elements of $\mathcal{C}_i(B)$ as the \emph{children of $B$ along the $i$th coordinate}.

To describe these children explicitly, fix throughout this section a set $\mathcal S \subseteq \mathcal O_K$ of representatives for the residue classes modulo $\mathfrak m_K$, together with a uniformiser $\pi$ of $K$. In particular,
$$
|\mathcal S|=|\mathfrak K|.
$$
For $K=\QQ_p$, we take the standard choice $\mathcal S=\{0,\dotsc,p-1\}$ and $\pi=p$.

The next proposition shows that the branching structure of the search graph is completely explicit and uniformly controlled.

\begin{proposition}
 Suppose we are given a set of coset representatives $\{c_1, \dotsc, c_m \} \subset \mathcal{O}_K$ of the maximal ideal $\mathfrak{m}_K$ (so in particular $m = \lvert \mathfrak{K} \rvert$), and a uniformiser $\pi$ of $K$. Suppose $B$ is a rational polydisc with centre $a$ and valuative radius $v$. Then we have 
    \begin{equation}
    \label{eq:children_coset}
        \mathcal{C}_i(B) = \left\{ B_{\mathcal{H}}(a + c_j \pi ^ {r_i} e_i, v + e_i) \mid j = 1, \dotsc, m \right\}
    \end{equation}
    In particular, 
    $
        \lvert \mathcal{C}_i(B) \rvert = \lvert \mathfrak{K} \rvert
    \text{ and }
    \lvert \mathcal{C}(B) \rvert = n \lvert \mathfrak{K} \rvert.
    $
\end{proposition}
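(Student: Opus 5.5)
The plan is to reduce everything to a one-dimensional statement about discs and then read off the product structure. Under \cref{con:optimisation}, $K$ is discretely valued with uniformiser $\pi$ of valuation $1$, so the rational valuative radii are exactly the integers. A rational polydisc $B=B_{\mathcal H}(a,v)$ factors as $B_1\times\dots\times B_n$ with $B_k=B_{\mathcal H}(a_k,v_k)$ and $v_k\in\ZZ$. A child along the $i$th coordinate is a rational polydisc $B'\subsetneq B$ with no rational polydisc strictly between them, obtained by changing only the $i$th factor. Hence $B'_k=B_k$ for $k\neq i$. For the $i$th factor, $B'_i\subsetneq B_i$ must be a rational disc with no rational disc strictly in between, so its valuative radius is exactly $v_i+1$: if it were $\geq v_i+2$, the disc $B_{\mathcal H}(a'_i,v_i+1)$ would lie strictly between. (In the displayed formula the exponent $\pi^{r_i}$ should be read as $\pi^{v_i}$, the valuative radius of the $i$th coordinate.)

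The key step is therefore the one-dimensional claim: the rational discs of valuative radius $v+1$ contained in $B_{\mathcal H}(a,v)$ are exactly $B_{\mathcal H}(a+c_j\pi^{v},v+1)$ for $j=1,\dots,m$, and these are pairwise distinct.

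\emph{Containment.} Each such disc lies in $B_{\mathcal H}(a,v)$, because $\val(c_j\pi^v)\ge v$ and \cref{lem:recentre}(i) lets us recentre.

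\emph{Exhaustion.} Any subdisc $B_{\mathcal H}(a',v+1)\subseteq B_{\mathcal H}(a,v)$ with $a'\in K$ has $\val(a'-a)\geq v$. Write $u=\pi^{-v}(a'-a)\in\mathcal O_K$, and pick $c_j$ with $u-c_j\in\mathfrak m_K$. Since $K$ is discretely valued, this means $\val(u-c_j)\geq 1$, so $\val(a'-a-c_j\pi^v)\geq v+1$. Then \cref{lem:recentre}(i) gives $B_{\mathcal H}(a',v+1)=B_{\mathcal H}(a+c_j\pi^v,v+1)$.

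\emph{Distinctness.} If two of these discs coincided for $j\neq j'$, then $\val((c_j-c_{j'})\pi^v)\ge v+1$, i.e.\ $c_j\equiv c_{j'}\bmod \mathfrak m_K$. This contradicts the choice of coset representatives. Equivalently, these discs realise the distinct residue directions $C_{\mathcal H^1\setminus B}(\bar c_j)$ of \cref{lem:tangentDirectionsRational}.

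Combining the one-dimensional claim with the product description gives \eqref{eq:children_coset}, hence $|\mathcal C_i(B)|=m=|\mathfrak K|$. For the count $|\mathcal C(B)|=n|\mathfrak K|$, the sets $\mathcal C_i(B)$ must be pairwise disjoint. An element of $\mathcal C_i(B)$ has valuative radius vector $v+e_i$, and the radius vector of a polydisc is determined by the set itself. The determination follows because all radii here are rational and a disc over $\hat K$ determines its radius: the diameter of $B_{\mathcal H}(a,v)$ in $\hat K$ is attained and equals $e^{-v}$, using that $|\hat K^*|$ is dense as in the proof of \cref{lem:algebraDefinition}. Thus different $i$ give different radius vectors, and the union is disjoint.

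The main obstacle is the bookkeeping in the exhaustion step, namely that discrete valuation forces ``$\val>v$'' to mean ``$\val\ge v+1$''. Without that, the set of rational radii in $(v,v+1)$ would be nonempty and the immediate-successor description would fail. A related subtlety is that centres of children may be taken in $K$, which holds by definition of $\mathcal B^n$.
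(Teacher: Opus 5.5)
Your proposal is correct and follows the same route as the paper: reduce to the $i$th coordinate, identify the radius-$(v_i+1)$ subdiscs of $B_{\mathcal H}(a_i,v_i)$ with the residue classes via the representatives $c_j$, and count. You also supply details the paper leaves implicit: the exhaustion step that uses discreteness of the valuation, the reading of $\pi^{r_i}$ as $\pi^{v_i}$, and the disjointness of the sets $\mathcal C_i(B)$ for different $i$ that is needed to sum to $n\lvert\mathfrak K\rvert$.
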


\begin{proof}

It suffices to consider the $i$th coordinate, since all other coordinates are unchanged when passing to a child in $\mathcal C_i(B)$.

We write $B=B_{\mathcal H}(a,v)$. Refining the $i$th coordinate means replacing the disc $B_{\mathcal H}(a_i,v_i)$ by one of its maximal proper rational subdiscs of valuative radius $v_i+1$. These subdiscs are indexed by the residue classes modulo $\mathfrak m_K$. For representatives $\mathcal S=\{c_1,\dots,c_m\}$, they are precisely
$$
B_{\mathcal H}(a_i+c_j\pi^{v_i},\, v_i+1),
\qquad j=1,\dots,m.
$$
Leaving the remaining coordinates fixed gives
$$
B_{\mathcal H}\big(a+c_j\pi^{v_i}e_i,\; v+e_i\big),
\qquad j=1,\dots,m,
$$
which proves \eqref{eq:children_coset} for \(\mathcal C_i(B)\).

Since the representatives $c_1,\dots,c_m$ give distinct residue classes, these children are distinct. Hence
$$
|\mathcal C_i(B)|=m=|\mathfrak K|.
$$
Summing over the $n$ possible coordinate refinements gives $|\mathcal C(B)|=n|\mathfrak K|$.
\end{proof}

The expression for the set of children is entirely explicit and therefore readily and easily computable. We illustrate the computation in the simplest nontrivial two-dimensional case.

\begin{example}
    Say $K = \QQ_3$ and $B = B_{\mathcal{H}}((0, 0), (0, 0)) = \ZZ_3 \times \ZZ_3$. Then, we have $\pi = \pi_{\mathbb Q_3} = 3$ and $\mathcal{S} = \{0, 1, 2\}$. Then
    \begin{align*}
            \mathcal{C}_1(B) & = \left\{ B_{\mathcal{H}}((0, 0), (1, 0)), B_{\mathcal{H}}((1, 0), (1, 0)), B_{\mathcal{H}}((2, 0), (1, 0)) \right\} \\
            \mathcal{C}_2(B) & = \left\{ B_{\mathcal{H}}((0, 0), (0, 1)), B_{\mathcal{H}}((0, 1), (0, 1)), B_{\mathcal{H}}((0, 2), (0, 1)) \right\}
    \end{align*}
\end{example}

The directed graph structure introduced above is not unique. It reflects the choice to refine a single coordinate radius at each step, which is natural for coordinate-wise descent methods. In some optimisation procedures, however, it may be advantageous to refine several coordinates simultaneously: for $d \in \{1,\dots,n\}$, we say that a polydisc $y$ is a \emph{degree $d$ child} of a polydisc $B$ if it is obtained from $B$ by increasing $d$ distinct valuative radii by 1. We denote the set of all degree $d$ children of $B$ by $\mathcal C^d(B)$. In particular, $\mathcal C(B)=\mathcal C^1(B).$ Different choices of $d$ lead to different search graphs and correspond to different levels of branching complexity.

\paragraph{Hierarchical Partitioning.}
A closely related viewpoint comes from hierarchical optimisation and state-space search, where the goal is to optimise a function $f\colon \mathcal X \to \mathbb R$ under a finite evaluation budget by recursively partitioning the search space into nested cells $X_h$, where $h$ denotes the depth or resolution level. The optimisation problem is then converted into a sequential decision problem, where at each stage, the child cell to be explored next is selected. 

The rational polydiscs of $\mathcal B^n$ provide a natural instance of such a hierarchy. Each refinement step replaces a current region by smaller descendant polydiscs, yielding an adaptive partition of $K^n$ (or more generally of the relevant search domain). Consequently, optimisation over polydisc spaces can be interpreted through the lens of hierarchical search. This perspective is useful algorithmically, since it connects our setting to established ideas from global optimisation and bandit methods. In particular, effective procedures must balance \emph{exploration}, by testing new branches of $\mathcal B^n_{\mathrm{rat}}$, with \emph{exploitation}, by refining branches that already appear promising.

The methods we discuss next may be viewed as concrete realisations of this general search principle.

\subsubsection{Best-First Descent} \label{subsec:best-first-search}

A natural baseline strategy is to follow a greedy refinement rule, selecting at each step the child polydisc with the smallest currently observed objective value. We refer to this procedure as \emph{best-first descent}. Starting from a current iterate \(x_i\), the next iterate is chosen according to
$$
B_{i+1}\in \argmin_{C\in \mathcal S(B_i)} \ell(C),
$$
where $\mathcal S(B)\subseteq \mathcal C(B)$ is a prescribed collection of admissible children.

The choice of $\mathcal{S}(B)$ determines the computational cost of each update. Taking $\mathcal{S}(B) = \mathcal{C}(B)$ compares all immediate refinements and therefore makes the most informed local decision. In contrast, attention may be restricted to subsets such as $\mathcal{C}_i(B)$, for example, by cycling through coordinates or selecting coordinates adaptively. This yields a standard trade-off between per-iteration cost and quality of the descent direction.

Minimisers within $\mathcal{S}_i$ need not be unique. For example, if $\ell = |X^2-1|$ over $K = \QQ_2$, then the Gauss point has two children with identical objective value. In such cases, we require a tie-breaking rule. In our work, ties may be resolved deterministically or by uniform random choice.

When an algorithm requires selecting uniformly from a finite set $S$ of candidate points or branches, we will write $B \sim S$.

\begin{algorithm}[H]
  \begin{algorithmic}[1]
    \Require $\ell : \mathcal{B}^n \to \RR$, $N_\mathrm{epochs} \ge 1$, $B_0 \in \mathcal{B}^n$
    \For{$i = 1, \dotsc, N_\mathrm{epochs}$} 
      \State $B_i \sim \argmin_{C \in \mathcal{C}(B_{i-1})} \ell(C)$ \Comment{Pick a random minimising child}
    \EndFor
  \Return $B_{N_\mathrm{epochs}}$
  \end{algorithmic}
  \caption{Best-First Descent}
  \label{alg:best-first-value-search}
\end{algorithm}

\begin{algorithm}[H]
  \begin{algorithmic}[1]
    \Require $\ell : \mathcal{B}^n \to \RR$, $N_\mathrm{epochs} \ge 1$, $B_0 \in \mathcal{B}^n$
    \State Compute set of coset representatives for $\mathfrak{m}_K$.
    \For{$i = 1, \dotsc, N_\mathrm{epochs}$}
      \State $j \gets (i-1 \mod n) + 1$.
      \Comment{At each step, switch to the next coordinate or loop back to the first}
      \State $\mathcal{S} \gets \mathcal{C}_j(B_{i-1})$.
      \State $B_i \sim \argmin_{C \in \mathcal{S}} \ell(C)$ \Comment{Pick a random minimising child}
    \EndFor
  \Return $B_{N_\mathrm{epochs}}$
  \end{algorithmic}
  \caption{Coordinate-wise Best-First Descent}
  \label{alg:coordinate-wise-best-first-value-search}
\end{algorithm}

Notice these two algorithms can exhibit radically different behaviour in practice: In Algorithm \ref{alg:best-first-value-search}, no constraint is imposed on how coordinates are refined, so some radii may stabilise at positive values and the sequence need not necessarily converge to a $K$-point. In contrast, Algorithm \ref{alg:coordinate-wise-best-first-value-search} enforces refinement in every coordinate which guarantees convergence to a $K$-point, i.e., $\lim_{i \to \infty} B_i \in K^n$. This makes Algorithm \ref{alg:coordinate-wise-best-first-value-search} well suited to problems whose minimisers lie in $K^n$, but more limited when the minimum is attained only at a non-$K$-point.  This observation suggests that enforcing coordinate-wise refinement may be necessary when the goal is to recover classical solutions in $K^n$, which we formalise in the case of polynomial objectives below. We show that the coordinate-wise best-first descent procedure indeed converges to a root.

\begin{proposition}\label{thm:best-first-descent-convergence}
    Let $f$ be a univariate polynomial that splits over $K$ and $x_0$ a polydisc containing a root of $f$. Let $B_0, B_1, \dotsc$ be the sequence of polydiscs obtained by running the best-first descent algorithm indefinitely. Then this sequence converges to a root of $f$ in $K$.
\end{proposition}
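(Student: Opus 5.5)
Throughout, $\ell=|f|$ with $f=c\prod_{k=1}^d(x-z_k)$ and all $z_k\in K$. In dimension one, best-first descent and its coordinate-wise version coincide, since $\mathcal C(B)=\mathcal C_1(B)$. So the iterates satisfy $B_{i+1}=B_{\mathcal H}(a_{i+1},v_0+i+1)$, and the valuative radius grows by exactly one at each step. By the lemma on decreasing chains and completeness of $K$, the centres converge to some $z\in K$ and $B_i\to z$. What remains is to show that $z$ is a root, that is, that every $B_i$ contains a root of $f$.

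I would prove by induction on $i$ that $B_i\cap V(f)\neq\emptyset$. The case $i=0$ is the hypothesis. For the inductive step, suppose $B=B_i=B_{\mathcal B}(a,r)$ is rational with $r>0$ and contains at least one root. For a child $C=B_{\mathcal B}(a',r')$ with $r'=r/|\pi|^{-1}$ (radius one step smaller), the proof of \cref{lem:slopeAndRootsUnivariateField} gives
\[
|f(C)|=|c|\cdot\prod_{z_k\in C} r'\cdot\prod_{z_k\notin C}|a'-z_k|.
\]
Now fix a root $z_k\in B$ and compare children. If $z_k\in C$, the root contributes the factor $r'$. If $z_k\notin C$, it contributes $|a'-z_k|=r$, because $a'$ and $z_k$ lie in different maximal subdiscs of $B$. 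Roots outside $B$ contribute $|a-z_k|$, which is the same for every child. Hence
\[
|f(C)|=\text{const}\cdot r^{m_B}\,(r'/r)^{m_C},
\]
where $m_B=|V(f)\cap B|$ and $m_C=|V(f)\cap C|$ are counted with multiplicity. Since $r'/r<1$, the objective value is strictly decreasing in $m_C$. Because $m_B\geq 1$ and the children partition $B\cap K$, some child has $m_C\geq1$. The minimising child therefore maximises $m_C$, so it contains a root, and every tie-break also yields a child with $m_C\geq1$. This completes the induction.

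It remains to pass to the limit. Each $B_i$ contains a root, the roots form a finite set, and $\bigcap_i B_i=\{z\}$ because the radii tend to $0$. So some root lies in infinitely many $B_i$, and since the chain is nested it lies in all of them. That root must equal $z$, so $B_i\to z\in V(f)\cap K$.

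The delicate step is the children formula: I need that roots in $B$ outside $C$ sit at distance exactly $r$ from the centre of $C$. I would derive this directly from \cref{lem:recentre} together with the residue-class description of the children given in \eqref{eq:children_coset}, rather than through slopes. I would also note the edge case where $B$ has radius $0$ or is irrational; this is excluded because the iterates are rational with positive radius.
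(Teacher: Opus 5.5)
Your proof is correct and follows the same route as the paper. Both arguments show by induction that every iterate contains a root, because a rootless child has the same value as its parent while a child containing a root has strictly smaller value, and then use that the radius shrinks by the factor $|\pi|$ at each step. Your explicit formula $|f(C)|=\text{const}\cdot r^{m_B}(r'/r)^{m_C}$ makes precise the paper's terse comparison $|f(B_{m+1})|=|f(B_m)|>|f(\alpha)|$, and your finite-roots argument supplies the last step, that the limit point is actually a root, which the paper leaves implicit.
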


\begin{proof}
    It suffices to show that (i) for all $m$, $B_m$ contains a root of $f$, and that (ii) $\lim_{m \to \infty} B_m \in K$.
    To prove (i), notice that by assumption, $x_0$ contains a root. Suppose that $B_m$ contains a root but $B_{m+1}$ does not. By construction, $B_{m+1} \in \mathcal{C}_i(B_m)$ for some $i$.
    Since 
    $$
        B_m \cap K = \bigcup_{C \in \mathcal{C}_i(B_m)} C \cap K,
    $$
    the root must be contained in some other $\alpha \in \mathcal{C}_i(B_m)$. Moreover, we have
    $
    f(B_{m+1}) = f(B_m) > f(\alpha)
    $
    which is a contradiction.
    By construction, the radius of $B_m$ is equal to $\lvert \pi \rvert ^{m}r_i$ where $r$ is the radius of $B_0$. This tends to $0$ as $m \to \infty$, so $\lim_{m \to \infty} B_m \in K$, proving (ii).
\end{proof}

\subsubsection{Best-First Gradient Descent}
\label{subsec:best-first-gradient-search}

A key advantage of working over the polydisc space $\mathcal{B}^n$ rather than $K^n$ is that we have a notion of geodesics, which allows for calculus with real valued functions, and thus a calculus-based selection of branch and descent directions, rather than purely combinatorial descent.

To formalise this idea, we require a notion of directional derivative along tangent directions.

\begin{definition}[Directional derivatives]
Let $f$ be a continuous real valued function defined on an open neighbourhood $U$ of $B \in \mathcal{B}^n$. Given $\vec{v} = \overline\gamma \in T_B \mathcal{B}^n$ where $\gamma$ has unit speed geodesic, we define the \emph{directional derivative} of $f$ along $\vec{v}$ to be the value
$$
d_{\vec{v}} f = \lim_{h \to 0^+} \frac{f \circ \gamma (h) - f(B)}{h}
$$
provided this limit exists. If this quantity always exists for all $\vec{v} \in T_B \mathcal{B}^n$, the function $f$ is said to be \emph{differentiable} at $B$. 
\end{definition}

When $f$ is differentiable, each tangent direction $\vec{v} \in T_B \mathcal{B}^n$ is assigned a directional derivative $d_{\vec{v}} f$, which quantifies the local variation of $f$ along the corresponding branch.

The usefulness of this notion lies in the fact that it behaves in close analogy with classical differentiation. In particular, directional derivatives interact well with algebraic operations, which allows them to be computed explicitly for structured functions such as absolute polynomial sums. The following lemma collects these basic properties.

\begin{lemma}
Assume that $f, g\colon \mathcal{B}^n \to \RR$ are differentiable at $B$, and $\vec{v} \in T_B \mathcal{B}^n$. Then
\begin{enumerate}
    \item $d_{\vec{v}} (f + g) = d_{\vec{v}} f + d_{\vec{v}} g$, 
    \item $d_{\vec{v}} (fg) = f(B)  d_{\vec{v}} g + g(B) d_{\vec{v}} f$, 
    \item $d_{\vec{v}} (h \circ f) = h'(f(B)) d_{\vec{v}} f$ for any $h\colon \RR \to \RR$ differentiable at $B$.
\end{enumerate}
\end{lemma}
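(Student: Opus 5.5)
The plan is to reduce everything to the one-variable difference quotients along the fixed unit-speed geodesic $\gamma$ representing $\vec{v}$. By definition, $d_{\vec{v}}f$ and $d_{\vec{v}}g$ are the right-hand limits as $h\to 0^+$ of
\[
q_f(h)\coloneqq \frac{f(\gamma(h))-f(B)}{h},\qquad q_g(h)\coloneqq\frac{g(\gamma(h))-g(B)}{h}.
\]
Write $\phi(h)\coloneqq f(\gamma(h))$ and $\psi(h)\coloneqq g(\gamma(h))$. These are real functions on $[0,\epsilon)$, and by hypothesis they have right derivatives $d_{\vec{v}}f$ and $d_{\vec{v}}g$ at $0$. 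Each claimed identity is then the classical one-sided rule applied to $\phi$ and $\psi$.

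\textbf{Well-definedness.} First I will check that the value does not depend on the representative of $\vec{v}$. Two equivalent unit-speed geodesics agree on an initial segment $[0,s]$. By \cref{def:tangentSpace} the images agree, and because both have unit speed from $B$, the parametrisations agree as well. Only small $h$ enter the limit, so the representative does not matter. I take $\gamma$ to be the same representative for $f$, $g$, and the combinations.

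\textbf{Sum rule.} This is linearity of limits: $q_{f+g}=q_f+q_g$.

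\textbf{Product rule.} I will use the algebraic identity
\[
\frac{\phi(h)\psi(h)-\phi(0)\psi(0)}{h}=\phi(h)\,q_g(h)+\psi(0)\,q_f(h).
\]
The limit then follows once I know $\phi(h)\to\phi(0)$. This holds either because $f$ is continuous on $U$ and $\gamma$ is continuous, or more directly because $q_f$ has a finite limit, so $\phi(h)-\phi(0)=h\,q_f(h)\to 0$.

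\textbf{Chain rule.} I read the hypothesis as $h$ being differentiable at the point $f(B)$. I will use the standard auxiliary function
\[
\Delta(y)\coloneqq\frac{h(y)-h(f(B))}{y-f(B)}\ \text{ for } y\neq f(B),\qquad \Delta(f(B))\coloneqq h'(f(B)).
\]
It is continuous at $f(B)$, and it gives
\[
\frac{h(\phi(t))-h(\phi(0))}{t}=\Delta(\phi(t))\,q_f(t)
\]
for all small $t>0$, including those where $\phi(t)=\phi(0)$. Since $\phi(t)\to\phi(0)$ and $\Delta$ is continuous at $f(B)$, the limit is $h'(f(B))\,d_{\vec{v}}f$. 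To avoid the clash with the step variable, I will rename it to $t$ in the write-up.

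\textbf{Main obstacle.} There is no real obstacle. The only subtle points are the representative-independence in the first step and the case $\phi(t)=\phi(0)$ in the chain rule, which the $\Delta$ trick handles. Everything else is one-variable calculus on $[0,\epsilon)$.
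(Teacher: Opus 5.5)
Your proposal is correct and takes the same approach as the paper. The paper likewise reduces each identity to the corresponding one-variable differentiation rule for $f\circ\gamma$, where $\gamma$ is a unit-speed representative of $\vec v$. Your write-up supplies details the paper leaves implicit: representative-independence, the explicit product-rule identity, the auxiliary quotient $\Delta$ for the chain rule, and reading the hypothesis on $h$ correctly as differentiability at $f(B)$.
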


\begin{proof}
The definition reduces directional derivatives to one-dimensional limits along paths. Writing $f \circ \gamma$ for a representative of $\vec{v}$, each statement follows from the corresponding differentiation rule for real-valued functions.
\end{proof}

The notion of directional derivatives identifies, at each point, the directions along which the function decreases most rapidly. To turn this into an algorithmic update rule, a discrete analogue of ``moving along a direction'' is required within the rational polydisc hierarchy.

\begin{definition}
    Given a rational polydisc $P$ and a tangent vector $\vec{v} = \overline\gamma \in T_P\mathcal{B}^n$, we define the \emph{translation of $P$ by $\vec{v}$}, $P + \vec v$, to be the closest rational polydisc to $P$ on the path $\gamma$ if it exists (up to extending $\gamma$). This definition is independent of the choice of representative path $\gamma$.
\end{definition}

This construction recovers the usual refinement step when the direction corresponds to a child: if $C$ is a child of $B$ and $\vec{v} = \overline\gamma_{BC}$, then $B + \vec v = C$.

We can then propose a gradient-guided refinement rule, in which the next iterate is chosen by selecting the direction of steepest local decrease and moving along the corresponding branch. More precisely, given a current iterate \(P_i\), define
\begin{align*}
    \vec v_i = \argmin_{\vec v \in \mathcal{T}(B_i)} d_{\vec{v}} \ell \qquad\text{and}\qquad
    B_{i + 1} = B_i + \vec v_i
\end{align*}
where $\mathcal{T}(B) \subset T_B \mathcal{B}^n$ is a subset of tangent vectors pointing downwards (i.e. each component is defined by a point in $\mathfrak K$ in the correspondence given by \cref{prop:tangentSpacePolydisk}. Concretely, this means that the vector is of the form $[d_{BC}]$ for some rational polydisc lying below $B$). We can take, for instance, $\mathcal{T}(B) = \{ d_{BC} \mid C \in \mathcal{C}(B)\}$. 
We point out that this allows for certain components to have magnitude zero, which corresponds to the fact that this descent rule does not force all radii to shrink.

\begin{proposition}\label{thm:best-first-gradient-descent-convergence}
    Let $f$ be a univariate polynomial that splits over $K$ and $x_0$ a polydisc containing a root of $f$. Let $B_0, B_1, \dotsc$ be the sequence of polydiscs obtained by running the best-first gradient descent algorithm indefinitely. Then this sequence converges to a root of $f$ in $K$.
\end{proposition}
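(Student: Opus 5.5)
We follow the structure of the proof of \cref{thm:best-first-descent-convergence}. It suffices to show two things: (i) every iterate $B_m$ contains a root of $f$, and (ii) the radii of the $B_m$ tend to $0$, so that $\lim_{m\to\infty}B_m$ is a $K$-point. The limit is then a root, since it lies in every $B_m$ and each $B_m$ contains a root. In the univariate case $\mathcal T(B)=\{\vec v_{BC}\mid C\in\mathcal C(B)\}$ consists exactly of the downward directions $\vec v_{B,\bar c}$ for $\bar c\in\mathfrak K$, by \cref{prop:tangentSpaceDisk}. For such a direction, $B+\vec v_{B,\bar c}$ is the child $C$ in the residue class $\bar c$, whose valuative radius is one larger. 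Each step therefore shrinks the absolute radius by the factor $|\pi|<1$, which gives (ii) immediately, exactly as in the best-first descent proof.

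For (i), we compute the directional derivatives of $\ell=|f|$ at a rational disc $B=B_{\mathcal B}(a,r)$ in the downward directions. Write $f=c\prod_i(x-z_i)$ with $z_i\in K$. Along the unit-speed geodesic $\gamma(h)=B_{\mathcal B}(a',r-h)$ into the branch $\bar c$, the proof of \cref{lem:slopeAndRootsUnivariateField} gives, for small $h>0$,
\begin{equation*}
|f(\gamma(h))| = |c|\,(r-h)^{k_{\bar c}}\, r^{k_B-k_{\bar c}}\prod_{z_i\notin B}|a-z_i|,
\end{equation*}
where $k_B=|V(f)\cap B|$ and $k_{\bar c}$ is the number of roots (with multiplicity) in the open residue disc of class $\bar c$. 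Indeed, roots in other residue classes are at distance exactly $r$ from every point of that branch. Hence
\begin{equation*}
d_{\vec v_{B,\bar c}}|f| = -\,k_{\bar c}\,\frac{|f(B)|}{r},
\end{equation*}
which is strictly decreasing in $k_{\bar c}$ because $|f(B)|>0$ and $r>0$.

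Consequently the argmin of the directional derivative is attained exactly at the residue classes containing the maximal number of roots. If $B_m$ contains a root, then $k_{B_m}\ge1$, so some class has $k_{\bar c}\ge 1$, and the chosen direction has $k_{\bar c}\ge1$. Therefore $B_{m+1}$ contains a root, and (i) follows by induction from the hypothesis on $x_0=B_0$. This is in fact slightly stronger than needed: the chosen branch carries the maximal root multiplicity.

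The main obstacle is justifying the derivative formula rigorously, in particular identifying the limit set of $\vec v_{B,\bar c}$ with the open residue disc and showing that roots outside it contribute only constant factors near $h=0$. We would handle this with \cref{lem:clopenDiscsConstant} applied to the factors $x-z_i$ for roots outside the branch, together with \cref{cor:formulaCoefficientsInField}. A minor point is that $\gamma$ must be reparametrised to unit speed, but this only rescales all derivatives by the same positive constant and does not change the argmin. If ties are broken arbitrarily, every minimiser still has $k_{\bar c}\ge1$, so the argument is unaffected.
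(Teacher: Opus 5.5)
Your proposal is correct and follows the paper's route. The paper's proof is just ``same as'' the best-first descent argument: iterates keep containing a root, and radii shrink by $|\pi|$. Your formula $d_{\vec v_{B,\bar c}}|f| = -k_{\bar c}|f(B)|/r$ is exactly the gradient counterpart of the value comparison $|f(B_{m+1})|=|f(B_m)|>|f(\alpha)|$ used there, and it fills in the detail the paper leaves implicit. The one step worth stating explicitly is this: since $K$ is discretely valued and the roots lie in $K$, a root in the open residue disc of class $\bar c$ already lies in the child of radius $|\pi| r$, which is what turns $k_{\bar c}\ge 1$ into ``$B_{m+1}$ contains a root''.
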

\begin{proof}
    Same as \cref{thm:best-first-descent-convergence}
\end{proof}

\begin{algorithm}[H]
  \begin{algorithmic}[1]
    \Require $\ell\colon \mathcal{B}^n \to \RR$, $N_\mathrm{epochs} \ge 1$, $P_0 \in \mathcal{B}^n$
    \For{$i = 1, \dotsc, N_\mathrm{epochs}$} 
      \State Compute $\mathcal{V}(B_{i-1}) = \{d_{B_{i-1}C} \mid C \in \mathcal{C}(B_{i-1})\}$ 
      \State $\vec{v} \sim \argmin_{\vec{v} \in \mathcal{V}(B_{i-1})} d_{\vec{v}}\ell$ \Comment{Pick a random minimising vector}
      \State $B_i \gets B_{i-1} +\vec{v}$
    \EndFor
  \Return $B_{N_\mathrm{epochs}}$
  \end{algorithmic}
  \caption{Best-First Gradient Descent}
  \label{alg:best-first-gradient-search}
\end{algorithm}

\subsubsection{Deterministic Optimistic Optimisation on the Polydisc space} \label{subsec:doo}

The descent-based methods considered so far are inherently greedy: at each step, they refine only the most promising branch given by local information. While such approaches can be effective in well-behaved settings, they may overlook regions that initially appear suboptimal but contain better solutions at finer scales. This observation motivates methods that explicitly balance local refinement with global exploration.

Towards this end, we connect our setting to the framework of \emph{optimistic optimisation} introduced by \cite{munos2011optimistic}, where optimisation of a function $f\colon \mathcal{X} \to \mathbb{R}$ over a metric space $\mathcal{X}$ is recast as a hierarchical search problem. The space $\mathcal{X}$ is partitioned into a nested family of cells $X_{h, i}$, indexed by a level parameter $h \geq 0$ with $X_{0, 0} = \mathcal{X}$ and each subsequent level providing a finer decomposition. Optimisation then proceeds by selecting at each stage which cell to refine further, with the goal of isolating a global maximiser through progressively finer partitions,
$$
    \bigcap_h X_{h, i_h} = \{ x^* \}.
$$

In our setting, we consider a function $g \colon K^n \to \RR$ with a global minimum below the Gauss point $B_\text{Gauss} = B(0, 0)$, and restrict attention to the subspace $\mathcal{X} = B_\text{Gauss}$. Since the optimisation problem is formulated for maximisation problems, we instead work with $f = -g$.

To implement this framework, we require a suitable hierarchical partition of $\mathcal{X}$. A natural choice is given by the valuative radius: for each level $h \geq 0$, define
$$
\{ X_{h,i} \}
= \left\{ B_{\mathcal{H}}\big(a, (h,\dotsc,h)\big) \cap \mathcal{O}_K^n
\;\middle|\;
a \in \mathcal{O}_K^n \text{ and for all } i, r_i = h \right\}.
$$
These cells form a uniform partition of $\mathcal{X}$ at resolution level $h$, with refinement corresponding to increasing the common radius.

\begin{figure}[t]
  \centering
  \begin{tikzpicture}[
      x={(1.4cm, 0cm)},         
      y={(0.6cm, 0.4cm)},       
      z={(0cm, 1.8cm)},         
      every node/.style={inner sep=2pt, font=\scriptsize, fill=white}, 
      arr/.style={-{Stealth[length=4pt,width=3pt]}, semithick, black!50}
    ]

    \node (root) at (0, 0, 2) {$B_{\mathcal{H}}((0,0),(0,0))$};

    \node (n00_11) at (-1, -1, 0) {$B_{\mathcal{H}}((0,0),(1,1))$};
    \node (n01_11) at (-1,  1, 0) {$B_{\mathcal{H}}((0,1),(1,1))$};
    \node (n10_11) at ( 1, -1, 0) {$B_{\mathcal{H}}((1,0),(1,1))$};
    \node (n11_11) at ( 1,  1, 0) {$B_{\mathcal{H}}((1,1),(1,1))$};

    \begin{scope}[on background layer]
      \draw[arr] (root) -- (n00_11);
      \draw[arr] (root) -- (n01_11);
      \draw[arr] (root) -- (n10_11);
      \draw[arr] (root) -- (n11_11);
    \end{scope}

    \node[anchor=east, fill=none] at (-3.0, 0, 2) {$X_{0, 0}$};
    \node[anchor=east, fill=none] at (-3.0, 0, 0) {$X_{1, \bullet}$};
  \end{tikzpicture}%
  \caption{The first two levels of the cell decomposition for $n=2$ when $K = \mathbb{Q}_2$.}
\end{figure}

At level $h$, this decomposition consists of $|\mathfrak{K}|^{nh}$ cells. Each cell admits a canonical representative point, obtained by truncating the expansion of its centre:
$$
a = \sum_{j=0}^{h-1} a_j \pi^j,
\quad a_j \in \mathcal{S}.
$$
We denote this representative by $x_{h,i}$ for the corresponding cell $X_{h,i}$.

Different hierarchical decompositions induce different search trees and therefore different exploration--exploitation trade-offs. 
Consider, for instance, the alternative decomposition obtained by refining coordinates sequentially rather than simultaneously: take $X_{0, 0}' = \mathcal{X}$, and set the $X_{1, i}'$ to be the children of $X_{0, 0}'$ along the first coordinate, the $X_{2, j}'$ to be the children of the $X_{1, i}'$ along the second coordinate, and so on, looping back to the first coordinate once the $n$-coordinate has been reached. At level $h$, there are now $|\mathfrak{K}|^{h}$ nodes and canonical centres for these cells may be chosen. More generally, for any given $1 \le d \le n$, one can obtain a decomposition by shrinking $d$ radii at each step. Given an enumeration $S_1, \dotsc, S_N$, $N = \binom{n}{d}$ of the subset of size $d$ of $\{1, \dotsc, n\}$, one can construct the decomposition $X^{(d)}_{h, i}$
by taking $X^{(d)}_{0, 0} = \mathcal X$, setting $X^{(d)}_{1, i}$ to be the children of $X^{(d)}_{0, 0}$ along the coordinates in $S_1$, $X^{(d)}_{2, i}$ to be the children of the $X^{(d)}_{1, i}$ in the coordinates in $S_2$ and so on. Such choices lead to different tree structures which may be advantageous depending on the optimisation task.

The hierarchical partition constructed above serves as the backbone for optimistic optimisation methods. To apply these techniques, the cells must satisfy certain structural properties ensuring that the search procedure is well-defined and behaves consistently across levels. The following result collects these required properties.

\begin{proposition}
    \label{prop:doo-requirements}
    Let $g = \sum_i a_i \lvert f_i \rvert$ with $f_i\in \mathcal O_K[x_1,\dots,x_n]$ be an absolute polynomial sum whose polynomials have absolute value $\le 1$. Let $\mathcal X = \mathcal O_K^n.$
    Then the following properties hold:
    \begin{enumerate}
        \item[(i)] There exists a semimetric $d_L \colon \mathcal{X} \times \mathcal{X} \to \mathbb{R}^+$ such that for all $x, y \in \mathcal{X}$, we have 
        $$
            g(x) - g(y) \le d_L(x, y).
        $$
        In particular, $d_L(x, y) = L d(x, y)$, where $d$ is the $\ell^q$ metric on the polydisc space and 
        $
        L = \sum_i a_i 
        $, satisfies this inequality.
        \item[(ii)] There exists a decreasing function $\delta \colon \NN \to \RR_{>0}$ such that for all $h, i$ we have
        $$
            \sup_{x \in X_{h, i}} d_L(x_{h, i}, x) \le \delta(h)
        $$
        For instance, if $d$ is induced by the $\ell^p$ norm, then
        $
            \delta(h) = Lp^{-h} \lvert \lvert (1, \dotsc, 1) \rvert \rvert_{\ell^p}
        $
        satisfies this bound.
        \item[(iii)] Each cell $X_{h,i}$ contains a ball (with respect to $d_L$) of radius proportional to its diameter: more precisely, for every $h,i$ there exists $\nu > 0$ such that $X_{h,i}$ contains a $d_L$-ball of radius $\nu\,\delta(h)$.
    \end{enumerate}  
\end{proposition}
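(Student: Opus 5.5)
The plan is to prove a Lipschitz estimate for each summand $|f_i|$ on $\mathcal O_K^n$, with constant $1$, and then derive (ii) and (iii) from explicit diameter computations for the cells $X_{h,i}$.

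\textbf{Step (i).} Fix $f\in\mathcal O_K[x_1,\dots,x_n]$ and $x,y\in\mathcal O_K^n$. Expand $f(x+z)=\sum_\alpha c'_\alpha z^\alpha$ as in \cref{cor:formulaCoefficientsInField}. The coefficients $c'_\alpha$ are obtained from those of $f$ by integer binomial combinations and multiplication by powers of the entries of $x$, so they satisfy $|c'_\alpha|\le 1$. Set $z=y-x$. Since $|z_j|\le 1$, the ultrametric inequality gives
\begin{equation*}
|f(y)-f(x)|\le \max_{\alpha\neq 0}|z|^{\alpha}\le \max_j |y_j-x_j|.
\end{equation*}
The reverse triangle inequality then yields $\big||f(x)|-|f(y)|\big|\le \max_j|x_j-y_j|$.

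This is bounded by $\tfrac12 d^q_{\mathcal B}(x,y)$ for every $q$, using \cref{lem:metricExtendingFieldMetric} and the fact that $\ell^\infty\le\ell^q$. Multiplying by $a_i$ and summing over $i$ gives $g(x)-g(y)\le L\,d(x,y)$ with $L=\sum_i a_i$, even with the spare factor $\tfrac12$. Since $d$ is a metric, $d_L=Ld$ is a semimetric.

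\textbf{Step (ii).} Take a cell $X_{h,i}=B_{\mathcal H}(a,(h,\dots,h))\cap\mathcal O_K^n$ with representative $x_{h,i}$. Any $x$ in the cell satisfies $\val(x_j-(x_{h,i})_j)\ge h$, so $|x_j-(x_{h,i})_j|\le|\pi|^h$. By \cref{lem:metricExtendingFieldMetric},
\begin{equation*}
d(x_{h,i},x)\le 2|\pi|^h\,\|(1,\dots,1)\|_{\ell^p}.
\end{equation*}
The stated $\delta(h)$ is the case $|\pi|=p^{-1}$; for a general field it should be read with $|\pi|^h$. The constant $2$ is either absorbed or removed by using the factor $\tfrac12$ saved in (i). Either way the resulting $\delta$ is decreasing and positive.

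\textbf{Step (iii).} This is the main point needing care, because the space is totally disconnected. I would show that the $d_L$-ball around $x_{h,i}$ of radius $\rho=2L|\pi|^{h}$ (open, or closed at a slightly smaller radius) is contained in $X_{h,i}$. Suppose $y\in\mathcal O_K^n$ with $d_L(x_{h,i},y)<\rho$. Then $\ell^\infty\le\ell^p$ and \cref{lem:metricExtendingFieldMetric} give $|y_j-(x_{h,i})_j|<|\pi|^{h}$, hence $\le|\pi|^{h+1}$ because the valuation is discrete. So $y\in X_{h,i}$; in fact $y$ lies in the child cell, which makes the containment even stronger.

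Therefore $X_{h,i}$ contains a ball of radius $2L|\pi|^{h}\cdot\tfrac12$, say, and taking $\nu=1/(2\|(1,\dots,1)\|_{\ell^p})$ gives radius $\nu\delta(h)$. This $\nu$ does not depend on $h$ or $i$, which is stronger than the statement requires.

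The only subtlety is to take the ball within $\mathcal X=\mathcal O_K^n$ rather than in $\mathcal B^n$. This matches how the cells are defined, as intersections with $\mathcal O_K^n$.
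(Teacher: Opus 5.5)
Your proposal is correct and follows the same route as the paper. The paper also reduces (i) to the $1$-Lipschitz property of polynomials over $\mathcal O_K$ on $\mathcal O_K^n$, which it cites and which you prove via the Taylor expansion; it obtains (ii) from the cell diameter and (iii) from the cells being polydiscs of positive radius. Your version is more careful in three places: you track the factor $2$ from \cref{lem:metricExtendingFieldMetric} that the paper's diameter computation drops, you read $p^{-h}$ correctly as $|\pi|^h$, and you give an explicit $\nu$ independent of $h$ and $i$ using discreteness of the valuation.
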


The assumption that the coefficients of the $f_i$ lie in $\mathcal{O}_K$ is not restrictive. Indeed, any absolute polynomial sum can be reduced to this case by rescaling the coefficients $a_i$, which leaves the optimisation problem unchanged up to an overall multiplicative factor.

\begin{proof}
    We begin with (i). Take
    $$
    d_L(x, y) = L d(x, y),
    $$
    which is a semimetric. To prove the inequality, we can reduce to the case where only one polynomial appears in the sum, i.e., $g = a \lvert F \rvert$ where $a > 0$. 
    We write
    $
    F(x) = \sum_\alpha a_\alpha x^\alpha
    $
    so that we have: 
    $$
    g(x) - g(y) \le a \lvert F (x) - F(y) \rvert \le a \lvert x - y \rvert,
    $$
    where the last inequality uses the fact that polynomials are $1$-Lipschitz on $\mathcal{O}_K^n$.
    
    For (ii), we need to bound $\diam(X_{i, h})$. By definition of the $\ell^q$ metric on $\mathcal{B}^n$, we have
    $$
    \diam(B_{\mathcal{B}}(a, r)) = L \lvert \lvert r \rvert \rvert_{\ell^q}.
    $$
    Since $X_{i, h}$ has valuative radius $(h, \dotsc, h)$, we have
    $$
        \diam(X_{i, h}) = L \lvert \lvert (\pi^{h}, \dotsc, \pi^{h}) \rvert \rvert_{\ell^p} = L\lvert \pi \rvert^{h} \lvert \lvert (1, \dotsc, 1) \rvert \rvert_{\ell^p} = \delta_h.
    $$

    Finally, (iii) follows from the fact that $X_{i, h}$ is always a polydisc whose absolute radii are all positive.
\end{proof}

An analogous statement holds for the alternative cell decomposition $(X'_{h,i})$ described above, with $\delta$ replaced by $\delta'_h = L\lvert \pi \rvert^{\lfloor h / n \rfloor } \lvert \lvert (1, \dotsc, 1) \rvert \rvert_{\ell^p}$. More generally, one can check that for the decomposition $X^{(d)}_{h, i}$ whose construction was sketched above, one can take 
$$
\delta^{(d)}_h = \delta_{\binom{n-1}{d-1}\lfloor ({n} /{\binom{n}{d}}\rfloor}
$$

The previous proposition provides exactly the components required to implement the optimistic optimisation framework on $\mathcal{B}^n$. In particular, each cell $X_{h,i}$ is equipped with an upper confidence bound 
$$
b_{h,i} = f(x_{h,i}) + \delta_h,
$$
which balances the observed value of $f$ at the centre with the uncertainty induced by the size of the cell. 

This leads to a direct adaptation of the deterministic optimistic optimisation (DOO) algorithm of \cite{munos2011optimistic}, adapted to the polydisc setting, which we summarise in Algorithm~\ref{alg:doo}.

\begin{algorithm}
  \begin{algorithmic}[1]
    \Require $\ell \colon \mathcal{B}^n \to \RR$, $N_\mathrm{epochs} \ge 1$, $x_0 \in \mathcal{B}^n$
    \State Set $\mathcal{L} = \mathcal{T} = \{x_0\}$
    \For{$i = 1, \dotsc, N_\mathrm{epochs}$} 
      \State Compute the set $\mathcal{L}$ of leaves of the tree $\mathcal{T}$
      \State Find the node $(h, i) \in \mathcal{L}$ that maximises $b_{h, i}$
      \State Add to $\mathcal{T}$ the children of $(h, i)$ \Comment{Expand the node $(h, i)$.}
    \EndFor
  \Return $x_{h, i}$ where $(h, i) = \argmax_{(h, i) \in \mathcal{T}} f(x_{h, i})$.
  \end{algorithmic}
  \caption{Polydisc Space DOO}
  \label{alg:doo}
\end{algorithm}

\begin{theorem}
    \label{thm:doo}
    Suppose that $f = -g$ are as in \cref{prop:doo-requirements}, that $x_*$ is a global minimum of $g$ and let $x_0, x_1, \dotsc$ be the sequence of iterates from running \cref{alg:doo} indefinitely. Then $(x_i)$ admits a subsequence that converges to a global minimum of $g$. In fact, any convergent subsequence of $(x_i)$ converges to a global minimum of $g$.
\end{theorem}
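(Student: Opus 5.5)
We follow the standard DOO analysis of \cite{munos2011optimistic}, using the three properties of \cref{prop:doo-requirements}. Write $f^*=\max_{\mathcal X} f = -g(x_*)$; this maximum exists by \cref{thm:globalMinimumLocallyCompact}, or simply because $\mathcal X=\mathcal O_K^n$ is compact and $g$ is continuous.

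\textbf{Step 1: an invariant.} At every stage of \cref{alg:doo}, the cells at the leaves of $\mathcal T$ partition $\mathcal X$. Hence some leaf $(h^*,i^*)$ contains $x_*$. By \cref{prop:doo-requirements}(i),(ii),
\begin{equation*}
b_{h^*,i^*} = f(x_{h^*,i^*})+\delta(h^*) \ge f(x_*) - d_L(x_{h^*,i^*},x_*) + \delta(h^*) \ge f^*.
\end{equation*}
The leaf actually expanded maximises $b$, so it also satisfies $b_{h,i}\ge f^*$. Equivalently, every expanded node lies in the set
\begin{equation*}
I_h=\{(h,i) \mid f(x_{h,i})\ge f^*-\delta(h)\}.
\end{equation*}

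\textbf{Step 2: depth grows without bound.} Level $h$ has only finitely many cells ($|\mathfrak K|^{nh}$, since $K$ is discretely valued with finite residue field by \cref{con:optimisation}). So only finitely many expansions can happen at depth $\le H$, for any $H$. Running the algorithm indefinitely therefore expands nodes $(h_k,i_k)$ with $h_k\to\infty$. By Step 1 these satisfy $f(x_{h_k,i_k})\ge f^*-\delta(h_k)$, and $\delta(h)=L|\pi|^h\|(1,\dots,1)\|\to0$. Hence the expanded representatives form a sequence with $f(x_{h_k,i_k})\to f^*$. The returned point maximises $f$ over all representatives in $\mathcal T$, so the iterates satisfy $f(x_j)\ge f(x_{h_k,i_k})$ for the deepest node expanded so far. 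Consequently $f(x_j)\to f^*$; equivalently $g(x_j)\to g(x_*)$.

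\textbf{Step 3: conclusion.} The iterates lie in the compact set $\mathcal O_K^n$, so a convergent subsequence exists. For any convergent subsequence $x_{j_m}\to y$, continuity of $g$ (it is $L$-Lipschitz by \cref{prop:doo-requirements}(i)) gives $g(y)=\lim g(x_{j_m})=g(x_*)$. So $y$ is a global minimum, which proves both claims.

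\textbf{Main obstacle.} The delicate point is Step 1, specifically that leaves always partition $\mathcal X$, so that $x_*$ sits in some leaf whose bound dominates $f^*$. This needs the cells $X_{h,i}$ at each level to be genuinely disjoint and to cover $\mathcal X$, which is \cref{lem:recentre} applied coordinatewise, and expanding a node to replace it by all its children. A second subtlety is the monotonicity of $f(x_j)$. I would handle it by reading the returned value as the best value seen so far, which is nondecreasing, rather than a value at a particular node.
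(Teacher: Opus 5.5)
Your proof is correct, and it takes a more self-contained route than the paper's. The paper uses \cite[Theorem 1]{munos2011optimistic} as a black box. It bounds the near-optimality dimension by $n$ via a Haar-measure packing estimate on $\mathcal O_K^n$, reads off the regret bound $r_i=g(x_i)-g(x_*)\le\delta(h_i)$, and concludes by continuity.

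You instead reprove only the first ingredient of Munos's analysis: your Step 1, that every expanded node lies in $I_h$. You then replace the near-optimality-dimension count of $|I_h|$ by the trivial bound that level $h$ has only $|\mathfrak K|^{nh}$ cells, which already forces the expansion depth to infinity. This needs strictly less. You use only properties (i) and (ii) of \cref{prop:doo-requirements} together with finiteness of the residue field. You make no appeal to the well-shaped-cells property (iii), to $d_{\mathrm{opt}}$, or to the measure estimate. You also supply the compactness argument for the existence of a convergent subsequence, which the paper leaves implicit.

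What the paper's route nominally adds is a convergence rate, but your count gives one too. Once $i>\sum_{h\le H}|\mathfrak K|^{nh}$, some expanded node has depth at least $H+1$, so $r_i\le\delta(H+1)$. This is essentially the bound Munos's theorem yields with $d_{\mathrm{opt}}\le n$, since the paper's Haar-measure estimate is itself just a count of cells. The black box would only improve on your bound if one proved $d_{\mathrm{opt}}<n$ for a specific $g$.
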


\begin{proof}
    This result follows from \cite[Theorem 1]{munos2011optimistic}. Let $x_* \in \mathcal X$ be a global minimum of $g$. Then we can define the \emph{loss}
    $$
        r_i = g(x_i) - g(x_*)
    $$
    and set for any $\varepsilon > 0$
    $$
        \mathcal X_\varepsilon = \{ x \in \mathcal X \mid g(x_*) + \varepsilon \ge g(x) \}.
    $$
    Recall that we set $d_L(x, y) = Ld(x, y)$ where $d$ is the $\ell^\infty$ metric on $K^n$ and $L = \sum_i a_i$.
    We will see that there exists a minimal integer $d_\mathrm{opt}$ such that the maximal number of disjoint balls of radius $\varepsilon$ (with respect to the $\ell$ metric defined above) is at most $C\varepsilon^{-d_\mathrm{opt}}$ for some constant $C > 0$.
    Assuming for now the existence of such a $d_\mathrm{opt}$, let $h_i$ be the smallest natural number such that
    $$
        C \sum_{j=0}^{h_i} \delta(j)^{-d_\mathrm{opt}} \ge i.
    $$
    where $\delta$ is as defined in \cref{prop:doo-requirements}.
    By \cite[Theorem 1]{munos2011optimistic}, we have $r_i \le \delta_{h_i} \to 0$ as $i \to \infty$.
    Thus, if $(y_j) \to y$ is a convergent subsequence of $(x_i)$ then 
    $$f(x_*) - f(y) = \lim_{i \to \infty} r_i = 0.$$

    We now give a proof of the existence of $d_\mathrm{opt}$ and in fact that it is less than $n$. Let $\mu$ be the Haar measure on $K^n$ (this exists because $K$ is locally compact), normalised so that $\mu(\mathcal X) = 1$. Then, the maximum number of disjoint $\varepsilon$-balls (with respect to the $d_L$ metric from \cref{prop:doo-requirements}) contained in $\mathcal X_\varepsilon$ is bounded by the maximum number of disjoint $\varepsilon$-balls in $\mathcal X$. This in turn is bounded above by
    $$
        \frac{\mu(\mathcal X)}{\mu(B_\mathcal{B}(0, \varepsilon / L))} = \frac{L^n}{\varepsilon^n},
    $$
    which finishes the proof.
\end{proof}

\subsubsection{Monte-Carlo Tree Search on the Polydisc Space} \label{subsec:mcts}

\begin{figure}[t]
        \centering
        \begin{subfigure}{.24\linewidth}
        \includegraphics[width=\linewidth]{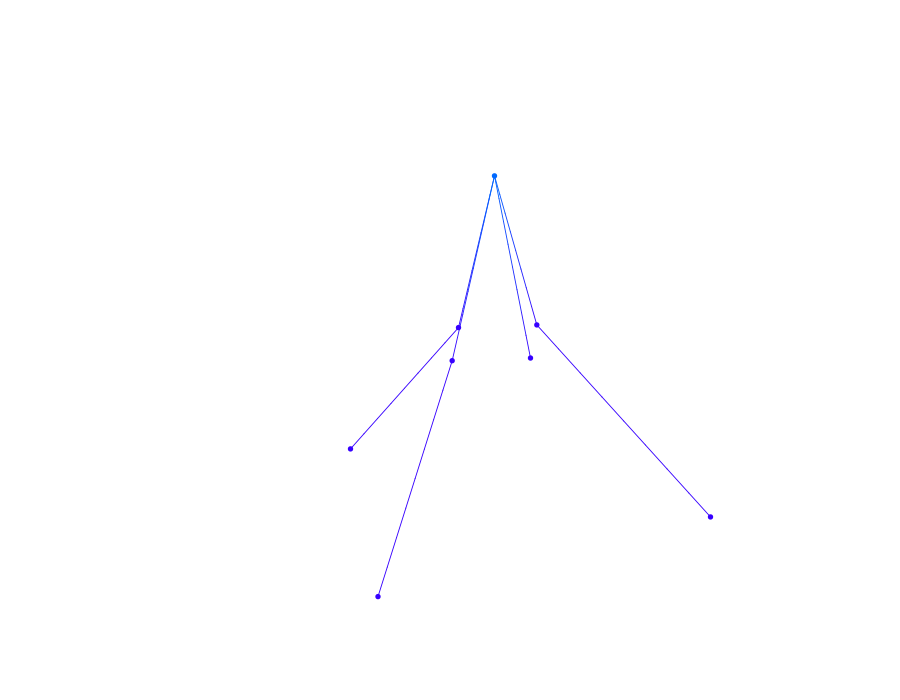}
        \caption{$N = 4$}
        \end{subfigure}
        \begin{subfigure}{.24\linewidth}
        \includegraphics[width=\linewidth]{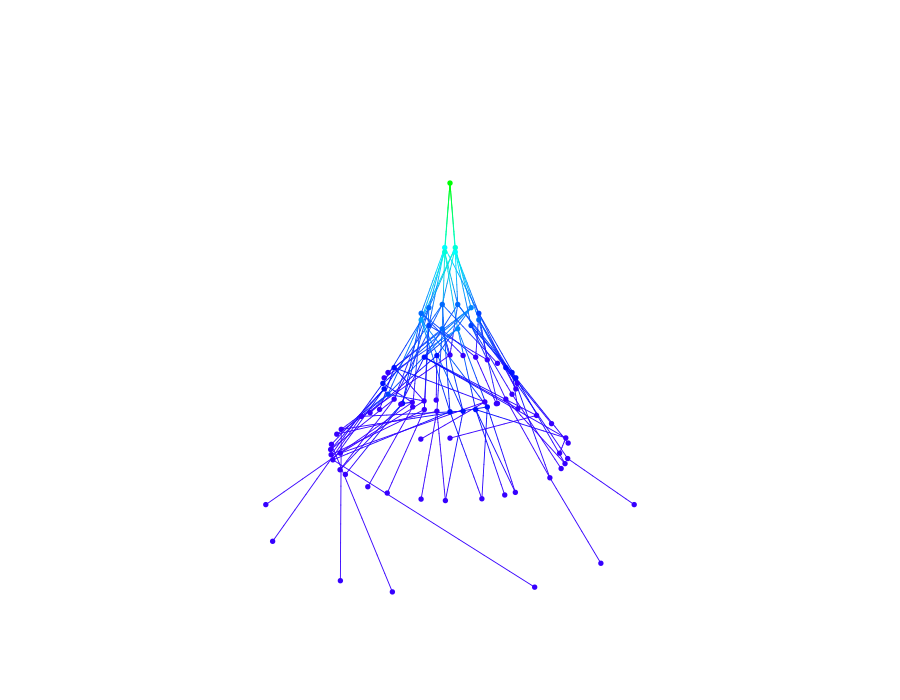}
        \caption{$N = 50$}
        \end{subfigure}
        \begin{subfigure}{.24\linewidth}
        \includegraphics[width=\linewidth]{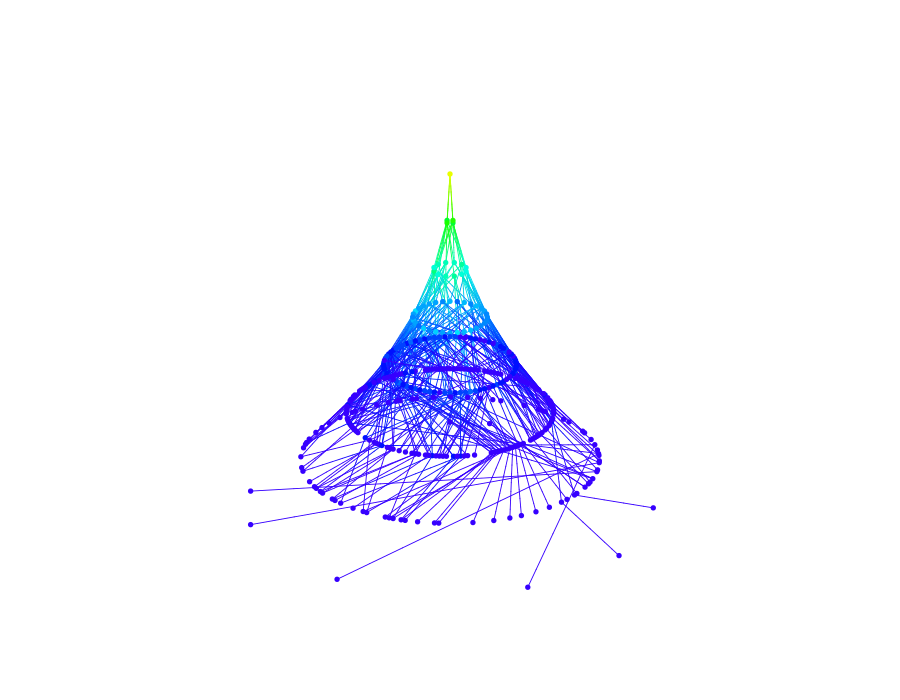}
        \caption{$N = 200$}
        \end{subfigure}
        \begin{subfigure}{.24\linewidth}
        \includegraphics[width=\linewidth]{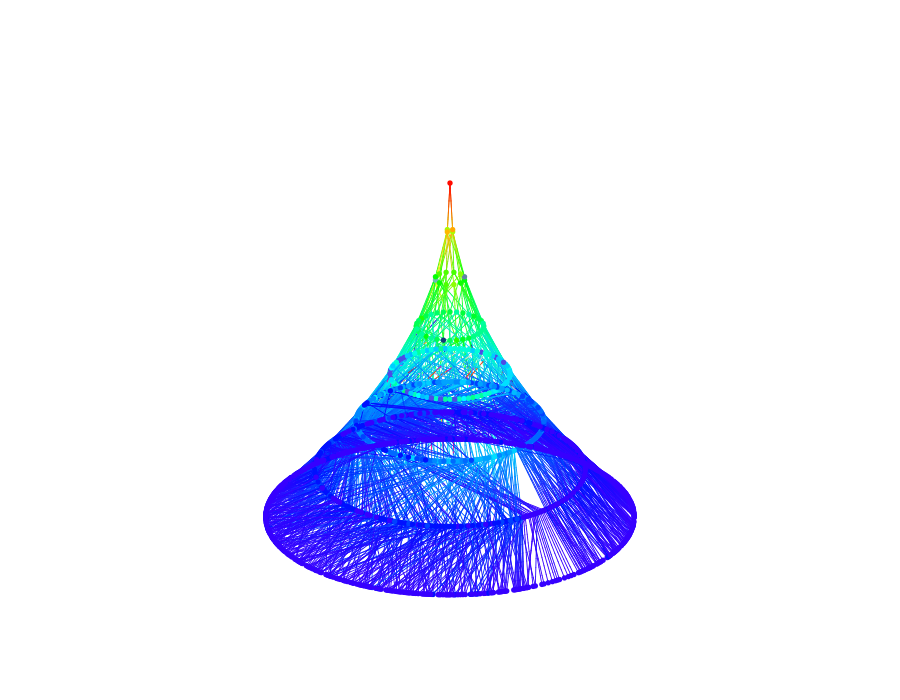}
        \caption{$N = 1000$}
        \end{subfigure}
        \caption{DAG Monte-Carlo Tree Search over $\mathcal{B}^2(\mathbb{Q}_2)$ after $N$ simulations. Brighter colours correspond to larger visit counts. The depth of the final tree is truncated.}\label{fig:optimisationHeatMap}
    \end{figure}

The deterministic optimistic optimisation framework described above relies on explicit geometric control of the objective through upper bounds on each cell. Although this leads to strong guarantees, it can be conservative in practice, as the bounds are designed to hold uniformly over the entire space.

An alternative approach is to guide the search using empirical information gathered along the trajectory of the algorithm, which is the premise of \emph{Monte-Carlo tree search} (MCTS). MCTS was developed in the context of two-player games such as Go where the branching factor makes exhaustive search infeasible \cite{coulom2006efficient,kocsis2006bandit,browne2012survey}. Rather than assigning fixed optimistic bounds to all nodes in advance, it balances exploration and exploitation dynamically using statistics accumulated during the search.

From the perspective of optimisation, this amounts to treating the hierarchical decomposition of $\mathcal{B}^n$ as a sequential decision process, where each node represents a region of the space and each descent step corresponds to selecting a promising refinement. The key idea is to allocate computational effort adaptively, focusing on regions that appear favourable based on past evaluations while concurrently ensuring sufficient exploration of the search space.

\paragraph{Choice of Tree Structure.} As in the DOO setting, MCTS operates on an incrementally constructed search structure $\mathcal{T}$ whose nodes are rational polydiscs and whose edges are determined by a children function $\mathcal{C}$. Several natural choices are available, such as the cell decompositions $\{X_{h,i}\}$ and $\{X'_{h,i}\}$, as well as the degree-$d$ refinement $\mathcal{C}^d$. Each choice induces a different way of exploring the search space, and our formulation of this approach is given abstractly in terms of a generic $\mathcal{C}$.

When $\mathcal{C}$ defines a true tree (for instance in dimension $n=1$ or when using the decompositions $\{X_{h,i}\}$ or $\{X'_{h,i}\}$ in arbitrary dimension), standard MCTS applies directly. In higher dimensions a polydisc may admit multiple parents giving rise to a directed acyclic graph; this case is discussed explicitly further on in this section.

Each node $x \in \mathcal{T}$ is assigned a visit count $N(x) \in \mathbb{N}$ and a cumulative reward $W(x) \in \mathbb{R}$; we write $Q(x) = W(x)/N(x)$ for the empirical mean value. As before, we work with the maximisation objective $-\ell$.

In modern reinforcement-learning systems such as AlphaZero, MCTS is often combined with learned policy and value functions that guide the search and evaluate leaf positions \cite{silver2018general}. While our use of MCTS includes a value function to guide the search, in our setting the value function is not a learned predictor, but rather a deterministic objective $-\ell$. For example, in the case where $\ell$ is an absolute polynomial sum, the value of $\ell(y)$ corresponds roughly to a bound on the worst possible outcome in the portion of the tree below $y$. In this respect, it is closer in spirit to MCTS variants that incorporate fixed evaluation or heuristic information into the search process \cite{lanctot2014monte,lorentz2016using}.


\begin{definition}
    Let $x \in \mathcal{T}$ be a node with parent $p$, the \emph{upper confidence bound (UCB) score} of $x$ is
\begin{equation}
\label{eq:UCB}
        \mathrm{UCB}(x) = Q(x) + c \cdot \sqrt{\frac{\ln N(p)}{N(x)}}
\end{equation}
    where $c > 0$ is a fixed exploration parameter. Nodes that have not yet been visited are assigned score $+\infty$.
\end{definition}

The first term of \eqref{eq:UCB} promotes exploitation of high-value nodes, while the second encourages exploration of less-visited regions. The UCB score provides a principled rule for navigating the search tree, balancing exploitation of regions with low objective value and exploration of less-sampled branches. In contrast to deterministic optimistic optimisation, which relies on global bounds that are required to hold uniformly over the space, MCTS adapts to the values encountered along the search trajectory, allocating computational effort based on observed behaviour. We now describe the resulting search procedure.

\begin{algorithm}[H]
  \begin{algorithmic}[1]
    \Require $\ell \colon \mathcal{B}^n \to \RR$, $N_\mathrm{sim} \ge 1$, $x_0 \in \mathcal{B}^n$, children function $\mathcal{C}$, $c > 0$
    \State $\mathcal{T} \gets \{x_0\}$; $N(x_0) \gets 0$; $W(x_0) \gets 0$
    \For{$i = 1, \dotsc, N_\mathrm{sim}$}
      \State $x \gets x_0$; $P \gets (x_0)$ \Comment{Selection: traverse $\mathcal{T}$ by UCB}
      \While{$\mathcal{C}(x) \subseteq \mathcal{T}$ and $\mathcal{C}(x) \ne \varnothing$}
        \State $x \gets \argmax_{y \in \mathcal{C}(x)} \mathrm{UCB}(y)$
        \State append $x$ to $P$
      \EndWhile
      \State Add $\mathcal{C}(x)$ to $\mathcal{T}$, set $N(y) \gets 0$, $W(y) \gets 0$ for $y \in \mathcal{C}(x)$ \Comment{Expansion}
      \State $y \sim \mathcal{C}(x)$
      \State append $y$ to $P$
      \State $v \gets -\ell(y)$ \Comment{Evaluation}
      \For{each $z \in P$} \Comment{Backpropagation}
        \State $N(z) \gets N(z) + 1$; $W(z) \gets W(z) + v$
      \EndFor
    \EndFor
    \Return $\argmax_{y \in \mathcal{C}(x_0)} Q(y)$ \Comment{Select child node with the largest mean value}
  \end{algorithmic}
  \caption{Update Rule for MCTS on Polydisc Space}
  \label{alg:mcts-update}
\end{algorithm}

\paragraph{DAG--MCTS.} When the chosen children function produces a directed acyclic graph rather than a tree, distinct traversal paths may lead to the same polydisc, similarly to the way several move sequences may lead to the same state in game-tree search.
Naively identifying such nodes may artificially inflate visit counts and bias the exploration--exploitation balance.  Transposition-aware variants of MCTS and UCT-style methods for rooted directed acyclic graphs have been studied in \cite{childs2008transpositions,saffidine2012ucd}. To address this, \emph{directed acyclic graph MCTS (DAG--MCTS)} maintains statistics at the level of root-to-node paths rather than individual nodes.

Specifically, for a path $P = (x_0, x_1, \ldots, x_k)$, we record its visit count $N(P)$ and cumulative value $W(P)$ and write $Q(P) = W(P)/N(P)$ for the corresponding empirical mean. If $y$ is a child of $x_k$, we denote by $P \cdot y\coloneqq (x_0, x_1, \ldots, x_k, y)$ the path obtained by appending $y$ to $P$. A \emph{transposition table} $H$ caches evaluations of $f(x) = -\ell(x)$, ensuring that each node is evaluated at most once, irrespective of how many paths reach it. The exploration strategy is then defined in terms of a path-dependent analogue of the UCB score. The approach taken here is similar to the UCT0 strategy outlined in \cite{childs2008transpositions}.

\begin{definition}
    Let $P = (x_0, \dotsc, x_k)$ be a path and $y$ a child of $x_k$. The \emph{DAG upper confidence bound (DUCB) score} of $(P, y)$ is
    $$
        \mathrm{DUCB}(P, y) = Q(P \cdot y) + c\sqrt{\ln N(P)\,/\,N(P \cdot y)}
    $$
    where $c > 0$ is a fixed exploration parameter.
\end{definition}

This leads to a natural extension of MCTS to the DAG setting, in which selection is performed using the DUCB score while evaluation is shared across nodes via the transposition table.

\begin{algorithm}[H]
  \begin{algorithmic}[1]
    \Require $\ell \colon \mathcal{B}^n \to \RR$, $N_\mathrm{sim} \ge 1$, $x_0 \in \mathcal{B}^n$, children function $\mathcal{C}$, $c > 0$
    \State $\mathcal{T} \gets \{x_0\}$; $H \gets \varnothing$; $N(P) \gets 0$, $W(P) \gets 0$ for each path $P$ in $\mathcal{T}$
    \For{$i = 1, \dotsc, N_\mathrm{sim}$}
      \State $P \gets (x_0)$; $x \gets x_0$ \Comment{Selection: traverse using path-based UCB}
      \While{$\mathcal{C}(x) \subseteq \mathcal{T}$ and $\mathcal{C}(x) \ne \varnothing$}
        \State $x \gets \argmax_{y \in \mathcal{C}(x)} \mathrm{DUCB}(P, y)$; $P \gets P \cdot x$
      \EndWhile
      \State Add $\mathcal{C}(x)$ to $\mathcal{T}$; $y \sim \mathcal{C}(x)$; $P \gets P \cdot y$ \Comment{Expansion}
      \If{$y \in H$} \Comment{Evaluation}
        \State $v \gets H(y)$
      \Else
        \State $v \gets -\ell(y)$; $H(y) \gets v$
      \EndIf
      \For{each prefix $P'$ of $P$} \Comment{Backpropagation}
        \State $N(P') \gets N(P') + 1$; $W(P') \gets W(P') + v$
      \EndFor
    \EndFor
    \Return $\argmax_{y \in \mathcal{C}(x_0)} Q\bigl((x_0) \cdot y\bigr)$\Comment{Select child yielding the largest mean value}
  \end{algorithmic} 
  \caption{Update rule for DAG-MCTS on the Polydisc Space}
  \label{alg:dag-mcts-update}
\end{algorithm}

In the polydisc setting, these procedures serve as update rules within a broader descent scheme. Starting from an initial polydisc $x_0$, each iteration applies Algorithm~\ref{alg:mcts-update} or Algorithm~\ref{alg:dag-mcts-update} to select a promising child and the process is repeated for a fixed number of steps. MCTS thus provides a flexible mechanism for navigating the hierarchical structure of $\mathcal{B}^n$ by adaptively concentrating the search in regions that exhibit favourable behaviour of the objective.

\section{Implementation and Applications:\\ A Julia Library for Non-Archimedean Machine Learning}
\label{sec:applications}
The mathematical framework developed throughout this paper is accompanied by a software implementation, \textsc{NonArchimedeanMachineLearning.jl} (NAML), a Julia library for optimisation, learning, and geometric computation on non-Archimedean polydisc spaces. Beyond serving as a proof of concept, the library provides a practical environment for experimenting with the structures introduced in this paper, including polydisc spaces, absolute and valuation polynomials, tangent directions, embeddings, and the optimisation algorithms in Sections \ref{subsec:best-first-search} to \ref{subsec:mcts}.
Our implementation plays a central role in the present work, allowing the theoretical constructions developed in the previous sections to be tested experimentally. It provides a unified interface for optimisation over polydisc spaces and forms a foundation for future work on non-Archimedean machine learning and optimisation.

In this section, we first describe the design and capabilities of NAML. We then present a collection of computational experiments illustrating both the expressiveness of the framework and the practical behaviour of the optimisation algorithms introduced in this paper. A discussion of our experimental results can be found in \cref{sec:discussion}.

\subsection{The \textsc{NonArchimedeanMachineLearning.jl} library}


\textsc{NonArchimedeanMachineLearning.jl} (NAML) is an open-source Julia library for machine learning and optimisation over non-Archimedean valued fields. It implements the mathematical framework developed throughout this paper and provides a unified interface for working with polydisc spaces, real-valued functions defined on them, and optimisation algorithms adapted to their geometry.

At the core of the library is a compositional algebra of functions on polydisc spaces. NAML supports multivariate polynomials, rational functions, absolute polynomials, valuation polynomials, and combinations of such objects through addition, multiplication, and composition with real-valued functions. The library further provides automatic differentiation capabilities for computing directional derivatives on polydisc spaces.

On top of this functional framework, NAML implements all optimisation algorithms introduced previously in Sections \ref{subsec:best-first-search} to \ref{subsec:mcts}, including best-first value descent, best-first gradient descent, Monte-Carlo tree search (MCTS), DAG--MCTS, and deterministic optimistic optimisation (DOO). The package also includes commonly used loss functions, utilities for computing Fr\'echet means, and tools for $p$-adic linear regression.

To facilitate experimentation and visualisation, the library provides routines for plotting loss landscapes, geodesics, convex hulls, and optimisation trajectories. The computational experiments reported in this section were all implemented using NAML and are fully reproducible through a companion repository containing experiment scripts, datasets, and analysis code.
The codebases associated with this work are available at:

\begin{center}
    \begin{tabular}{|l|l|}
        \hline
        Package & \href{https://github.com/Paul-Lez/NonArchimedeanMachineLearning.jl}{\texttt{github.com/Paul-Lez/NonArchimedeanMachineLearning.jl}} \\
        \hline
        Experiments & \href{https://github.com/Paul-Lez/Non-Archimedean-Polydisc-Spaces-and-Applications-to-Optimisation}{\texttt{github.com/Paul-Lez/Non-Archimedean-Polydisc-Spaces}} \\
        & \hspace{22mm}\href{https://github.com/Paul-Lez/Non-Archimedean-Polydisc-Spaces-and-Applications-to-Optimisation}{\texttt{-and-Applications-to-Optimisation}}\\
        \hline
    \end{tabular}
\end{center}

Taken together, these implementations provide, to our knowledge, the first software framework specifically designed for optimisation and machine learning on non-Archimedean polydisc spaces. The experiments below illustrate several aspects of the framework, ranging from polynomial root-finding and absolute polynomial sum minimisation to interpolation and function learning tasks.

\subsection{Computational Examples}
In order to make the methods developed above more concrete, we also run various illustrative experiments, which are intended to play a proof-of-concept role. 
Each subsection below briefly describes one such experiment and provides a summary of the outcome, with more detailed tables included in the appendix.

\subsubsection{Solving Polynomial Equations}

\begin{figure}[t]
  \centering
  \begin{subfigure}{0.45\textwidth}
    \includegraphics[width=\linewidth]{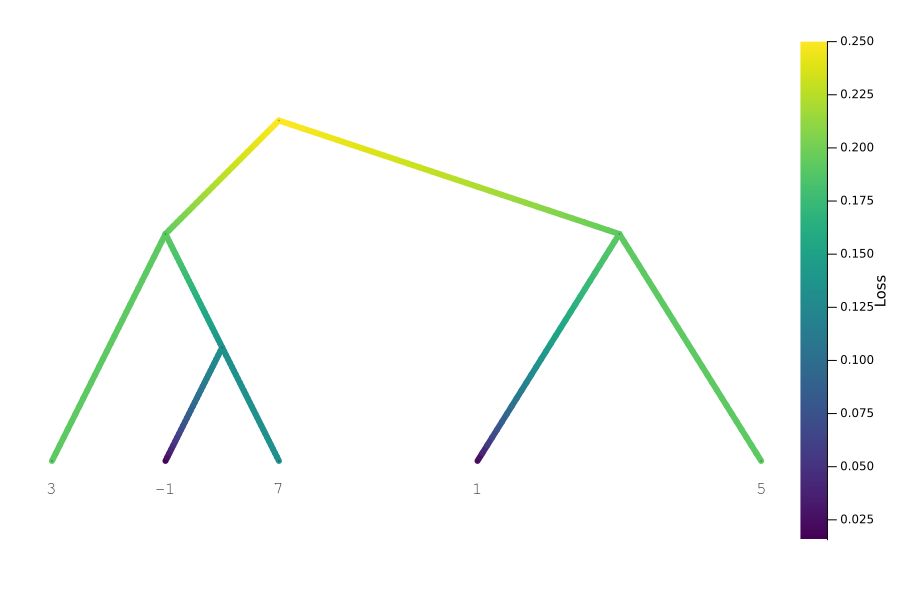}\vspace{-5mm}
    \caption{loss landscape on a subset of $\mathcal{B}^1$.}
  \end{subfigure}\hfill
  \begin{subfigure}{0.45\textwidth}
    \includegraphics[width=\linewidth]{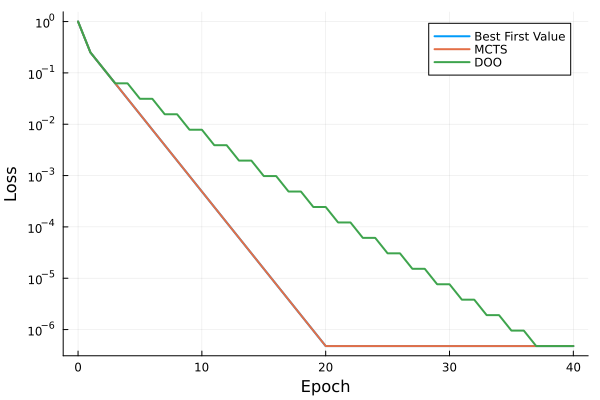}
    \caption{loss curves for best-first value, MCTS, DOO.}
  \end{subfigure}
  \caption{Loss landscape and curves for $|f|=\lvert x^2 - 1 \rvert$ over $\mathbb{Q}_2$.}
  \label{fig:x2_minus_1_loss_landscape_and_curves}
\end{figure}

\begin{example}
    Let $p=2$ and $f = X^2 - 1$. The loss landscape of $\lvert X^2 - 1 \rvert$ on a subset of the disc space $\mathcal{B}^1$ is plotted in  \cref{fig:x2_minus_1_loss_landscape_and_curves} (a).
    Running some of the optimisers described above starting from the Gauss point, we obtain the results presented in \cref{tab:x2_minus_1_results}.
    \begin{table}
        \centering
        \begin{tabular}{lcrrc}
          \toprule
          Optimizer & Center $c$ & Radius $r$ & Loss $f\!\left(B(c,\,2^{-r})\right)$ & Time\,(s) \\
          \midrule
          Greedy Descent   & $1$ & $20$ & $4.77 \times 10^{-7}$ & $0.077$ \\
          MCTS             & $1$ & $20$ & $4.77 \times 10^{-7}$ & $0.027$ \\
          DOO              & $1$ & $20$ & $1.53 \times 10^{-5}$ & $0.018$ \\
          \bottomrule
        \end{tabular}
        \caption{Optimisation results for $|f|=\lvert x^2 - 1 \rvert$ over $\mathbb{Q}_2$.}
    \label{tab:x2_minus_1_results}
    \end{table}
    The loss curves plotted over time yield those shown in \cref{fig:x2_minus_1_loss_landscape_and_curves} (b).

    As shown in \cref{tab:x2_minus_1_results} and \cref{fig:x2_minus_1_loss_landscape_and_curves} (b), each optimisation algorithm converges to a global minimum of the function $\lvert f \rvert$, namely one of the roots of $f$.
    One notable difference here is the \emph{loss dynamics}: the convergence of DOO is slower than the other two algorithms. 
    This is due to the fact that throughout the process, DOO expands the search tree towards both roots, whilst the other two algorithms ``commit'' to a specific root. This is also visible at the level of the search trees of the algorithms which are plotted in \cref{fig:search-trees-1d}. 
    \begin{figure}[t]
        \centering
        \begin{subfigure}{0.45\textwidth}
            \includegraphics[width=.8\textwidth]{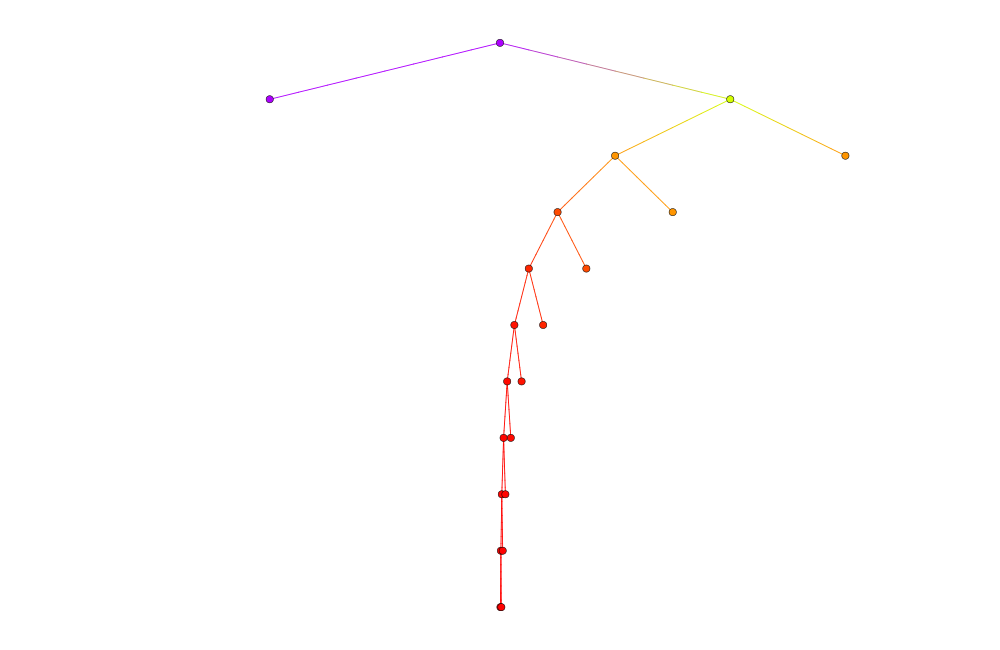}
            \caption{Best first search, full search tree. Brighter colours mean lower loss.}
        \end{subfigure}\hfill
        \begin{subfigure}{0.45\textwidth}
            \includegraphics[width=.8\textwidth]{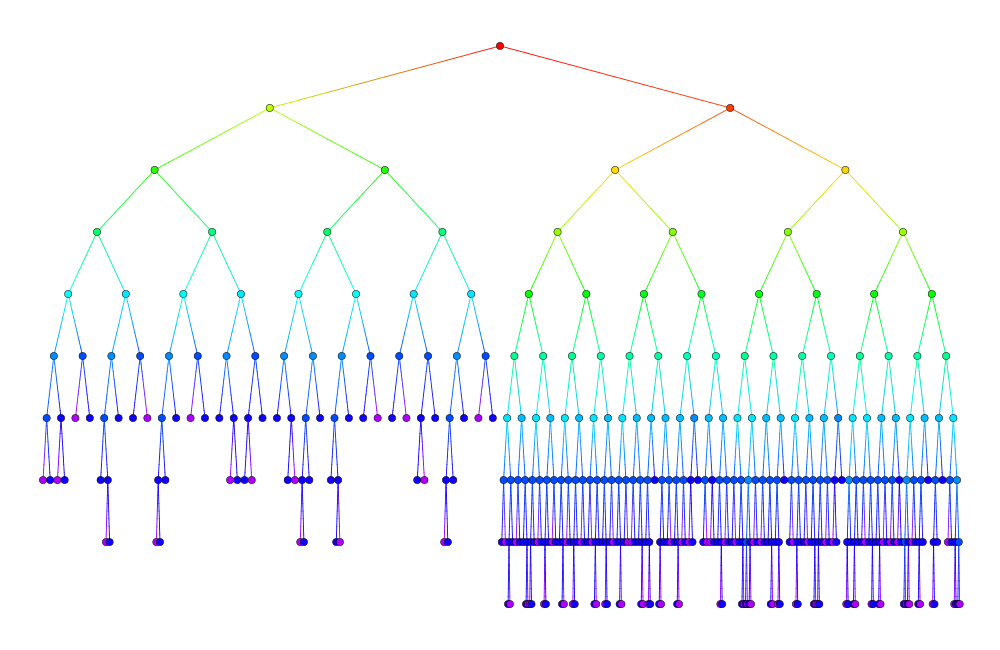}
            \caption{MCTS, \emph{search tree for the first step}. Brighter colours indicate nodes that have been more visited.}
        \end{subfigure}\\
        \begin{subfigure}{0.45\textwidth}
        \includegraphics[width=.8\textwidth]{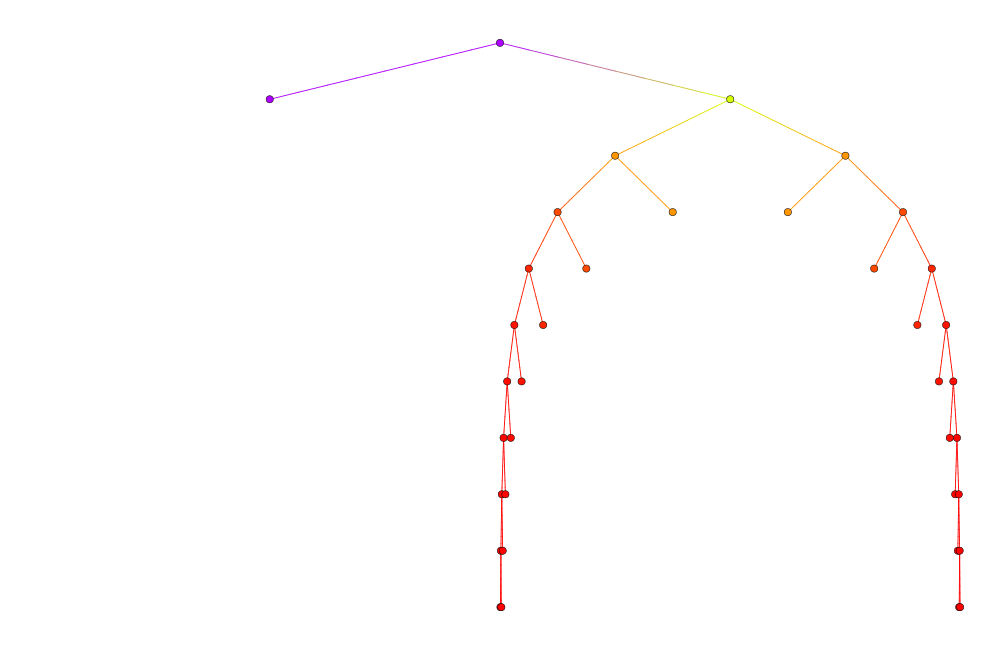}
            \caption{DOO, full search tree. Brighter colours mean lower loss.}
        \end{subfigure}\hfill
        \begin{subfigure}{0.45\textwidth}
    \includegraphics[width=.8\textwidth]{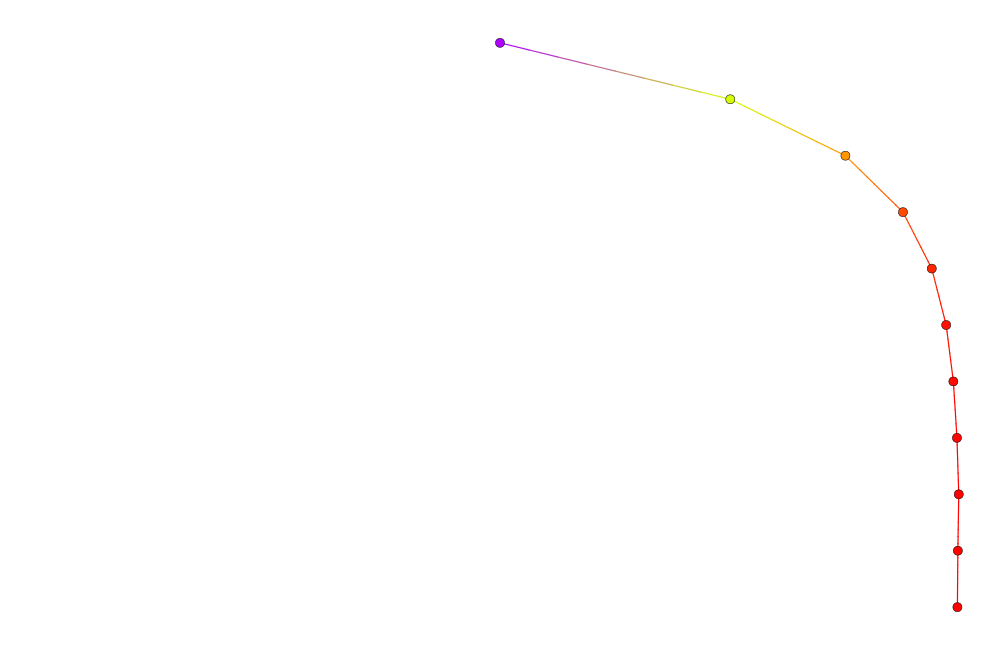}
            \caption{MCTS sequence of iterates. Brighter colours mean lower loss.}
        \end{subfigure}
    \caption{The search trees for the minimisation of $\lvert X ^ 2 - 1 \rvert$.}
    \label{fig:search-trees-1d}
    \end{figure}
\end{example}

\paragraph{Random Polynomials.} We construct a larger scale experiment by generating random polynomials $f$ with degrees $1, 2, 3$ and $1, 2, 3$ variables and minimising the function $z \mapsto |f(z)|$. To ensure that $f$ has a root below the Gauss point, we generate random $p$-adic numbers $\alpha_i$ and write $f$ as a product $f(z)$ of degree 1 multivariate polynomials with coefficients the $\alpha_i$. We pick the Gauss point as the starting point in the optimisation process.
We vary the prime $p \in \{2, 3, 5\}$ and test each configuration across many random instances. We rank several optimisers: a random baseline, best-first value search, best first gradient search, MCTS, DAG-MCTS, and DOO. Summary results are shown in \cref{fig:polynomial_solving_comparison}; more details are given in Section \ref{subsec:experimental-results}.

\begin{figure}[t]
    \centering
    \begin{subfigure}{0.48\textwidth}
        \centering
        \includegraphics[width=\linewidth]{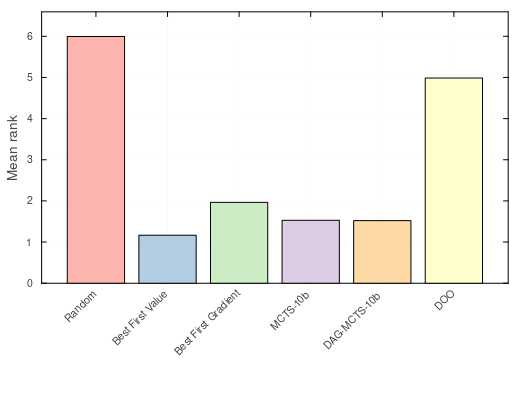}\vspace{-6mm}
        \caption{Average rank}
    \end{subfigure}
    \hfill
    \begin{subfigure}{0.48\textwidth}
        \centering
        \includegraphics[width=\linewidth]{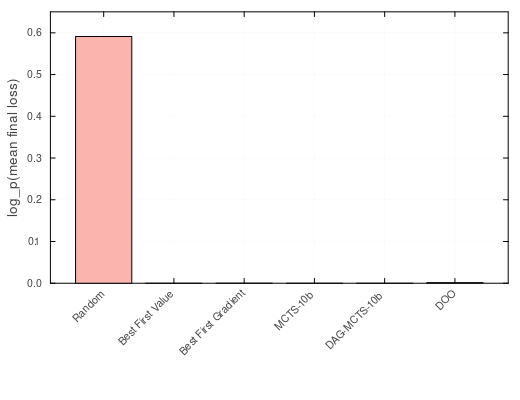}\vspace{-6mm}
        \caption{Mean final loss}
    \end{subfigure}
    \caption{Average ranks and mean final loss of the optimisers across several polynomial minimisation tasks}
    \label{fig:polynomial_solving_comparison}
\end{figure}

\subsubsection{Minimising Absolute Polynomial Sums}

\begin{example}
    Consider the absolute polynomial sum
    $$
      f\colon z \mapsto \sum_{x \in X} \lvert (z - x)(z - 2x)(z - 4x) \rvert
    $$
    where $X = \{1, 2, 4\}$.
    The loss landscape of $f$ on a subset of the disc space $\mathcal{B}^1$ is plotted in Figure \ref{fig:cubic_sum_landscape_and_curves} (a).
    \begin{figure}[t]
      \centering
      \begin{subfigure}{0.45\textwidth}
        \centering
        \includegraphics[width=\linewidth]{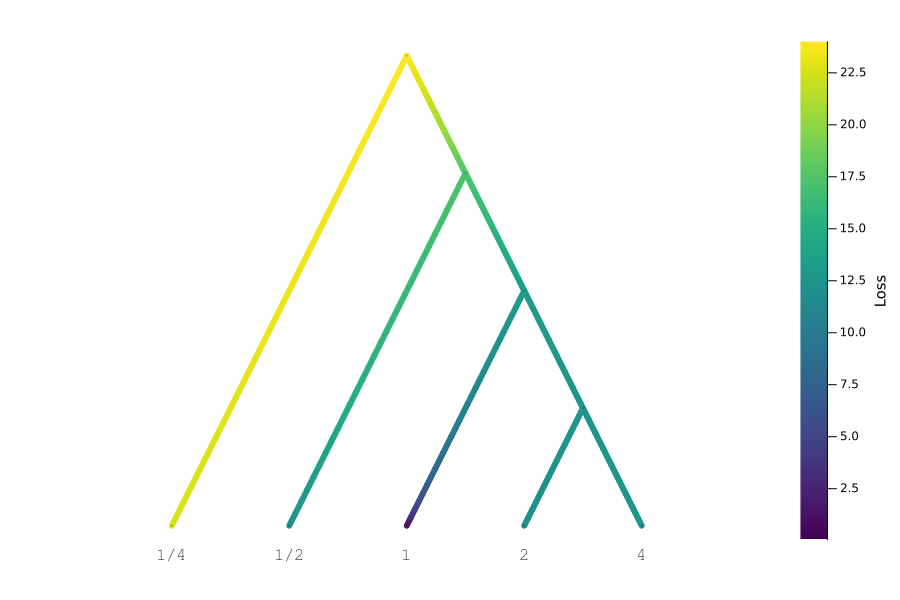}\vspace{-3mm}
        \caption{Loss landscape on a subset of $\mathcal{B}^1$}
      \end{subfigure}
      \hfill
      \begin{subfigure}{0.45\textwidth}
        \centering
        \includegraphics[width=\textwidth]{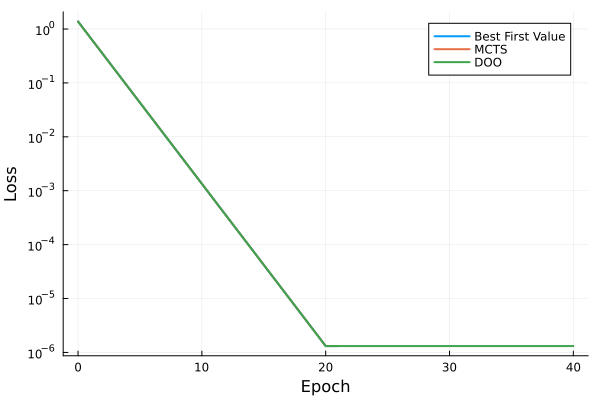}
        \caption{Loss curves for best-first-value, MCTS, DOO.}
      \end{subfigure}
      \caption{Loss landscape and curves of $\sum_{x \in X} \lvert (z - x)(z - 2x)(z - 4x) \rvert$ over $\QQ_2$.}
      \label{fig:cubic_sum_landscape_and_curves}
    \end{figure}

    Running the optimisers starting at the point $B_{\mathcal{H}}(0, 4)$ (the smallest disc containing all the roots of the polynomials appearing in the sum) yields the results shown in \cref{tab:cubic_sum_optimisation}.
    \begin{table}[t]
        \centering
        \begin{tabular}{lcrrc}
          \toprule
          Optimizer & Center $c$ & Radius $r$ & Loss $f\!\left(B(c,\,2^{-r})\right)$ & Time\,(s) \\
          \midrule
          Greedy Descent   & $1$ & $20$ & $1.31 \times 10^{-6}$ & $0.077$ \\
          MCTS             & $-1$ & $20$ & $1.31 \times 10^{-6}$ & $0.048$ \\
          DOO              & $1$ & $20$ & $1.31 \times 10^{-6}$ & $0.025$ \\
          \bottomrule
        \end{tabular}
        \caption{Results of optimisation algorithms for $f$}
        \label{tab:cubic_sum_optimisation}
    \end{table}
    The loss curves plotted over time are displayed in \cref{fig:cubic_sum_landscape_and_curves} (b).
    As shown in \cref{tab:cubic_sum_optimisation,fig:cubic_sum_landscape_and_curves} (b), all three algorithms yield the same result, and have identical trajectories when it comes to the observed loss at each step.
\end{example}

To demonstrate the practical effectiveness of our optimisation algorithms on polydisc spaces, we consider the following benchmark problem: given polynomials $f_1, \dotsc, f_m \in \QQ_p[x_1, \dotsc, x_n]$ with random coefficients, minimise the loss function
$$
\ell(x) = \sum_{i=1}^m |f_i(x)|
$$
over the $n$-dimensional polydisc space $\mathcal{B}^n$.

For each experiment, we generate multiple random problem instances by sampling polynomial coefficients uniformly from $p$-adic numbers with valuations between 0 and 8, and initialize all optimisers at the Gauss point $B_{\mathcal{H}}(1, 0)$. We vary the prime $p \in \{2, 3, 5\}$, the problem dimension $n \in \{1, 2, 5\}$, the number of polynomials in the sum $m \in \{2, 3, 5\}$, the polynomial degree (linear or quadratic), and the optimisation degree parameter (which controls the granularity of the search; see Section \ref{sec:optimisation}).
We compare several optimisers: a random baseline, best-first value search (branching over degree 1, and degree 2 children), best first gradient search, MCTS, DAG-MCTS, and DOO.

\begin{figure}[t]
    \centering
    \begin{subfigure}{0.48\textwidth}
        \centering
        \includegraphics[width=\linewidth]{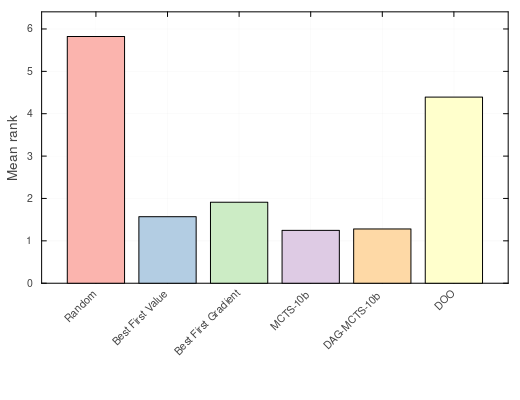}\vspace{-6mm}
        \caption{Average rank}
    \end{subfigure}
    \hfill
    \begin{subfigure}{0.48\textwidth}
        \centering
        \includegraphics[width=\linewidth]{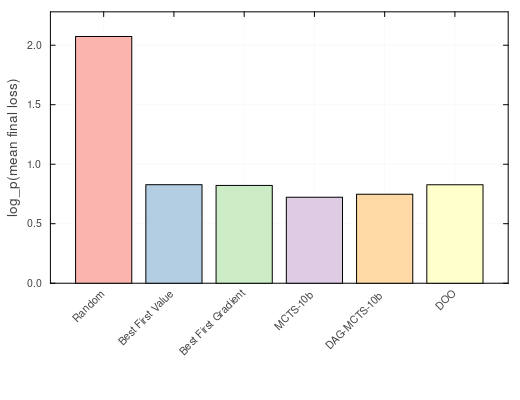}\vspace{-6mm}
        \caption{Mean final loss}
    \end{subfigure}
    \caption{Average ranks and mean final loss of the optimisers across several absolute polynomial sum minimisation tasks}
    \label{fig:absolute_sum_comparison}
\end{figure}

The summary results are shown in \cref{fig:absolute_sum_comparison}; more details are given in Section \ref{subsec:experimental-results}.

\subsubsection{Polynomial Interpolation}
Suppose we are given data points
$$
(x_0, y_0), \dotsc, (x_n, y_n) \in K \times K
$$
Consider the problem of finding the unique polynomial $f =  \sum_{i = 0}^n a_ix^i$ of degree at most $n$ such that for each $i$, we have $f(x_i) = y_i$. We can consider the loss function
$$
\ell(a_0, \dotsc, a_n) = \sum_i \lvert f(x_i) - y_i \rvert ^ 2 = \lvert \lvert X a - y \rvert \rvert ^ 2
$$
where
$$
X = 
\begin{pmatrix}
1 & x_0 & \ldots & x_0^n \\
1 & x_1 & \ldots & x_1^n \\
\vdots & \vdots & & \vdots \\ 
1 & x_n & \ldots & x_n ^ n
\end{pmatrix}, 
\ a = \begin{pmatrix}
    a_0 \\ a_1 \\ \vdots \\ a_n
\end{pmatrix}, 
\
y = \begin{pmatrix}
    y_0 \\ y_1 \\ \vdots \\ y_n
\end{pmatrix}.
$$

As we saw in Example \ref{ex:linearPreimage}, this has a global minimum whenever $X$ is non-singular, i.e., whenever the $x_i$ are distinct.

To test our optimisation algorithms on polynomial interpolation, we generate random distinct $p$-adic numbers $x_i$ and try to solve the optimisation problem described above. 
As before, we always initialise the optimisation process at the Gauss point $B_\mathcal{H}(0, 0)$ and vary several experimental parameters: the prime $p \in \{2, 3, 5\}$ and the number of points $x_i$. We use the same optimisers as in the previous experiment.

\begin{figure}[t]
    \centering
    \begin{subfigure}{0.48\textwidth}
        \centering
        \includegraphics[width=\linewidth]{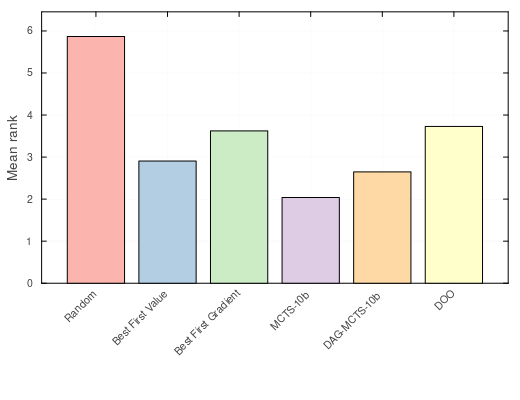}\vspace{-6mm}
        \caption{Average rank}
    \end{subfigure}
    \hfill
    \begin{subfigure}{0.48\textwidth}
        \centering
        \includegraphics[width=\linewidth]{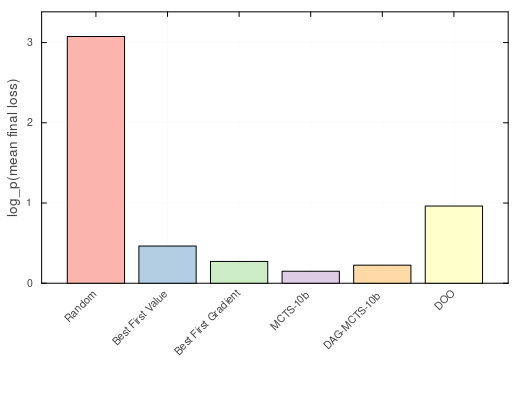}\vspace{-6mm}
        \caption{Mean final loss}
    \end{subfigure}
    \caption{Average ranks and mean final loss of the optimisers across several polynomial interpolation tasks}
    \label{fig:polynomial_interpolation_comparison}
\end{figure}

The summary results are shown in \cref{fig:polynomial_interpolation_comparison}; more detail is given in Section \ref{subsec:experimental-results}.

\subsection{Learning a Function on Random Data}

We experiment with the following standard learning problem: given data points $x_1, \dotsc, x_n$ and outputs $y_1, \dotsc, y_n \in \{0, 1\}$, find a function $f : K \to \RR$ such that $f(x_i) \approx y_i$. 

To do so, we generate small datasets $x_1, \dotsc x_n$ of random $p$-adic numbers, and random corresponding $y_i \in \{0, 1\}$, and attempt to fit this data using various choices of a learnable function $f_\theta$, which we determine by attempting to minimise the cross entropy loss function
$$
  \ell(\theta) = - \sum_i y_i \log f_\theta(x_i) + (1-y_i) \log \left(1 - f_\theta(x_i) \right) 
$$
We construct the function $f_\theta$ as follows:
$$
f_\theta(x) = \sigma_{\mu, \tau} \circ g_\theta(x)\qquad\text{where }
\sigma_{\mu, \tau}(x) = \sigma(\tau (x - \mu)) \text{ and }
\sigma(x) = \frac{1}{1 + e ^ {-x}}
$$
and $g_\theta$ is the absolute value of a polynomial (or a sum of such absolute values). 

We test our optimisation methods against this problem by generating random $p$-adic values $x_i$ and binary labels $y_i$ and trying to fit the data with a function of the form $f_\theta$.
As before, we always start from the Gauss point, and try several optimisers: a random baseline, best-first value search, best first gradient search, MCTS, and DAG--MCTS. We vary several parameters of the experiment:
the prime $p \in \{2, 3, 5\}$, the degree of the polynomial used to fit the data (degrees $2$, $3$, and $4$), and the target function, which is either the constant zero function, the constant one function, or a function with random binary labels. In each experiment, we take $\mu = \frac 1 2$ and $\tau = 1$. Summary results are shown in \cref{fig:function_learning_comparison}; more details about the experimental setup are given in Section \ref{subsec:experimental-results}.

\begin{figure}[t]
    \centering
    \begin{subfigure}{0.48\textwidth}
        \centering
        \includegraphics[width=\linewidth]{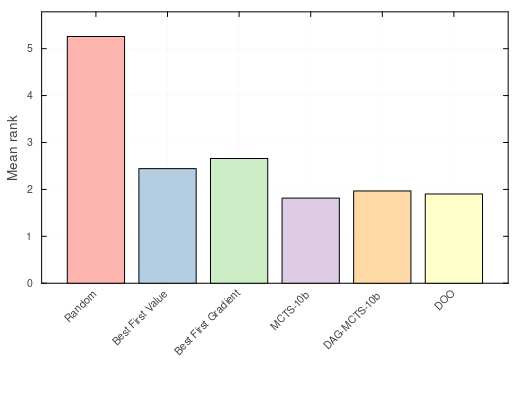}\vspace{-6mm}
        \caption{Average rank}
    \end{subfigure}
    \hfill
    \begin{subfigure}{0.48\textwidth}
        \centering
        \includegraphics[width=\linewidth]{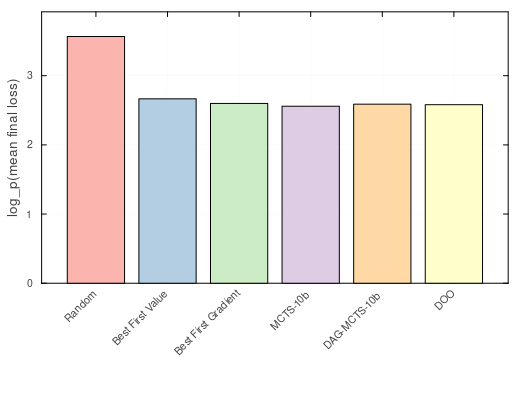}\vspace{-6mm}
        \caption{Mean final loss}
    \end{subfigure}
    \caption{Average ranks and mean final loss of the optimisers across several function learning tasks}
    \label{fig:function_learning_comparison}
\end{figure}

\subsection{Experimental Results} \label{subsec:experimental-results}

In this section, we analyse the outcome of the experiments described above.

\subsubsection{Methodology for Comparing Optimisers}

Before giving a summary of the outcome of the experiments, we briefly outline the methodology used to make comparisons between optimisers. The experiments have been designed so that for any given problem configuration (e.g., ``minimise a cubic univariate polynomial with coefficients in $\mathbb{Z}_3$''), we can draw many random sample problems (i.e., randomly sampled cubic polynomials). For each problem configuration, we select $N = 50$ sample problems $P_1, \dotsc, P_N$, and run each optimiser on each $P_i$, and rank the optimisers according to the final loss achieved on each given problem $P_i$. If two optimisers tie for a position $j$ in the ranking, they are both given the rank $j$, meaning that several optimisers can achieve a good ranking if they perform identically. We then report the mean rank of each optimiser across all given samples of a given problem configuration. We use the mean rank as the main metric in order to have a measure that is robust to outlier problems that are harder than average and might drive the mean loss of a given optimiser upwards. In particular, a given optimiser may rank better than another on average, but have worse average loss because of such outliers. For completeness, we also often report the mean loss.

\begin{table}[t]
    \centering
    \begin{tabular}{|l|c|c|c|}
        \hline
        Name & Branching & Algorithm & Additional Parameters \\
        \hline
        Random & $\mathcal{C}^1$ & --- & --- \\
        Best First Value & $\mathcal{C}^1$ or $\mathcal{C}^2$ & \cref{alg:best-first-value-search} & --- \\ 
        Best First Gradient & $\mathcal{C}^1$ or $\mathcal{C}^2$ & \cref{alg:best-first-gradient-search} & --- \\ 
        MCTS & $\mathcal{C}^1$ or $\mathcal{C}^2$ & \cref{alg:mcts-update} & $N_\mathrm{sim} = b$, $5b$ or $10b$ \\
        DAG-MCTS & $\mathcal{C}^1$ or $\mathcal{C}^2$ & \cref{alg:dag-mcts-update} & $N_\mathrm{sim} = b$, $5b$ or $10b$ \\
        DOO & $\mathcal{C}^2$ & \cref{alg:doo} & --- \\ 
        \hline
    \end{tabular}
    \caption{Optimisers used in the experiments}
    \label{table:experiment-optimisers}
\end{table}

\cref{table:experiment-optimisers} describes the various optimisers that are run on each experiment. These cover a variety of different tree search approaches which offer different solutions to the exploration vs exploitation trade-off: exploitation focused methods like the best first search algorithms, and more balanced approaches such as MCTS, DAG--MCTS and DOO. The variations in the hyperparameters (branching, i.e., $\mathcal{C} ^ 1$ vs $\mathcal{C} ^ 2$; number of simulations; and the exploration constant in the case of MCTS) used here reflect the fact that different choices can lead to very different behaviour and performance in practice.

\subsubsection{Summary of the Experiments}
The experiments cover several different levels of difficulty for optimisation problems.
\begin{itemize}
    \item \emph{Polynomial Solving}. This is a very natural example of a minimisation problem over $\mathcal{B}^n$, and perhaps the simplest: As seen in the proof of \cref{thm:best-first-descent-convergence}, it suffices to ensure that the loss is always decreasing and that all valuative radii tend to zero to ensure convergence to a root (and thus a global minimum).
    \item \emph{Absolute Polynomial Sum Minimisation}. This is a natural extension of the first problem, leading to more complicated loss landscapes. Unlike the first problem, we no longer have the guarantee on convergence: merely following the steepest path down is no longer enough to ensure one reaches the global minimum.
    In particular in this experiment and the following ones, it might be expected that algorithms searching longer before making an update will perform better than algorithms that perform a very shallow search.
    When running this experiment, we restrict to relatively easy instances of this problem (few terms in the sum, and low degree); more complicated instances of this problem occur in the next experiment. 
    \item \emph{Polynomial Interpolation}. This is a special case of the absolute polynomial sum minimisation problem. Here we tackle more complicated instances of this problem than in the previous experiment: the terms of the sums are always quadratic, and the sums generally have more terms. 
    \item \emph{Function Learning}. Unlike all previous experiments, the functions we deal with in this experiment are no longer absolute polynomial sums, and can have more complicated loss landscapes: for example, they may not be monotone on the polydisc space. This gives us a good testing ground to understand how the various optimisers designed in this paper behave on more complicated loss landscapes that go beyond the setting of absolute polynomial sums.
\end{itemize}

\subsubsection{Findings}

\paragraph{Branching.} To investigate the impact of the choice of the child generation function $\mathcal C$, we run pairwise comparisons for Best First Value, Best First Gradient and MCTS. In each case, we run the algorithm twice on all four experimental setups, using $\mathcal C ^ 1$ and $\mathcal C ^ 2$ respectively as child generating functions, and measure the average rank.
As shown in \cref{fig:branching_comparison}, in all cases we find that increasing the amount of branching, i.e., using $\mathcal C ^ 2$ rather than $\mathcal C ^ 1$ yields better results.

\begin{figure}[t]
    \centering
    \begin{subfigure}{0.3 \textwidth}
        \centering
        \includegraphics[width=0.8 \textwidth]{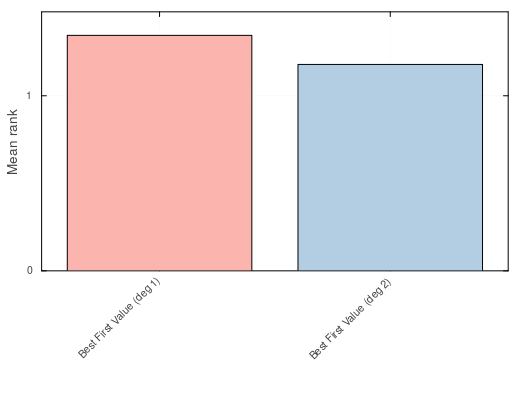}\vspace{-3mm}
        \caption{Best First Value}
    \end{subfigure}
    \begin{subfigure}{0.3 \textwidth}
        \centering
        \includegraphics[width=0.8 \textwidth]{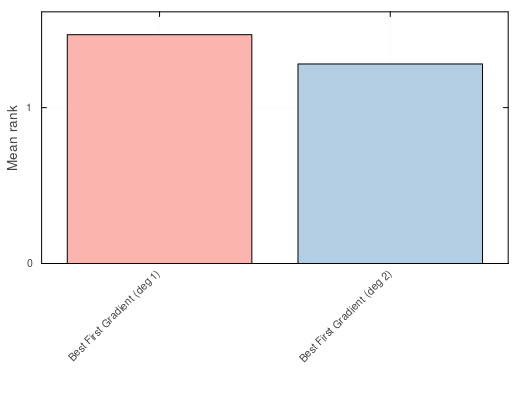}\vspace{-3mm}
        \caption{Best First Gradient}
    \end{subfigure}
    \begin{subfigure}{0.3 \textwidth}
        \centering
        \includegraphics[width=0.8 \textwidth]{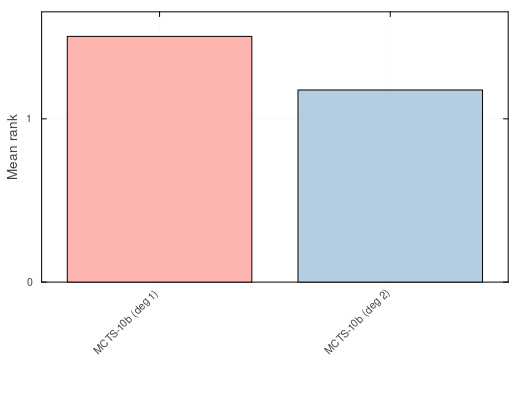}\vspace{-3mm}
        \caption{MCTS}
    \end{subfigure}
    \caption{Effect of branching ($\mathcal{C}^1$ vs.\ $\mathcal{C}^2$) on optimiser performance}
    \label{fig:branching_comparison}
\end{figure}

\paragraph{Comparison Between Optimisers.}

\begin{figure}[t]
    \centering
    \includegraphics[width=0.5 \textwidth]{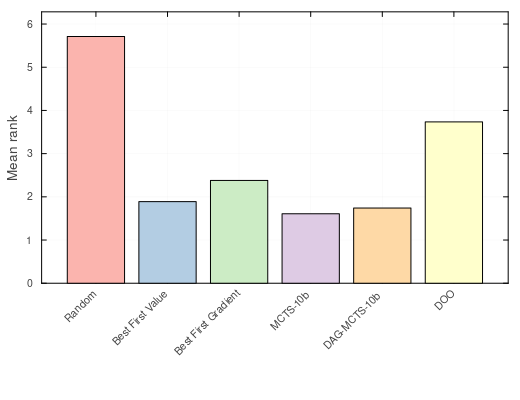}\vspace{-8mm}
    \caption{Average ranks of optimisers across all experiments}
    \label{fig:overall_ranking}
\end{figure}

As shown in \cref{fig:overall_ranking}, the best performing optimiser across all experiments is MCTS, which matches the intuition that allowing search algorithms to explore deeper in the DAG allows them to make better decisions. Perhaps surprisingly, DAG-MCTS performs slightly less well than MCTS, while still performing better than other shallower search algorithms.
This performance of MCTS and DAG--MCTS does come at the cost of being far more computationally intensive, compared to best-first value search methods. 

We note that on the problems considered, best first value search performs remarkably well on average compared to other methods, relative to the number of function evaluations required. One possible explanation for this fact is that the problems we test on here have relatively simple loss landscapes which lend themselves well to the ``best child'' heuristic it uses.

\paragraph{Number of Simulations.}

\begin{figure}[t]
    \centering
    \begin{subfigure}{0.4 \textwidth}
        \centering
        \includegraphics[width=0.8 \textwidth]{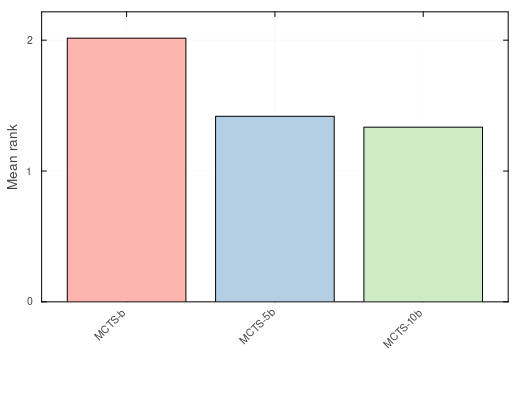}\vspace{-5mm}
        \caption{MCTS}
    \end{subfigure}
    \begin{subfigure}{0.4 \textwidth}
        \centering
        \includegraphics[width=0.8 \textwidth]{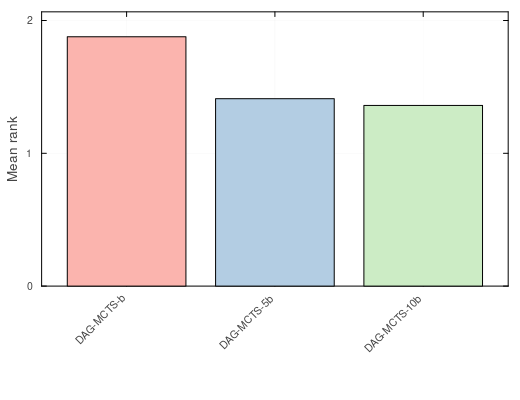}\vspace{-5mm}
        \caption{DAG-MCTS}
    \end{subfigure}
    \caption{Effect of number of simulations on MCTS and DAG--MCTS performance}
    \label{fig:simulations_comparison}
\end{figure}

As shown in \cref{fig:simulations_comparison}, as expected, the performance of MCTS-based optimisers improves as the number of simulations increases. Intuitively, this corresponds to the fact that we allow the optimiser to look at larger parts of the tree before choosing which path to descend.
In particular, this allows the optimiser to discard branches that might look promising but turn out to be poor choices, as well as choose branches which turn out to be promising after having been explored more.

\paragraph{Performance Considerations.}
\begin{figure}[t]
    \centering
    \begin{subfigure}{.45 \textwidth}
        \centering
        \includegraphics[width=.8 \textwidth]{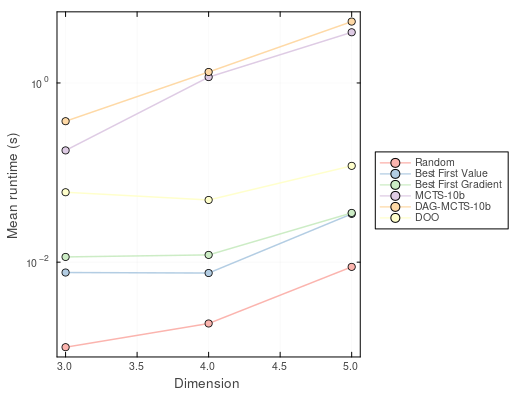}
        \caption{Average runtime per sample, $p = 2$}
    \end{subfigure}
    \begin{subfigure}{.45 \textwidth}
        \centering
        \includegraphics[width=.8 \textwidth]{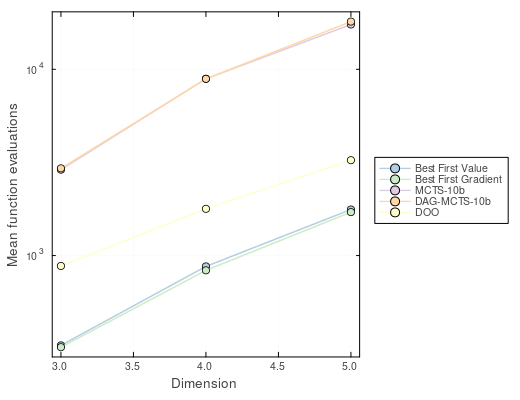}
        \caption{Average number of evaluations per sample, $p=2$}
    \end{subfigure}
    \caption{Function learning experiment performance}
    \label{fig:performance_comparison}
\end{figure}

As shown in \cref{fig:performance_comparison}, across all optimisers tested, both the average runtime and the average number of function evaluations scale roughly log-linearly with the dimension. The MCTS-based methods are the most expensive by a significant margin, which is consistent with their deeper exploration of the search DAG; the best-first methods sit at the opposite end of the spectrum.

The two main bottlenecks of the current implementation are the child generating function $\mathcal{C}$ and the evaluation of the loss at nodes, both of which can be called on the order of $10^3$ times per optimisation run when using MCTS. While the current implementation already supports the experiments reported in this paper, several opportunities for further optimisation remain. 


\section{Discussion}
\label{sec:discussion}
In this paper, we introduce polydiscs as a natural geometric setting both to embed hierarchical data, and to optimise functions defined over non-Archimedean fields. 
While various objects such as hyperbolic spaces have been proposed by the community, we argue that non-Archimedean geometry provides a natural setting to deal with hierarchical data, because of the well-known connection between hierarchies and ultrametrics.
The spaces we construct are motivated by Berkovich spaces in non-Archimedean geometry, but are simpler to represent and to compute with, and possess various additional structure, including a metric, a partial order, a notion of tangent directions and differentiation, making them a natural setting for optimisation.
We showed how these spaces are endowed with a natural class of well-behaved functions, linear combinations of absolute polynomials, that are universal approximators and can be easily computed in practice as they are locally representable by polynomials in the radii of polydiscs.
One of the key developments of this paper is the theory of optimisation presented in \cref{sec:optimisation}. 
There, we focus on positive linear combinations of absolute polynomials, which are particularly amenable to study and can be seen as an analogue of \emph{convex} functions on the polydisc space.
We show that such functions, under some mild geometric conditions on the zero loci of their defining polynomials, admit global minima on the polydisc space, and that these global minima can be related to the zero loci of these polynomials.
This allows us to derive several results about non-Archimedean analogues of well-known statistical optimisation problems such as Fr\'echet mean and ordinary least squares.
In the one dimensional case, we give a full characterisation of the global minima of positive absolute polynomial sums, and compute a bounded region of the polydisc space that contains all possible global minima of such functions.
We develop a more computational approach to optimisation of monotone decreasing functions over the polydisc space, and propose several algorithms for computing local minima in practice.
A crucial connection made here is that one can rephrase optimisation problems over the polydisc space as sequential decision making problems over the subspace of \emph{rational polydiscs}, which can be thought of as the elements of the polydisc space where the branching is ``maximal''.
This provides a unifying conceptual framework for designing optimisation procedures over the polydisc space, and allows us to propose polydisc versions of several powerful search algorithms such as DOO and MCTS that are well-adapted to search spaces with a large branching factor.
Importantly, we provide a Julia library called \textsc{NonArchimedeanMachineLearning.jl} that implements these algorithms, and we provide various experiments to demonstrate how these optimisers behave on concrete problems and compare performance across various configurations. Notably, we find that MCTS (and its DAG variant, DAG--MCTS) perform better on average than the other optimisers presented here, and that the performance of the algorithm increases as we scale the computational resources available to it (i.e., the number of simulations per step).

\subsection{Directions for Future Research}

Our work inspires several directions for future research, particularly towards complete optimisation and a fully-structured machine learning pipeline, which we now discuss.

One first direction would be in the function theory of polydisc spaces.  In concrete applications, having a rich class of functions that can be used to approximate quantities of interest is key, and thus studying broader classes of functions than just linear combinations of absolute polynomials may be of direct practical interest.
For example, compositions of such functions with real valued functions, kernels over polydisc spaces or even perhaps non-Archimedean analogs of neural network layers could be of direct practical use. Such a study would involve broadening the optimisation techniques presented here to a larger class of functions, and could benefit from the input of other techniques that have been developed in the context of non-Archimedean geometry such as potential theory. We note that in this more general setting, certain results may no longer always hold; for example, if one considers general linear combinations of absolute polynomials, it is no longer the case that the global minimum (if it exists) must lie in the base vector space $K^n \subset \mathcal B ^ n$.

Another promising direction would be more algorithmic, and would investigate more deeply the computational methods proposed here for finding local minima. 
We believe that the connection proposed here between sequential decision making processes and optimisation problems over $\mathcal B ^ n$ particularly merits further investigation. 
Similarly, it is plausible that using deeper ideas from non-Archimedean geometry might be beneficial here for developing more efficient methods that exploit the specific structure of the polydisc space and of the subset $\mathcal B ^ n _ \mathrm{rat}$.
At a very practical level, developing principled methods for reducing the dimension or branching factor of the decision problem seems achievable (note that the coordinate-wise best first descent algorithm can be seen as a basic example of this) and could result in significant computational improvements.
In a similar manner, tree (and DAG) search problems have been the object of much attention from the computer science community, and it seems plausible that many of the algorithmic improvements that have been proposed for MCTS-like techniques (adaptive branching, memory management, etc.) might be brought to bear on our specific context.

The final proposed directions bring us back to the notion that lies at the origin of this work, namely applications to concrete hierarchical datasets.
While the experiments described in \cref{sec:applications} show that the framework we have developed can be used for concrete computations, we point out that all examples considered are synthetic, and some work remains to be done to bridge the gap to concrete computational applications.
Our results on embeddings suggest that (hyperbolic) polydisc spaces provide a natural place to embed hierarchical data structures such as phylogenies or ontologies.
Given such an embedding, the function classes developed here provide a natural framework to learn quantities, for example in regression or classification problems.
More generally, an important next step in this context would be to develop methods to learn an embedding jointly with functions of interest.
A key test of the methods proposed would then be to benchmark their performance on some standard hierarchical datasets, and compare them with hyperbolic and Euclidean techniques.

\section*{Acknowledgments}

Y.F.~and P.L.~are funded by a London School of Geometry and Number Theory--Imperial College London/King's College London/University College London PhD studentship, which is supported by the Engineering and Physical Sciences Research Council [EP/S021590/1].  A.M.~and Y.R.~are supported by the EPSRC AI Hub on Mathematical Foundations of Intelligence: An ``Erlangen Programme'' for AI [EP/Y028872/1]. Y.R.~is supported by the UKRI FLF Computational tropical geometry and its applications [MR/Y003888/1].

\renewcommand{\bibfont}{\small}
\printbibliography

\end{document}